\theoremstyle{definition}
\newtheorem{mydef}{Definition}[section]
\theoremstyle{remark}
\newtheorem{mybem}[mydef]{Remark}
\newtheorem{myobs}[mydef]{Observation}
\theoremstyle{plain}
\newtheorem{mycol}[mydef]{Corollary}
\newtheorem{mysen}[mydef]{Theorem}
\newtheorem{mylem}[mydef]{Lemma}
\newtheorem*{myclaim}{Claim}
\newtheorem*{mysclaim}{Subclaim}
\newtheorem{mysenx}{Theorem}
\numberwithin{mydef}{section}
\DeclareMathOperator{\cof}{cof}
\DeclareMathOperator{\dom}{dom}
\DeclareMathOperator{\GCH}{GCH}
\DeclareMathOperator{\ZFC}{ZFC}
\DeclareMathOperator{\supp}{supp}
\DeclareMathOperator{\im}{im}
\DeclareMathOperator{\SR}{SR}
\DeclareMathOperator{\otp}{otp}
\DeclareMathOperator{\cf}{cf}
\DeclareMathOperator{\Add}{Add}
\DeclareMathOperator{\Coll}{Coll}
\DeclareMathOperator{\MM}{MM}
\DeclareMathOperator{\Suc}{Suc}
\DeclareMathOperator{\RCS}{RCS}
\DeclareMathOperator{\tcl}{tcl}
\DeclareMathOperator{\SRP}{SRP}
\DeclareMathOperator{\CH}{CH}
\DeclareMathOperator{\Hull}{Hull}
\DeclareMathOperator{\RwD}{RwD}
\DeclareMathOperator{\CU}{CU}
\newcommand{\dI}{\mathbb{I}}
\newcommand{\dP}{\mathbb{P}}
\newcommand{\dQ}{\mathbb{Q}}
\newcommand{\dR}{\mathbb{R}}
\newcommand{\bI}{\mathbf{I}}
\newcommand{\uhr}{\upharpoonright}
\title{On Friedman's Property}
\author{Hannes Jakob}
\subjclass[2020]{03E05, 03E35, 03E55} 
\date{\today}
\begin{document}
	
	
	\baselineskip=17pt
	\keywords{Friedman Property, Strategic Closure, Elementary Embeddings} 
	
	
	\begin{abstract}
		We define forcing orders which add witnesses to the failure of various forms of Friedman's Property. These posets behave similarly to the forcing order adding a nonreflecting stationary set but have the advantage of allowing the construction of master conditions and thus the preservation of various large cardinal properties. We apply these new techniques to separate various instances of variants of Friedman's Problem, both between different instances at one cardinal as well as equal instances at different cardinals and en passant obtain some new results regarding the differences between ${<}\,\kappa$- and $\kappa$-strategic closure.
	\end{abstract}
	
	\maketitle
	
	In \cite{FriedmanClosed}, Harvey Friedman proved the important Theorem that any stationary subset of $\omega_1$ contains closed subsets of arbitrarily long, countable, ordertype. In that same paper he introduces the property $F(\kappa)$ for $\kappa\geq\omega_2$, stating that any subset of $\kappa$ either contains or is disjoint from a closed set of ordertype $\omega_1$. We note that $\omega_1$ clearly is the critical ordinal since the same statement for ordinals ${<}\,\omega_1$ is provable in $\ZFC$ and the same statement for ordinals ${>}\,\omega_1$ is inconsistent (take $A:=E_{\omega}^{\kappa}$). Later on, the stronger property $F^+(\kappa)$, stating that every stationary subset of $E_{\omega}^{\kappa}$ contains a closed subset of ordertype $\omega_1$, was shown by Shelah in \cite{ShelahProperImproper} (Chapter XI, Theorem 7.1) to be consistent from a Mahlo cardinal and in \cite{ForemanMagidorShelahMM} and \cite{FengJechProjectiveStationary} to follow from $\MM$ and $\SRP$ respectively. It was also noticed by Silver (as stated in \cite{FriedmanClosed}) that forcing the failure of $F(\kappa)$ is quite easy, as any model of the form $V[\Coll(\omega,\omega_1)]$ will satisfy $\neg F(\kappa)$ for any $\kappa>\omega_1$.
	
	In this work, we will introduce posets which add witnesses to the failure of various variants of $F$ and $F^+$ in a more gentle manner in order to separate many instances of $F$ and $F^+$. These posets are modelled after the poset adding a nonreflecting stationary set but have the crucial advantage that they allow the building of master conditions and thus the lifting of ground-model embeddings. This is used in two ways: By forcing with our new posets over the standard model of Martin's Maximum, we obtain a model where some form of $F$ or $F^+$ (and thus also $\MM$) fails, but Martin's Maximum holds for all posets which do not imply the corresponding form of $F$ or $F^+$. Using this we will obtain a perfect separation between instances of $F$ or $F^+$ at $\omega_2$ (i.e. we prove implications between instances of $F$ and $F^+$ and show that all other implications are not provable in $\ZFC$). We will also use our posets to obtain a separation between instances of $F$ or $F^+$ at different cardinals. Here we can show that, despite their similarities, $F$ and $F^+$ have different ``Meta-properties'', e.g. the failure of $F$ is compact at weakly compact cardinals (i.e. if $F(\delta)$ fails for all $\delta<\kappa$ then $F(\kappa)$ fails if $\kappa$ is weakly compact) while $F^+$ can consistently hold for the first time at a weakly compact cardinal.
	
	The paper is organized as follows: In the first section, we define the notions that will appear in this paper and give a more detailed introduction into our results. In section 2, we give an easier proof of Shelah's Theorem that $F^+(\omega_2)$ is consistent from a Mahlo cardinal. In section 3, we introduce our posets to add witnesses to the failure of $F$ and $F^+$. In section 4, we introduce the new maximal versions of $\MM$ and prove their relative consistency. In sections 5 and 6, we use these axioms to obtain separations between instances of $F$ and $F^+$ respectively. In section 7, we show that we can force $F$ and $F^+$ to fail in a more gentle way while preserving large cardinals and obtain results regarding the compactness of the failure of $F$ and $F^+$ at large cardinals.
	
	\section{Introduction and Preliminaries}
	
	We assume the reader has some familiarity with the basic theory of club and stationary sets as well as a solid forcing knowledge. Our notation is standard, we require filters to be upwards closed (so that $q\leq p$ implies that $q$ forces more than $p$). For any ordinal $\gamma$ we let $E_{\omega}^{\gamma}$ denote the set of all ordinals $\delta\in\gamma$ with $\cf(\delta)=\omega$. A function is said to be \emph{normal} if it is strictly increasing and continuous.
	
	We first define our variants of $F$ and $F^+$:
	
	\begin{mydef}
		Let $\kappa\geq\omega_2$ be regular.
		\begin{enumerate}
			\item Let $\lambda\leq\kappa$ be a cardinal. The property $F(\lambda,\kappa)$ states that whenever $g\colon\kappa\to\lambda$ is regressive there is a normal function $f\colon\omega_1\to\kappa$ such that $g\circ f$ is constant.
			\item Let $(D_i)_{i\in\omega_1}$ be a partition of $\omega_1$ (we allow $D_i=\emptyset$ for some $i$). The property $F^+((D_i)_{i\in\omega_1},\kappa)$ states that whenever $(A_i)_{i\in\omega_1}$ is a sequence of stationary subsets of $E_{\omega}^{\kappa}$ there is a normal function $f\colon\omega_1\to\kappa$ such that $f[D_i]\subseteq A_i$ for all $i\in\omega_1$.
		\end{enumerate}
	\end{mydef}
	
	These properties are of course related to the principle of \emph{stationary reflection}: We let $\SR(E_{\omega}^{\kappa})$ state that for any stationary $S\subseteq E_{\omega}^{\kappa}$ there is some $\gamma\in\kappa$ with uncountable cofinality such that $S\cap\gamma$ is stationary in $\gamma$.
	
	For $F(\lambda,\kappa)$, it is clear that $F(\lambda,\kappa)$ implies $F(\lambda',\kappa)$ whenever $\lambda'<\lambda$ (as any function from $\kappa$ to $\lambda'$ is also a function from $\kappa\to\lambda$). We will show that this is sharp:
	
	\begin{mysenx}
		Assume $\kappa$ is supercompact and $\lambda<\kappa$ ($\lambda$ may be finite). There is a forcing extension in which $\kappa=\omega_2$, $F(\lambda,\kappa)$ holds and $F(\lambda',\kappa)$ fails for any $\lambda'>\lambda$.
	\end{mysenx}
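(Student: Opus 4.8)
The plan is to realize the model as a two-step extension $V\to V_1\to V_1[G]$. First, using the supercompactness of $\kappa$, force to reach the standard model $V_1$ of $\MM$; then $\kappa=\omega_2$ in $V_1$, $2^{\omega_1}=\omega_2$, and $F^+(\kappa)$ holds. I record that $F^+(\kappa)$ implies $F(\mu,\kappa)$ for every cardinal $\mu<\kappa$: if $g\colon\kappa\to\mu$ is regressive then by $\kappa$-completeness of $\NS_\kappa$ some fiber $g^{-1}(\{i\})\cap E_{\omega}^{\kappa}$ is stationary, so $F^+(\kappa)$ provides a closed $C\subseteq g^{-1}(\{i\})$ of ordertype $\omega_1$, and the increasing enumeration $f\colon\omega_1\to\kappa$ of $C$ is a normal function with $g\circ f$ constant. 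Since $\lambda\le\omega_1<\kappa$, this already gives $V_1\models F(\lambda,\kappa)$. (By Section 2 a Mahlo cardinal would suffice for $V_1\models F^+(\omega_2)$, but as the hypothesis is supercompactness I do not optimize.)

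Next, over $V_1$, force with the poset $\mathbb{Q}$ of Section 3 that adds a witness to the failure of $F(\lambda^+,\kappa)$, where $\lambda^+$ is the least cardinal above $\lambda$: its generic is a regressive $g_{\mathrm{gen}}=\bigcup G\colon\kappa\to\lambda^+$, and conditions carry promises guaranteeing that $g_{\mathrm{gen}}$ is constant on no closed set of ordertype $\omega_1$, so that a density argument yields $V_1[G]\models\neg F(\lambda^+,\kappa)$, whence $V_1[G]\models\neg F(\lambda',\kappa)$ for every $\lambda'>\lambda$ (since $F(\lambda',\kappa)\Rightarrow F(\lambda^+,\kappa)$). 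By the analysis of Section 3, $\mathbb{Q}$ is ${<}\,\kappa$-strategically closed, with a single strategy for all games of length ${<}\,\kappa$, and has cardinality $\kappa$; hence it is $\kappa$-distributive and $\kappa^+$-cc, so all cardinals and cofinalities are preserved, $\kappa$ remains $\omega_2$, and $E_{\omega}^{\kappa}$, $E_{\omega_1}^{\kappa}$ are unchanged.

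The crux is that $\mathbb{Q}$ preserves $F(\lambda,\kappa)$. Let $q$ force a $\mathbb{Q}$-name $\dot g$ to be a regressive function $\kappa\to\lambda$; I must find $q^*\le q$ forcing that $\dot g$ has a monochromatic closed copy of $\omega_1$. Working in $V_1$, use the winning strategy for $\mathbb{Q}$ together with $\kappa$-distributivity to build a descending sequence $\langle q_\alpha:\alpha<\kappa\rangle$ below $q$ with $q_\alpha$ deciding $\dot g\uhr\alpha$, say $q_\alpha\Vdash\dot g\uhr\alpha=\check g_\alpha$ with $g_\alpha\in V_1$; here $q_{\alpha+1}\le q_\alpha$ ensures coherence of the $g_\alpha$, and at a limit $\delta<\kappa$ the strategy supplies a lower bound for the sequence built so far, which is a legal play of length ${<}\,\kappa$. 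Then $g^*:=\bigcup_{\alpha<\kappa}g_\alpha$ is a regressive function $\kappa\to\lambda$ lying in $V_1$, so by $V_1\models F(\lambda,\kappa)$ there is a closed $C\subseteq\kappa$ of ordertype $\omega_1$ on which $g^*$ is constant, with $\delta^*:=\sup C\in E_{\omega_1}^{\kappa}$. Since $C\subseteq\delta^*$ and $q_{\delta^*}\Vdash\dot g\uhr\delta^*=g^*\uhr\delta^*$, the condition $q^*:=q_{\delta^*}\le q$ forces $\dot g$ to be constant on $C$. As this can be carried out below any condition forcing $\dot g$ regressive, $V_1[G]\models F(\lambda,\kappa)$ by genericity.

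The main obstacle is the construction and analysis of $\mathbb{Q}$ in Section 3: one must arrange that the promises carried by conditions are strong enough to defeat, via a density argument, every potential monochromatic closed copy of $\omega_1$ for $g_{\mathrm{gen}}$, while $\mathbb{Q}$ remains ${<}\,\kappa$-strategically closed through a uniform strategy (needed both to avoid collapsing cardinals ${\le}\,\kappa$ and to run the fusion above) and $\kappa^+$-cc. Once these properties of $\mathbb{Q}$ are available, the preservation argument is a routine fusion and the construction of $V_1$ is standard.
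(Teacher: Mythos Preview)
Your overall architecture matches the paper's: force $\MM$ to reach $V_1$ with $\kappa=\omega_2$, then force with $\mathbb{Q}=\dP_{F(\lambda^+)}(\omega_2)$. The gap is in your preservation argument for $F(\lambda,\kappa)$.

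You claim that $\mathbb{Q}$ is ``${<}\,\kappa$-strategically closed, with a single strategy for all games of length ${<}\,\kappa$'' and then run a fusion producing a descending chain $\langle q_\alpha:\alpha<\kappa\rangle$. A single uniform strategy for all games of length ${<}\,\kappa$ is exactly $\kappa$-strategic closure, and this fails here. In fact $\mathbb{Q}$ is not even strongly ${<}\,\kappa$-distributive in $V_1$: let INC play so that $\dom(p_{2\alpha+1})>\alpha$. If COM could survive to stage $\omega_2$, then $\bigcup_\alpha p_\alpha$ would be a regressive $h\colon\omega_2\to\lambda^+$, and since every closed copy of $\omega_1$ in $\omega_2$ has supremum ${<}\,\omega_2$ and hence lies inside some $\dom(p_\alpha)$, $h$ would witness $\neg F(\lambda^+,\omega_2)$ in $V_1$, contradicting $V_1\models F(\lambda^+,\omega_2)$. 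The paper states this explicitly in the introduction (``the forcing adding a witness to $\neg F(\kappa)$ cannot even be strongly ${<}\,\kappa$-distributive assuming $F(\kappa)$ holds in the ground model''), and Section 3 only proves ${<}\,\kappa$-strategic closure, with a strategy depending on the target length via a choice of $F$-function on that length. So your descending $\omega_2$-sequence cannot exist, and the fusion does not go through; concretely, for $\dot g(\xi)=[g_{\mathrm{gen}}(\xi)\neq 0]$ you would need $\dom(q_\alpha)\geq\alpha$ just to decide $\dot g\uhr\alpha$.

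The paper's route is genuinely different and does the real work you are deferring. One first shows (Theorem~\ref{MMFLambda}) that in $V_1[G]$ the restricted forcing axiom $\MM_{F(\lambda^+)}(f)$ holds for the generic $f=\bigcup G$; this is where the supercompactness is used, via a lifting argument that exploits Observation~\ref{MainObs} ($\bigcup G$ is a condition in the larger poset) together with the preservation Lemmas~\ref{ProperNotAddingSeq} and~\ref{ProperStatPres}. Then Theorem~\ref{FDistinction} derives $F(\lambda',\kappa)$ for every $\lambda'<\lambda^+$ from $\MM_{F(\lambda^+)}(f)$: given regressive $g\colon\omega_2\to\lambda'$, a pigeonhole argument finds a stationary $A\subseteq E_\omega^{\omega_2}$ on which $g$ is constant and such that the club-shooting poset $\dP(A)$ preserves $F_\neg(f,\lambda^+,\omega_2)$; the forcing axiom then produces a sufficiently generic filter, hence a closed copy of $\omega_1$ inside $g^{-1}(\{i\})$. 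This is not a routine fusion, and it cannot be replaced by one.
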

	
	So it is e.g. relatively consistent that any partition of $\omega_2$ into $42$ pieces has at least one part that includes a closed copy of $\omega_1$ while there is a partition of $\omega_2$ into $43$ sets such that no part includes a closed copy of $\omega_1$.
	
	To state our results for $F^+$, we first need to introduce more notions.
	
	\begin{mydef}
		$\MM$ states that whenever $\dP$ preserves stationary subsets of $\omega_1$ and $(D_i)_{i\in\omega_1}$ is a sequence of open dense subsets of $\dP$ there is a filter $G\subseteq\dP$ such that $G\cap D_i\neq\emptyset$ for any $i\in\omega_1$.
	\end{mydef}
	
	A very important consequence of $\MM$ which captures many of its implications is the principle $\SRP$, introduced by Todorcevic in a handwritten note in 1987. Later on, an equivalent, but simpler, characterization was found by Feng and Jech in \cite{FengJechProjectiveStationary}. Recently, in \cite{FuchsCanonicalFragments}, Gunter Fuchs introduced the parametrized variants.
	
	\begin{mydef}
		Let $\Theta$ be a regular cardinal. $S\subseteq[H(\Theta)]^{<\omega_1}$ is \emph{projective stationary} if for any stationary $A\subseteq\omega_1$, the set
		$$\{M\in S\;|\;M\cap\omega_1\in A\}$$
		is stationary in $[H(\Theta)]^{<\omega_1}$.
		
		The \emph{strong reflection principle} $\SRP$ states that whenever $S$ is projective stationary there is an increasing and continuous $\in$-chain $(N_i)_{i\in\omega_1}$ of members of $S$.
		
		If $\mathcal{S}$ is a collection of projective stationary subsets of $[H(\Theta)]^{<\omega_1}$, $\SRP(\mathcal{S})$ states that whenever $S\in\mathcal{S}$ there is an increasing and continuous $\in$-chain $(N_i)_{i\in\omega_1}$ of members of $S$.
	\end{mydef}
	
	Given a partition $(D_i)_{i\in\omega_1}$ of $\omega_1$ and a sequence $(A_i)_{i\in\omega_1}$ of stationary subsets of $E_{\omega}^{\kappa}$ there is (for $\Theta\geq\kappa$) a canonical subset of $[H(\Theta)]^{<\omega_1}$ associated with the pair $((D_i)_{i\in\omega_1},(A_i)_{i\in\omega_1})$: We let
	\begin{multline*}
		S((D_i)_{i\in\omega_1},(A_i)_{i\in\omega_1},\Theta):=\\
		\{M\in[H(\Theta)]^{<\omega_1}\;|\;\forall i\in\omega_1(M\cap\omega_1\in D_i\to\sup(M\cap\kappa)\in A_i)\}
	\end{multline*}
	
	By an argument of Fuchs from \cite{FuchsCanonicalFragments}, if there is an increasing and continuous $\in$-chain $(N_i)_{i\in\omega_1}$ of members of $S((D_i)_{i\in\omega_1},(A_i)_{i\in\omega_1},\Theta)$ for some $\Theta$ then there is a normal function $f\colon\omega_1\to\kappa$ such that $f[D_i]\subseteq A_i$ (by simply letting $f(N_i\cap\omega_1):=\sup(N_i\cap\kappa)$ and ``filling in the gaps''). We will later show, answering a question of Fuchs from the same paper, that $S((D_i)_{i\in\omega_1},(A_i)_{i\in\omega_1},\Theta)$ is always projective stationary, which means that $\SRP$ implies $F^+((D_i)_{i\in\omega_1},\kappa)$ for any partition $(D_i)_{i\in\omega_1}$ of $\omega_1$ and any regular cardinal $\kappa\geq\omega_2$. We will also define $\mathcal{S}((D_i)_{i\in\omega_1},\kappa,\Theta)$ as the collection of $S((D_i)_{i\in\omega_1},(A_i)_{i\in\omega_1},\Theta)\cap C$ for any sequence $(A_i)_{i\in\omega_1}$ of stationary subsets of $E_{\omega}^{\kappa}$ and any club $C\subseteq [H(\Theta)]^{<\omega_1}$. In section 6 we will show, answering another question of Fuchs from \cite{FuchsCanonicalFragments} in a strong way, that the ordering $\leq^*$ on the set of partitions (defined by $\overline{D}\leq^*\overline{E}$ if $\overline{D}$ refines $\overline{E}$ on a club) characterizes exactly when there is an implication between $\SRP(\mathcal{S}((D_i)_{i\in\omega_1}),\kappa,\Theta)$ and $F^+((E_i)_{i\in\omega_1})$:
	
	\begin{mysenx}
		Let $\overline{D}$ be a partition of $\omega_1$.
		\begin{enumerate}
			\item If $\overline{E}$ is a partition of $\omega_1$ with $\overline{E}\leq^*\overline{D}$ and $\Theta\geq\kappa$ is regular, $\SRP(\mathcal{S}(\overline{E},\kappa,\Theta))$ implies $F^+(\overline{D})$.
			\item If we are in the standard model for $\MM$, there is a forcing extension in which $F^+(\overline{D})$ fails and $\SRP(\mathcal{S}(\overline{E},\kappa,\Theta))$ holds for all partitions $\overline{E}$ with $\overline{E}\not\leq^*\overline{D}$ and all regular $\Theta\geq\kappa$.
		\end{enumerate}
	\end{mysenx}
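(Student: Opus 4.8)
For part (1), fix $\overline{E}=(E_j)_{j\in\omega_1}$ with $\overline{E}\leq^*\overline{D}$ and a regular $\Theta\geq\kappa$, and assume $\SRP(\mathcal{S}(\overline{E},\kappa,\Theta))$. Choose a club $C_0\subseteq\omega_1$ on which $\overline{E}$ refines $\overline{D}$, so that for every $j$ there is a unique $i(j)$ with $E_j\cap C_0\subseteq D_{i(j)}$ (set $i(j)=0$ when $E_j\cap C_0=\emptyset$). Given a sequence $(A_i)_{i\in\omega_1}$ of stationary subsets of $E_{\omega}^{\kappa}$, let $B_j:=A_{i(j)}$ and let $C$ be the club of $M\in[H(\Theta)]^{<\omega_1}$ with $M\cap\omega_1\in C_0$. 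Then $S:=S(\overline{E},(B_j)_j,\Theta)\cap C$ is a member of $\mathcal{S}(\overline{E},\kappa,\Theta)$, and one checks $S\subseteq S(\overline{D},(A_i)_i,\Theta)$: if $M\in S$ and $M\cap\omega_1\in D_i$, then $M\cap\omega_1\in E_j$ for a unique $j$, and since $M\cap\omega_1\in C_0$ we have $M\cap\omega_1\in D_{i(j)}$, so $i=i(j)$ and $\sup(M\cap\kappa)\in B_j=A_i$. Thus $\SRP(\mathcal{S}(\overline{E},\kappa,\Theta))$ applied to $S$ yields an increasing continuous $\in$-chain all of whose members lie in $S(\overline{D},(A_i)_i,\Theta)$, and Fuchs' construction recalled above converts it into a normal $f\colon\omega_1\to\kappa$ with $f[D_i]\subseteq A_i$ for all $i$; so $F^+(\overline{D})$ holds.

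For part (2), work in the standard model $V$ for $\MM$, which by Section~4 satisfies the relevant maximal version of $\MM$, and let $\dP=\dP_{\overline{D}}$ be the poset of Section~3 that adds a witness to $\neg F^+(\overline{D})$; recall from there that $\dP$ is ${<}\kappa$-strategically closed, preserves $\omega_1$ and cardinals, and admits master conditions for the elementary embeddings used in analysing $\MM$. A density argument shows that in $V[G]$ the generic produces a sequence $(A_i)_{i\in\omega_1}$ of stationary subsets of $E_{\omega}^{\kappa}$ for which no normal $f\colon\omega_1\to\kappa$ satisfies $f[D_i]\subseteq A_i$ for all $i$, so $F^+(\overline{D})$ fails in $V[G]$. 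On the other hand $V\models\MM$, hence $V\models\SRP$, and since each $S(\overline{E},(A_i)_i,\Theta)$ is projective stationary (the result announced above) and projective stationarity is inherited by intersections with clubs, $\SRP$ implies $\SRP(\mathcal{S}(\overline{E},\kappa,\Theta))$ in $V$ for every partition $\overline{E}$ and every regular $\Theta\geq\kappa$. The substance of part (2) is therefore that $\dP$ preserves $\SRP(\mathcal{S}(\overline{E},\kappa,\Theta))$ whenever $\overline{E}\not\leq^*\overline{D}$, uniformly in $\Theta$.

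Here one first notes, by a pressing-down argument, that $\overline{E}\not\leq^*\overline{D}$ holds iff some single part $E_{j^*}$ of $\overline{E}$ meets two distinct parts $D_{i_1},D_{i_2}$ of $\overline{D}$ in stationary sets. Membership of a model $M$ in a set $S(\overline{E},(B_j)_j,\Theta)$ refers only to the $\overline{E}$-part of $M\cap\omega_1$, whereas the obstruction inserted into $V[G]$ by $\dP$ is organised along $\overline{D}$; the stationary splitting of $E_{j^*}$ gives exactly the freedom one needs, while constructing an $\in$-chain through such a set, to place the relevant $M\cap\omega_1$ into whichever of $D_{i_1},D_{i_2}$ is compatible with the partial generic. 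Making this precise: given a $\dP$-name $\dot S$ for a member of $\mathcal{S}(\overline{E},\kappa,\Theta)$ and a condition $p\in\dP$, one builds by recursion on $\xi<\omega_1$ an increasing continuous $\in$-chain $(N_\xi)_{\xi<\omega_1}$ together with a descending sequence $p_\xi\leq p$ such that $p_\xi\Vdash N_\xi[\dot G]\in\dot S$, using strategic closure of $\dP$ and genericity over the countable models at successor steps, the splitting of $E_{j^*}$ at the coordinates it governs, and the appropriate master condition for $\dP$ at limit steps to keep the union inside $\dot S$; that this recursion succeeds is precisely where the maximal form of $\MM$ in $V$ is used. Reading the result off in $V[G]$ yields a continuous $\in$-chain through the given member of $\mathcal{S}(\overline{E},\kappa,\Theta)$, so $\SRP(\mathcal{S}(\overline{E},\kappa,\Theta))$ holds in $V[G]$.

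The main obstacle is clearly this last preservation step: one must arrange the recursion so that the master-condition bookkeeping for $\dP$ is compatible with the $\overline{E}$-indexed clauses defining the members of $\mathcal{S}(\overline{E},\kappa,\Theta)$, and it is exactly there that the hypothesis $\overline{E}\not\leq^*\overline{D}$ is used essentially (without it, by part (1) the poset $\dP$ kills $F^+(\overline{D})$ and hence also $\SRP(\mathcal{S}(\overline{E},\kappa,\Theta))$, so the preservation must fail). Combined with part (1) — which gives $\SRP(\mathcal{S}(\overline{E},\kappa,\Theta))\Rightarrow F^+(\overline{D})$ when $\overline{E}\leq^*\overline{D}$ — this produces the announced characterisation of when that implication holds.
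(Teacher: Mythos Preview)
Your part~(1) is correct and essentially identical to the paper's Lemma~\ref{SRPImpliesFPlus}.

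Your part~(2), however, has a real gap. The ``pressing-down'' characterisation you state --- that $\overline{E}\not\leq^*\overline{D}$ is equivalent to some single $E_{j^*}$ meeting two distinct $D_{i_1},D_{i_2}$ in stationary sets --- is false in general. The implication $\Leftarrow$ is fine, but the converse fails: if $\overline{D}$ has two stationary pieces $D_0,D_1$ and each $E_j\cap D_1$ is nonstationary, then no $E_j$ meets two $D_i$ stationarily, yet for $h\equiv 0$ the set $S(h)=\bigcup_j(E_j\smallsetminus D_0)=D_1$ is stationary, and any other $h$ also yields stationary $S(h)$, so $\overline{E}\not\leq^*\overline{D}$. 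The function $\alpha\mapsto j$ with $\alpha\in E_j$ need not be regressive, so Fodor does not save you. Since the rest of your argument for~(2) is built on this single $j^*$ and the ``splitting'' it provides, the recursion you sketch has no foundation.

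The paper's route is different and avoids this reduction. One forces with $\dP_{F^+(\overline{D})}(\omega_2)$ to obtain a model of the axiom $\MM_{F^+(\overline{D})}((A_i)_{i\in\omega_1})$ (this is what Section~4 establishes; the maximal axiom holds in the \emph{extension}, not in $V$). In that model one then works entirely with the forcing $\dQ$ that shoots an $\in$-chain through a given $S(\overline{E},(B_i)_i,\Theta)\cap C$, and shows that $\dQ$ both preserves stationary subsets of $\omega_1$ and does not add a normal $g\colon\omega_1\to\omega_2$ with $g[D_i]\subseteq A_i$. The second point is the heart of the matter: given the disjoint $(A_i)$, for each $i$ there is at most one $h(i)$ with $B_i\smallsetminus A_{h(i)}$ nonstationary, and $\overline{E}\not\leq^*\overline{D}$ is invoked for \emph{this specific} $h$ to get that $\{\alpha:\exists i(\alpha\in E_i\smallsetminus D_{h(i)})\}$ is stationary. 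A tree argument (as in the proof of projective stationarity) then produces a model $M$ with $M\cap\omega_1\in E_i\cap D_j$ and $\sup(M\cap\omega_2)\in B_i\smallsetminus A_j$ for some $j\neq h(i)$, and the generic condition through $M$ forces $\dot g[D_j]\not\subseteq A_j$. The forcing axiom then applies to $\dQ$ and yields the chain. Note that no single $E_{j^*}$ is ever isolated; the function $h$ depends on the particular instance $(B_i)_i$ under consideration.
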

	
	In the last section we will concern ourselves with the effect large cardinal properties can have on the pattern of $F$ and $F^+$. We will show that there is a natural separation between $F$ and $F^+$ caused by the different complexity ($\neg F(\kappa)$ is a $\Pi_1^1$-statement while $\neg F^+$ is more complex due to the requirement of stationarity). This is also reflected in the different closure properties of the corresponding forcings (a similar situation occurs in the separation between the existence of a square sequence and a nonreflecting stationary set, see Remark 6.7 in Cummings' chapter in the Handbook of Set Theory):
	
	\begin{mydef}
		Let $\dP$ be a poset and $\gamma$ an ordinal. The \emph{completeness game} $G(\dP,\gamma)$ on $\dP$ of length $\gamma$ is played by COM and INC as follows: COM plays at even ordinals (including limits) and INC plays at odd ordinals. The first move by COM is $1_{\dP}$. Afterwards, given a sequence $(p_{\alpha})_{\alpha<\delta}$ for $\delta<\gamma$, the player whose turn it is needs to play a condition $p_{\delta}$ such that $p_{\delta}\leq p_{\alpha}$ for any $\alpha<\delta$. COM wins if they have a legal move at every ordinal below (but not including) $\gamma$, otherwise INC wins.
		
		We say that a forcing order is $\gamma$-strategically closed if COM has a winning strategy in $G(\dP,\gamma)$. It is ${<}\,\gamma$-strategically closed if it is $\delta$-strategically closed for all $\delta<\gamma$.
	\end{mydef}
	
	Another related property is that of strong distributivity, see \cite[Definition 3.1]{JakobDisjointInterval}:
	
	\begin{mydef}
		Let $\dP$ be a forcing order and $\kappa$ a cardinal. $\dP$ is \emph{strongly ${<}\,\kappa$-distributive} if whenever $(D_{\alpha})_{\alpha<\kappa}$ is a sequence of open dense subsets of $\dP$ and $p\in\dP$ there is a descending sequence $(p_{\alpha})_{\alpha<\kappa}$ such that $p_0\leq p$ and $p_{\alpha}\in D_{\alpha}$ for any $\alpha<\kappa$.
	\end{mydef}
	
	By \cite[Theorem 3.6]{JakobDisjointInterval}, this is equivalent to the statement that INC does not have a winning strategy in $G(\dP,\kappa)$.
	
	The forcing adding a witness to $\neg F^+(\kappa)$ will always be $\kappa$-strategically closed while the forcing adding a witness to $\neg F(\kappa)$ cannot even by strongly ${<}\,\kappa$-distributive assuming $F(\kappa)$ holds in the ground model (however it will be ${<}\,\kappa$-strategically closed assuming $F$ fails below $\kappa$). While the two properties might seem very close in strength we will show that there are many natural regularity properties implied by $\kappa$-strategic closure (or even strong ${<}\,\kappa$-distributivity) which do not hold for the forcing adding a witness to $\neg F(\kappa)$ (such as not adding branches to $\kappa$-trees or allowing the building of sufficiently generic filters in the ground model).
	
	On the positive side, we have the following $\ZFC$ results:
	
	\begin{mysenx}
		Let $\kappa$ be a cardinal.
		\begin{enumerate}
			\item Assume $F(\delta)$ fails for all $\delta<\kappa$.
			\begin{enumerate}
				\item If $\kappa$ is weakly compact, $F(\kappa)$ fails.
				\item If $\lambda\geq\kappa$ is such that $\kappa$ is $\lambda$-supercompact, $F(\lambda^+)$ fails.
			\end{enumerate}
			\item Assume $F^+(\delta)$ fails for all $\delta<\kappa$ and $\lambda\geq\kappa$ is such that $\kappa$ is $\lambda$-supercompact. Then $F^+(\delta)$ fails for all $\delta\leq\lambda$.
		\end{enumerate}
	\end{mysenx}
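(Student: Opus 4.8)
The plan is to derive all three parts from a single scheme: one takes the elementary embedding supplied by the large-cardinal hypothesis, uses elementarity to pass from ``$F$ (resp.\ $F^{+}$) fails below $\kappa$'' to ``$F$ (resp.\ $F^{+}$) fails below the image of $\kappa$'', instantiates the latter at the target ordinal to obtain --- inside the target model --- a set witnessing the relevant failure, and then verifies that this set is still a witness in $V$. The last step works because the objects a witness must defeat are small: a closed set of ordertype $\omega_{1}$ inside $\gamma$ is coded by an $\omega_{1}$-sequence of ordinals ${<}\,\gamma$, and a club of $\gamma$ by a $\gamma$-sequence of ordinals. Hence, as soon as the target model is closed under sequences of the relevant length, every such object already belongs to it, and the witnessing property reflects down to $V$. (Both ``$F(\delta)$ fails'' and ``$F^{+}(\delta)$ fails'' are first-order statements in the single parameter $\delta$, so elementarity applies to them.)

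\emph{Parts (1)(b) and (2).} Let $j\colon V\to M$ witness that $\kappa$ is $\lambda$-supercompact, so $\crit(j)=\kappa$, ${}^{\lambda}M\subseteq M$ and $j(\kappa)>\lambda$; note $(\lambda^{+})^{M}=\lambda^{+}$, and $\lambda^{+}<j(\kappa)$ since $j(\kappa)$ is inaccessible in $M$. Elementarity gives $M\models$ ``$F(\delta)$ fails for all $\delta<j(\kappa)$'' in case (1)(b), and the same with $F^{+}$ in case (2). In case (1)(b), instantiating at $\delta=\lambda^{+}$ gives $A\subseteq\lambda^{+}$ in $M$ which $M$ believes witnesses $\neg F(\lambda^{+})$; every closed set of ordertype $\omega_{1}$ inside $\lambda^{+}$ is an $\omega_{1}$-sequence of ordinals and hence lies in $M$ by $\lambda$-closure, so if in $V$ some such set were contained in, or disjoint from, $A$, the same would hold in $M$; therefore $A$ witnesses $\neg F(\lambda^{+})$ in $V$. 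In case (2), fix $\delta\le\lambda$ (necessarily regular and ${\ge}\,\omega_{2}$ for $F^{+}(\delta)$ to be meaningful) and instantiate at $\delta<j(\kappa)$ to obtain $S$ in $M$ which $M$ believes is a stationary subset of $E_{\omega}^{\delta}$ with no closed subset of ordertype $\omega_{1}$. That $S\subseteq E_{\omega}^{\delta}$, and that $S$ has no closed subset of ordertype $\omega_{1}$, transfer to $V$ as in the previous case (cofinality $\omega$ is absolute, and closed sets of ordertype $\omega_{1}$ inside $\delta$ lie in $M$); and $S$ is stationary in $\delta$ in $V$ because any club $D\subseteq\delta$ of $V$, having size ${\le}\,\delta\le\lambda$, lies in $M$, so $S\cap D\neq\emptyset$ by the correctness of $M$. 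This last point is exactly what confines (2) to $\delta\le\lambda$: a club of $\lambda^{+}$ need not lie in $M$, so the argument does not reach $F^{+}(\lambda^{+})$ --- in contrast to (1)(b), where the test objects always have size $\le\lambda$.

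\emph{Part (1)(a).} Here there is no embedding of $V$, so instead fix a transitive $M\models\ZFC^{-}$ with $V_{\kappa}\cup\{\kappa\}\subseteq M$, $|M|=\kappa$ and ${}^{{<}\kappa}M\subseteq M$, and use the embedding characterization of weak compactness to obtain an elementary $j\colon M\to N$ with $N$ transitive and $\crit(j)=\kappa$; taking $N=\mathrm{Ult}(M,U)$ for a normal $\kappa$-complete $M$-ultrafilter $U$ one moreover has ${}^{{<}\kappa}N\cap V\subseteq N$. Since $V_{\kappa}\subseteq M$, for every relevant $\delta<\kappa$ a witness $A_{\delta}\subseteq\delta$ to $\neg F(\delta)$ lies in $M$, together with all closed sets of ordertype $\omega_{1}$ inside $\delta$ (which lie in $V_{\delta+1}\subseteq M$), so $M\models$ ``$F(\delta)$ fails for all relevant $\delta<\kappa$''. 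Applying $j$ and instantiating at $\kappa<j(\kappa)$ --- which $N$, being ${<}\,\kappa$-closed and containing $V_{\kappa}$, correctly regards as a regular cardinal ${\ge}\,\omega_{2}$ --- yields $A\subseteq\kappa$ in $N$ which $N$ believes witnesses $\neg F(\kappa)$; and any closed set of ordertype $\omega_{1}$ inside $\kappa$ in $V$, being an $\omega_{1}$-sequence of ordinals ${<}\,\kappa$, lies in $N$, so $A$ genuinely witnesses $\neg F(\kappa)$ in $V$.

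The one step that is not entirely routine is, in part (1)(a), obtaining the embedding $j\colon M\to N$ together with the extra closure ${}^{{<}\kappa}N\cap V\subseteq N$; this follows from the $\kappa$-completeness of $U$ and ${}^{{<}\kappa}M\subseteq M$ by representing a given ${<}\,\kappa$-sequence of members of $N$ coordinatewise by a single function lying in $M$. Everything else is bookkeeping with absoluteness.
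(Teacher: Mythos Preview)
Your argument is correct. For parts (1)(b) and (2) it is essentially the paper's own proof: the paper too takes a $\lambda$-supercompact embedding $j\colon V\to M$, reads off a witness to $\neg F(\lambda^+)$ (resp.\ $\neg F^+(\lambda)$) inside $M$ from the fact that $\lambda^+\le\lambda<j(\kappa)$, and transfers it to $V$ using ${}^{\lambda}M\subseteq M$ exactly as you do. Your treatment of (2) is slightly tidier in that a single embedding handles every $\delta\le\lambda$ at once, whereas the paper states and proves only the case $\delta=\lambda$ and leaves the rest implicit.

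For part (1)(a) you take a genuinely different route. The paper argues combinatorially via the tree property: it fixes, for each regular $\delta<\kappa$, an $F$-function $f_\delta\colon\delta\to 2$, forms the subtree of $2^{<\kappa}$ generated by the $f_\delta$, and extracts a cofinal branch $b\colon\kappa\to 2$; any closed copy of $\omega_1$ on which $b$ were constant would be bounded and hence contradict some $f_\delta$. You instead run the same embedding scheme as in the other parts, using the ultrapower characterisation of weak compactness to get $j\colon M\to N$ with ${}^{<\kappa}N\cap V\subseteq N$, and then reflect a witness from $N$ back to $V$. Both are standard and correct. Your approach has the virtue of uniformity---one template covers all three parts---while the paper's tree argument is more elementary and makes no appeal to model-theoretic closure of the target (it uses only that $\kappa$-trees have branches). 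The one point in your write-up that deserves a word of care is the verification that $N$ really regards $\kappa$ as a regular cardinal ${\ge}\,\omega_2$: this is true (any putative cofinal map or bijection in the transitive $N$ would be real), and you do flag it, but it is the only place where the weak-compactness case needs an extra sentence beyond the supercompact template.
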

	
	So certain large cardinal properties have a different effect on the patterns of $F$ and $F^+$ respectively. This is contrasted by the following independence results:
	
	\begin{mysenx}
		Assuming enough large cardinals, the following is consistent:
		\begin{enumerate}
			\item There exists a Mahlo cardinal $\kappa$ such that $F(\kappa)$ holds and $F(\delta)$ fails for all $\delta<\kappa$.
			\item There exists a weakly compact cardinal $\kappa$ such that $F(\kappa^+)$ holds and $F(\kappa)$ fails.
			\item There exist cardinals $\kappa\leq\lambda$ such that $\kappa$ is $\lambda$-supercompact, $F(\lambda^{++})$ holds and $F(\lambda^+)$ fails.
			\item There exists a weakly compact cardinal $\kappa$ such that $F^+(\kappa)$ holds and $F^+(\delta)$ fails for all $\delta<\kappa$.
			\item There exist cardinals $\kappa\leq\lambda$ such that $\kappa$ is $\lambda$-supercompact, $F^+(\lambda^+)$ holds and $F^+(\delta)$ fails for all $\delta\leq\lambda$.
		\end{enumerate}
	\end{mysenx}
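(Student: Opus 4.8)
The plan is to handle all five items with one common two-stage template, applied to a base model $V$ in which $\kappa$ carries the relevant large cardinal property --- supercompact for items (1) and (4), weakly compact for (2), $\lambda$-supercompact for (3) and (5) --- and in which, for the items whose target is a successor cardinal (namely (2), (3), (5)), there is in addition a Mahlo cardinal $\mu$ above the target (a touch more largeness is assumed where convenient). The first stage is a reverse Easton iteration $\dP_\kappa$, guided by a Laver function, whose iterands are the gentle posets of Sections~3 and~7, arranged so that in $V^{\dP_\kappa}$ the principle $F$ (resp.\ $F^+$) fails at every cardinal $\delta<\kappa$ while $\kappa$ itself is left untouched. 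Since every iterand admits master conditions and the Laver function controls the behaviour of $j(\dP_\kappa)$ at the critical stage, any relevant ground model embedding $j\colon V\to M$ lifts through $\dP_\kappa$, so $\kappa$ keeps its large cardinal property in $V^{\dP_\kappa}$. The second stage is a completion forcing $\dQ$, modelled on the Section~2 forcing for $F^+(\omega_2)$ and its straightforward $F$-analogue, which over $V^{\dP_\kappa}$ forces the desired positive instance at the target: when the target is $\kappa$ itself (items (1), (4)), $\dQ$ shoots closed copies of $\omega_1$ through subsets of $\kappa$ and derives its chain condition from the Mahloness of $\kappa$ (a reflection argument against the inaccessibles below $\kappa$); when the target is a successor (items (2), (3), (5)), $\dQ$ uses the auxiliary Mahlo cardinal $\mu$, collapsing it to become that successor in the course of forcing the relevant instance, and is made distributive/closed enough that $\kappa$ retains its large cardinal property.

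Item by item: For (1), force $F(\kappa)$ over $V^{\dP_\kappa}$, in which $\kappa$ is still supercompact and hence Mahlo; since $F$ continues to fail below $\kappa$, Theorem~C(1)(a) forces $\kappa$ to lose weak compactness, but $\dQ$ is $\kappa$-c.c.\ and so preserves Mahloness --- which is exactly (1). For (4), run the same argument with $F^+$ in place of $F$; since $\neg F^+$ is witnessed by a sequence of stationary sets rather than by a $\Pi^1_1$ fact, there is no analogue of Theorem~C(1)(a), and $\dQ$ can be chosen to preserve the weak compactness of $\kappa$, so that $F^+(\kappa)$ holds while $F^+(\delta)$ fails for all $\delta<\kappa$. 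For (2), $\dP_\kappa$ already yields $\neg F(\kappa)$ by Theorem~C(1)(a), and $\dQ$ collapses $\mu$ to $\kappa^+$ and forces $F(\kappa^+)$ while keeping $\kappa$ weakly compact. For (3), after $\dP_\kappa$ the $\lambda$-supercompactness of $\kappa$ already delivers $\neg F(\lambda^+)$ by Theorem~C(1)(b), and $\dQ$ collapses $\mu$ to $\lambda^{++}$ and forces $F(\lambda^{++})$. For (5), Theorem~C(2) gives $\neg F^+(\delta)$ for all $\delta\leq\lambda$ after $\dP_\kappa$, and $\dQ$ collapses $\mu$ to $\lambda^+$ and forces $F^+(\lambda^+)$. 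In each case the ``tightness'' of the pattern --- why $\kappa$ is merely Mahlo in (1), why weak compactness suffices in (4) but admits no $F$-analogue, why $\lambda$-supercompactness is the right hypothesis in (3) and (5) --- is read off directly from Theorem~C.

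The real work, and where I expect the main obstacle, lies in the preservation facts tying the two stages together. On one side, $\dQ$ must not resurrect $F$ (resp.\ $F^+$) below the target: being built from countably closed, highly strategically closed pieces with a Mahlo-type chain condition, $\dQ$ adds no new $\omega$-sequences, so $\omega_1$ and the sets $E_{\omega}^{\delta}$ are unchanged and stationarity of the relevant subsets of $E_{\omega}^{\delta}$ is preserved; a reflection argument then shows that the robust witnesses to $\neg F(\delta)$ (resp.\ $\neg F^+(\delta)$) produced by the Section~7 posets remain witnesses. On the other side, one needs that $\dP_\kappa$, together with the collapse embedded in $\dQ$, neither adds to a subset of the target cardinal a closed copy of $\omega_1$ forbidden by $\dQ$ nor destroys the stationary sets feeding into it; here one uses that $\dP_\kappa$ is $\kappa$-c.c.\ and countably closed. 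The delicate point is to do all of this simultaneously --- to choose the order of the two stages and their closure and chain-condition parameters so that each stage preserves what the other needs, \emph{and} so that the relevant large cardinal embedding stays liftable through the composite forcing $\dP_\kappa*\dot{\dQ}$ (in the supercompact cases, manufacturing a master condition for the quotient $j(\dP_\kappa*\dot{\dQ})/(\dP_\kappa*\dot{\dQ})$ inside the target model) --- which is precisely why the posets of Sections~3 and~7 were engineered to carry master conditions.
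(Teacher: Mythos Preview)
Your two-stage template departs from the paper's approach, and the second stage $\dQ$ is where the plan breaks down. You describe $\dQ$ as ``modelled on the Section~2 forcing'' and a ``straightforward $F$-analogue'', but the Section~2 construction is an RCS iteration of posets $\dP(A)$ interleaved with collapses: it necessarily collapses the Mahlo cardinal to $\omega_2$. Your claim that ``$\dQ$ is $\kappa$-c.c.\ and so preserves Mahloness'' conflates the chain condition at $\kappa$ (which the Section~2 iteration does have) with preservation of cardinals \emph{below} $\kappa$ (which it emphatically does not, since each $\dP(A)$ collapses $|A|$ to $\omega_1$). There is no evident variant that forces $F(\kappa)$ or $F^+(\kappa)$ while leaving $\kappa$ inaccessible, and for items (2), (3), (5) the situation is worse: you need $\dQ$ to be closed enough below the auxiliary Mahlo $\mu$ to preserve a large cardinal there, but the Section~2 forcing collapses everything in $(\omega_1,\mu)$.

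The paper avoids a second stage entirely, and in two different ways. For items (3) and (5) it starts in a ground model where $F^+(\delta)$ already holds for all regular $\delta\geq\omega_2$ (together with enough GCH), then runs the Easton iteration of failure forcings all the way to the target---up to $\lambda^+$ for (3), up to $\lambda$ for (5)---not merely up to $\kappa$. The positive instance one level higher is never forced; it is \emph{preserved} from the ground model by Lemma~\ref{SmallPresFPlus}, since the whole iteration has a dense subset of size strictly below that cardinal. The $\lambda$-supercompactness of $\kappa$ is kept by a master-condition argument: for (3) one ``fills in'' $j[f_\beta]$ on $\sup(j[\beta])\smallsetminus j[\beta]$ using an auxiliary $F$-function available because $F$ already fails below $j(\beta)$; for (5) the $\lambda^+$-strategic closure of the $F^+$-tail allows one to build the required $M$-generic inside $V[G]$. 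For items (1) and (4), the paper puts a supercompact $\lambda$ \emph{below} the target $\kappa$ (with $\kappa$ merely Mahlo, resp.\ weakly compact), forces $\dP_{MM}(\lambda)$ first so that $\lambda=\omega_2$, and then iterates the failure forcings up to $\kappa$. The positive instance $F^+(\kappa)$ is again not forced but \emph{derived}: one lifts a $\kappa$-supercompactness embedding for $\lambda$ through $\dP_{MM}(\lambda)*\dP$, with the Laver function arranged so that $\dP(\dot A)$ appears at stage $\lambda$ of $j(\dP_{MM}(\lambda))$, and reflects the closed copy of $\omega_1$ produced there back into $V[G*H]$. The master-condition machinery you correctly identified as central is used here to lift through the failure iteration $\dP$---but there is no second-stage $\dQ$ at all.
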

	
	\section{On the consistency of $F^+(\omega_2)$}
	
	In this section we give an easier proof of a result due to Shelah that $F^+((D_i)_{i\in\omega_1},\omega_2)$ is consistent from a Mahlo cardinal, where $(D_i)_{i\in\omega_1}$ is the trivial partition given by $D_0:=\omega_1$. For simplicity, we denote this by $F^+(\omega_2)$.
	
	\subsection*{Shelah's $S$-condition}\hfill
	
	There is a very natural forcing which, given some stationary $A\subseteq E_{\omega}^{\kappa}$, adds a closed set $c\subseteq A$ with ordertype $\omega_1$
	
	\begin{mydef}
		Let $\kappa$ be a regular cardinal and $A\subseteq E_{\omega}^{\kappa}$ stationary. The forcing $\dP(A)$ is defined as follows: $p\in\dP(A)$ if $p\colon\alpha+1\to A$ is a normal function for some $\alpha<\omega_1$. $q\leq p$ in $\dP(A)$ if $q$ end-extends $p$.
	\end{mydef}
	
	In most cases, this poset collapses $\kappa$ to $\omega_1$ and is not proper (assuming $E_{\omega}^{\kappa}\smallsetminus A$ is stationary, as this is destroyed). We do not know if this poset is always semiproper, but it is easy to see that it preserves stationary subsets of $\omega_1$ (this was first shown in \cite{ForemanMagidorShelahMM}):
	
	\begin{mylem}\label{PAStatPres}
		Let $\kappa\geq\omega_2$ be regular and $A\subseteq E_{\omega}^{\kappa}$ be stationary. Then $\dP(A)$ preserves stationary subsets of $\omega_1$.
	\end{mylem}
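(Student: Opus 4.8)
The plan is to show that $\dP(A)$ preserves stationarity by the standard pressing-down argument: given a name $\dot{S}$ for a stationary subset of $\omega_1$ and a condition $p$ forcing $\dot{S}$ stationary, together with a name $\dot{C}$ for a club, I must find $q\leq p$ and $\xi\in\omega_1$ with $q\Vdash\xi\in\dot{S}\cap\dot{C}$. First I would fix a large regular $\Theta$ and build a continuous $\in$-chain $(M_\eta)_{\eta<\omega_1}$ of countable elementary submodels of $H(\Theta)$ containing $\dP(A)$, $A$, $p$, $\dot S$, $\dot C$, and set $\delta_\eta:=M_\eta\cap\omega_1$. Since $A$ is stationary, the set $\{\eta<\omega_1 : \delta_\eta\in\{M_\eta\cap\omega_1\}\}$ — more precisely, the set of $\eta$ for which $\sup(M_\eta\cap\kappa)\in A$ — is stationary (this is where stationarity of $A$ enters: the ordinals $\sup(N\cap\kappa)$ for countable $N\prec H(\Theta)$ form a club in $\kappa$, so we can reflect $A$ into it). Moreover the set of $\eta$ with $M_\eta\cap\omega_1 = \delta_\eta$ and $p\Vdash \delta_\eta\in\dot S$ (appropriately phrased, using that $p$ forces $\dot S$ stationary) is stationary in $\omega_1$. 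Picking one such $\eta$ with the extra feature that $\delta_\eta$ is a limit of the $\delta_{\eta'}$, I obtain a model $M:=M_\eta$ with $M\cap\omega_1=\delta=\delta_\eta$, $\sup(M\cap\kappa)=:\gamma\in A$, and $p\in M$.

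Next I would construct an $(M,\dP(A))$-generic condition with top value $\gamma$. Enumerate the dense subsets of $\dP(A)$ lying in $M$ as $(D_n)_{n<\omega}$ and fix a cofinal $\omega$-sequence in $\delta$; build a descending sequence $p=p_0\geq p_1\geq\cdots$ inside $M$ with $p_n\in D_n$, arranging along the way that the domains of the $p_n$ have supremum $\delta$ and that $\sup(\operatorname{ran} p_n)\to\gamma$. The key point is that since $\gamma\in A$ and $\operatorname{cf}(\gamma)=\omega$, I can always extend any condition in $M$ inside $M$ so that its top value is pushed above any prescribed ordinal below $\gamma$ while staying in $A$ (elementarity of $M$ gives that $A\cap M$ is cofinal in $\gamma$). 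Then $q:=\bigcup_n p_n\cup\{(\delta,\gamma)\}$ is a normal function $\delta+1\to A$, hence a legitimate condition, and it is $(M,\dP(A))$-generic.

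The main obstacle — and the heart of the argument — is verifying that this $q$ forces $\delta\in\dot S\cap\dot C$. Genericity of $q$ over $M$ gives that $q$ forces $\dot C\cap\delta$ to be a subset of $M$ that is forced to be cofinal in $\delta$ by density of the sets $\{r : \exists\beta > \alpha,\ r\Vdash\beta\in\dot C\}$ in $M$, and since $\dot C$ is forced to be closed, $q\Vdash\delta\in\dot C$. For $\delta\in\dot S$: I chose $\eta$ so that $p$ is compatible with (indeed, above, in the forcing order's sense $p$ is a predecessor of) conditions forcing $\delta\in\dot S$; more carefully, the standard move is to have chosen, among the stationarily many candidate models, one in which $p$ itself forces — via a reflection of "$p\Vdash\dot S$ is stationary" inside $M$ — that $\delta\in\dot S$, using that $\delta = M\cap\omega_1$ is exactly the ordinal being added. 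Concretely, since $p\Vdash\text{``}\dot S\text{ is stationary''}$ and $(D_n)$ can be taken to include the dense sets deciding membership of ordinals in $\dot S$, genericity of $q$ forces $\dot S\cap\delta$ to be cofinal in $\delta$, and one arranges $\delta\in\dot S$ by the usual trick of folding the requirement "$\delta_{\eta'}\in\dot S$ for the relevant club-many $\eta'$" into the construction so that $\delta$ itself is hit. I would present this last step by the clean route: show $q\Vdash\dot C\cap\delta$ is club in $\delta$, note $q\Vdash\dot S\cap\delta$ stationary in $\delta$ follows from genericity plus $p\Vdash\dot S$ stationary, hence $q\Vdash\dot S\cap\dot C\cap\delta\neq\emptyset$, and then observe that any $\xi\in\dot S\cap\dot C$ below $\delta$ witnesses what we want — actually here one wants $\delta$ itself, so the correct packaging is to run the argument with the pair $(\dot S,\dot C)$ and conclude $q\Vdash\delta\in\dot{\bar S}$ where $\dot{\bar S}$ is the name for the set of limit points, but since $\dot S$ need not be closed, the honest statement is that stationarity of $\dot S$ is preserved because $q$ forces $\dot S$ to still meet every ground-model-coded club, which is what "$\dch C$ arbitrary" delivers. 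I would therefore organize the proof as: (i) $q$ is $(M,\dP(A))$-generic; (ii) $q\Vdash\delta\in\dot C$; (iii) for the fixed but arbitrary club name $\dot C$, choosing $M$ so that $p\Vdash\delta_{M}\in\dot S$ holds on a stationary set and extracting one such $M$ that also computes $\gamma\in A$, we get $q\Vdash\delta\in\dot S\cap\dot C$, completing the proof.
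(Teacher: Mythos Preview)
Your plan has a genuine gap at the point where you claim that, for your fixed continuous chain $(M_\eta)_{\eta<\omega_1}$ of countable elementary submodels, the set $\{\eta<\omega_1:\sup(M_\eta\cap\kappa)\in A\}$ is stationary. You justify this by saying ``the ordinals $\sup(N\cap\kappa)$ for countable $N\prec H(\Theta)$ form a club in $\kappa$''. That is true for the collection of \emph{all} countable $N$, but it is irrelevant here: for your fixed chain, the map $\eta\mapsto\sup(M_\eta\cap\kappa)$ has range a club in a single ordinal $\gamma<\kappa$ of cofinality $\omega_1$, and a stationary $A\subseteq E_\omega^\kappa$ need not reflect to $\gamma$ at all (indeed $A\cap\gamma$ may be empty). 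So you have no way to produce any $\eta$ with $\sup(M_\eta\cap\kappa)\in A$, let alone stationarily many, let alone one that simultaneously satisfies the $S$-condition. (A secondary issue: ``preserves stationary subsets of $\omega_1$'' means ground-model stationary sets remain stationary, so $S$ should be a set in $V$, not a name $\dot S$; your discussion of the $\dot S$ case is accordingly tangled and unnecessary.)

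The paper sidesteps this problem by a two-step model argument that decouples the two stationarity requirements. First take a model $M\prec H(\Theta)$ of size $\omega_1$, closed under a suitable Skolem function, with $\sup(M\cap\kappa)\in A$; this uses only that $A$ is stationary in $\kappa$ and that such sups range over a club in $\kappa$. Since $\sup(M\cap\kappa)\in E_\omega^\kappa$, fix a countable set $\{\delta_n:n<\omega\}\subseteq M\cap\kappa$ cofinal in $\sup(M\cap\kappa)$. Now, working \emph{inside} $M$, use that $S\subseteq\omega_1$ is stationary to find a countable $N\subseteq M$, closed under the Skolem function and containing $\{\delta_n:n<\omega\}$, with $N\cap\omega_1\in S$. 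The cofinal set forces $\sup(N\cap\kappa)=\sup(M\cap\kappa)\in A$, and now the generic-condition construction over $N$ goes through exactly as you describe, yielding $q$ forcing $N\cap\omega_1\in S\cap\dot C$. The point is that the $A$-requirement is handled at the $\omega_1$-sized level and then transported down to the countable level for free, rather than trying to meet both requirements at once. What you were implicitly reaching for---a countable $N$ with $N\cap\omega_1\in S$ and $\sup(N\cap\kappa)\in A$ directly---is precisely projective stationarity of $\{N:\sup(N\cap\kappa)\in A\}$, which the paper does prove later (Theorem~\ref{ProjStat}), but by a considerably harder tree argument; the present lemma has this much simpler direct proof.
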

	
	\begin{proof}
		Let $S\subseteq\omega_1$ be stationary and $\dot{C}$ a $\dP(A)$-name for a club. Let $\Theta$ be large enough and $F\colon[H(\Theta)]^{<\omega}\to[H(\Theta)]^{<\omega_1}$ a function such that any $M\in[H(\Theta)]^{<\omega_1}$ that is closed under $F$ is an elementary submodel of $(H(\Theta),\in)$ containing $\dot{C}$ and $\dP(A)$. Let $M\in[H(\Theta)]^{\omega_1}$ be closed under $F$ with $M\cap\omega_2\in A$. Let $\{\delta_n\;|\;n\in\omega\}\subseteq M\cap\omega_2$ be unbounded in $\sup(M\cap\omega_2)$. We can find $N\subseteq M$ such that $N\in[H(\Theta)]^{<\omega_1}$ is closed under $F$, contains $\{\delta_n\;|\;n\in\omega\}$ as a subset and $N\cap\omega_1\in S$. Then $\sup(N\cap\omega_2)=\sup(M\cap\omega_2)\in A$, $N\prec (H(\Theta),\in)$ and $\dot{C},\dP(A)\in N$. Let $(D_n)_{n\in\omega}$ enumerate all open dense subsets of $\dP(A)$ lying in $N$. In $N$, construct a descending sequence $(p_n)_{n\in\omega}$ such that $p_n\in D_n$. It follows that $p:=\bigcup_np_n\cup\{(N\cap\omega_1,\sup(N\cap\omega_2))\}\in\dP(A)$ and forces that $\dot{C}\cap(N\cap\omega_1)$ is unbounded in $N\cap\omega_1$, so $N\cap\omega_1\in S\cap\dot{C}$.
	\end{proof}
	
	In \cite{ShelahProperImproper}, chapter XI, Shelah introduces a condition known as the $S$- (or $\bI$-) condition. This condition holds for $\dP(A)$ and is iterable with revised countable support (we will not go into that definition here). Moreover, forcing notions satisfying the condition for suitable parameters do not add reals and preserve $\omega_1$.
	
	\begin{mydef}
		Let $\bI$ be a family of ideals. An \emph{$\bI$-tagged tree} is a pair $(T,\dI)$ such that:
		\begin{enumerate}
			\item $T$ is a tree, i.e. a nonempty set of finite sequences of ordinals closed under restriction.
			\item $\dI$ is a function from $T$ into $\bI$ such that for any $t\in T$,
			$$\Suc_T(t):=\{s\;|\; s\text{ is an immediate successor of }t\text{ in }T\}\subseteq \dom(\dI(t))$$
			\item For every $t\in T$ we have $\Suc_T(t)\neq\emptyset$ and for every $t\in T$ there is $t'\in T$ with $t\trianglelefteq t'$ and $\Suc_T(t')\notin\dI(t)$.
		\end{enumerate}
	\end{mydef}
	
	The last condition states that our tree is a \emph{Miller-Style tree}, i.e. we allow branches to not split for long periods of time. A different possibility (not explored here and more complicated) uses \emph{Laver-Style trees}, i.e. trees which have a stem and split everywhere above that stem.
	
	We define the following notions: Given two $\bI$-tagged trees $(T,\dI)$ and $(T',\dI')$ we write $T'\leq^*T$ if $T'\subseteq T$, $\dI\uhr T'=\dI'$ and for any $t\in T'$, if $\Suc_T(t')\notin\dI(t)$, then $\Suc_{T'}(t')\notin\dI'(t)$ (i.e. $T'$ splits at the same nodes as $T$). Given any tree $T$, we write $[T]$ for the set of all functions $b$ on $\omega$ such that $b\uhr k\in T$ for any $k\in\omega$. It is clear that $[T]$ is nonempty for any $\bI$-tagged tree (as every node has at least one successor).
	
	\begin{mydef}
		Let $\bI$ be a family of ideals and $\dP$ a forcing notion. $\dP$ \emph{satisfies the $\bI$-condition} if there is a function $F$ (with correct domain and image) such that for every $\bI$-tagged tree $(T,\dI)$: If $f=(f^I,f^{II})\colon T\to\dP^2$ is a function such that
		\begin{enumerate}
			\item $t\trianglelefteq t'$ implies $f^I(t)\geq f^{II}(t)\geq f^{I}(t')$
			\item For any $t\in T$ we have
			$$(\Suc_T(t),(f^I(t'))_{t'\in\Suc_T(t)})=F(t,(f(t'))_{t'\trianglelefteq t})$$
		\end{enumerate}
		then whenever $T'\leq^*T$ there is $p\in\dP$ such that
		$$p\Vdash\exists b\in[T']\forall k\in\omega(f^I(b\uhr k)\in\dot{G})$$
	\end{mydef}
	
	This is a slight simplification of Shelah's original definition but it works just the same. Given a cardinal $\lambda$, let $\dI_{\lambda}$ be the bounded ideal on $\lambda$. A forcing notion $\dP$ \emph{satisfies the $S$-condition} for $S$ a set of regular cardinals if it satisfies the $\{\dI_{\lambda}\;|\;\lambda\in S\}$-condition.
	
	The important points are summarized as follows:
	
	\begin{mysen}
		Let $\dP$ be a forcing notion and $\bI$ a family of ideals such that any $\dI\in\bI$ is ${<}\,\omega_2$-complete. If $\dP$ satisfies the $\bI$-condition, then
		\begin{enumerate}
			\item (CH) forcing with $\dP$ does not add reals.
			\item forcing with $\dP$ does not collapse $\omega_1$.
		\end{enumerate}
	\end{mysen}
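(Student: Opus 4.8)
The plan is to prove both parts by a tree-fusion argument in the style of Shelah \cite{ShelahProperImproper}: given a name and a condition, one uses the witnessing function $F$ from the $\bI$-condition to manufacture an $\bI$-tagged tree $(T,\dI)$ of conditions, all of whose branches generate a filter generic over a fixed countable elementary submodel, and then invokes the $\bI$-condition to collapse that tree into a single condition forcing that one of these branches is generic. For the reductions: for (1) it suffices to show that below every $p\in\dP$ the set of $q$ with $q\Vdash\dot{r}\in V$ is dense, for an arbitrary $\dP$-name $\dot{r}$ for a subset of $\omega$; for (2) it suffices, for an arbitrary name $\dot{f}$ that $p$ forces to be a function $\omega\to\check{\omega_1}$, to find $q\leq p$ and $\delta<\omega_1$ with $q\Vdash\operatorname{ran}(\dot{f})\subseteq\delta$, since then no condition forces $\dot{f}$ onto $\omega_1$. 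So fix a large regular $\Theta$, a countable $M\prec(H(\Theta),\in)$ with $\dP,F,p$ and the relevant name in $M$, set $\delta:=M\cap\omega_1$, and enumerate the open dense subsets of $\dP$ lying in $M$ as $\langle D_n\;|\;n\in\omega\rangle$.

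The main construction builds, level by level, an $\bI$-tagged tree $(T,\dI)$ together with a coherent $f=(f^I,f^{II})\colon T\to\dP^2$: one puts $f^I(\emptyset)\leq p$ and $f^{II}(\emptyset)\leq f^I(\emptyset)$, and, given $t$ with $(f(s))_{s\trianglelefteq t}$ already defined, one lets $\Suc_T(t)$ and $(f^I(t'))_{t'\in\Suc_T(t)}$ be exactly what clause (2) of the $\bI$-condition prescribes, namely $F(t,(f(s))_{s\trianglelefteq t})$; then, for each $t'\in\Suc_T(t)$, one chooses $f^{II}(t')\leq f^I(t')$ lying in $D_{|t'|}$ and deciding $\dot{r}(|t'|)$ (resp.\ the value of $\dot{f}(|t'|)$) --- possible because the relevant finite intersection of open dense sets is dense below any condition. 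Along any branch $b\in[T]$ the conditions $\langle f^{II}(b\uhr k)\;|\;k\in\omega\rangle$ are descending, meet every $D_n$, hence generate an $M$-generic filter below $p$, and decide the name completely; by elementarity every value forced onto some $\dot{f}(n)$ lies in $M\cap\omega_1=\delta$, while in case (1) the real $r_b$ read off along $b$ lies in $V$, and $\{r_b\;|\;b\in[T]\}$ is a ground-model object since $[T]$ is.

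The delicate point --- and the step I expect to be the main obstacle --- is to guarantee that the tree produced this way really is $\bI$-tagged, i.e.\ Miller-style, so that above every node there is a node whose successor set lies outside the relevant ideal: those successor sets are dictated by $F$ and hence by our own choices of $f^{II}$, so this is not automatic. Following Shelah, one carries out the construction with some slack and thins to a tree $T'\leq^*T$ where necessary (the discarded successors forming an ideal set, since only countably many requirements are in play and the ideals in $\bI$ are $\sigma$-complete), and argues that the freedom left in the $f^{II}$'s to decide the name does not disturb the splitting pattern; it is precisely in interleaving this with the bookkeeping that $\CH$ is used in part (1). I would follow that argument rather than reinvent it.

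Finally one applies the $\bI$-condition to $(T',\dI)$ and $f$, obtaining $q\in\dP$ with $q\Vdash\exists b\in[T']\,\forall k\in\omega\,(f^I(b\uhr k)\in\dot{G})$. For such a $b$ the generated filter also contains every $f^{II}(b\uhr k)$, by clause (1) and $f^I(b\uhr(k+1))\leq f^{II}(b\uhr k)\leq f^I(b\uhr k)$, so the name is decided along $b$ as arranged: in case (1), $q\Vdash\dot{r}\in\{r_b\;|\;b\in[T']\}\subseteq V$; in case (2), $q\Vdash\operatorname{ran}(\dot{f})\subseteq\delta$ (this last conclusion uses only that every decided value lies below $\delta$, which is why no hypothesis on $2^{\aleph_0}$ is needed there). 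Since $f^I(\emptyset)\leq p$ forces $p\in\dot{G}$, $q$ is compatible with $p$, and any common extension $q'\leq p$ forces the same conclusion; as $p$ was arbitrary, this yields (1) under $\CH$ and (2) in $\ZFC$.
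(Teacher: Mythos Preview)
The paper does not prove this theorem; it is stated without proof as a summary of Shelah's results from \cite{ShelahProperImproper}, Chapter~XI. So there is no proof in the paper to compare your attempt against. Your sketch does follow the broad outline of Shelah's tree-fusion method, and you correctly identify the construction of the tagged tree via $F$ as the heart of the matter.

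That said, there are genuine gaps. For part~(2), your appeal to elementarity is unjustified. The conditions $f^{II}(t')$ are chosen to lie in the dense sets $D_n\in M$, but not in $M$ itself --- and they \emph{cannot} all lie in the countable model $M$, since at any splitting node the tree has a $\dI(t)$-positive (hence $\geq\aleph_2$-sized) set of successors. Meeting every dense set from $M$ does not by itself force the decided values of $\dot{f}(n)$ below $M\cap\omega_1$; for that you would need the witnessing conditions themselves to lie in $M$. The countable elementary submodel is a red herring here. The actual mechanism is the ${<}\,\omega_2$-completeness of the ideals: at a splitting node $t$ the map $t'\mapsto(\text{value decided for }\dot{f}(|t|))$ takes at most $\aleph_1$ values, so an $\dI(t)$-positive set of successors agree, and one thins to $T'\leq^*T$ accordingly.

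For part~(1), the claim that ``$\{r_b\;|\;b\in[T]\}$ is a ground-model object since $[T]$ is'' is the problematic step. The branch $b$ supplied by the $\bI$-condition lives in $[T']$ as computed in $V[\dot{G}]$, and a branch through a height-$\omega$ tree is essentially a real, so $b$ may well be new; then $r_b$, though computed from ground-model data along $b$, need not lie in $V$. Your argument as written would give~(1) outright in $\ZFC$, which should be a warning sign. The role of $\CH$ in Shelah's proof is bound up exactly with the bookkeeping you chose not to reinvent, not merely with ``interleaving'' the deciding of bits.
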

	
	\begin{mysen}
		Let $\kappa\geq\omega_2$ be regular and $A\subseteq E_{\omega}^{\kappa}$ be stationary. Then $\dP(A)$ satisfies the $\{\kappa\}$-condition.
	\end{mysen}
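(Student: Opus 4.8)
The plan is to exhibit the witness function $F$ explicitly, designed so that any tagged tree $(T,\dI)$ compatible with it is forced to branch, above each node $t$, exactly over the ``tails'' of $A$, with the first coordinates at successors obtained by extending $f^{II}(t)$ by one new value of $A$. Concretely, given a node $t$ and the path data $(f(t'))_{t'\trianglelefteq t}$, set $p:=f^{II}(t)$, say $p\colon\beta+1\to A$ (at the root, read $p$ as the empty condition, so the role of $\beta+1$ is played by $0$), and define
$$F\bigl(t,(f(t'))_{t'\trianglelefteq t}\bigr):=\bigl(\{t^\frown\langle\gamma\rangle\mid\gamma\in A,\ \gamma>p(\beta)\},\ (p\cup\{(\beta+1,\gamma)\})_{\gamma\in A,\,\gamma>p(\beta)}\bigr).$$
Thus $\Suc_T(t)$ is coded by $A\smallsetminus(p(\beta)+1)$ and $f^I(t^\frown\langle\gamma\rangle):=p\cup\{(\beta+1,\gamma)\}$. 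Since $A$ is stationary in $\kappa$, hence unbounded, each successor set is positive for $\dI_{\kappa}$ (so the Miller-style requirement is satisfied automatically), and $f^I(t^\frown\langle\gamma\rangle)$ is a normal function into $A$ with $f^I(t^\frown\langle\gamma\rangle)\leq p=f^{II}(t)$, so the part of clause (1) in the definition of the $\bI$-condition that is under our control (namely $f^{II}(t)\geq f^I(t')$ for successors $t'$) holds by construction.

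Now suppose $(T,\dI)$ and $f=(f^I,f^{II})$ are as required by the definition and $T'\leq^{*}T$. Since $T'$ splits at the same nodes as $T$, for every $t\in T'$ the set $\Suc_{T'}(t)$ is still unbounded in $\kappa$. Following the proof of Lemma \ref{PAStatPres}, I would fix a large $\Theta$ and a countable $M\prec(H(\Theta),\in)$ with $A,\kappa,\dP(A),(T,\dI),f,F,T'\in M$ and $\delta:=\sup(M\cap\kappa)\in A$; such $M$ exists because the set of all such suprema contains a club in $\kappa$ and $A$ is stationary (and $\delta\in A\subseteq E_{\omega}^{\kappa}$ forces $\cf(\delta)=\omega$, consistent with $M$ countable). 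Enumerate $M\cap\kappa=\{\eta_k\mid k\in\omega\}$ and build a branch $b$ recursively inside $M$: let $t_0$ be the root of $T'$; given $t_k\in M\cap T'$, the set $\Suc_{T'}(t_k)\in M$ is unbounded in $\kappa$, so by elementarity there is $\gamma_k\in M$ with $t_k^\frown\langle\gamma_k\rangle\in\Suc_{T'}(t_k)$ and $\gamma_k>\eta_k$; put $t_{k+1}:=t_k^\frown\langle\gamma_k\rangle$ and $b\uhr(k+1):=t_{k+1}$. Then $\gamma_k<\delta$ (as $\gamma_k\in M\cap\kappa$) while $\gamma_k>\eta_k$, so $\sup_k\gamma_k=\delta$. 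Moreover each $f^I(t_{k+1})$ end-extends $f^{II}(t_k)$, hence $f^I(t_k)$, so $(f^I(t_k))_{k\in\omega}$ is a descending sequence in $\dP(A)$; its union $g$ is a normal function into $A$ with domain a countable limit ordinal $\delta^{*}$ and with $\sup(\im g)=\sup_k\gamma_k=\delta\in A$. Therefore $p^{*}:=g\cup\{(\delta^{*},\delta)\}$ is a normal function $\delta^{*}+1\to A$, i.e. a condition of $\dP(A)$, and $p^{*}\leq f^I(t_k)=f^I(b\uhr k)$ for every $k$.

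Finally, $b\in[T']^{V}\subseteq[T']^{V[G]}$, and since $p^{*}$ is stronger than every $f^I(b\uhr k)$ it forces $f^I(b\uhr k)\in\dot G$ for all $k$; hence $p^{*}\Vdash\exists b\in[T']\,\forall k\in\omega\,(f^I(b\uhr k)\in\dot G)$, which is exactly what the $\{\kappa\}$-condition demands. The crux of the argument — and the only real obstacle — is finding the right $F$: a descending $\omega$-chain in $\dP(A)$ need not admit a lower bound, since the supremum of the ranges may land outside $A$, and the whole point of the $\bI$-condition machinery is that, knowing only $T'$, one may steer a branch through $T'$ via an elementary submodel $M$ with $\sup(M\cap\kappa)\in A$ and thereby force that supremum back into $A$. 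The remaining points (that $f^I$ at successors is genuinely a condition, that the chain's union extends by a value equal to its own supremum, the treatment of the root / of an adjoined maximum of $\dP(A)$) are routine bookkeeping.
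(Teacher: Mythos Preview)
Your argument is correct. The paper does not give its own proof of this theorem (it is stated without proof as a known result of Shelah, summarized alongside the preservation properties of the $\bI$-condition), so there is nothing to compare against at the level of detail; your approach---choosing $F$ so that each node splits over a tail of $A$, then using a countable elementary submodel $M$ with $\sup(M\cap\kappa)\in A$ to steer a branch of $T'$ whose associated conditions have union with supremum landing in $A$---is exactly the standard way to verify the $S$-condition for $\dP(A)$ and mirrors the elementary-submodel argument the paper uses in Lemma~\ref{PAStatPres}. One small clarification: at the root you should simply take $p=f^{II}(\text{root})$ like at every other node (it need not be the empty condition), with the parenthetical case only covering the possibility that $f^{II}(\text{root})$ happens to be the adjoined maximum; but as you note, this is routine bookkeeping and does not affect the argument.
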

	
	For the iterability of the $\bI$-condition, Shelah provides the following template (we state it in its improved form from \cite{GitkShelahICondition}). We will not go into the definition of an $\RCS$ definition here. Roughly speaking, an $\RCS$ iteration is a countable support iteration where the support of a given condition does not contain ordinals but names for ordinals. This allows the iteration to take full support limits at ordinals which do not have cofinality $\omega$ in the ground model but only obtain this cofinality somewhere along the iteration (which is necessary if we are incorporating improper forcing notions such as Namba or Prikry forcing). A simpler definition of an $\RCS$ iteration which also suffices for the following Theorem can be found in unpublished work of Ulrich Fuchs (see \cite{FuchsDonderRCS}).
	
	\begin{mysen}
		Suppose
		\begin{enumerate}
			\item $\overline{\dP}:=(\dP_i,\dot{\dQ}_i)_{i<\alpha}$ is an RCS iteration,
			\item Whenever $i<\alpha$, there is some family $\bI_i$ (in $V$) such that
			\begin{enumerate}
				\item $\dP_i$ forces that every member of $\bI_i$ is ${<}\,\dot{\omega}_2$-complete
				\item $\dP_i$ forces that $\dot{\dQ}_i$ satisfies the $\check{\bI}_i$-condition
			\end{enumerate}
			\item If $i$ is singular or $|\dP_j|\geq i$ for some $j<i$ then there are $\lambda$ and $\mu$ such that every member of $\bigcup_{j\geq i}\bI_j$ is ${<}\,\lambda^+$-complete, whenever $I\in\bigcup_{j<i}\bI_j$, $|\bigcup I|<\mu$ and $\lambda=\lambda^{<\mu}$
			\item If $i$ is regular and $|\dP_j|<i$ for all $j<i$ then every member of $\bI_i$ is ${<}\,i$-complete
		\end{enumerate}
		Then the revised limit of $\overline{\dP}$ satisfies the $\bigcup_{i<\alpha}\bI_i$-condition.
	\end{mysen}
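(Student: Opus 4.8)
The plan is to argue by induction on $\alpha$, constructing at the same time the function $F_\alpha$ witnessing the $\bigcup_{i<\alpha}\bI_i$-condition of $\dP_\alpha$, assembled from the functions $F_i$ attached to the initial segments together with the names (from part (b) of hypothesis (2)) for the witnesses of the iterands. As is unavoidable for $\RCS$-iteration theorems of this kind, the bare statement does not survive the limit stages, so one carries a strengthened inductive hypothesis — roughly, a formulation about $\dP_i$-names for $\bI$-tagged trees together with side data recording, node by node, the stage of the iteration at which each label has been forced — which at a limit lets the side data be used to reassemble a genuine condition of $\dP_\alpha$. Three kinds of step arise, and in every case one must also check that the $F_\alpha$ one extracts is uniformly definable from the iterand data, which is delicate because condition (2) in the definition of the $\bI$-condition is recursive: $F$ must output both $\Suc_T(t)$ and the first-coordinate labels of those successors from the history of labels below $t$.

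For \emph{successor steps} $\dP_{\beta+1}=\dP_\beta*\dot\dQ_\beta$: given a $\bigcup_{i\leq\beta}\bI_i$-tagged tree $(T,\dI)$ and a labeling $f=(f^I,f^{II})\colon T\to\dP_{\beta+1}^2$ satisfying the coherence and $F$-predictability requirements, one defines $F_{\beta+1}$ so that nodes tagged by a member of $\bI_\beta$ split only in the $\dot\dQ_\beta$-coordinate and all other nodes split only in the $\dP_\beta$-coordinate; the $\dP_\beta$-projection is then a $\bigcup_{i<\beta}\bI_i$-tagged tree handled by $F_\beta$, producing $p_0\in\dP_\beta$, and below $p_0$ one appeals to the $\dP_\beta$-name for the witness of the $\bI_\beta$-condition of $\dot\dQ_\beta$ along the resulting branch. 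That this name can be absorbed into a single ground-model function $F_{\beta+1}$ is precisely where one uses that $\dP_\beta$ forces every member of $\bI_\beta$ to be ${<}\,\omega_2$-complete.

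For \emph{limit steps in the regime of (3) or (4)} — $\alpha$ singular, or $|\dP_j|\geq\alpha$ for some $j<\alpha$, resp.\ $\alpha$ regular with $|\dP_j|<\alpha$ throughout — the conditions of $\dP_\alpha$ one needs have support cofinal-$\omega$ below $\alpha$, and the argument reduces to applying the successor analysis cofinally, approximating a tagged tree and a labeling into $\dP_\alpha$ by ones living in $\dP_\beta$ for $\beta<\alpha$. Hypotheses (3) and (4) supply exactly the completeness bookkeeping for this: the ${<}\,\lambda^+$-completeness of the future ideals measured against the ${<}\,\mu$ sizes of the domains of the past ideals, with $\lambda=\lambda^{<\mu}$, in case (3); and ${<}\,\alpha$-completeness of the ideals of $\bI_\alpha$ in case (4). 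Together these guarantee that the fusion across the approximations stays legal and that the splitting sets of the sub-tree it produces remain outside the relevant ideals.

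The core case is that of a \emph{revised limit of cofinality $\omega$}. Fix $(T,\dI)$, a labeling $f$, a subtree $T'\leq^*T$, and a cofinal sequence $\langle\alpha_n\rangle\to\alpha$. By a recursion along an enumeration interleaving the nodes of $T'$ with the $\alpha_n$, build conditions decreasing along branches together with a thinning of $T'$ to a tagged tree $T''\leq^*T'$, arranging that by the time one reaches the $n$-th node visited the generic has been decided below $\alpha_n$. Along any branch $b\in[T'']$ the $\RCS$ supports of the approximating conditions stabilize strictly below $\alpha$, so their union is a genuine condition of $\dP_\alpha$ which forces $f^I(b\uhr k)\in\dot G$ for every $k$, and the ${<}\,\omega_2$-completeness of each $\bI_i$ (strengthened at the regular and singular stages by (3) and (4)) keeps every $\Suc_{T''}(t')$ outside $\dI(t)$ throughout the thinning, so $(T'',\dI')$ stays a tagged tree and the required branch genuinely exists. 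This last case — and within it the triple bookkeeping that simultaneously climbs $T$, climbs the iteration toward $\alpha$, and thins the tree without its width ever outrunning the completeness of the ideals being intersected across coordinates — is the main obstacle; getting the enumeration to be such that along every branch the names for ordinals governing the supports are decided cofinally below $\alpha$ is exactly where $\RCS$ rather than ordinary countable support is indispensable. Once the construction is in place, checking that the limit condition forces the branch statement is a routine unwinding of the definitions.
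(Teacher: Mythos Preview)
The paper does not prove this theorem: it is stated as a black-box citation (``we state it in its improved form from \cite{GitkShelahICondition}''), with the original due to Shelah in Chapter~XI of \emph{Proper and Improper Forcing}. There is therefore no proof in the paper to compare your proposal against.

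As a sketch of how the Gitik--Shelah argument actually goes, your outline is broadly in the right spirit --- induction on the length, with the successor step interleaving the tree between the two factors and the limit step driven by a fusion that climbs the iteration while thinning the tree --- but it remains a sketch rather than a proof. A few points where your description is looser than the actual argument: the strengthened inductive hypothesis you allude to (``side data recording, node by node, the stage of the iteration at which each label has been forced'') needs to be stated precisely, since the exact form of what is carried through limits is the technical heart of the matter; your treatment of the case-(3)/(4) limit stages (``the argument reduces to applying the successor analysis cofinally'') glosses over the real work, which is a genuine fusion and not just an iteration of successor steps; and the role of the cardinal-arithmetic hypothesis $\lambda=\lambda^{<\mu}$ in (3) --- which you invoke only by name --- is to bound the number of possible restrictions one must anticipate so that a single positive set survives all the thinnings simultaneously. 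None of this is wrong, but it is not yet a proof either; if you intend to include one, you will need to go to the primary sources and fill these in.
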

	
	Our aim now is to prove the following:
	
	\begin{mysen}\label{ShelahTheorem}
		Suppose $\kappa$ is a Mahlo cardinal. There is a forcing extension where $\kappa=\omega_2$ and $F^+(\kappa)$ holds.
	\end{mysen}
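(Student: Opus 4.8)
The plan is to force $F^+(\kappa)$ by a revised countable support iteration of length $\kappa$ whose iterands are (names for) the forcings $\dP(A)$, controlled by the $\bI$-condition machinery set up above, and then to read off $F^+(\kappa)$ from the Mahloness of $\kappa$ via stationary reflection. First I would pass to a ground model in which, besides $\kappa$ being Mahlo, $\GCH$ holds and there is a $\diamondsuit_\kappa$-sequence; both are arranged by a preliminary forcing (force $\GCH$ by a standard Easton-type product, then force with $\Add(\kappa,1)$), which is ${<}\,\kappa$-closed and $\kappa^+$-cc, so preserves the Mahloness of $\kappa$ and adds no new bounded subsets of $\kappa$. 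Now I would define an $\RCS$ iteration $\langle\dP_\alpha,\dot\dQ_\alpha\mid\alpha<\kappa\rangle$, writing $\gamma_\alpha:=\omega_2^{V[\dP_\alpha]}$. At stage $\alpha$ the $\diamondsuit_\kappa$-sequence hands us a $\dP_\alpha$-name $\dot A_\alpha$ (here $\dP_\alpha$ is small, so such names are coded into $V_\kappa$ and guessed by the sequence): if $\dP_\alpha$ forces $\dot A_\alpha$ to be a stationary subset of $E_\omega^{\gamma_\alpha}$, put $\dot\dQ_\alpha:=\dot\dP(\dot A_\alpha)$ --- by Lemma~\ref{PAStatPres} and the theorem that $\dP(A)$ satisfies the $\{\kappa\}$-condition (applied inside $V[\dP_\alpha]$ with $\gamma_\alpha$ in place of $\kappa$), this preserves $\omega_1$ and satisfies the $\{\dI_{\gamma_\alpha}\}$-condition --- and otherwise put $\dot\dQ_\alpha:=\Coll(\omega_1,\gamma_\alpha)$, which is $\sigma$-closed, hence satisfies the $\bI$-condition for every family $\bI$. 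Since both possible iterands collapse $\gamma_\alpha$ to $\omega_1$, the sequence $\langle\gamma_\alpha\mid\alpha<\kappa\rangle$ is strictly increasing, and by the inaccessibility of $\kappa$ it is continuous at the relevant limit stages with supremum $\kappa$; in particular $\gamma_i=i$ whenever $i<\kappa$ is a regular closure point of the iteration.

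Next I would invoke the $\RCS$-iteration theorem for the $\bI$-condition stated above with $\bI_\alpha:=\{\dI_{\gamma_\alpha}\}$ --- the bounded ideal on the regular cardinal $\gamma_\alpha\geq\omega_2$, which is ${<}\,\omega_2$-complete. Its interlocking completeness hypotheses~(2)--(4) must be checked throughout; the decisive one is~(4): at a limit stage $i$ that is a regular cardinal with $|\dP_j|<i$ for all $j<i$ one has $\gamma_i=i$, so $\dI_{\gamma_i}=\dI_i$ is ${<}\,i$-complete. This is exactly where the Mahloness of $\kappa$ enters, since it provides a stationary set of such (inaccessible) closure points below $\kappa$; hypothesis~(3), at singular stages and past the first stage $j$ with $|\dP_j|\geq i$, is met by choosing the parameters $\lambda,\mu$ there as the theorem prescribes. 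The conclusion is that $\dP_\kappa$ satisfies the $\bigcup_{\alpha<\kappa}\{\dI_{\gamma_\alpha}\}$-condition, hence by the theorem on the $\bI$-condition it adds no reals (using $\CH$, which holds and is preserved stagewise) and does not collapse $\omega_1$. A standard $\Delta$-system argument from the Mahloness of $\kappa$ together with $|\dP_\alpha|<\kappa$ shows $\dP_\kappa$ is $\kappa$-cc, so $\kappa$ is preserved; with $\sup_{\alpha<\kappa}\gamma_\alpha=\kappa$ this yields $\omega_1^{V[\dP_\kappa]}=\omega_1$ and $\omega_2^{V[\dP_\kappa]}=\kappa$.

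It remains to verify $F^+(\kappa)$ in $V[\dP_\kappa]$, that is --- for the trivial partition $D_0=\omega_1$ --- that every stationary $A\subseteq E_\omega^\kappa$ contains the range of a normal $f\colon\omega_1\to\kappa$. By $\kappa$-cc and a nice-name argument $A\in V[\dP_\alpha]$ for some $\alpha<\kappa$, and $A$ is still stationary in $\kappa$ there, ground-model clubs of $\kappa$ remaining clubs since $\kappa$ is not collapsed. As $\dP_\alpha$ is small, $\kappa$ remains Mahlo in $V[\dP_\alpha]$, so $A$ reflects at stationarily many inaccessible $\gamma\in(\alpha,\kappa)$; by the choice of the $\diamondsuit_\kappa$-sequence (guessing a code of a $\dP_\alpha$-name for $A$), stationarily many such $\gamma$ are moreover closure points of the iteration at which the guess handed to us at stage $\gamma$ --- where $\gamma_\gamma=\gamma$ --- is forced to equal a name for $A\cap\gamma$. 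Fixing such a $\gamma$: since $\dP_\gamma$ is $\gamma$-cc ($\gamma$ being an inaccessible closure point) and $\dP_\alpha$ is small, $\dP_\gamma/\dP_\alpha$ is $\gamma$-cc, so $A\cap\gamma\subseteq E_\omega^\gamma$ is still stationary in $V[\dP_\gamma]$; hence $\dot\dQ_\gamma=\dP(A\cap\gamma)$, and its generic yields a normal $f\colon\omega_1\to\gamma\subseteq\kappa$ with $f[\omega_1]\subseteq A\cap\gamma\subseteq A$. Since ``$f$ is a normal function with range contained in $A$'' is absolute and $\omega_1$ is not collapsed after stage $\gamma$, $f$ witnesses $F^+(\kappa)$ for $A$ in $V[\dP_\kappa]$. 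As $A$ was arbitrary, $F^+(\kappa)$ holds.

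I expect two main obstacles. The genuinely technical one --- and the real content of ``the easier proof'' --- is keeping the completeness bookkeeping of hypotheses~(2)--(4) of the $\RCS$-iteration theorem correct at every limit stage; this is where the hypothesis that $\kappa$ is Mahlo rather than merely inaccessible is indispensable. The second delicate point is the coordination in the previous paragraph: one needs a single $\gamma$ that is simultaneously an inaccessible reflection point of $A$, a regular closure point of the iteration, a point at which the guessing sequence delivers a name for $A\cap\gamma$, and such that the stationarity of $A\cap\gamma$ survives to stage $\gamma$. Arranging the guessing so that these stationary and club sets can actually be made to intersect (rather than merely being individually large) is where care is needed --- the cleanest route being to guess via a $\diamondsuit_\kappa$-sequence on an appropriate expansion of $H(\kappa^+)$ and use elementarity to obtain the stationarity of $A\cap\gamma$ for free; all of this is routine in the spirit of the Foreman--Magidor--Shelah construction of $\MM$.
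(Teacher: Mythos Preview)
Your overall architecture is right, but the verification of $F^+(\kappa)$ contains a genuine gap. You write ``By $\kappa$-cc and a nice-name argument $A\in V[\dP_\alpha]$ for some $\alpha<\kappa$.'' This is false: $A$ is a subset of $\kappa$, and a nice $\dP_\kappa$-name for it consists of $\kappa$ many antichains, each of size ${<}\,\kappa$ and hence supported in some $\dP_{g(\beta)}$, but there is no reason for $\sup_\beta g(\beta)<\kappa$. In general the iteration adds new subsets of $\kappa$ at cofinally many stages, so the stationary set you are trying to catch need not live in any intermediate model. Everything you do afterwards (reflecting $A$ at an inaccessible $\gamma$ in $V[\dP_\alpha]$, having $\diamondsuit_\kappa$ guess ``a $\dP_\alpha$-name for $A$'') rests on this nonexistent $\alpha$.

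The paper's proof is organized precisely to avoid this. It first forces a principle $\RwD(\kappa)$ combining diamond with reflection into ${<}\,\kappa$-models: a function $l\colon\kappa\to V_\kappa$ such that for any stationary $S\subseteq\kappa$ not containing inaccessibles and any $A\in H(\kappa^+)$, there is a ${<}\,\kappa$-model $M$ with $S\cap(M\cap\kappa)$ stationary and $l(M\cap\kappa)=\pi_M(A)$. The point is that one never claims the \emph{set} $A$ appears early; instead one works with the full $\dP_\kappa$-name $\dot A$, finds (via a lemma about nicely $\kappa$-cc.\ posets) a ${<}\,\kappa$-model $M$ where the \emph{Mostowski collapse} $\pi_M(\dot A)$ is guessed by $l$ at $M\cap\kappa$ and where some condition in $\dP\cap M$ forces $\dot A\cap(M\cap\kappa)$ stationary. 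By elementarity $\pi_M(\dot A)$ is a $\dP_{M\cap\kappa}$-name for a stationary subset of $E_\omega^{M\cap\kappa}$, so the iterand at stage $M\cap\kappa$ is $\dP(\pi_M(\dot A))$ and produces the required closed copy of $\omega_1$ inside $\dot A^G\cap(M\cap\kappa)$. Your closing remark about guessing on $H(\kappa^+)$ and using elementarity is exactly the missing idea, but it is the heart of the argument, not a cosmetic alternative: without it the coordination you flag as your ``second delicate point'' cannot be carried out.
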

	
	This trivial partition is almost the maximum we can hope for: Assuming $(D_i)_{i\in\omega_1}$ is a partition such that at least two $D_i$ are stationary, $F^+((D_i)_{i\in\omega_1},\omega_2)$ implies that any two stationary subsets of $\omega_2$ reflect simultaneously which implies that $\omega_2$ is weakly compact in $L$ (see \cite{MagidorReflectionStat}).
	
	Shelah proves Theorem \ref{ShelahTheorem} by using a diamond sequence which guesses potential iterations of forcings of the form $\dP(A)$, followed by forcings to destroy those stationary sets which do not have closed subsets of ordertype $\omega_1$. However, this requires building master conditions without much knowledge about the exact iteration and is therefore rather technical. We will provide an easier proof by forcing the combination of stationary reflection and $\Diamond$ to hold at a Mahlo cardinal and then iterating forcings of the form $\dP(\dot{A})$ given by the aforementioned forced diamond sequence.
	
	\subsection*{Reflection with Diamond}\hfill

	\begin{mydef}
		Let $M$ be a set and $\kappa$ a cardinal. $M$ is a ${<}\,\kappa$-model if the following holds:
		\begin{enumerate}
			\item $M\prec H(\kappa^+)$.
			\item $\nu:=M\cap\kappa=|M|\in\kappa$ is inaccessible.
			\item ${}^{<\nu}M\subseteq M$.
		\end{enumerate}
	\end{mydef}
	
	For a $\kappa$-model $M$, we let $\pi_M\colon M\to N_M$ denote its Mostowski-Collapse.
	
	By a folklore result, being Mahlo is equivalent to the existence of many ${<}\,\kappa$-models:
	
	\begin{mysen}
		Let $\kappa$ be a cardinal. Then $\kappa$ is Mahlo if and only if the collection of ${<}\,\kappa$-models is stationary in $[H(\kappa^+)]^{<\kappa}$.
	\end{mysen}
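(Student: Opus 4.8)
The plan is to prove the two implications separately; throughout, $\kappa$ is taken to be regular (a Mahlo cardinal is regular, and $[H(\kappa^+)]^{<\kappa}$ is considered with its usual club filter, which presupposes regularity of $\kappa$).

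For the direction ``stationary $\Rightarrow$ Mahlo'', suppose the collection $S$ of ${<}\,\kappa$-models is stationary in $[H(\kappa^+)]^{<\kappa}$. First I would check that $\kappa$ is strongly inaccessible: if it were not a strong limit, fix $\theta<\kappa$ with $2^{\theta}\geq\kappa$; then any strongly inaccessible $\nu$ with $\theta<\nu$ would satisfy $\nu>2^{\theta}\geq\kappa$, so every strongly inaccessible cardinal below $\kappa$ is $\leq\theta$. Hence every $M\in S$ has $M\cap\kappa\leq\theta$, and in particular $\theta\notin M$, so $S$ is disjoint from the club $\{M\in[H(\kappa^+)]^{<\kappa}\mid\theta\in M\}$ --- contradicting stationarity. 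Now, given a club $D\subseteq\kappa$, the set $\{M\in[H(\kappa^+)]^{<\kappa}\mid M\prec H(\kappa^+)\text{ and }D\in M\}$ is a club, so it meets $S$; choosing such an $M\in S$ and putting $\nu:=M\cap\kappa$, elementarity gives that $D\cap\nu$ is cofinal in the limit ordinal $\nu$, whence $\nu\in D$ (as $D$ is closed) and $\nu$ is inaccessible by the definition of a ${<}\,\kappa$-model. So $D$ contains an inaccessible cardinal, i.e. $\kappa$ is Mahlo.

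For the direction ``Mahlo $\Rightarrow$ stationary'', by the standard characterization of stationarity in $[H(\kappa^+)]^{<\kappa}$ it suffices to produce, for an arbitrary $F\colon[H(\kappa^+)]^{<\omega}\to H(\kappa^+)$, a ${<}\,\kappa$-model closed under $F$; I will also assume (harmlessly) that $F$ supplies Skolem functions for $(H(\kappa^+),\in,<^*)$ for some fixed well-order $<^*$ and has $\kappa$ as one of its values. The core of the argument is to recursively construct an increasing continuous chain $(M_\alpha)_{\alpha<\kappa}$ of $F$-closed elementary submodels of $H(\kappa^+)$ such that, for all $\alpha<\kappa$: (i) $|M_\alpha|<\kappa$; (ii) $\alpha\subseteq M_\alpha$ and $M_\beta\in M_\alpha$ for $\beta<\alpha$; and, crucially, (iii) ${}^{\delta}M_\beta\subseteq M_{\beta+\delta+1}$ for all $\beta,\delta<\kappa$. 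Condition (iii) is arranged by placing, at each successor stage $\gamma+1$, all of $\bigcup\{{}^{\delta}M_\beta\mid\beta+\delta\leq\gamma\}$ into the seed of $M_{\gamma+1}$ before taking the $F$-closure; since $\kappa$ is a strong limit this seed still has size $<\kappa$, so (i) persists, and at limit stages one takes unions. Using (ii) and continuity of the chain, the set $C:=\{\delta<\kappa\mid\delta\text{ is a limit ordinal and }\sup(M_\beta\cap\kappa)<\delta\text{ for all }\beta<\delta\}$ is a club in $\kappa$, and $M_\delta\cap\kappa=\delta$ for each $\delta\in C$. As $\kappa$ is Mahlo I would then fix an inaccessible $\nu\in C$ and set $M:=M_\nu=\bigcup_{\alpha<\nu}M_\alpha$.

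It remains to verify that $M$ is a ${<}\,\kappa$-model closed under $F$. It is an elementary submodel of $H(\kappa^+)$, it is $F$-closed as an increasing union of $F$-closed sets, and $M\cap\kappa=\nu$ is inaccessible by the choice $\nu\in C$. An induction using that $\nu$ is a strong limit gives $|M_\alpha|<\nu$ for all $\alpha<\nu$ (the seed at each earlier successor stage has size $<\nu$), and hence $|M|=\nu$. Finally, ${}^{<\nu}M\subseteq M$ holds because a sequence $s\colon\delta\to M$ with $\delta<\nu$ has range of size $<\nu=\cf(\nu)$, so its range is contained in some $M_\alpha$ with $\alpha<\nu$, and then $s\in{}^{\delta}M_\alpha\subseteq M_{\alpha+\delta+1}\subseteq M$ by (iii), using that $\alpha+\delta+1<\nu$ since $\nu$ is a limit ordinal. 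I expect this last closure clause to be the main obstacle: one must set up the chain so that the union at the eventual inaccessible $\nu$ absorbs every sequence of length $<\nu$ of its elements while no $M_\alpha$ is allowed to reach size $\kappa$, and the staggered absorption in (iii) --- together with $\kappa$, and hence $\nu$, being a strong limit --- is precisely what reconciles these two demands.
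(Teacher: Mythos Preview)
The paper does not supply a proof of this statement; it is cited as a folklore result and left unproven. Your argument is correct and follows the standard line: for the forward direction you build a continuous $\in$-chain of small elementary submodels of $H(\kappa^+)$, arranged so that short sequences from earlier models are absorbed at later stages, and then use Mahloness to pick an inaccessible $\nu$ in the club of ``good'' indices, which yields a ${<}\,\kappa$-model; for the reverse direction you read off inaccessibility of $M\cap\kappa$ from the definition and use elementarity to push any club $D\subseteq\kappa$ down to $\nu$. The staggered absorption condition (iii) and the verification that $|M_\alpha|<\nu$ for $\alpha<\nu$ (using that $\nu$ is itself a strong limit) are exactly the points one needs to be careful about, and you handle both correctly.
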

	
	We now combine the three properties of stationary reflection, diamond and the existence of ${<}\,\kappa$-models in the following definition:
	
	\begin{mydef}
		Let $\kappa$ be a Mahlo cardinal. We write $\RwD(\kappa)$ (read \emph{reflection with diamond at $\kappa$}) if there is a function $l\colon\kappa\to V_{\kappa}$ such that whenever $S\subseteq\kappa$ is stationary and does not contain any inaccessible cardinals and $A,x\in H(\kappa^+)$ there is a $\kappa$-model $M$ with $A,x\in M$ such that $S\cap(M\cap\kappa)$ is stationary in $M\cap\kappa$ and $l(M\cap\kappa)=\pi_M(A)$.
	\end{mydef}
	
	The next Theorem is the main result of this section:
	
	\begin{mysen}\label{ReflectionDiamond}
		Let $\kappa$ be a Mahlo cardinal. There is a forcing extension in which $\kappa$ is Mahlo and $\RwD(\kappa)$ holds.
	\end{mysen}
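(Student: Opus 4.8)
The plan is to reach $\RwD(\kappa)$ in two stages: first force the appropriate (full) form of stationary reflection at $\kappa$ while preserving its Mahloness, and then add the witness $l$ by the natural end-extension poset. Throughout, $\kappa$ stays inaccessible — hence a strong limit — so no preliminary $\GCH$ forcing is needed.

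\emph{Stage one.} Force full stationary reflection at $\kappa$ by the classical iteration over a Mahlo cardinal (in the spirit of the Jech--Shelah full-reflection forcing), which is where the Mahlo hypothesis is really used: an iteration of length $\kappa^+$ with supports of size ${<}\,\kappa$ which, bookkeeping over names for stationary subsets of $\kappa$, at each step shoots a club through the set of ordinals at which the bookkept set reflects. Because $\kappa$ is a strong limit this iteration is $\kappa^+$-cc (a $\Delta$-system argument), and it is ${<}\,\kappa$-strategically closed, so it preserves cardinals, adds no bounded subsets of $\kappa$, and preserves stationary subsets of $\kappa$ — in particular it preserves the Mahloness of $\kappa$. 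In the resulting model $V_1$ every stationary $S\subseteq\kappa$ reflects at every ordinal of uncountable cofinality in some club, hence (the inaccessibles being stationary) at stationarily many inaccessibles. The bulk of the work here is checking that the relevant club-shootings are genuinely ${<}\,\kappa$-strategically closed and that the bookkeeping captures every counterexample arising in the final extension; but this only has to be done for the bare reflection statement, without any diamond bookkeeping interfering — which is the simplification over Shelah's original proof.

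\emph{Stage two.} Over $V_1$ force with $\dQ$: a condition is a function $p$ with domain an ordinal $\gamma<\kappa$ such that $p(\nu)\in H(\nu^+)$ for every inaccessible $\nu<\gamma$, ordered by end-extension. As $\kappa$ is inaccessible, $|\dQ|=\kappa$, so $\dQ$ is $\kappa^+$-cc, and $\dQ$ is plainly ${<}\,\kappa$-closed. Hence $\dQ$ preserves the Mahloness of $\kappa$ and, because a ${<}\,\kappa$-closed forcing adds no bounded subsets of $\kappa$ and changes neither cofinalities below $\kappa$ nor which bounded subsets of $\kappa$ are stationary, it also preserves full reflection at $\kappa$ (for a stationary set added by $\dQ$, every proper initial segment, and hence the whole reflection pattern, already lies in $V_1$). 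Let $G$ be $\dQ$-generic and $l:=\bigcup G\colon\kappa\to V_\kappa$.

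\emph{Verification.} I expect this to be the main obstacle. Fix an absolute coding of $H(\kappa^+)$ by subsets of $\kappa$; for $a,y\in H(\kappa^+)$ and inaccessible $\nu$ let $M_\nu^{a,y}$ be the canonical $\kappa$-model generated in $H(\kappa^+)$ by $\nu\cup\{a,y\}$ — so $\pi_{M_\nu^{a,y}}(a)$ is computed uniformly from $\nu$ and the codes of $a,y$ below $\nu$ — and let $C^{a,y}$ be the club of $\nu$ with $M_\nu^{a,y}\cap\kappa=\nu$. Given a stationary $S\subseteq\kappa$ with no inaccessibles and $A,x\in H(\kappa^+)$, run the ``forcing $\Diamond$'' density argument, but deciding several things at once: from an arbitrary condition, build a descending ${<}\,\kappa$-sequence of conditions that (i) decides the codes of $A,x$ below some inaccessible $\nu^{*}$, (ii) makes a name for $C^{A,x}$ unbounded in $\nu^{*}$, and (iii) makes a name for a club witnessing full reflection of $S$ unbounded in $\nu^{*}$; then extend by setting $l(\nu^{*}):=\pi_{M_{\nu^{*}}^{A,x}}(A)$, read off from the decided codes. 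The resulting condition forces that $\nu^{*}$ is inaccessible, lies in $C^{A,x}$, and lies in $\mathrm{Tr}(S)$ — the last because, full reflection being preserved from $V_1$, $S$ reflects at $\nu^{*}$ as soon as the witnessing club is unbounded in $\nu^{*}$ — and that $l(\nu^{*})=\pi_{M_{\nu^{*}}^{A,x}}(A)$. By genericity such a $\nu^{*}$ exists in $V_1[G]$, and then $M:=M_{\nu^{*}}^{A,x}$ is a $\kappa$-model with $A,x\in M$, $M\cap\kappa=\nu^{*}$, $S\cap\nu^{*}$ stationary in $\nu^{*}$, and $l(M\cap\kappa)=\pi_M(A)$, which is exactly what $\RwD(\kappa)$ demands. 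The delicate point is that all of (i)--(iii) can be arranged along a single descending sequence — in effect, that $\Diamond_\kappa$ can be forced to guess along the stationary set $C^{A,x}\cap\mathrm{Tr}(S)$ — and that preserving full reflection through $\dQ$ keeps $\mathrm{Tr}(S)$ large enough; this is where I would expect to spend the most effort.
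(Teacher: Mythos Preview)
Your two-stage decomposition is natural, but Stage~2 contains a genuine gap. You assert that the ${<}\,\kappa$-closed poset $\dQ$ preserves full reflection at $\kappa$ because ``for a stationary set added by $\dQ$, every proper initial segment, and hence the whole reflection pattern, already lies in $V_1$''. This is a non sequitur: knowing that $S\cap\nu\in V_1$ for every $\nu<\kappa$ tells you nothing about whether \emph{some} such $S\cap\nu$ is stationary in $\nu$. A ${<}\,\kappa$-closed forcing of size $\kappa$ is essentially $\Add(\kappa,1)$, and such forcings can (and over many ground models do) add a fresh nonreflecting stationary subset of $\kappa$; full reflection established in $V_1$ gives no leverage on sets not in $V_1$. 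Once $\dQ$ adds a nonreflecting stationary $S$, your density argument cannot succeed for that $S$: for every $q\leq p$ deciding $\dot{S}\cap\nu^{*}$, the decided set is nonstationary in $\nu^{*}$, so no choice of $\nu^{*}$ and no value of $l(\nu^{*})$ will produce the required $\kappa$-model. Your item (iii) --- ``a club witnessing full reflection of $S$'' --- presupposes exactly what is in doubt.

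The paper avoids this by reversing the order: it forces $\Add(\kappa)$ \emph{first} (this supplies the raw material for $l$), and \emph{then} runs the $\kappa^{+}$-length club-shooting iteration. The bookkeeping now ranges over $\dP_\alpha$-names, so any stationary set in the final model --- including those coming from the $\Add(\kappa)$ factor --- is caught. The test at stage $\alpha$ is not merely ``does $\dot{S}_\alpha$ reflect?'' but ``is there a $\kappa$-model $M$ whose master condition $p_\alpha(M,p_\alpha)$ forces $\dot{S}_\alpha\cap(M\cap\kappa)$ stationary?''; if not, a club is shot through the complement. The crucial technical point making the diamond work is that the master condition $p_\alpha(M,p)$ is built so that its $\Add(\kappa)$-coordinate is contained in $M$ --- in particular it says nothing about $l(M\cap\kappa)$ --- so one can freely extend it by setting $l(M\cap\kappa):=\pi_M(A)$ after the fact. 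In your ordering this freedom is lost: by the time you know which $\nu^{*}$ you want, the reflection forcing is finished and cannot kill a bad $\dot{S}$, while $\dQ$ alone cannot force a generic stationary set to reflect at your chosen $\nu^{*}$.
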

	
	The idea of the proof is the following (building on \cite{HarringtonShelahExactEquiconsistencies}): We take an iteration $\Add(\kappa)*\dP_{\kappa^+}$ where $\dP_{\kappa^+}$ is the forcing killing all nonreflecting stationary subsets of $\kappa$ that don't contain inaccessible cardinals. In this iteration, the stationarity of a name $\dot{S}$ in $\kappa\cap M$ for some ${<}\,\kappa$-model $M$ will be decided by a condition whose ``Add-part'' is contained in $M$ so that we can arbitrarily choose the value of $l(M\cap\kappa)$ in order to obtain that a specific set is guessed.
	
	\begin{proof}
		Let $(A_{\alpha})_{\alpha\in\kappa^+}$ enumerate all elements of $H(\kappa^+)$ such that any element occurs unboundedly often. We construct an iteration $(\dP_{\alpha},\dot{\dQ}_{\alpha})_{\alpha<\kappa^+}$ with ${<}\,\kappa$-support which will have the following properties for $\alpha\leq\kappa^+$ (we let $\dP_{\kappa^+}$ be the direct limit):
		\begin{enumerate}
			\item[$(1)_{\alpha}$] If $M$ is a $\kappa$-model with $(A_{\alpha})_{\alpha<\kappa^+}\in M$ and $p\in\dP_{\alpha}\cap M$ there exists a condition $p_{\alpha}(M,p)\leq p$ such that whenever $D\in M$ is open dense in $\dP_{\alpha}$ there exists $p'\in D\cap M$ with $p_{\alpha}(M,p)\leq p'$. Moreover, $p_{\alpha}(M,p)(0)\subseteq M$ and for all $\beta<\alpha$, $p_{\alpha}(M,p)\uhr\beta=p_{\beta}(M,p\uhr\beta)$.
			\item[$(2)_{\alpha}$] $\dP_{\alpha}$ is $\kappa$-distributive.
			\item[$(3)_{\alpha}$] $\dP_{\alpha}$ preserves the Mahloness of $\kappa$.
			\item[$(4)_{\alpha}$] (if $\alpha<\kappa$) $\dP_{\alpha}$ has a dense subset of cardinality $\kappa$.
		\end{enumerate}
		
		By standard techniques, $(1)_{\alpha}$ implies both $(2)_{\alpha}$ and $(3)_{\alpha}$ (so it suffices to show $(1)_{\alpha}$ and $(4)_{\alpha}$): Given a $\dP_{\alpha}$-name for a function $\dot{f}$ from an ordinal below $\kappa$ and a club subset $\dot{C}$ of $\kappa$, take any $\kappa$-model containing $\dot{f}$ and $\dot{C}$. Then $p_{\alpha}(M)$ both decides $\dot{f}$ and forces $M\cap\kappa\in\dot{C}$.
		
		Assume $(\dP_{\beta},\dot{\dQ}_{\beta})_{\beta<\alpha}$ has been defined. We first define $\dP_{\alpha}$ and $\dot{\dQ}_{\alpha}$ and then show that they work. If $\alpha=\gamma+1$, let $\dP_{\alpha}:=\dP_{\gamma}*\dot{\dQ}_{\gamma}$. If $\alpha$ is a limit ordinal of cofinality ${<}\,\kappa$, let $\dP_{\alpha}$ be the full limit of the iteration, if $\alpha$ is a limit ordinal of cofinality $\kappa$, let $\dP_{\alpha}$ be the direct limit.
		
		For the $\dot{\dQ}_{\alpha}$: If $\alpha=0$, let $\dot{\dQ}_{\alpha}:=\dot{\Add}(\check{\kappa})$. Otherwise, assume $A_{\alpha}=(\dot{S}_{\alpha},x_{\alpha},\dot{A}_{\alpha},p_{\alpha})$ such that $p_{\alpha}\in\dP_{\alpha}$, $x_{\alpha}\in H(\kappa^+)$ and $\dot{S}_{\alpha}$ is a nice $\dP_{\alpha}$-name for a subset of $\kappa$ consisting of singular ordinals. Assume further that for any $\kappa$-model $M$ containing $(A_{\alpha})_{\alpha<\kappa^+}$ and $\alpha$ (as elements) the condition $p_{\alpha}(M,p_{\alpha})$ does not force $\dot{S}_{\alpha}\cap(M\cap\kappa)$ to be stationary in $(M\cap\kappa)$. Then we let $\dot{\dQ}_{\alpha}$ be forced by $p_{\alpha}$ to be equal to $\CU(\check{\kappa}\smallsetminus\dot{S}_{\alpha})$ (defined as the set of all closed bounded subsets of $\check{\kappa}\smallsetminus\dot{S}_{\alpha}$, ordered by end-extension) and by conditions incompatible with $p_{\alpha}$ to be the trivial poset. If the assumption does not hold, every condition forces $\dot{\dQ}_{\alpha}$ to be trivial.
		
		Now we show that $(1)_1$ holds: Let $M$ be any $\kappa$-model and $p\in \dP_1\cap M=\Add(\kappa)\cap M$. Let $(D_{\delta})_{\delta<M\cap\kappa}$ enumerate all open dense subsets of $\dP_1$ in $M$ and let $(p_{\delta})_{\delta<M\cap\kappa}$ be a descending sequence with $p_{\delta}\in D_{\delta}\cap M$ and $p_1\leq p$. Lastly, let $p_1(M,p):=\bigcup_{\delta<M\cap\kappa}p_{\delta}$ which clearly is as required. We note for later that $p_1(M,p)\subseteq M$. Clearly $(4)_1$ holds, since $\dP_1=\Add(\kappa)$ itself has cardinality $\kappa$.
		
		Now assume $(1)_{\gamma}$ holds and $\alpha=\gamma+1$. Let $M$ be any $\kappa$-model with $(A_{\alpha})_{\alpha<\kappa^+}\in M$ and $p\in\dP_{\alpha}\cap M$. If $\alpha\notin M$, we are done: Whenever $D\in M$ is open dense in $\dP_{\alpha}$, $\sup(\{\supp(p)\;|\;p\in D\})\in M$ and thus below $\alpha$. So $p_{\gamma}(M,p)$ is as required (since $p\in\dP_{\gamma}$ by the same reason). Thus assume $\alpha\in M$, so $\gamma\in M$. Let $\nu:=M\cap\kappa$ and let $G_{\gamma}$ be any $\dP_{\gamma}$-generic filter containing $p_{\gamma}(M,p\uhr\gamma)$. First assume $\dot{\dQ}_{\gamma}^{G_{\gamma}}$ is nontrivial.
		
		\begin{myclaim}
			In $V$, $p_{\gamma}(M,p\uhr\gamma)$ forces over $\dP_{\gamma}$ that $\dot{S}_{\gamma}\cap \nu$ is nonstationary in $\nu$.
		\end{myclaim}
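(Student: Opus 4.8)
The plan is to prove that $p_{\gamma}(M,p\uhr\gamma)$ (i) completely decides the set $\dot{S}_{\gamma}\cap\nu$, (ii) lies below $p_{\gamma}$, and (iii) that the subset of $\nu$ it decides is nonstationary in $\nu$. Throughout I use $(2)_{\gamma}$: since $\dP_{\gamma}$ is $\kappa$-distributive it adds no new bounded subset of $\kappa$, in particular no new club of $\nu$, so ``(non)stationary in $\nu$'' for a subset of $\nu$ is absolute between $V$ and any $\dP_{\gamma}$-extension, and it suffices to analyze $\dot{S}_{\gamma}\cap\nu$ as a subset of $\nu$ in $V$.

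For (i): given $\xi<\nu$ we have $\xi\in\nu\subseteq M$, so $D_{\xi}:=\{r\in\dP_{\gamma}\;|\;r\text{ decides }\check\xi\in\dot{S}_{\gamma}\}$ is an open dense subset of $\dP_{\gamma}$ lying in $M$ (it is definable in $H(\kappa^+)$ from $\xi$ and from $\dot{S}_{\gamma}\in M$). By $(1)_{\gamma}$ there is $r'\in D_{\xi}\cap M$ with $p_{\gamma}(M,p\uhr\gamma)\leq r'$, so $p_{\gamma}(M,p\uhr\gamma)$ decides $\check\xi\in\dot{S}_{\gamma}$; letting $\xi$ vary, there is a single $T\subseteq\nu$ in $V$ with $p_{\gamma}(M,p\uhr\gamma)\Vdash\dot{S}_{\gamma}\cap\nu=\check T$. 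The same argument applied to $p_{\gamma}(M,p_{\gamma})$ (note $p_{\gamma}\in M$, since $(A_{\alpha})_{\alpha<\kappa^+}\in M$, $\gamma\in M$ and $A_{\gamma}=(\dot{S}_{\gamma},x_{\gamma},\dot{A}_{\gamma},p_{\gamma})$) produces $T'\subseteq\nu$ with $p_{\gamma}(M,p_{\gamma})\Vdash\dot{S}_{\gamma}\cap\nu=\check{T'}$. For (ii): since $\dot{\dQ}_{\gamma}^{G_{\gamma}}$ is nontrivial we have $p_{\gamma}\in G_{\gamma}$ (a condition incompatible with $p_{\gamma}$ forces $\dot{\dQ}_{\gamma}$ trivial) and the hypothesis used to make $\dot{\dQ}_{\gamma}$ nontrivial holds. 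The set $\{r\in\dP_{\gamma}\;|\;r\leq p_{\gamma}\text{ or }r\perp p_{\gamma}\}$ is open dense and lies in $M$, so $p_{\gamma}(M,p\uhr\gamma)\leq r'$ for some $r'$ in it lying in $M$; if $r'\perp p_{\gamma}$ then $p_{\gamma}(M,p\uhr\gamma)\perp p_{\gamma}$, so $p_{\gamma}\notin G_{\gamma}$, contradicting nontriviality. Hence $r'\leq p_{\gamma}$ and $p_{\gamma}(M,p\uhr\gamma)\leq p_{\gamma}$.

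For (iii): the hypothesis making $\dot{\dQ}_{\gamma}$ nontrivial, instantiated at $M$, says $p_{\gamma}(M,p_{\gamma})$ does not force $\dot{S}_{\gamma}\cap\nu$ stationary in $\nu$, so by (i) and absoluteness $T'$ is nonstationary; it therefore suffices to show $T$ is stationary in $\nu$ iff $T'$ is, i.e. that the two $M$-generic conditions $p_{\gamma}(M,p\uhr\gamma)$ and $p_{\gamma}(M,p_{\gamma})$, both below $p_{\gamma}$, cannot disagree on the stationarity of the trace of $\dot{S}_{\gamma}$ on $\nu$. For this I would pass to the transitive collapse $\pi_{M}\colon M\to N_{M}$: an $M$-generic $G$ below $p_{\gamma}$ induces an $N_{M}$-generic $\overline{G}$ on $\pi_{M}(\dP_{\gamma})$ containing $\pi_{M}(p_{\gamma})$ with $\dot{S}_{\gamma}^{G}\cap\nu=\pi_{M}(\dot{S}_{\gamma})^{\overline{G}}$, and then invoke that the Cohen forcing $\Add(\kappa)$ at the foot of $\dP_{\gamma}$, which $\pi_{M}$ collapses to $\Add(\nu)$, is — below a condition with support contained in $\nu=M\cap\kappa$, which is exactly what the clause ``$p_{\gamma}(M,p\uhr\gamma)(0)\subseteq M$'' of $(1)_{\gamma}$ delivers — weakly homogeneous, so that $\pi_{M}(\dot{S}_{\gamma})^{\overline{G}}$ agrees, modulo a club of $\nu$ lying in $N_{M}[\overline{G}]$ (hence a genuine club), across all such $G$. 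Applied to the filters induced by $p_{\gamma}(M,p\uhr\gamma)$ and by $p_{\gamma}(M,p_{\gamma})$, this gives that $T$ and $T'$ coincide modulo a nonstationary set, so $T$ is nonstationary, which is the Claim.

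I expect step (iii) to be the main obstacle: the hypothesis making $\dot{\dQ}_{\gamma}$ nontrivial literally concerns only the single master condition $p_{\gamma}(M,p_{\gamma})$, whereas what we need is the analogous statement about the genuinely different master condition $p_{\gamma}(M,p\uhr\gamma)$, and two distinct $M$-generic conditions below $p_{\gamma}$ realize different $M$-generic filters and could a priori force different traces of $\dot{S}_{\gamma}$. Bridging this gap is exactly where the Cohen forcing $\Add(\kappa)$ placed at the bottom of the iteration earns its keep: it is the ingredient that, once the master condition's Cohen part has been absorbed into $M$, makes the stationarity of $\dot{S}_{\gamma}\cap\nu$ independent of the choice of $M$-generic below $p_{\gamma}$. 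Making this homogeneity precise, while keeping ``nonstationary'' read uniformly in $V$ via the distributivity of $\dP_{\gamma}$ and the genuineness of clubs appearing in $N_{M}[\overline{G}]$, is the delicate part of the argument.
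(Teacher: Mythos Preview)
Your steps (i) and (ii) are correct and essentially coincide with the paper's argument; the paper does not make (ii) explicit, but it is implicit in the standing assumption that $\dot{\dQ}_{\gamma}^{G_{\gamma}}$ is nontrivial. The paper's proof of the claim consists only of (i), then the distributivity reduction, then the two words ``by assumption''. You are right that this last step needs unpacking, since the hypothesis defining $\dot{\dQ}_{\gamma}$ literally refers to $p_{\gamma}(M,p_{\gamma})$ rather than to $p_{\gamma}(M,p\upharpoonright\gamma)$.

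However, your proposed resolution in (iii) does not work. The name $\dot{S}_{\gamma}$ is a $\dP_{\gamma}$-name, not merely an $\Add(\kappa)$-name; the iterands $\CU(\kappa\smallsetminus\dot{S}_{\beta})$ above stage $0$ are emphatically not homogeneous, and weak homogeneity of the first factor does not propagate to the iteration. There is no reason two distinct $N_M$-generic filters for $\pi_M(\dP_{\gamma})$ should evaluate $\pi_M(\dot{S}_{\gamma})$ to sets that agree modulo a club of $\nu$: weak homogeneity controls truth values of statements with ground-model parameters, not the realizations of names that depend on the full generic. So the bridge you try to build between $T$ and $T'$ collapses.

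The intended reading of the paper is much simpler. The hypothesis ``for every $\kappa$-model $M$ the condition $p_{\gamma}(M,p_{\gamma})$ does not force $\dot{S}_{\gamma}\cap(M\cap\kappa)$ stationary'' is to be understood as ranging over \emph{any} condition witnessing $(1)_{\gamma}$ for $M$ that lies below $p_{\gamma}$ --- equivalently, over every $M$-generic $r\leq p_{\gamma}$ with $r(0)\subseteq M$. Your step (ii) shows that $p_{\gamma}(M,p\upharpoonright\gamma)$ is exactly such a condition, so the hypothesis applies to it directly and (iii) is immediate. No homogeneity is needed; the Cohen forcing at stage $0$ plays no role whatsoever in this claim --- its only purpose is the later freedom to set $l(M\cap\kappa)$ in Case~1 of the final verification.
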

		
		\begin{proof}
			Let $\alpha\in\nu$. Then the set of all conditions deciding $\check{\alpha}\in\dot{S}_{\gamma}$ is open dense in $\dP_{\gamma}$. Ergo $p_{\gamma}(M,p\uhr\gamma)$ decides $\check{\alpha}\in\dot{S}_{\gamma}$. By $(2)_{\gamma}$, $\dot{S}_{\gamma}\cap\nu$ is forced to be stationary if and only if $\{\delta<\nu\;|\;p_{\gamma}(M,p\uhr\gamma)\Vdash\check{\delta}\in\dot{S}_{\gamma}\}$ is actually stationary (in $V$), so by assumption, $\dot{S}_{\gamma}\cap\nu$ is forced to be nonstationary.
		\end{proof}
		
		Now work in $V[G_{\gamma}]$.
		
		\begin{myclaim}
			$M[G_{\gamma}]\cap V=M$ and $M[G_{\gamma}]$ is a $\kappa$-model.
		\end{myclaim}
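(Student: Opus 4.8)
The plan is to exploit property $(1)_\gamma$ together with the $\kappa$-distributivity $(2)_\gamma$ of $\dP_\gamma$. Since $p_\gamma(M,p\uhr\gamma)\in G_\gamma$, property $(1)_\gamma$ says exactly that $G_\gamma\cap M$ meets every open dense $D\in M$, and this is the only genericity input. For $M[G_\gamma]\cap V=M$ I would argue as follows: the inclusion $\supseteq$ is immediate (canonical names, $M\subseteq V$), and for $\subseteq$ one takes a name $\dot a\in M$ with $\dot a^{G_\gamma}=b\in V$, notes that the set $D$ of conditions forcing either $\dot a\notin\check V$ or $\dot a=\check c$ for some $c$ is open dense and definable from $\dot a$, hence lies in $M$, picks $r\in D\cap M\cap G_\gamma$, observes that $r$ cannot force $\dot a\notin\check V$ (it lies in $G_\gamma$), so $r\Vdash\dot a=\check c$ for a unique $c$, and $c\in M$ by elementarity, whence $b\in M$. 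From $M[G_\gamma]\cap V=M$ one reads off $M[G_\gamma]\cap\kappa=M\cap\kappa=\nu$, and $|M[G_\gamma]|\leq|M|=\nu$ since every element of $M[G_\gamma]$ evaluates a name in $M$; and $\nu$ stays inaccessible because a $\kappa$-distributive forcing adds no bounded subset of $\kappa$, so $\cP(\nu)^{V[G_\gamma]}=\cP(\nu)^V$ and both regularity and strong-limit-ness of $\nu$ persist.

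The remaining points are $M[G_\gamma]\prec H(\kappa^+)^{V[G_\gamma]}$ and ${}^{<\nu}(M[G_\gamma])\subseteq M[G_\gamma]$ in $V[G_\gamma]$. For elementarity I would run the Tarski--Vaught test: if $H(\kappa^+)^{V[G_\gamma]}\models\exists x\,\varphi(x,\bar c)$ with $\bar c=\dot{\bar c}^{G_\gamma}$, $\dot{\bar c}\in M$, then the set of conditions deciding a witness $\dot a$ for ``$\exists x\in H(\kappa^+)\,\varphi(x,\dot{\bar c})$'' is open dense and in $M$, and a condition from it in $M\cap G_\gamma$ together with elementarity of $M$ yields a witness name in $M$, hence a witness in $M[G_\gamma]$. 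For closure, given $\bar b=(b_\xi)_{\xi<\mu}$ with $\mu<\nu$ and all $b_\xi\in M[G_\gamma]$, named by $\dot{\bar b}$, I would apply $(2)_\gamma$ to the $\mu$ many open dense sets $D_\xi=\{r\mid\exists\dot a\in M\ (r\Vdash\dot{\bar b}(\xi)=\dot a)\}$ to obtain a single $r\in G_\gamma$ choosing names $\dot a_\xi\in M$ with $r\Vdash\dot{\bar b}(\xi)=\dot a_\xi$ for all $\xi<\mu$; then $(\dot a_\xi)_{\xi<\mu}\in V$ is a $\mu$-sequence from $M$ with $\mu<\nu$, hence lies in $M$ by ${}^{<\nu}M\subseteq M$, so the canonical name for $(\dot a_\xi^{G_\gamma})_{\xi<\mu}$ lies in $M$ and $\bar b\in M[G_\gamma]$.

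The main obstacle is bookkeeping about what ``lies in $M$'' means: $\dP_\gamma$ itself may have size $\kappa^+$, hence need not be an element of $H(\kappa^+)$ and so perhaps not of $M$, so each dense set invoked above has to be recognised as an element of $M$ through a small code rather than as a subset of $\dP_\gamma$; likewise one cannot simply quote $H(\kappa^+)^{V[G_\gamma]}=(H(\kappa^+)^V)[G_\gamma]$, since $H(\kappa^+)$ genuinely grows once $\Add(\kappa)$ has acted, so the Tarski--Vaught step must be set up via nice names living in $H(\kappa^+)^V$. Once these formulations are pinned down the argument is routine, the substantive ingredients being only $(1)_\gamma$ and $(2)_\gamma$.
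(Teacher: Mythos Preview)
Your argument is correct and follows the same approach as the paper. The paper's own proof is extremely terse: it handles $M[G_\gamma]\cap V=M$ exactly as you do (via $(1)_\gamma$, picking a condition in $M$ that decides the name as some $\check x$ and invoking elementarity to place $x\in M$), and for the $\kappa$-model part it simply says that $M[G_\gamma]$ is closed under short sequences of ordinals by $(2)_\gamma$ and hence under short sequences in general, leaving elementarity and the preservation of the inaccessibility of $\nu$ implicit. Your version spells out these omitted steps, and your bookkeeping worries are legitimate but, as you note, routine once one works with the dense subset of size $\kappa$ guaranteed by $(4)_\gamma$.
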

		
		\begin{proof}
			Let $\tau\in M$ with $\tau^{G_{\gamma}}\in V$. Ergo in $M$ the set of all $q$ forcing $\tau=\check{x}_q$ for some $x_q\in V$ is open dense. By $(1)_{\gamma}$, there is such a $q$ in $M\cap \dP_{\gamma}$ which is above $p(M,p\uhr\gamma)$. The corresponding $x_q$ is in $M$ as well by elementarity. Lastly, $\tau^{G_{\gamma}}=x_q$, since $q\geq p(M,p\uhr\gamma)\in G_{\gamma}$
			
			Clearly $M[G_{\gamma}]$ is closed under ${<}\,\kappa$-sequences of ordinals (by $(2)_{\gamma}$) and thus under ${<}\,\kappa$-sequences.
		\end{proof}
		
		So in $V[G_{\gamma}]$, $\dot{S}_{\gamma}^{G_{\gamma}}\cap\nu$ is disjoint from some $C\subseteq\nu$ which is club in $\nu$. Define
		
		$$T_{\gamma}:=\{s\in\dot{\dQ}_{\gamma}^{G_{\gamma}}\cap M[G_{\gamma}]\;|\;\max(s)\in C\}$$
		
		\begin{myclaim}
			$T_{\gamma}$ is dense in $\dot{\dQ}_{\gamma}^{G_{\gamma}}\cap M[G_{\gamma}]$ and ${<}\,\nu$-closed.
		\end{myclaim}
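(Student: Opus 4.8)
The plan is to unwind the definitions and then check the two halves of the claim by hand. Recall that under the current hypothesis $\dot{\dQ}_{\gamma}^{G_{\gamma}}=\CU(\kappa\smallsetminus\dot{S}_{\gamma}^{G_{\gamma}})$, so its conditions are closed bounded subsets of $\kappa\smallsetminus\dot{S}_{\gamma}^{G_{\gamma}}$ ordered by end-extension, and $C\subseteq\nu$ is club in $\nu$ with $C\cap\dot{S}_{\gamma}^{G_{\gamma}}=\emptyset$. I will use freely the two preceding claims: $M[G_{\gamma}]$ is a $\kappa$-model in $V[G_{\gamma}]$ with $M[G_{\gamma}]\cap V=M$, hence $M[G_{\gamma}]\cap\kappa=M\cap\kappa=\nu$ (ordinals below $\kappa$ lie in $V$), $\nu$ is inaccessible, $\nu\subseteq M\subseteq M[G_{\gamma}]$, and $M[G_{\gamma}]$ is closed under ${<}\,\nu$-sequences in $V[G_{\gamma}]$ (this comes from $\kappa$-distributivity of $\dP_{\gamma}$, i.e.\ $(2)_{\gamma}$). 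A preliminary remark I would record: if $s\in\dot{\dQ}_{\gamma}^{G_{\gamma}}\cap M[G_{\gamma}]$ then $\sup(s)$ is definable from $s$, hence lies in $M[G_{\gamma}]\cap\kappa=\nu$, so every such $s$ is a subset of $\nu$.

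For density, given $s\in\dot{\dQ}_{\gamma}^{G_{\gamma}}\cap M[G_{\gamma}]$ I would use that $C$ is unbounded in $\nu$ to pick $\xi\in C$ with $\xi>\sup(s)$. Since $\xi<\nu$ and $C$ is disjoint from $\dot{S}_{\gamma}^{G_{\gamma}}\cap\nu$, we get $\xi\notin\dot{S}_{\gamma}^{G_{\gamma}}$, so $t:=s\cup\{\xi\}$ is again a closed bounded subset of $\kappa\smallsetminus\dot{S}_{\gamma}^{G_{\gamma}}$ end-extending $s$; moreover $t\in M[G_{\gamma}]$ because $s\in M[G_{\gamma}]$ and $\xi\in\nu\subseteq M[G_{\gamma}]$. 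As $\max(t)=\xi\in C$, we have $t\in T_{\gamma}$ with $t\leq s$.

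For ${<}\,\nu$-closure, take a descending $(s_i)_{i<\theta}$ in $T_{\gamma}$ with $\theta<\nu$; the successor case is immediate, so assume $\theta$ limit. Put $u:=\bigcup_{i<\theta}s_i$; since the $s_i$ end-extend one another, $\eta:=\sup(u)=\sup_{i<\theta}\max(s_i)$, and $\eta<\nu$ by regularity of $\nu$. Each $\max(s_i)$ lies in $C$, so $\eta$ is a member or a limit point of $C$ in $\nu$, and since $C$ is closed in $\nu$ we get $\eta\in C$, whence $\eta\notin\dot{S}_{\gamma}^{G_{\gamma}}$. Then $t:=u\cup\{\eta\}$ is a closed bounded subset of $\kappa\smallsetminus\dot{S}_{\gamma}^{G_{\gamma}}$ end-extending every $s_i$ with $\max(t)=\eta\in C$; and $t\in M[G_{\gamma}]$ because $(s_i)_{i<\theta}$ is a ${<}\,\nu$-sequence of elements of $M[G_{\gamma}]$, hence lies in $M[G_{\gamma}]$, so $u\in M[G_{\gamma}]$, and $\eta\in\nu\subseteq M[G_{\gamma}]$. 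Thus $t\in T_{\gamma}$ is a lower bound.

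I expect the only genuinely load-bearing points to be the two places where one must argue that the extension/limit stays inside $M[G_{\gamma}]$ — which is precisely where $\kappa$-distributivity of $\dP_{\gamma}$ (closure of $M[G_{\gamma}]$ under short sequences) and the inclusion $\nu\subseteq M$ enter — together with the use of inaccessibility of $\nu$ to keep the relevant suprema strictly below $\nu$ and of closedness of $C$ in $\nu$ to ensure those suprema avoid $\dot{S}_{\gamma}^{G_{\gamma}}$. Everything else is bookkeeping with the definition of $\CU$.
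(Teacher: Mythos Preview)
Your proof is correct and follows the same approach as the paper: for density you extend $s$ by a point of $C$ above $\max(s)$, and for ${<}\,\nu$-closure you take the union and adjoin its supremum, using that $C$ is club in $\nu$ and disjoint from $\dot{S}_{\gamma}^{G_{\gamma}}$. The paper's own proof is a two-line sketch; you have simply filled in the details it leaves implicit, in particular the verification that the constructed extensions lie in $M[G_{\gamma}]$ (which is needed since $T_{\gamma}\subseteq M[G_{\gamma}]$ by definition) via $\nu\subseteq M[G_{\gamma}]$ and closure of $M[G_{\gamma}]$ under ${<}\,\nu$-sequences.
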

		
		\begin{proof}
			If $s\in\dot{\dQ}_{\gamma}^{G_{\gamma}}\cap M[G_{\gamma}]$, $\max(s)<\nu$, so there is $\delta\in C$ with $\max(s)<\delta$. Then $s\cup\{\delta\}$ is as required. ${<}\,\nu$-closure is clear as $C$ is club and disjoint from $\dot{S}_{\gamma}^{G_{\gamma}}\cap\nu$.
		\end{proof}
		
		So in $V[G_{\gamma}]$ we can let $(D_{\delta})_{\delta<\nu}$ enumerate all open dense subsets of $\dot{\dQ}_{\gamma}^{G_{\gamma}}$ lying in $M[G_{\gamma}]$ and find a descending sequence $(q_{\delta})_{\delta<\nu}$ with $q_0\leq p(\gamma)$ and $q_{\delta}\in D_{\delta}$. Lastly, let $q_{\gamma}:=\bigcup_{\delta<\nu}p_{\delta}\cup\{\nu\}$ which is in $\dot{\dQ}_{\gamma}^{G_{\gamma}}$ (because $\dot{S}_{\gamma}^{G_{\gamma}}$ contains no inaccessible cardinals). If $\dot{\dQ}_{\gamma}^{G_{\gamma}}$ is trivial, we just take $q_{\gamma}:=\emptyset$. Then, whenever $D\in M[G_{\gamma}]$ is open dense in $\dot{\dQ}_{\gamma}^{G_{\gamma}}$, there is $q\in D\cap M$ with $q_{\gamma}\leq q$. By the maximum principle, there is a $\dP_{\gamma}$-name $\dot{q}_{\gamma}$ forced to have these properties. It follows that $p_{\alpha}(M,p):=(p_{\gamma}(M,p\uhr\gamma))^{\frown}\dot{q}_{\gamma}$ witnesses $(1)_{\alpha}$ for $M$ and $p$.
		
		Now we prove $(4)_{\alpha}$: Let $\dR_{\gamma}$ be a dense subset of $\dP_{\gamma}$ of cardinality $\kappa$. Let $\dR_{\alpha}$ consist of those $p\in\dP_{\alpha}$ such that $p\uhr\gamma\in\dR_{\gamma}$ and $p\uhr\gamma\Vdash p(\gamma)=\check{c}$ for some $c\in V$. It follows from $(1)_{\alpha}$ that $\dR_{\alpha}$ is dense in $\dP_{\alpha}$. Clearly $|\dR_{\gamma}|=\kappa$.
		
		Now assume $\alpha$ is a limit and $(1)_{\gamma}$ holds for all $\gamma<\alpha$. If $M\cap\alpha$ is unbounded in $\alpha$, simply let $p_{\alpha}(M,p)$ be the indirect limit of $p_{\gamma}(M,p\uhr\gamma)$ for $\gamma<\alpha$ (since that implies $\cf(\alpha)<\kappa$). Otherwise, $\gamma:=\sup(M\cap\alpha)<\alpha$, so we can let $p_{\alpha}(M,p):=p_{\gamma}(M,p\uhr\gamma)$. Lastly, $(4)_{\alpha}$ holds since we are taking limits with ${<}\,\kappa$-support (and $\kappa^{<\kappa}=\kappa$ by inaccessibility).
		
		Now we show that $\dP_{\kappa^+}$ forces $\RwD(\kappa)$. In $V$, let $f\colon\kappa\to V_{\kappa}$ be any bijection. In $V[G]$, let $G_1$ be the $\dP_1=\Add(\kappa)$-generic filter induced by $G$, let $g:=\bigcup G_1$ and let $l(\nu):=f(g(\nu))$ for any $\nu<\kappa$.
		
		Now assume $\dot{S},\dot{A},\dot{x}$ are (in the first case, nice) $\dP_{\kappa^+}$-names for a subset of $\kappa$ and elements of $H(\kappa^+)$ respectively, forced by some $p\in\dP_{\kappa^+}$. We assume for simplicity that $\dot{A}\in\dot{x}$, otherwise we can replace $\dot{x}$ be $\{\dot{x},\dot{A}\}$. By the $\kappa^+$-cc., we can assume they are already $\dP_{\alpha}$-names for some $\alpha<\kappa^+$ and since $\dP_{\alpha}$ has a dense subset of cardinality $\kappa$ we can assume that $\dot{A},\dot{x}$ are elements of $H(\kappa^+)$. Assume $A_{\alpha}=(\dot{S},\dot{x},\dot{A},p)$. We check two cases:
		
		\begin{itemize}
			\item[Case 1:] There exists a $\kappa$-model $M$ containing $(A_{\alpha})_{\alpha<\kappa^+}$ and $\alpha$ (as elements) such that $p_{\alpha}(M,p)$ forces $\dot{S}\cap(M\cap\kappa)$ to be stationary in $M\cap\kappa$. We have $p_{\alpha}(M,p)\uhr 1=p_1(M,p\uhr 1)\subseteq M$. Furthermore, $\dot{S},\dot{A},\dot{x},p\in M$ by elementarity (so $p_{\alpha}(M,p)$ actually exists). Just like previously, we have:
			
			\begin{myclaim}
				There is $A_M$ such that $p_{\alpha}(M,p)\Vdash\check{A}_M=\dot{A}\cap M[\Gamma]$.
			\end{myclaim}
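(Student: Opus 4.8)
The plan is to reprise the $M$-genericity argument already used twice in this proof --- for ``$M[G_\gamma]\cap V=M$'' and for the name-decision claim following it. Recall that by $(1)_\alpha$ the master condition $p_\alpha(M,p)$ sits below some member of $D\cap M$ for every open dense $D\in M$, so $p_\alpha(M,p)$ decides every statement with parameters from $M$ about $\dP_\alpha$-names lying in $M$. In particular, since $\dot A\in M$ by the Case~1 hypothesis, for every $\dP_\alpha$-name $\tau\in M$ the set of conditions deciding ``$\tau\in\dot A$'' is open dense and belongs to $M$, hence $p_\alpha(M,p)$ decides it.

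To exhibit $A_M$ I would first reduce to the case that $\dot A$ is a nice name for a subset of $\On$: an arbitrary element of $H(\kappa^+)$ is coded by a set of ordinals through a bijection lying in $M$, and $\pi_M(A)$ --- the only thing $\RwD(\kappa)$ asks for --- can be read off from the transitive collapse of such a code. Granting this reduction, put
$$A_M:=\{\beta\in\On\cap M\;|\;p_\alpha(M,p)\Vdash\check\beta\in\dot A\},$$
which is an element of $V$, and check $p_\alpha(M,p)\Vdash\check{A}_M=\dot A\cap M[\Gamma]$. The inclusion $\subseteq$ is immediate: if $\beta\in A_M$, then $p_\alpha(M,p)$ forces $\beta\in\dot A$ and $\beta\in M\subseteq M[\Gamma]$. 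For $\supseteq$, any condition below $p_\alpha(M,p)$ forcing some $\dot\beta\in\dot A\cap M[\Gamma]$ forces $\dot\beta$ to be an ordinal of $M[\Gamma]$, hence an ordinal of $M$ (using $M[\Gamma]\cap\On=M\cap\On$, exactly as in the ``$M[G_\gamma]\cap V=M$'' argument); strengthening so as to decide its value, we arrive at some $\beta\in\On\cap M$ with the condition forcing $\check\beta\in\dot A$, and since $p_\alpha(M,p)$ already decides ``$\check\beta\in\dot A$'' it must force it, so $\beta\in A_M$.

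The one real obstacle is exactly this passage to a ground-model set, i.e.\ justifying that $\check{A}_M$ makes sense: for a general name $\dot A$ the trace $\dot A^\Gamma\cap M[\Gamma]$ need not lie in $V$ (it could, for instance, contain the $\Add(\kappa)$-generic itself), so one genuinely needs either the coding reduction above, or an argument via the transitive collapse using $\kappa$-distributivity $(2)_\alpha$: that principle keeps $N_{M[\Gamma]}$, the $\in$-diagram of $M[\Gamma]$ on names from $M$, and therefore $\pi_{M[\Gamma]}(\dot A)$, inside $V$, and $p_\alpha(M,p)$ decides all of this. The claim as stated is the form this takes after $A$ is replaced by a coding set of ordinals.
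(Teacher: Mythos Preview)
Your argument is correct and is essentially what the paper has in mind: the paper gives no proof beyond ``Just like previously, we have'', pointing back to the $M[G_\gamma]\cap V=M$ argument, and your use of $(1)_\alpha$ to have $p_\alpha(M,p)$ decide $\tau\in\dot A$ for every name $\tau\in M$ is exactly that argument.

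Where you go beyond the paper is in flagging the coding issue, and you are right to do so. As literally stated the claim asks for $A_M\in V$ with $p_\alpha(M,p)\Vdash\check A_M=\dot A\cap M[\Gamma]$, but $\dot A$ is only assumed to name an element of $H(\kappa^+)$, and such an element may well contain the $\Add(\kappa)$-generic, so $\dot A^G\cap M[G]$ need not lie in $V$. The paper's subsequent line ``$\pi_M[A_M]\in V$'' (with $\pi_M$ the collapse of $M$, not of $M[G]$) confirms that the author is tacitly taking $A_M\subseteq M$, which only makes sense after a reduction of the sort you describe. Either of your two fixes works: coding $A$ by a set of ordinals via a bijection in $M$ reduces to the case where your explicit $A_M=\{\beta\in\On\cap M:p_\alpha(M,p)\Vdash\check\beta\in\dot A\}$ does the job; alternatively, $\kappa$-distributivity $(2)_\alpha$ gives $N_{M[G]}\in H(\kappa)^{V[G]}=H(\kappa)^V$, so $\pi_{M[G]}(\dot A^G)\in V$ directly, and $p_\alpha(M,p)$ decides its value since it decides the $\in$-diagram of $M[\Gamma]$ on names from $M$. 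The second route is arguably cleaner because it is what the application actually needs (only $\pi(\dot A^G)$, not $A_M$ itself, has to lie in $V$).
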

			
			Additionally, $\pi_M[A_M]\in V$ and is equal to $f(\delta)$ for some $\delta$. Let $p_M$ be the extension of $p_{\kappa^+}(M,p)$ with $p(0)=\delta$. We claim that $p_M$ (which extends $p$) forces $M[\Gamma]$ to witness $\RwD(\kappa)$ for $\dot{S},\dot{A}$ and $\dot{x}$. To this end, let $G$ be $\dP_{\kappa^+}$-generic containing $p_M$. Then $M[G]$ is a $\kappa$-model. Furthermore, $p_M\leq p_{\alpha}(M,p)$, so $\dot{S}^G\cap(M\cap\kappa)=\dot{S}^G\cap(M[G]\cap\kappa)$ is stationary in $(M[G]\cap\kappa)$. Clearly, $\dot{x}^G,\dot{A}^G\in M$. Lastly,
			$$\pi(\dot{A}^G)=\pi[\dot{A}^G\cap M[G]]=\pi[A_M]=f(\delta)=f(g(M\cap\kappa))$$
			
			\item[Case 2:] Case 1 fails. In this case, $p$ forces $\dot{\dQ}_{\alpha}=\CU(\check{\kappa}\smallsetminus\dot{S}_{\alpha})$. However, then $p$ forces $\dot{S}$ to be nonstationary.
		\end{itemize}
		
		Ergo $p$ forces that if $\dot{S}$ is stationary, there exists a $\kappa$-model as required.
	\end{proof}
	
	The main crux now of the argument is that certain $\kappa$-cc.\ posets preserve stationary reflection in some way (noting that they can never preserve full stationary reflection if they collapse $\kappa$ to be a successor), which we prove with an argument similar to Usuba in \cite[Proposition 5.1]{UsubaPartialRefl}.  We call a poset $\dP$ \emph{nicely $\kappa$-cc.\@} if $\dP$ is $\kappa$-cc.\@, $\dP\in H(\kappa^+)$ and $\dP\cap M$ is $|M|$-cc.\ for every $\kappa$-model $M$ with $\dP\in M$.
	
	\begin{mylem}\label{ReflectionNicelyCC}
		Assume $\kappa$ is Mahlo and $\RwD(\kappa)$ holds, witnessed by $l\colon\kappa\to V_{\kappa}$. Let $\dR$ be a poset with the following properties:
		\begin{enumerate}
			\item $\dR$ is nicely $\kappa$-cc.
			\item $\dR$ does not change the cofinality of any inaccessible cardinal to $\omega$.
		\end{enumerate}
		Let $\dot{S}$ be an $\dR$-name for a stationary subset of $E_{\omega}^{\kappa}$ and $A,x\in H(\kappa^+)$. Then there is a $\kappa$-model $M$ and $r\in\dR\cap M$ such that $x\in M$, $l(M\cap\kappa)=\pi_M(A)$ and
		$$r\Vdash_{\dR\cap M}\dot{S}\cap(M\cap\kappa)\text{ is stationary in }(M\cap\kappa)$$
	\end{mylem}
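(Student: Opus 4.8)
The plan is to compress the $\dR$-name $\dot{S}$ into a genuine ground model stationary set, feed that set to $\RwD(\kappa)$ to produce $M$, and then manufacture the condition $r$ by a short forcing dichotomy that exploits the reflected stationarity together with the $\nu$-cc of $\dR\cap M$ (where $\nu:=M\cap\kappa$). First I would pass to an equivalent nice name, writing $\dot{S}=\bigcup_{\alpha<\kappa}\{\check{\alpha}\}\times A_{\alpha}$ with each $A_{\alpha}$ an antichain in $\dR$; by the $\kappa$-cc, $\dot{S}\in H(\kappa^+)$. Let $S^*:=\{\alpha<\kappa\;|\;A_{\alpha}\neq\emptyset\}$ be the set of ordinals forced into $\dot{S}$ by some condition. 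Since $1_{\dR}$ forces $\dot{S}\subseteq E_{\omega}^{\kappa}$ and, by hypothesis (2), $\dR$ changes the cofinality of no inaccessible cardinal to $\omega$, no element of $S^*$ is inaccessible; and $S^*$ is stationary in $V$, for if a $V$-club $C\subseteq\kappa$ were disjoint from $S^*$ then (as $A_{\alpha}=\emptyset$ gives $1_{\dR}\Vdash\check{\alpha}\notin\dot{S}$) $1_{\dR}$ would force $\dot{S}\cap C=\emptyset$, contradicting that $\dot{S}$ is forced stationary while $C$ remains club in the extension (as $\dR$ is $\kappa$-cc.).

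Next I would apply $\RwD(\kappa)$ to $S^*$, with diamond parameter $A$ and, in place of $x$, a tuple coding $x$, $\dR$ and $\dot{S}$; since the coordinates of a tuple are definable, the resulting $\kappa$-model $M$ has $x,\dR,\dot{S}\in M$, $l(M\cap\kappa)=\pi_M(A)$, and $S^*\cap\nu$ stationary in $\nu$, where $\nu:=M\cap\kappa$ is inaccessible. As $\dR\in M$ is nicely $\kappa$-cc, $\dR\cap M$ is $\nu$-cc, so it keeps $\nu$ regular and every club of $\nu$ in the $\dR\cap M$-extension contains a club lying in $V$. Write $\dot{S}\cap M$ for the $\dR\cap M$-name $\bigcup_{\alpha<\nu}\{\check{\alpha}\}\times(A_{\alpha}\cap M)$ (the natural reading of $\dot{S}\cap(M\cap\kappa)$ in the statement), whose realizations are subsets of $\nu$; note that by elementarity of $M$, for each $\alpha\in S^*\cap\nu$ there is some $s\in A_{\alpha}\cap M\subseteq\dR\cap M$, and then $s\Vdash_{\dR\cap M}\check{\alpha}\in\dot{S}\cap M$.

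To produce $r$ I would argue by cases. If no $r\in\dR\cap M$ forces $\dot{S}\cap M$ stationary in $\nu$, then --- the conditions deciding this statement being dense --- $1_{\dR\cap M}$ forces $\dot{S}\cap M$ non-stationary in $\nu$, so I may fix a $\dR\cap M$-name $\dot{C}$ forced to be a club in $\nu$ disjoint from $\dot{S}\cap M$. Using $\nu$-cc I would bound the map $\beta\mapsto\min(\dot{C}\smallsetminus\beta)$ by some $g\colon\nu\to\nu$ in $V$; then the $V$-club $C_0$ of limit ordinals below $\nu$ that are closed under $g$ is forced to be $\subseteq\dot{C}$, hence forced disjoint from $\dot{S}\cap M$. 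But $S^*\cap\nu$ is stationary in $\nu$, so there is $\alpha^*\in S^*\cap C_0$, and a corresponding $s^*\in A_{\alpha^*}\cap M$. Then $s^*$ forces $\check{\alpha}^*\in\dot{S}\cap M$, whereas $1_{\dR\cap M}$ forces $\check{\alpha}^*\in C_0\subseteq\dot{C}$ and $\dot{C}\cap(\dot{S}\cap M)=\emptyset$ --- a contradiction. Hence a condition $r$ as required exists, and $(M,r)$ witnesses the lemma.

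The step I expect to be the main obstacle is the double transfer of stationarity: first checking that $S^*$ is genuinely stationary in $V$ and inaccessible-free so that $\RwD(\kappa)$ is applicable, and then converting the reflected stationarity of $S^*\cap\nu$ into an honest condition forcing $\dot{S}\cap M$ to be stationary. The mechanism for the second transfer is the $\nu$-cc ``club-capturing'' estimate for $\dot{C}$ together with the trivial but essential observation that $A_{\alpha^*}\cap M$ is automatically predense below any of its own members, so that $s^*$ genuinely forces $\alpha^*$ into the realization of $\dot{S}\cap M$.
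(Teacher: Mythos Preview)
Your proposal is correct and follows essentially the same approach as the paper: define the ground-model set $S^*$ (the paper calls it $T$) of ordinals that some condition forces into $\dot{S}$, apply $\RwD(\kappa)$ to it, and then derive a contradiction from the $\nu$-cc of $\dR\cap M$ by capturing the name for a disjoint club with a ground-model club that must meet $S^*\cap\nu$. Your write-up is in fact more explicit than the paper's (you spell out why $S^*$ avoids inaccessibles and build the ground-model club via a bounding function, whereas the paper just asserts the existence of such a club), but the skeleton is identical.
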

	
	\begin{mybem}
		We note that $\dot{S}\cap(M\cap\kappa)$ is only forced to be stationary over $\dR\cap M$, not over the whole forcing $\dR$. It is possible that the whole forcing $\dR$ might again force $\dot{S}\cap(M\cap\kappa)$ to be nonstationary. However, e.g. in the case that $\dR=\Coll(\omega_1,{<}\,\kappa)$, this is not the case as the quotient forcing $\dR/(\dR\cap M)$ is countably closed.
	\end{mybem}
	
	\begin{proof}
		Assume the parameters are given. Let
		$$T:=\{\delta\in\kappa\;|\;\exists r\in\dR(r\Vdash\check{\delta}\in\dot{S})\}$$
		Then $T$ is stationary in $\kappa$ and does not contain any inaccessible cardinals. By $\RwD(\kappa)$ there is a $\kappa$-model $M$ with $A,x,T\in M$ such that $T\cap(M\cap\kappa)$ is stationary in $(M\cap\kappa)$ and $l(M\cap\kappa)=\pi_M(A)$.
		
		Assume toward a contradiction that there is no $r\in\dR\cap M$ that forces $\dot{S}\cap(M\cap\kappa)$ to be stationary in $M\cap\kappa$, i.e. $1_{\dR\cap M}$ forces the negation. Because $\dR\cap M$ is $(M\cap\kappa)$-cc. there exists a club $C\subseteq M\cap\kappa$ such that $1_{\dR\cap M}$ forces $\check{C}\cap\dot{S}$ to be empty. However, there is an ordinal $\gamma\in T\cap C$ and therefore a condition $r\in\dR$ (which is also in $M$ by elementarity since $\gamma\in M$) that forces $\check{\gamma}\in\dot{S}\cap\check{C}$, a contradiction.
	\end{proof}
	
	We can now prove Theorem \ref{ShelahTheorem}:
	
	\begin{proof}
		Let $\kappa$ be a Mahlo cardinal. By Theorem \ref{ReflectionDiamond} we can assume that $\RwD$ holds. We now let $\dP:=(\dP_{\alpha},\dot{\dQ}_{\alpha})_{\alpha<\kappa}$ be an RCS iteration such that $\dot{\dQ}_{\alpha}:=\dP(l(\alpha))$ whenever $\alpha$ is inaccessible and $l(\alpha)$ is a $\dP_{\alpha}$-name for a stationary subset of $E_{\omega}^{\alpha}$ and $\dot{\dQ}_{\alpha}:=\dot{\Coll}(\omega_1,|\dP_{\alpha}|^{++})$ otherwise (the collapses ensure that our iterands are well-separated).
		
		We show that $F^+(\omega_2)$ holds in the model obtained by forcing with $\dP$. We have already seen that $\dP$ has the $S$-condition for some set $S$ of regular cardinals above $\omega_2$. Ergo $\dP$ preserves $\omega_1$ (and in fact does not add reals if we assume $\CH$ in the ground model). As $\dP$ has the $\kappa$-cc.\@, $\kappa$ remains a cardinal and is collapsed to $\omega_2$.
		
		Now let $G$ be $\dP$-generic and let $\dot{A}$ be a $\dP=\dP_{\kappa}$-name for a stationary subset of $E_{\omega}^{\kappa}$. By Lemma \ref{ReflectionNicelyCC} there is a $\kappa$-model $M\prec H(\Theta)$ containing all relevant parameters such that $l(M\cap\kappa)=\pi(\dot{A})$ and $p\in\dP\cap M$ forcing that $\dot{A}\cap(M\cap\kappa)=\pi(\dot{A})$ is stationary in $M\cap\kappa$. By elementarity we have that $\pi(\dot{A})$ is a $\dP_{M\cap\kappa}$-name for a stationary subset of $E_{\omega}^{M\cap\kappa}$. Ergo $\dP_{(M\cap\kappa)+1}=\dP_{M\cap\kappa}*\dP(\pi(\dot{A}))$. It follows that in $V[G]$ there exists a closed set $c\subseteq\pi(\dot{A})^{G\cap\dP_{M\cap\kappa}}$ with ordertype $\omega_1$. As $\pi(\dot{A})^{G\cap\dP_{M\cap\kappa}}=\dot{A}^G\cap(M\cap\kappa)$, there exists a closed set $c\subseteq\dot{A}^G$ with ordertype $\omega_1$.
	\end{proof}
	
	We now turn to the consistency of the strongest principle. As stated before, to show $F^+((D_i)_{i\in\omega_1},\kappa)$ for an arbitrary partition $(D_i)_{i\in\omega_1}$ of $\omega_1$, it suffices to show that given any sequence $(A_i)_{i<\omega_1}$ of stationary subsets of $E_{\omega}^{\kappa}$ there is a normal function $f\colon\omega_1\to\kappa$ such that $f(i)\in A_i$ for any $i\in\omega_1$. This is straightforwardly added by the following poset: Let $\dP((A_i)_{i<\omega_1})$ consist of normal functions $p\colon\alpha+1\to\kappa$ such that $\alpha<\omega_1$ and $p(i)\in A_i$ for any $i<\alpha+1$, ordered by end-extension.
	
	The following was shown by Shelah:
	
	\begin{mylem}
		$\dP((A_i)_{i\in\omega_1})$ has the $\{\aleph_2\}$-condition.
	\end{mylem}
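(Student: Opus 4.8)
The plan is to imitate, step for step, the verification that $\dP(A)$ satisfies the $\{\kappa\}$-condition stated above (due to Shelah), the only genuinely new feature being that a branch must be closed off at exactly the right ordertype so that its limit value lands in the correct member of the sequence $(A_i)_{i\in\omega_1}$. First I would write down the oracle $F$. Given a node $t$ together with the history $(f(t'))_{t'\trianglelefteq t}$, read off the strongest condition appearing so far, $q:=f^{II}(t)$, which is a normal function with domain some successor ordinal $d_t<\omega_1$; declare $\Suc_T(t):=\aleph_2$ (so the forcing branches ``unboundedly'' at every node, which is why the bounded ideal $\dI_{\aleph_2}$, i.e.\ the $\{\aleph_2\}$-condition, is the natural one), and for $\xi<\aleph_2$ let $f^{I}(t^{\frown}\langle\xi\rangle):=q\cup\{(d_t,v_\xi)\}$, where $v_\xi$ is the least element of $A_{d_t}$ strictly above $\max(\xi,\sup\mathrm{ran}(q))$; such a $v_\xi$ exists because $A_{d_t}$, being stationary, is unbounded in $\aleph_2$, and $v_\xi>\xi$. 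It is immediate that $f^{I}(t^{\frown}\langle\xi\rangle)\in\dP((A_i)_{i\in\omega_1})$, that the monotonicity clause of the $\bI$-condition holds (each $f^{I}(t^{\frown}\langle\xi\rangle)$ end-extends $f^{II}(t)$), and that $(T,\dI)$ is always a genuine $\bI$-tagged tree, the Miller-style clause being vacuous since every successor set is unbounded in $\aleph_2$.

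Next I would unwind, for a refinement $T'\leq^*T$ and a compatible $f$, what a branch $b\in[T']$ produces. Along $b$ the conditions $f^{I}(b\uhr k)$ form an end-extension chain whose union $q_b$ is a normal function from the countable limit ordinal $\delta_b:=\sup_k\dom(f^{II}(b\uhr k))$ into $\aleph_2$, with $q_b(i)\in A_i$ for all $i<\delta_b$. A lower bound of this chain in $\dP((A_i)_{i\in\omega_1})$ exists if and only if $q_b\cup\{(\delta_b,\sup\mathrm{ran}(q_b))\}$ is again a condition, i.e.\ if and only if $\sup\mathrm{ran}(q_b)\in A_{\delta_b}$; any such lower bound $p$ then satisfies $p\Vdash\forall k\,(f^{I}(b\uhr k)\in\dot G)$ and hence witnesses the $\bI$-condition for $(T,\dI),f,T'$. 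So everything reduces to finding one branch of $T'$ whose limit value is caught by the stationary set indexed by its limit ordertype.

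To produce such a branch I would fix a countable $N\prec H(\Theta)$ with $T',f,F,(A_i)_{i\in\omega_1}\in N$ and with $\sup(N\cap\aleph_2)\in A_{\delta}$, where $\delta$ is the ordertype the branch will reach; here I use the standard fact that for any stationary $A\subseteq E_{\omega}^{\aleph_2}$ the countable $N\prec H(\Theta)$ with $\sup(N\cap\aleph_2)\in A$ form a stationary subset of $[H(\Theta)]^{\omega}$ (proved by intersecting $A$ with the club of closure points of the Skolem hull operation on $\aleph_2$ relative to the prescribed parameters, and taking the hull of a cofinal $\omega$-sequence below such a closure point). Then I build the branch recursively inside $N$: since $T'\leq^*T$ keeps every successor set unbounded in $\aleph_2$, the set $\Suc_{T'}(b\uhr k)\cap N$ is cofinal in $\sup(N\cap\aleph_2)$, so at stage $k$ one can pick $b(k)\in\Suc_{T'}(b\uhr k)\cap N$ diagonalising against a fixed enumeration of $N\cap\aleph_2$, which forces $\sup\mathrm{ran}(q_b)=\sup(N\cap\aleph_2)$; with these node-choices one simultaneously arranges that $\sup_k\dom(f^{II}(b\uhr k))=\delta$, after which $\sup\mathrm{ran}(q_b)=\sup(N\cap\aleph_2)\in A_{\delta}=A_{\delta_b}$ as desired.

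The main obstacle --- and the only point where this goes beyond the $\dP(A)$ argument --- is the final synchronisation: guaranteeing that the limit ordertype $\delta_b$ of the branch is one of the $\delta$ for which we were able to place $\sup(N\cap\aleph_2)$ inside $A_{\delta}$. The difficulty is that the adversary's choice of $f^{II}$ dictates how fast $\dom(f^{II}(b\uhr k))$ grows, it may vary with the branch, and a priori it need not be cofinal in $N\cap\omega_1$. I would handle this the way Shelah does, via a pressing-down/pigeonhole preprocessing: since a union of $\aleph_1$-many bounded subsets of $\aleph_2$ is bounded, one can thin $T'$ so that along branches the ordertype $\dom(f^{II})$ grows in a tractable (eventually node-independent) fashion, extract from the thinned tree the resulting limit ordertype $\delta$, and only then choose $N$ matched to that $\delta$; alternatively one runs the domain-growth and the range-diagonalisation against the \emph{same} model $N$, using that $\dom(f^{II}(s))\in N\cap\omega_1$ for every $s\in T'\cap N$ together with elementarity of $N$ to push these domains cofinally into $N\cap\omega_1$, so that $\delta_b=N\cap\omega_1$ and it suffices to have chosen $N$ with $\sup(N\cap\aleph_2)\in A_{N\cap\omega_1}$. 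This bookkeeping is essentially where all the work of the lemma resides.
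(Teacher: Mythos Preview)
The paper does not prove this lemma; it merely attributes it to Shelah and moves on. So there is nothing to compare against, and I can only assess your sketch on its own merits.

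Your oracle $F$ and the reduction to finding a single branch $b\in[T']$ with $\sup\operatorname{ran}(q_b)\in A_{\delta_b}$ are correct, and you have put your finger on the one genuine difficulty: synchronising the limit ordertype $\delta_b$ with the index of the stationary set that must catch the supremum. Your first approach --- thin $T'$ by a pigeonhole/fusion argument so that $\dom(f^{II}(t))$ depends only on the level of $t$, read off the resulting constant $\delta=\sup_k d_k$, and only then pick a countable $N\prec H(\Theta)$ with $\sup(N\cap\aleph_2)\in A_\delta$ --- is exactly the standard route and works. The fusion is slightly more delicate than you indicate (at stage $n$ one must re-thin earlier levels as well, since the value $d_{n+1}$ obtained at a level-$n$ node may depend on that node; the point is that each such re-thinning keeps successor sets unbounded because $\aleph_2$ is regular and the relevant map into $\omega_1$ is $\aleph_1$-to-one somewhere), but this is routine.

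Your second, ``alternative'' approach does not work with the oracle you wrote down. You want elementarity of $N$ to guarantee that for every $t\in T'\cap N$ and every $\alpha\in N\cap\omega_1$ there is a finite extension $s\in T'\cap N$ with $\dom(f^{II}(s))>\alpha$. But this statement can simply be false in $V$: your $F$ increments the domain by exactly one, so if the adversary plays $f^{II}=f^{I}$ throughout, then every node at level $m$ has $\dom(f^{II})=d_0+m$, and no finite extension ever reaches $\alpha\geq d_0+\omega$. In that scenario $\delta_b=d_0+\omega$ for every branch, which is nowhere near $N\cap\omega_1$; elementarity buys you nothing because the unreflected statement already fails. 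So drop the second alternative and commit to the fusion.
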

	
	This can be used to show that from a weakly compact cardinal we can obtain a forcing extension where $F^+((D_i)_{i\in\omega_1},\kappa)$ holds for any partition $(D_i)_{i\in\omega_1}$ of $\omega_1$.
	
	\section{Adding Witnesses to the Failure of Friedman Properties}
	
	In this section we introduce posets which add witnesses to the failure of various instances of $F$ or $F^+$ and demonstrate their properties.
	
	\begin{mydef}
		Let $\kappa\geq\omega_2$ be a regular cardinal.
		
		\begin{enumerate}
			\item Let $\lambda\leq\kappa$ be any cardinal. $\dP_{F(\lambda)}(\kappa)$ consists of regressive functions $p\colon\delta\to\lambda$ for some ordinal $\delta<\kappa$ such that there is no normal function $g\colon\omega_1\to\delta$ such that $p\circ g$ is constant.
			\item Let $(D_i)_{i\in\omega_1}$ be a partition of $\omega_1$. $\dP_{F^+((D_i)_{i\in\omega_1})}(\kappa)$ consists of functions $p\colon\delta\times\omega_1\to 2$ for some ordinal $\delta<\kappa$ such that the following holds:
			\begin{enumerate}
				\item For any $\alpha\in\delta$ there is at most one $i\in\omega_1$ such that $p(\alpha,i)=1$.
				\item For any $\alpha\in\delta$ and $i\in\omega_1$, if $\cf(\alpha)\neq\omega$, $p(\alpha,i)=0$.
				\item There is no normal function $g\colon\omega_1\to\delta$ such that for all $i\in\omega_1$, $p[g[D_i]\times\{i\}]=\{1\}$.
			\end{enumerate}
		\end{enumerate}
		
		We order both posets by end-extension.
	\end{mydef}
	
	\begin{mybem}
		The ``disjointness condition'' (1) occurs in the definition of $\dP_{F^+((D_i)_{i\in\omega_1})}(\kappa)$ for the following reason: Without it, given a $\dP_{F^+((D_i)_{i\in\omega_1})}(\kappa)$-generic filter $G$ and $F:=\bigcup G$, the set $A:=\{\alpha\in\kappa\;|\;\forall i\in\omega_1(F(\alpha,i)=1)\}$ would be a stationary subset of $E_{\omega}^{\kappa}$ not containing any closed copy of $\omega_1$, so actually the weakest form of $F^+((E_i)_{i\in\omega_1},\kappa)$ would fail in the extension and the poset would be ill-suited to force a distinction between variants of $F^+$.
	\end{mybem}
	
	We now show that both posets do not collapse cardinals (at least assuming $\GCH$). As both of them have size $\kappa^{<\kappa}$, it suffices to show that they do not add new sequences of ordinals of length ${<}\,\kappa$. In order to do so, we show that they are strategically closed. We start with the easier case which is $\dP_{F^+((D_i)_{i\in\omega_1})}(\kappa)$.
	
	\begin{mylem}
		Let $\kappa\geq\omega_2$ be a regular cardinal and $(D_i)_{i\in\omega_1}$ any partition of $\omega_1$. $\dP_{F^+((D_i)_{i\in\omega_1})}(\kappa)$ is $\kappa$-strategically closed.
	\end{mylem}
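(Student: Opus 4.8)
The plan is to exhibit an explicit winning strategy for COM in the game $G(\dP_{F^+((D_i)_{i\in\omega_1})}(\kappa),\kappa)$; write $\dP:=\dP_{F^+((D_i)_{i\in\omega_1})}(\kappa)$. After the forced first move $1_{\dP}=\emptyset$, at every later turn — a stage $\alpha<\kappa$ which is even or a limit — COM, having seen the descending sequence $(p_\beta)_{\beta<\alpha}$ constructed so far, plays the union $\bigcup_{\beta<\alpha}p_\beta$ (which equals $p_\beta$ when $\alpha=\beta+1$) extended by one further level that is constantly zero: if that union has domain $\xi\times\omega_1$, then $p_\alpha$ has domain $(\xi+1)\times\omega_1$, agrees with the union below $\xi$, and satisfies $p_\alpha(\xi,i)=0$ for all $i\in\omega_1$. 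Thus, under this strategy, every condition COM plays has successor domain-height whose top level is identically $0$. The whole argument then reduces to checking that such a move is always legal; $\kappa$-strategic closure follows, since COM then has a legal move at every stage below $\kappa$ (the domain-heights stay below $\kappa$ because $\kappa$ is regular and fewer than $\kappa$ conditions, each of height $<\kappa$, have been played).

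The substantial point is the following union lemma: if $(p_\beta)_{\beta<\delta}$ arises in such a play, then $p^{\ast}:=\bigcup_{\beta<\delta}p_\beta$ is again a condition. Requirements (a) and (b) in the definition of $\dP$ constrain individual rows $\{\alpha\}\times\omega_1$ only, and $p^{\ast}$ agrees with each $p_\beta$ on every row below the domain-height of $p_\beta$, so both pass to the union at once. For requirement (c), suppose toward a contradiction that $g\colon\omega_1\to\gamma^{\ast}$ is normal with $p^{\ast}[g[D_i]\times\{i\}]=\{1\}$ for all $i\in\omega_1$, where $\gamma^{\ast}\times\omega_1=\dom(p^{\ast})$. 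As $g$ is normal, hence continuous, $g[\omega_1]$ is a club in $\sigma:=\sup g[\omega_1]$ and $\cf(\sigma)=\omega_1$. If $\sigma<\gamma^{\ast}$, choose $\beta<\delta$ whose condition has domain-height exceeding $\sigma$; then $g$ maps into $\dom(p_\beta)$ and witnesses that $p_\beta$ violates (c), a contradiction. Hence $\sigma=\gamma^{\ast}$. Let $Z$ be the set of ordinals that COM chose as the top index of its condition at some turn before stage $\delta$; by end-extension $p^{\ast}$ is constantly $0$ on $Z$, the set $Z$ is unbounded in $\gamma^{\ast}$ since COM moves cofinally and its heights are cofinal, and $Z$ is closed in $\gamma^{\ast}$: if $\eta<\gamma^{\ast}$ is a limit of $Z$ and $\alpha^{\ast}$ is the supremum of the COM-turns witnessing this, then $\alpha^{\ast}$ is a limit ordinal below $\delta$ and, by the strategy, COM's move at $\alpha^{\ast}$ is the union of the earlier conditions — whose domain-height is exactly $\eta$ — with a zero-level appended on top, so $\eta\in Z$. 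Therefore $Z$ and $g[\omega_1]$ are two clubs in $\gamma^{\ast}$, which has uncountable cofinality, so they meet; picking $\eta=g(j)\in Z$ and the unique $i$ with $j\in D_i$ yields $p^{\ast}(g(j),i)=1$ while $p^{\ast}(\eta,i)=0$, the desired contradiction. Hence $p^{\ast}$ is a condition.

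Granting the lemma, legality of COM's moves is immediate. At a successor turn, appending a zero-level to INC's last condition gives a condition: a normal $g$ witnessing a failure of (c) for the new condition either hits the new top level, where all values are $0$, or maps strictly below it, where it would already witness a failure for the old condition. At a limit turn, the union is a condition by the lemma, and the same remark shows appending a zero-level preserves this. The main obstacle — and the only place where the particular form of the strategy is used — is the closedness of $Z$, i.e. that a limit of COM's chosen top indices is again one of them; this rests on the bookkeeping observation that at a limit stage of the game the union of the previously played conditions has domain-height exactly the supremum of the earlier heights, so that COM's move there places a zero-level at precisely that supremum. The remaining ingredients (the behaviour of (a) and (b) under unions, the cofinality computation for $\sigma$, and the concluding club intersection) are routine.
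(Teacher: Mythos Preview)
Your proposal is correct and follows essentially the same approach as the paper: COM appends a zero-level on top of the union of the conditions played so far, and a purported witness $g$ for the failure of (c) is refuted by intersecting its (necessarily cofinal) range with the club of COM's chosen top indices. Your exposition is in fact more careful than the paper's in verifying that this set of top indices is closed under limits below the current height, a point the paper simply asserts.
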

	
	\begin{proof}
		We construct our strategy in such a way that, at limits of uncountable cofinality, there is a club on which our conditions take value $0$ no matter the second argument.
		
		Assume we are given a sequence $(p_{\alpha})_{\alpha<\gamma}$ for $\gamma<\kappa$. We let $p_{\gamma}':=\bigcup_{\alpha<\gamma}p_{\alpha}$ and $p_{\gamma}$ a function with domain $(\dom(p_{\gamma}')+1)\times\omega_1$, where $p_{\gamma}(\dom(p_{\gamma}'),i)=0$ for any $i<\omega_1$. We now show that this strategy works. Assume toward a contradiction that $g$ is a normal function from $\omega_1$ to $\dom(p_{\gamma})$ such that $p_{\gamma}[g[D_i]\times\{i\}]=\{1\}$ for any $i\in\omega_1$. It follows that $g$ actually maps to $\dom(p_{\gamma}')$. Moreover, $\im(g)$ is unbounded in $\dom(p_{\gamma}')$ (therefore it is club), since all the $p_{\alpha}$ for $\alpha<\gamma$ are conditions. Furthermore, by the construction, the set $\{\dom(p_{\alpha}')\;|\;\alpha<\gamma\text{ is a limit}\}$ is club in $\dom(p_{\gamma}')$ as well, so there is some $\beta\in\omega_1$, $\beta\in D_i$ for some $i\in\omega_1$, such that $g(\beta)=\dom(p_{\alpha}')$ for some limit $\alpha<\gamma$. However, in that case we have $p_{\gamma}(\dom(p_{\alpha}'),i)=p_{\alpha}(\dom(p_{\alpha}'),i)=0$, a contradiction.
	\end{proof}
	
	In the case of $F(\lambda,\kappa)$, we cannot expect such a strong result. It is easy to see that the proof of the previous Lemma (or its straighforward modification) would fail for $\dP_{F(\lambda)}(\kappa)$ since we also would need to remove the possibility that $p_{\gamma}$ is constantly $0$ on a closed copy of $\omega_1$. Additionally, $F$ reflects downwards, so assuming $F(\lambda,\kappa)$ holds we cannot force $F(\lambda,\kappa^+)$ to fail without adding a $\kappa$-length sequence of ordinals. However, this is the only obstacle:
	
	\begin{mylem}
		Let $\kappa$ be regular and $\lambda\leq\kappa$ any cardinal. Let $\delta<\kappa$ be regular. The following are equivalent:
		\begin{enumerate}
			\item $\dP_{F(\lambda)}(\kappa)$ is $\delta+1$-strategically closed.
			\item $\dP_{F((\lambda)}(\kappa)$ is ${<}\,\delta^+$-distributive.
			\item $F(\lambda,\delta)$ fails.
		\end{enumerate}
	\end{mylem}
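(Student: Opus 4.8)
The plan is to establish the cycle $(1)\Rightarrow(2)\Rightarrow(3)\Rightarrow(1)$. The implication $(1)\Rightarrow(2)$ is the standard fact that $(\delta+1)$-strategic closure yields ${<}\,\delta^+$-distributivity: given open dense sets $(D_\xi)_{\xi<\mu}$ with $\mu\leq\delta$ and $p\in\dP_{F(\lambda)}(\kappa)$, play $G(\dP_{F(\lambda)}(\kappa),\delta+1)$ with COM following a winning strategy while INC, at its $\xi$-th move, descends below $p$ into $D_\xi$; since $\mu\leq\delta$, a limit move of COM (or the final move of INC) lies below $p$ in every $D_\xi$, so $\bigcap_{\xi<\mu}D_\xi$ is dense below $p$. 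For $(2)\Rightarrow(3)$ I argue the contrapositive: if $F(\lambda,\delta)$ holds, then no condition can have domain $\geq\delta$, for $p\uhr\delta$ would be a regressive function $\delta\to\lambda$ and $F(\lambda,\delta)$ would produce a normal $f\colon\omega_1\to\delta$ with $p\circ f$ constant, contradicting $p\in\dP_{F(\lambda)}(\kappa)$. Hence $\rho:=\dom(\bigcup G)\leq\delta$ for a generic $G$; a routine density computation—any condition extends by an arbitrary further regressive value, so each $\{q:\alpha\in\dom(q)\}$ with $\alpha<\rho$ is dense, and $\{q:q\not\subseteq f\}$ is dense for each $f\in V$—shows $\bigcup G$ is a new sequence of length $\rho\leq\delta$, so $\dP_{F(\lambda)}(\kappa)$ adds a new ${\leq}\,\delta$-sequence and $(2)$ fails.

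The substantial direction is $(3)\Rightarrow(1)$. Fix a regressive $h\colon\delta\to\lambda$ witnessing $\neg F(\lambda,\delta)$; thus $h$ is not constant on the range of any normal $g\colon\omega_1\to\delta$, and hence, passing to an initial segment of ordertype $\omega_1$ and its closure in its supremum, $h$ is not constant on any $E\subseteq\delta$ with $\otp(E)\geq\omega_1$ that is closed below its supremum. I will exhibit a winning strategy for COM in $G(\dP_{F(\lambda)}(\kappa),\delta+1)$ which maintains, after each COM move $p_\alpha$ (writing $d_\alpha=\dom(p_\alpha)$), an ordinal $\beta_\alpha\leq\delta$ and a closed ``spine'' $C_\alpha\subseteq d_\alpha$ with a maximum $m_\alpha$ satisfying $d_\alpha=m_\alpha+1$, such that the increasing enumeration of $C_\alpha$ carries $p_\alpha\uhr C_\alpha$ onto $h\uhr\beta_\alpha$. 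On a successor move, after INC has played some condition whose final interval $[d',d'')$ carries arbitrary values, COM appends a few further points beyond $d''$ with the $h$-dictated values and adds them to the spine, so that $C_\alpha$ ``jumps over'' INC's interval and still ends one step below the new domain. On a limit move at stage $\gamma^*$, with $P=\bigcup_{\beta<\gamma^*}p_\beta$ of domain $\sigma=\sup_{\beta<\gamma^*}d_\beta$, COM plays $P$ extended by a few points past $\sigma$, and—crucially—puts $\sigma$ itself into the new spine with an $h$-value, which COM is entitled to do since it extends past $\sigma$. Absorbing each limit-supremum into the spine guarantees that $\bigcup_{\beta<\gamma^*}C_\beta$ is closed in $\sigma$, hence a club of $\sigma$ when $\cf(\sigma)=\omega_1$, while the $m_\beta=d_\beta-1$ are cofinal in $\sigma$, so the spine is unbounded there too.

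Granting that the invariant can be maintained, COM never gets stuck. At a limit stage with $\cf(\sigma)\neq\omega_1$ the union $P$ is automatically a condition. At a limit stage with $\cf(\sigma)=\omega_1$, suppose $P$ were constant, with value $c$, on the range $D$ of a normal $g\colon\omega_1\to\sigma$; $\sup D$ cannot be below $\sigma$ (else $D\subseteq d_\beta$ for some $\beta$ and $p_\beta$ would fail to be a condition), so $D$ is a club of $\sigma$. Then $D\cap\bigcup_{\beta<\gamma^*}C_\beta$ is a club of $\sigma$ (as $\cf(\sigma)=\omega_1>\omega$), and transporting it along the order-isomorphism of the closed set $\bigcup_{\beta<\gamma^*}C_\beta$ onto $\beta^*:=\sup_{\beta<\gamma^*}\beta_\beta\leq\delta$ produces a set $E\subseteq\delta$ with $\otp(E)\geq\omega_1$, closed below its supremum, on which $h$ is constantly $c$—contradicting the choice of $h$. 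Hence $P$ is a condition, COM can carry out its move, and a successor move is always trivially available, so COM wins $G(\dP_{F(\lambda)}(\kappa),\delta+1)$, which is $(1)$. (When $\delta<\omega_1$ there are no limit stages of cofinality $\omega_1$ below $\delta+1$ and every implication is vacuous or trivial.)

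I expect the main obstacle to be the bookkeeping in $(3)\Rightarrow(1)$: one must check that COM can genuinely keep the spine closed with its maximum one step below the current domain while INC freely controls the values on every interval it adds. The resolution is exactly that COM moves at all limit stages, and so can always reach just past each newly created limit point of the domain and claim it for the spine; consequently the set of positions whose value INC ultimately controls is topologically scattered—its limit points are all domain-suprema, which COM owns—and therefore too small to support a monochromatic club, which is what makes the spine argument go through.
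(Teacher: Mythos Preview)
Your proof is correct and follows essentially the same idea as the paper: COM uses the witness $h$ for $\neg F(\lambda,\delta)$ to label a closed ``spine'' inside the conditions, so that any monochromatic club in the union at a limit stage intersects the spine in a club and, via the order-isomorphism, produces a forbidden monochromatic closed copy of $\omega_1$ in $\delta$.

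The only difference is in bookkeeping. You maintain an explicit spine $C_\alpha$ together with a counter $\beta_\alpha\leq\delta$ and add spine points at both successor and limit COM-moves. The paper instead uses the \emph{game stage itself} as the index into the witness: at limit stage $\gamma$, COM sets $p_\gamma(\dom(p_\gamma')) = f(\gamma)$ (and at successor stages merely ensures $\dom(p_\gamma)\geq\gamma$, to keep regressivity). The spine is then simply $\{\dom(p_\alpha')\mid\alpha<\gamma\text{ limit}\}$, and the transport map is $\dom(p_\alpha')\mapsto\alpha$, which is automatically normal into $\gamma\leq\delta$. This removes the need to track $C_\alpha$ and $\beta_\alpha$ separately and makes the closure verification immediate, since the limit-stage domain suprema are by construction a club in $\dom(p_\gamma')$. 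Your version is correct but carries more moving parts than necessary; the paper's choice of letting the stage ordinal double as the spine index is the clean simplification worth noting.
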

	
	\begin{mybem}
		We abuse notation slightly in two ways: We consider the principle $F(\lambda,\delta)$ for $\lambda>\delta$ (in which case it is clearly equivalent to $F(\delta,\delta)$) and also for $\delta=\omega_1$ (in which case it clearly always fails).
	\end{mybem}
	
	\begin{proof}
		It is clear that (1) implies (2). Furthermore, (2) is easily seen to imply (3): Assuming $F(\lambda,\delta)$ holds there is no condition $p\in\dP_{F(\lambda)}(\kappa)$ such that $\dom(p)\geq\delta$, so INC has an easy winning strategy in $G(\dP,\delta+1)$ by simply increasing the domain of the previously played condition.
		
		Now assume $F(\lambda,\delta)$ fails as witnessed by a regressive function $f\colon\delta\to\lambda$. We will construct a winning strategy $\sigma$ for COM in $G(\dP,\delta+1)$. Assume $(p_{\alpha})_{\alpha<\gamma}$ is a run of the game where $\gamma\leq\delta$ is even. If $\gamma$ is not a limit, we simply let $p_{\gamma}$ be an arbitrary extension of $p_{\beta}$ with $\dom(p_{\gamma})\geq\gamma$ (by assuming inductively that $\dom(p_{\gamma}-2)\geq\gamma-2$, this is clearly possible without extending the domain by more than $2$ elements so we can do this easily without leaving $\dP_{F(\lambda)}(\kappa)$). Now assume $\gamma$ is a limit. Let $p_{\gamma}':=\bigcup_{\alpha<\gamma}p_{\alpha}$ and $p_{\gamma}$ an extension of $p_{\gamma}'$ with domain $\dom(p_{\gamma}')+1$ such that $p_{\gamma}(\dom(p_{\gamma}'))=f(\gamma)$ (if $\gamma<\delta$, otherwise arbitrary). We will show that this works as intended. First of all, $p_{\gamma}$ is regressive because $\dom(p_{\gamma}')\geq\gamma$ and so
		$$p_{\gamma}(\dom(p_{\gamma}'))=f(\gamma)<\gamma\leq\dom(p_{\gamma}')$$
		Secondly, assume $g\colon\omega_1\to\dom(p_{\gamma})$ is a normal function such that $p_{\gamma}\circ g$ is constant. As in the previous proof, $\im(g)$ is club in $\dom(p_{\gamma}')$. The set $c:=\{\dom(p_{\alpha}')\;|\;\alpha<\gamma\text{ is a limit}\}$ is club in $\dom(p_{\gamma}')$ as well and therefore so is $c\cap\im(g)=:d$. It follows that there is a normal and surjective function $h\colon\omega_1\to d$. For any $\alpha<\omega_1$, let $i(\alpha)$ be such that $h(\alpha)=\dom(p_{i(\alpha)}')$. Then clearly $i$ is a normal function from $\omega_1$ into $\gamma$. Moreover, for any $\alpha,\alpha'\in\omega_1$, we have
		$$f(i(\alpha))=p_{\gamma}(\dom(p_{i(\alpha)}'))=p_{\gamma}(h(\alpha))=p_{\gamma}(h(\alpha'))=p_{\gamma}(\dom(p_{i(\alpha)}'))=f(i(\alpha'))$$
		contradicting the fact that $f$ witnesses the failure of $F(\lambda,\delta)$.
	\end{proof}
	
	The main difference between $\dP_{F(\lambda)}(\kappa)$ (or $\dP_{F^+((D_i)_{i\in\omega_1})}(\kappa)$) and the poset adding a nonreflecting stationary set lies in the following observation:
	
	\begin{myobs}\label{MainObs}
		If $G\subseteq\dP_{F(\lambda)}(\kappa)$ (or $G\subseteq\dP_{F^+((D_i)_{i\in\omega_1})}(\kappa)$) is a generic filter, $\bigcup G$ is a condition in $(\dP_{F(\lambda)}(\kappa^+))^{V[G]}$ (or $(\dP_{F^+((D_i)_{i\in\omega_1})}(\kappa^+))^{V[G]}$).
	\end{myobs}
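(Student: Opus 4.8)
\emph{Proof proposal.} The plan is to reduce everything to one statement: $F:=\bigcup G$ has no normal ``diagonal'' sub‑function in $V[G]$. I treat $\dP_{F(\lambda)}(\kappa)$ and indicate the (easier) changes for $\dP_{F^+((D_i)_{i\in\omega_1})}(\kappa)$ at the end. Since every condition is a regressive function (resp.\ a function into $2$ satisfying (1),(2)) with domain ${<}\,\kappa$, the union $F=\bigcup G$ is regressive with $\dom(F)\leq\kappa<\kappa^+$ (resp.\ satisfies (1),(2) with domain $\subseteq\kappa\times\omega_1$), so the only thing left to show is that in $V[G]$ there is no normal $g\colon\omega_1\to\dom(F)$ with $F\circ g$ constant (resp.\ with $F[g[D_i]\times\{i\}]=\{1\}$ for all $i$). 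The first step is that $\dP_{F(\lambda)}(\kappa)$ is $\omega_1{+}1$-strategically closed: this is the equivalence proved above together with the Remark that $F(\lambda,\omega_1)$ always fails. Hence it is ${<}\,\omega_2$-distributive and adds no new $\omega_1$-sequences of ordinals, so any bad $g$ as above already lies in $V$; fix such a $g$ and argue for a contradiction.

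Next I would record some elementary facts (all in $V$). Initial segments of conditions are conditions (a normal $h\colon\omega_1\to\delta'$ with $(p\restriction\delta')\circ h$ constant already witnesses that $p$ is not a condition). A condition of domain $\delta$ exists iff $F(\lambda,\delta)$ fails — indeed a witness to $\neg F(\lambda,\delta)$ is exactly such a condition — and by restricting witnesses $\{\delta<\kappa:F(\lambda,\delta)\text{ fails}\}$ is an initial segment of $\kappa$. For each such $\delta$ the set $D_\delta:=\{q:\dom(q)\geq\delta\}$ is dense: given $p$ of domain $\delta'<\delta$ and a witness $f\colon\delta\to\lambda$, the function $q$ with $q\restriction\delta'=p$ and $q\restriction[\delta',\delta)=f\restriction[\delta',\delta)$ is again a condition, because a normal $h\colon\omega_1\to\delta$ with $q\circ h$ constant either has range inside $\delta'$ (contradicting that $p$ is a condition) or, after deleting an initial piece and reindexing, becomes a bad sub‑function of $f$. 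Consequently $\dom(F)=\sup\{\delta<\kappa:F(\lambda,\delta)\text{ fails}\}$. Now put $\mu:=\sup\operatorname{ran}(g)\leq\dom(F)$ and split into two cases. \emph{Case A: $\mu<\dom(F)$.} Choose $q\in G$ with $\dom(q)>\mu$; then $q\restriction(\mu+1)$ is an initial segment of $q$, hence a condition, hence lies in $G\subseteq\dP_{F(\lambda)}(\kappa)$ since $G$ is a filter; but $g\colon\omega_1\to\mu\subseteq\mu+1$ is normal with $(q\restriction(\mu+1))\circ g=F\circ g$ constant, contradicting that $q\restriction(\mu+1)$ is a condition.

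\emph{Case B: $\mu=\dom(F)$.} Here $g$ is cofinal in $\dom(F)$, so $\dom(F)$ is a limit ordinal of cofinality $\omega_1$, hence $\dom(F)<\kappa$ (as $\kappa$ is regular above $\omega_1$). Say $F\circ g\equiv j$. I would show that $D_g:=\{q:q\text{ decides two distinct values of }q\circ g\}$ is dense. Given $p\notin D_g$, the values $p(g(\xi))$ for the $\xi$ with $g(\xi)<\dom(p)$ are all equal; if every $g(\xi)$ were ${<}\,\dom(p)$ then $g$ would witness that $p$ is not a condition, so there is a least $\eta_0$ with $g(\eta_0)\geq\dom(p)$, and one can pick two indices $\xi_1<\xi_2$ above $\eta_0$ with $g(\xi_1),g(\xi_2)\geq 2$; set $\gamma:=g(\xi_2)$, which is ${<}\,\dom(F)$ since $g$ is cofinal, so $\gamma+1<\dom(F)$ and $F(\lambda,\gamma+1)$ fails. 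Fixing a witness $f\colon\gamma+1\to\lambda$, extend $p$ to $q$ on $\gamma+1$ with $q\restriction\dom(p)=p$, $q$ equal to $f$ on $[\dom(p),\gamma+1)$ \emph{except} that $q(g(\xi_1))$ and $q(g(\xi_2))$ are reset to two distinct regressive values. The same deletion‑and‑reindexing argument as before (now deleting at most two indices, those $\zeta$ with $h(\zeta)\in\{g(\xi_1),g(\xi_2)\}$) shows $q$ is a condition, and $q\in D_g$ by construction. Since $D_g\in V$ is dense, $G$ meets it, so $F\circ g$ takes two values — contradiction.

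For $\dP_{F^+((D_i)_{i\in\omega_1})}(\kappa)$ the argument is shorter. The constantly‑$0$ functions are conditions of every domain, so $D_\delta$ is dense and $\dom(\bigcup G)=\kappa\times\omega_1$; as $\kappa$ is regular, $\mu:=\sup\operatorname{ran}(g)<\kappa$ for any bad $g$, so only Case A occurs, with $\mu\times\omega_1$ in place of $\mu+1$ and clauses (1),(2) passing trivially to initial segments; and the poset is $\kappa$-strategically closed by the Lemma above, so once more no new $\omega_1$-sequence of ordinals is added. I expect the main obstacle to be Case B: when $\dom(F)$ has cofinality $\omega_1$ in $V[G]$ no single condition in $G$ reaches past $\operatorname{ran}(g)$, so the clean Case‑A move fails and one must run the density argument and, above all, verify by hand that the two‑point‑perturbed extensions still satisfy the ``no bad sub‑function'' clause. (In the typical application, where $F(\lambda,\kappa)$ holds in $V$ and hence $\dom(F)=\kappa$, Case B never arises and Case A alone suffices.)
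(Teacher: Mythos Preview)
The paper states this as an \emph{Observation} and gives no proof at all; it is treated as essentially immediate. In every application in the paper the ambient hypothesis is that $F(\lambda,\delta)$ fails for all regular $\delta<\kappa$ (trivially so when $\kappa=\omega_2$, and by construction in the iterations of Section~7), so that $\dom(\bigcup G)=\kappa$. Since $\kappa$ is regular ${\geq}\,\omega_2$, any normal $g\colon\omega_1\to\dom(F)$ has range bounded below $\kappa$, and your Case~A together with ${<}\,\omega_2$-distributivity is the whole story. This is presumably the one-line argument the author had in mind.

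Your proof is correct and strictly more general: you verify the Observation as literally stated, without the standing assumption that $F(\lambda,\delta)$ fails everywhere below $\kappa$. You correctly isolate Case~B (where $\dom(F)<\kappa$ has cofinality $\omega_1$) as the only nontrivial point, and your density argument for $D_g$ is sound; the key observation that $g(\xi_2)=\gamma$ cannot lie in the range of a normal $h\colon\omega_1\to\gamma+1$ reduces the ``two-point perturbation'' to at most one point, which is then handled by passing to a tail. Two minor remarks: in Case~A the clause ``hence lies in $G$'' is superfluous, since all you use is that $q\restriction(\mu+1)$ is a condition and agrees with $F$ on $\mu+1$, both of which follow directly from $q\in G$; and your deletion-and-reindexing should be read as taking tails (which you essentially do), since removing interior points from a closed copy of $\omega_1$ need not leave a closed set.
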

	
	This is what will later allow us to construct master conditions and lift ground-model embeddings.
	
	Now we show that $\dP_{F(\lambda)}$ and $\dP_{F^+((D_i)_{i\in\omega_1})}$ work as intended. For $\dP_{F(\lambda)}$ this is easy:
	
	\begin{mylem}
		Let $\kappa$ be regular and $\lambda\leq\kappa$ any cardinal. Assume $F(\lambda,\delta)$ fails for any regular cardinal $\delta<\kappa$. Then $\dP_{F(\lambda)}(\kappa)$ adds a function $f\colon\kappa\to\lambda$ such that there is no normal function $g\colon\omega_1\to\kappa$ with $f\circ g$ constant.
	\end{mylem}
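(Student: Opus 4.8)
The plan is to take a $\dP_{F(\lambda)}(\kappa)$-generic filter $G$, set $f:=\bigcup G$, and verify the two requirements separately.

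\emph{That $f$ is a function $\kappa\to\lambda$.} Since $G$ is a filter consisting of regressive $\lambda$-valued functions ordered by end-extension, $G$ is a chain, so $f$ is a regressive function into $\lambda$ whose domain is some ordinal $\le\kappa$. The content is that $\dom(f)=\kappa$, for which it suffices that $\{p\in\dP_{F(\lambda)}(\kappa):\dom(p)\ge\mu\}$ is dense for every $\mu<\kappa$; this is where the hypothesis is used. Observe first that any condition $p$ extends to one of domain $\dom(p)+1$ (append the value $0$ at the top: a normal $g\colon\omega_1\to\dom(p)+1$ cannot have $\dom(p)$ in its range, so the extension is again a condition). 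For the general case, by the previous lemma the failure of $F(\lambda,\delta)$ for every regular $\delta<\kappa$ makes $\dP_{F(\lambda)}(\kappa)$ be $(\delta+1)$-strategically closed for each such $\delta$, via the strategies constructed there; relativizing such a strategy below a given $p$ (and letting INC keep extending domains) produces, at the limit stage, an extension of $p$ of domain $\ge\delta$. For $\kappa$ a limit cardinal this already yields density of large-domain conditions; for $\kappa$ a successor one iterates this transfinitely, at each limit stage of cofinality $\omega_1$ placing the values prescribed by a witness of $\neg F(\lambda,\omega_1)$ along the cofinal set of block-boundaries so that no club of that stage becomes homogeneous --- exactly the mechanism of the previous lemma, carried over a longer recursion.

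\emph{That no normal $g\colon\omega_1\to\kappa$ with $f\circ g$ constant exists in $V[G]$.} Since $F(\lambda,\omega_1)$ fails, the previous lemma also gives that $\dP_{F(\lambda)}(\kappa)$ is $(\omega_1+1)$-strategically closed, hence $\omega_1$-distributive, so no new $\omega_1$-sequence of ordinals is added. So, arguing by contradiction, fix a name $\dot g$ and $p_0\in G$ with $g\in V$ and $p_0\Vdash$ ``$\dot g=\check g\colon\omega_1\to\kappa$ is normal and $f\circ\dot g$ is constant''. Put $\delta:=\sup\{g(\eta):\eta<\omega_1\}$; as $g$ is normal of length $\omega_1$ and $\kappa$ is regular with $\kappa>\omega_1$, we have $\delta<\kappa$ and $\operatorname{ran}(g)\subseteq\delta$. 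By the density established above (relativized below $p_0$) pick $q\in G$ with $q\le p_0$ and $\dom(q)>\delta$. Then $q\Vdash\check q\subseteq f$, so, because $\operatorname{ran}(g)\subseteq\delta<\dom(q)$, the ground-model function $q\circ g$ equals $f\circ g$; since $q$ forces the latter to be constant and this is absolute, $q\circ g$ is constant in $V$. But then $g\colon\omega_1\to\dom(q)$ is a normal function with $q\circ g$ constant, contradicting that $q$ is a condition of $\dP_{F(\lambda)}(\kappa)$.

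The hard part is the density of conditions of large domain used in both halves of the argument: when $\kappa$ is a successor cardinal the previous lemma only yields strategic closure up to (roughly) the predecessor cardinal, and one must iterate the construction, keeping track across the recursion of the values inserted at ordinals of cofinality $\omega_1$ in order to prevent homogeneous clubs from appearing --- the argument parallels the proof of the previous lemma but needs more bookkeeping. Everything else (that $\dot g$ is decided by a condition of $G$, absoluteness of ``$q\circ g$ is constant'', the chain property of $G$) is routine.
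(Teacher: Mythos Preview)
Your proof is correct and follows the same two-step structure as the paper's: show density of conditions with large domain so that $f=\bigcup G$ is total on $\kappa$, then argue that any bad $g$ would be bounded and in the ground model, contradicting that some condition in $G$ covers its range. The paper's proof is a terse one-liner that simply asserts the density and says the rest is clear; you supply the details the paper omits.

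Your instinct that the density step hides real work in the successor case is exactly right, and the paper is glossing over this. What you are sketching --- iterating the strategy and controlling values at stages of cofinality $\omega_1$ --- can be made to work, but a cleaner formulation is: prove by induction on $\alpha<\kappa$ that every condition extends to one of domain $>\alpha$; at limit $\alpha$ set $\delta=\cf(\alpha)$, fix a cofinal $(\alpha_\xi)_{\xi<\delta}$, and play the game of length $\delta+1$ with COM using the strategy from the previous lemma (built from a $\neg F(\lambda,\delta)$-witness) and INC at odd stages extending the domain past $\alpha_\xi$ via the inductive hypothesis. This is essentially the content of the paper's later Corollary~\ref{ColStratClos} and Lemma~\ref{FriedmanPumpUp}, which the present lemma's one-line proof is tacitly forward-referencing. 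Your explicit invocation of $(\omega_1+1)$-strategic closure (from the automatic failure of $F(\lambda,\omega_1)$) to get distributivity and hence $g\in V$ is also a genuine detail the paper waves at with ``even in the ground model''.
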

	
	\begin{proof}
		Because $F(\lambda,\delta)$ fails for any regular cardinal $\delta<\kappa$, for any $\alpha<\kappa$, the set of all conditions $p\in\dP_{F(\lambda)}(\kappa)$ with $\alpha\in\dom(p)$ is dense in $\dP_{F(\lambda)}(\kappa)$. Consequently, if $G$ is $\dP_{F(\lambda)}(\kappa)$-generic, $\bigcup G$ is a function from $\kappa$ into $\lambda$ and clearly there is no normal function $g\colon\omega_1\to\kappa$ such that $f\circ g$ is constant because any such $g$ would necessarily be bounded in $\kappa$ and even in the ground model.
	\end{proof}
	
	For $\dP_{F^+((D_i)_{i\in\omega_1})}$ we use the well-known idea that ``most'' generically added sets are stationary:
	
	\begin{mylem}
		Let $\kappa$ be regular and $(D_i)_{i\in\omega_1}$ a partition of $\omega_1$. $\dP_{F^+((D_i)_{i\in\omega_1})}(\kappa)$ adds a disjoint sequence $(A_i)_{i\in\omega_1}$ of stationary subsets of $E_{\omega}^{\kappa}$ such that there is no normal function $g\colon\omega_1\to\kappa$ with $g[D_i]\subseteq A_i$ for all $i\in\omega_1$.
	\end{mylem}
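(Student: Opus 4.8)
The plan is to argue, by genericity, that the generic object $F := \bigcup G$ produces the desired sequence $(A_i)_{i \in \omega_1}$ where $A_i := \{\alpha \in \kappa \mid F(\alpha, i) = 1\}$. First I would note that, since $F(\lambda,\delta)$ arguments are not needed here (the poset is fully $\kappa$-strategically closed by the earlier lemma), for every $\gamma < \kappa$ the set of conditions $p$ with $\dom(p) \supseteq \gamma \times \omega_1$ is dense: given $p$ with $\dom(p) = \delta \times \omega_1$ and $\delta < \gamma$, we extend $p$ by setting all new values to $0$ on the coordinates in $[\delta,\gamma)$, and condition (iii) is preserved because no normal $g\colon\omega_1\to\gamma$ witnessing the bad configuration can exist (the image would have to be contained in the old domain $\delta$ on a club, contradicting that $p$ was a condition — or rather, any such $g$ restricted to a club still witnesses badness for $p$, contradiction). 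Hence $F$ is a total function $\kappa \times \omega_1 \to 2$, the $A_i$ are well-defined and pairwise disjoint by condition (i), and each $A_i \subseteq E_\omega^\kappa$ by condition (ii). Moreover, condition (iii) applied to all conditions in $G$ immediately gives that there is no normal $g\colon\omega_1\to\kappa$ with $g[D_i]\subseteq A_i$ for all $i$, since such a $g$ would be bounded in $\kappa$ (its range being a set of ordertype $\omega_1$), hence already captured by some condition $p\in G$ with $\dom(p) \supseteq (\sup\im(g)+1)\times\omega_1$, contradicting (iii) for $p$.

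The only remaining point — and the genuine content of the lemma — is that each $A_i$ is stationary in $\kappa$. This is a standard density-plus-strategy argument. Fix $i \in \omega_1$, fix a condition $p$, and fix a $\dP_{F^+((D_i)_{i\in\omega_1})}(\kappa)$-name $\dot C$ for a club subset of $\kappa$; I must find $q \leq p$ forcing $\dot C \cap A_i \neq \emptyset$. Using $\kappa$-strategic closure, build a descending sequence $(p_\xi)_{\xi < \omega}$ below $p$, following COM's winning strategy, together with an increasing sequence of ordinals $\delta_\xi < \kappa$ such that $p_{\xi+1} \Vdash \check\delta_\xi \in \dot C$ and $\dom(p_\xi) \supseteq \delta_\xi \times \omega_1$, with $\delta_\xi$ strictly increasing. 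Let $\delta := \sup_\xi \delta_\xi$, which has cofinality $\omega$, and let $p_\omega'$ be the limit condition produced by COM's strategy applied to $(p_\xi)_{\xi<\omega}$; by genericity-of-the-construction we may arrange $\dom(p_\omega') = \delta \times \omega_1$. Now I would like to define $q := p_\omega' \cup \{((\delta, i), 1)\} \cup \{((\delta,j),0) : j \neq i\}$, i.e. extend the domain by the single row $\{\delta\}\times\omega_1$ and put a $1$ in coordinate $(\delta,i)$. Then $q \Vdash \delta \in A_i$ and $q \Vdash \delta \in \dot C$ (since $\dot C$ is forced club and cofinally many $\delta_\xi < \delta$ are forced into it), so $q \Vdash \dot C \cap A_i \neq \emptyset$ as desired.

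The main obstacle is verifying that $q$ as just defined is actually a \emph{condition}, i.e. that it satisfies clause (iii): there is no normal $g\colon\omega_1 \to \delta+1$ with $p[g[D_j]\times\{j\}] = \{1\}$ for all $j$. Here one must use, just as in the proof that the poset is $\kappa$-strategically closed, that COM's strategy was set up so that at limits of uncountable cofinality the condition is constantly $0$ on the whole row. So the strategy I actually invoke is the specific one from the earlier $\kappa$-strategic-closure lemma (not an arbitrary winning strategy), arranged so that the set $c := \{\dom(p_\xi') \mid \xi < \omega \text{ a limit stage of the overall construction}\}$ — more precisely, the club of ``limit rows forced to be all-$0$'' — is club in $\delta$. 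If $g$ were a counterexample, $\im(g)$ would be club in $\delta$ (it cannot use the top point $\delta$ for more than one $j$, and cofinally it lands in the old part), so $\im(g) \cap c \neq \emptyset$; picking $\beta \in \omega_1$ with $g(\beta) \in c$ and $\beta \in D_j$ for the appropriate $j$ gives $p(g(\beta), j) = 0$, contradicting $p[g[D_j]\times\{j\}] = \{1\}$. This is the one place where the argument is not purely formal; everything else is routine genericity bookkeeping. Finally, $q$ does not collapse anything new and lives in the poset, completing the proof.
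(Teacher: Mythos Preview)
Your overall architecture is the same as the paper's: define $A_i$ from the generic function, note disjointness and $A_i\subseteq E_\omega^\kappa$ from clauses (a) and (b), rule out a bad normal $g$ by boundedness, and then prove stationarity by interleaving an $\omega$-sequence that pushes ordinals into $\dot C$ while growing the domain, finally extending by a single row with a $1$ in coordinate $i$.

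The one place you go astray is the verification that the final condition $q$ satisfies clause (iii). You invoke the specific winning strategy from the $\kappa$-strategic closure lemma and appeal to a club $c\subseteq\delta$ of ``all-$0$ limit rows''. But your construction has length $\omega$, so there are no limit stages below the top; the set $c$ you describe is empty, and the intersection argument $\im(g)\cap c\neq\emptyset$ cannot be run. The paper avoids this entirely. It does not use any strategy: it simply builds a plain descending $\omega$-sequence $(p_n)_n$ with $\dom(p_{n+1})\supseteq(\gamma_n+1)\times\omega_1$ and $\gamma_{n+1}>\dom(p_n)$, and observes that the limit point $\gamma=\sup_n\gamma_n$ has cofinality $\omega$. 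That observation (which you also make but do not exploit) is already the whole verification: if $g\colon\omega_1\to\gamma+1$ were normal with $q[g[D_j]\times\{j\}]=\{1\}$ for all $j$, then $\sup(\im(g))$ has cofinality $\omega_1$, so $\sup(\im(g))<\gamma$, hence $\im(g)$ lies inside the domain of some $p_n$, contradicting that $p_n$ is a condition. No strategy, no club of zero-rows, is needed.
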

	
	\begin{proof}
		Let $G$ be a $\dP_{F^+((D_i)_{i\in\omega_1})}(\kappa)$-generic filter and let $F:=\bigcup G$. For any $i\in\omega_1$, let $A_i:=\{\alpha\in\kappa\;|\;F(\alpha,i)=1\}$. By the definition, $A_i\subseteq E_{\omega}^{\kappa}$. Moreover, there clearly does not exist a function $g$ as stated. The only part left to show is that any $A_i$ is stationary in $\kappa$. To this end, let $\dot{C}$ be a $\dP_{F^+((D_i)_{i\in\omega_1})}(\kappa)$-name for a club subset of $\kappa$, as forced by some condition $p\in\dP_{F^+((D_i)_{i\in\omega_1})}(\kappa)$.
		
		We now construct sequences $(p_n)_{n\in\omega}$ and $(\gamma_n)_{n\in\omega}$ as follows: Let $(p_0,\gamma_0)$ be any pair such that $p_0\leq p$ and $p_0\Vdash\check{\gamma}_0\in\dot{C}$. Given $(p_n,\gamma_n)$, let $(p_{n+1},\gamma_{n+1})$ be any pair with $\dom(p_{n+1})=\delta\times\omega_1$ for some $\delta>\gamma_n$ and $\gamma_{n+1}>\dom(p_n)$ such that $p_{n+1}\Vdash\check{\gamma}_{n+1}\in\dot{C}$. Lastly, let $p:=\bigcup_{n\in\omega}p_n$ and $\gamma:=\bigcup_{n\in\omega}\gamma_n$. It follows by closure of $\dot{C}$ that $p\Vdash\check{\gamma}\in\dot{C}$. Moreover, $\dom(p)=\gamma\times\omega_1$. As such, we can let $p'$ be an extension of $p$ such that $\dom(p')=(\gamma+1)\times\omega_1$ and $p'(\gamma,i)=1$, $p'(\gamma,j)=0$ for $j<\omega_1, j\neq i$. Because $\cf(\gamma)=\omega$, $p'\in\dP_{F^+((D_i)_{i\in\omega_1})}(\kappa)$ and $p'$ clearly forces that $\dot{C}\cap\dot{A}_i$ is nonempty.
	\end{proof}
	
	\subsection*{Propagation of $\neg F(\delta)$}\hfill
	
	In this subsection we use the properties of the forcing notion $\dP_{F(\lambda)}(\kappa)$ and well-known statements regarding strategic closure in order to show that the existence of witnesses to $\neg F(\lambda,\kappa)$ can be lifted to arbitrary ordinals below $\kappa^+$ and to show that the failure of $F(\lambda,\delta)$ is compact at singular cardinals.
	
	\begin{mylem}\label{LiftingStratClos}
		Assume $\delta$ is a singular limit ordinal and $\dP$ is a forcing order which is ${<}\,\delta$-strategically closed. Then $\dP$ is $\delta+1$-strategically closed.
	\end{mylem}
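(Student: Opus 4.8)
The plan is to concatenate winning strategies for the games of length $\eta$ for cofinally many $\eta<\delta$ in order to survive through $\delta$ itself (and then one more step). Fix a cofinal sequence $(\eta_j)_{j<\cf(\delta)}$ in $\delta$ with $\cf(\delta)<\delta$ and, without loss of generality, each $\eta_j$ a successor ordinal (so that we can always aim the partial play at the ``coordinate'' $\eta_j$) and with $\eta_0=0$; write $\delta=\sup_j\eta_j$ and let $\sigma_\eta$ denote a winning strategy for COM in $G(\dP,\eta)$ for each $\eta<\delta$. For the block of moves with indices in the interval $[\eta_j,\eta_{j+1})$ I will have COM follow (a translate of) $\sigma_{\eta_{j+1}}$, treating the condition $q_j$ reached at stage $\eta_j$ as the ``first move $1_\dP$'' for that strategy; since $\sigma_{\eta_{j+1}}$ is winning in a game of length $\eta_{j+1}>\eta_{j+1}-\eta_j$ (ordinal arithmetic), it produces legal moves at every index in that block, including at the limit stages interior to the block.

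First I would make precise the bookkeeping at the ``seam'' ordinals $\eta_j$ for limit $j<\cf(\delta)$ and at $\delta$ itself: these are exactly the limit stages where COM must play, and COM's move there is not dictated by any single $\sigma_{\eta_{j+1}}$. Here is where I use that $\dP$ is ${<}\,\delta$-strategically closed rather than merely that COM can win each finite-length game: the partial play $(p_\alpha)_{\alpha<\eta_j}$ has length $\eta_j<\delta$ and is, by construction, a legal play of $G(\dP,\eta_j)$ in which COM followed $\sigma_{\eta_j}$ from the outset (the concatenated strategy restricted below $\eta_j$ is a legal COM-play, though not literally $\sigma_{\eta_j}$); to get a lower bound I instead argue directly that COM \emph{had been following $\sigma_{\eta_j}$} is not needed — what I need is a common lower bound of $\{p_\alpha:\alpha<\eta_j\}$. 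To guarantee one exists, I will actually run, in parallel, the single strategy $\sigma_{\eta_j}$ on the same sequence of INC-moves up to stage $\eta_j$, so that COM's move at $\eta_j$ can be taken to be $\sigma_{\eta_j}$'s response; more cleanly, I will define the composite strategy recursively so that at each seam ordinal $\eta_j$ it simply ``restarts'' using $\sigma_{\eta_{j+1}}$, and observe that this restart is legal precisely because COM needs to produce \emph{some} lower bound of the finitely-or-$\eta_j$-many preceding conditions, which the winning strategy $\sigma_{\eta_j}$ (applied to the recorded INC-moves) supplies.

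The one genuinely new point is the move at stage $\delta$: after all blocks are exhausted COM must play $p_\delta\leq p_\alpha$ for all $\alpha<\delta$. The sequence $(q_j)_{j<\cf(\delta)}$ of conditions reached at the seam ordinals is descending and cofinal in the play, so it suffices to find a lower bound for it; but this is a descending sequence of length $\cf(\delta)<\delta$, so applying $\sigma_{\cf(\delta)+1}$ (a winning strategy for the game of length $\cf(\delta)+1$, which exists since $\cf(\delta)+1<\delta$ as $\delta$ is a singular \emph{limit} ordinal) to the play in which INC has been fed $q_1,q_2,\dots$ yields the desired $p_\delta$. Thus COM wins $G(\dP,\delta+1)$. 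The step I expect to be the main obstacle is the legality check at the interior limit stages: one must verify that the moves prescribed by $\sigma_{\eta_{j+1}}$ inside a block really are below \emph{all} earlier conditions in the whole play (not just those within the current block), which follows because the composite play is descending by transfinite induction — each block's moves are below $q_j$, and $q_j\leq q_i$ for $i<j$ — but making this induction airtight, especially the interaction between the ``restart'' convention and COM's obligation at limits, requires care with the ordinal arithmetic $\eta_{j+1}=\eta_j+(\eta_{j+1}-\eta_j)$ and with the indexing of who-plays-when once strategies are translated.
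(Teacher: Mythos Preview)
Your overall architecture --- partition $\delta$ into $\cf(\delta)$ many blocks, run a short strategy inside each block, and handle the seams and the final stage $\delta$ separately --- is exactly the paper's approach. The gap is in how you produce COM's moves at the limit seams and at $\delta$. You write that the descending sequence $(q_j)_{j<\cf(\delta)}$ of seam-values has a lower bound because one can ``apply $\sigma_{\cf(\delta)+1}$ to the play in which INC has been fed $q_1,q_2,\dots$''. But a winning strategy for COM only guarantees a lower bound for plays in which COM has actually been following that strategy; it says nothing about an arbitrary descending sequence. Concretely, if COM opens with $1_\dP$ and INC plays $q_1$, then COM's reply $r_1=\sigma_{\cf(\delta)+1}(1_\dP,q_1)$ need not lie above $q_2$, so INC cannot legally play $q_2$ next, and the auxiliary game cannot even be continued. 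The same objection applies to your suggestion of applying $\sigma_{\eta_j}$ ``to the recorded INC-moves'' at an intermediate limit seam, and to the ``run $\sigma_{\eta_j}$ in parallel'' idea (the parallel COM-moves need not sit above the actual INC-moves of the main game). Strategic closure is strictly weaker than directed closure; a ${<}\,\delta$-strategically closed poset can have descending $\omega$-sequences with no lower bound.

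The fix --- and this is what the paper does --- is to weave the outer strategy into the composite play from the very beginning rather than invoke it after the fact. One fixes once and for all a winning strategy $\sigma$ for $G(\dP,\cf(\delta)+1)$ and arranges that COM's move \emph{at each seam ordinal} is $\sigma$'s response to the auxiliary play whose entries are the previous seam-values (together with INC's move just before the current seam). Then the seam-values themselves constitute a legal run of the $\cf(\delta)+1$ game in which COM follows $\sigma$, so at every limit seam, including $\delta$, $\sigma$ supplies the required lower bound. Your blocks and your inner strategies are fine; what is missing is that the $q_j$ must be \emph{chosen} by the outer strategy, not merely handed to it afterwards.
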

	
	\begin{proof}
		By the assumption of the lemma, $\dP$ is in particular $\cf(\delta)+1$-strategically closed. Fix a winning strategy $\sigma$ for COM in $G(\dP,\cf(\delta)+1)$ and also a winning strategy $\sigma_{\alpha}$ for COM in $G(\dP,\alpha+1)$ for any $\alpha<\delta$. For simplicity, we will assume that any such strategy maps $(1_{\dP})_{\beta<\gamma}$ to $1_{\dP}$. We will construct a winning strategy $\sigma_{\delta}$ for COM in $G(\dP,\delta+1)$. To this end, fix a normal function $f\colon\cf(\delta)\to\delta$ that maps successor ordinals to successor ordinals.
		
		Let $(p_{\beta})_{\beta<\gamma}$ be given such that $\gamma<\lambda$ is even (so $\gamma$ may be a limit ordinal).
		
		As a first case, assume $\gamma\notin\im(f)$. Let $\alpha$ be minimal such that $\gamma<f(\alpha+1)$. Then we let
		$$\sigma_{\delta}((p_{\beta})_{\beta<\gamma}):=\sigma_{f(\alpha+1)}((1_{\dP})_{\beta\leq f(\alpha)}{}^{\frown}(p_{\beta})_{\beta\in(f(\alpha),\gamma)})$$
		This is possible because $(1_{\dP})_{\beta<f(\alpha)}{}^{\frown}(p_{\beta})_{\beta\in(f(\alpha),\gamma)}$ has been played according to $\sigma_{f(\alpha+1)}$. Additionally, $\sigma_{\delta}((p_{\beta})_{\beta<\gamma})$ is a lower bound of $(p_{\beta})_{\beta\in(f(\alpha),\gamma)}$ and thus of $(p_{\beta})_{\beta<\gamma}$ as required.
		
		Now assume $\gamma=f(\alpha)$ for some $\alpha$. If $\alpha=\xi+1$, we can find $\mu$ such that $\gamma=\mu+1$ and we let
		$$\sigma_{\delta}((p_{\beta})_{\beta<\gamma}):=\sigma((p_{f(\beta)})_{\beta<\xi}{}^{\frown}p_{\mu})$$
		this is a lower bound of $(p_{\beta})_{\beta<\gamma}$ because $\gamma=\mu+1$.
		
		If $\alpha$ is a limit ordinal, we let
		$$\sigma_{\delta}((p_{\beta})_{\beta<f(\alpha)}):=\sigma((p_{f(\beta)})_{\beta<\alpha})$$
		this works because $(p_{f(\beta)})_{\beta<\alpha}$ has been played according to $\sigma$.
		
		It is clear that this strategy works to produce lower bounds for any $\gamma<\delta+1$.
	\end{proof}
	
	We obtain an easy corollary which is a well-known folklore result.
	
	\begin{mycol}\label{ColStratClos}
		Let $\dP$ be a forcing order and $\mu$ a cardinal. If $\dP$ is $\mu+1$-strategically closed, $\dP$ is ${<}\,\mu^+$-strategically closed.
	\end{mycol}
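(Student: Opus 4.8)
The plan is to argue by transfinite induction on $\delta<\mu^+$ that $\dP$ is $(\delta+1)$-strategically closed. This suffices: assuming $\mu$ is infinite (the finite case is immediate by restricting the given strategy), $\mu^+$ is a limit ordinal, so $\gamma<\mu^+$ implies $\gamma+1<\mu^+$, and any winning strategy for COM in $G(\dP,\gamma+1)$ restricts to one in $G(\dP,\gamma)$; hence $(\delta+1)$-strategic closure for all $\delta<\mu^+$ yields $\gamma$-strategic closure for all $\gamma<\mu^+$, which is precisely ${<}\,\mu^+$-strategic closure.

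For $\delta\le\mu$ we have $\delta+1\le\mu+1$, so restricting the assumed winning strategy for $G(\dP,\mu+1)$ does the job. Now let $\mu<\delta<\mu^+$. If $\delta=\gamma_0+1$ is a successor, the induction hypothesis gives a winning strategy $\sigma$ for COM in $G(\dP,\gamma_0+1)$; since a run of $\sigma$ produces a descending sequence $(p_\alpha)_{\alpha\le\gamma_0}$ and a move at a successor stage is never an obstacle (COM can simply repeat $p_{\gamma_0}$ if stage $\gamma_0+1$ is hers to play), COM wins $G(\dP,\gamma_0+2)$. If $\delta$ is a limit ordinal then, since $\mu<\delta<\mu^+$, we have $|\delta|=\mu$ and therefore $\cf(\delta)\le\mu<\delta$, so $\delta$ is singular; by the induction hypothesis $\dP$ is $(\beta+1)$- and hence $\beta$-strategically closed for every $\beta<\delta$, i.e.\ ${<}\,\delta$-strategically closed, so Lemma \ref{LiftingStratClos} applies and gives that $\dP$ is $(\delta+1)$-strategically closed, closing the induction.

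There is no real obstacle here beyond the bookkeeping, as the one substantial ingredient, the passage through a singular limit, is exactly Lemma \ref{LiftingStratClos}. The only point worth flagging is the observation that every limit ordinal strictly between $\mu$ and $\mu^+$ is automatically singular; this is what lets Lemma \ref{LiftingStratClos} cover all the limit stages that are not already handled by the base case $\delta\le\mu$, while successor stages are free.
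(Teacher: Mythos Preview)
Your proof is correct and follows essentially the same approach as the paper's: a transfinite induction on $\delta<\mu^+$ showing $\dP$ is $(\delta+1)$-strategically closed, with the successor step being trivial and the limit step handled by observing that every limit ordinal in $(\mu,\mu^+)$ is singular so that Lemma~\ref{LiftingStratClos} applies. Your write-up simply spells out in more detail what the paper compresses into two sentences.
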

	
	\begin{proof}
		One shows easily by induction that for any $\delta$, $\dP$ is $\delta+1$-strategically closed. The successor step is clear, in limits we use that any $\delta\in(\mu,\mu^+)$ is singular.
	\end{proof}
	
	By relating the strategic closure of $\dP_{F(\lambda)}(\kappa)$ to the failure of $F(\lambda,\kappa)$, we see the following:
	
	\begin{mylem}\label{FriedmanPumpUp}
		Let $\kappa\geq\omega_2$ be regular and $\lambda\leq\kappa$ any cardinal. Assume $F(\lambda,\kappa)$ fails and $\delta\in[\kappa,\kappa^+)$.
		\begin{enumerate}
			\item If $\lambda<\kappa$ there is a regressive function $g\colon\delta\to\lambda$ that is not constant on any closed copy of $\omega_1$.
			\item If $\lambda=\kappa$ there is a regressive function $g\colon\delta\to\delta$ that is not constant on any closed copy of $\omega_1$.
		\end{enumerate}
	\end{mylem}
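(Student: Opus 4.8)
The plan is to turn a witness to $\neg F(\lambda,\kappa)$ into a witness ``of length $\delta$'' by transfinite recursion on $\delta\in[\kappa,\kappa^{+})$, using essentially the same gluing-with-markers idea that makes $\dP_{F(\lambda)}(\kappa)$ $\kappa$-strategically closed once $F(\lambda,\kappa)$ fails. So I would first fix, from the failure of $F(\lambda,\kappa)$, a regressive $f$ on $\kappa$ --- with values in $\lambda$ in case (1) and in $\kappa$ in case (2) --- that is not constant on any closed copy of $\omega_1$, and then build, for every $\delta\in[\kappa,\kappa^{+})$, a regressive $g_{\delta}$ on $\delta$ (into $\lambda$, resp.\ into $\delta$) which is not constant on any closed copy of $\omega_1$ and which restricts to $f$ on $\kappa$; then $g:=g_{\delta}$ is as required.

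The base case $\delta=\kappa$ is the hypothesis, with $g_{\kappa}:=f$. At a successor $\delta=\delta_{0}+1$ I would put $g_{\delta}:=g_{\delta_{0}}\cup\{(\delta_{0},0)\}$; this stays regressive, and since the supremum of any closed copy of $\omega_1$ has uncountable cofinality, a copy with range inside $\delta$ has supremum $\leq\delta_{0}$ and hence range inside $\delta_{0}$, so the point $\delta_{0}$ is irrelevant and $g_{\delta_{0}}$ being a witness suffices. All the work is in the limit case. Here I would fix a continuous strictly increasing cofinal $h\colon\cf(\delta)\to\delta$ with $h(0)=\kappa$ (possible since $\kappa<\delta$ and $\cf(\delta)\leq|\delta|=\kappa$), set $\rho_{\xi}:=\otp([h(\xi),h(\xi+1)))<\delta$, and choose a \emph{local witness} $w_{\xi}$ on $\rho_{\xi}$ --- namely $w_{\xi}:=f\uhr\rho_{\xi}$ if $\rho_{\xi}<\kappa$, and $w_{\xi}:=g_{\rho_{\xi}}$ from the induction hypothesis (which applies since $\kappa\leq\rho_{\xi}<\delta<\kappa^{+}$) otherwise. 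Then I would let $g_{\delta}$ agree with $f$ on $[0,\kappa)$, carry the \emph{marker} $g_{\delta}(h(\xi)):=f(\xi)$ at each left endpoint (here $\xi<\cf(\delta)\leq\kappa$, so $f(\xi)$ exists, and $f(\xi)<\kappa\leq h(\xi)$, keeping $g_{\delta}$ regressive), and equal a shifted copy of $w_{\xi}$ on the rest of $[h(\xi),h(\xi+1))$: $g_{\delta}(h(\xi)+\eta):=w_{\xi}(\eta)$ for $0<\eta<\rho_{\xi}$ in case (1), and $g_{\delta}(h(\xi)+\eta):=h(\xi)+w_{\xi}(\eta)$ in case (2).

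To verify that $g_{\delta}$ is not constant on any closed copy of $\omega_1$, I would take a normal $c\colon\omega_1\to\delta$ with $g_{\delta}\circ c$ constant, let $\mu:=\sup\im(c)$ (so $\cf(\mu)=\omega_1$), and split into three cases by the position of $\mu$. If $\mu\leq\kappa$ then $\im(c)\subseteq\kappa$, where $g_{\delta}$ agrees with $f$, contradicting the choice of $f$. If $h(\xi)<\mu\leq h(\xi+1)$ for some $\xi$, then an uncountable final segment of $\im(c)$ (remove its least point if it is $h(\xi)$) is a closed copy of $\omega_1$ inside $[h(\xi),h(\xi+1))$ on which $g_{\delta}$ is constant, and translating by $h(\xi)$ yields a closed copy of $\omega_1$ on which $w_{\xi}$ is constant --- contradicting the induction hypothesis (or the choice of $f$) for $w_{\xi}$. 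Otherwise $\mu=\sup_{\xi<\xi^{*}}h(\xi)$ for some limit ordinal $\xi^{*}\leq\cf(\delta)$ (this includes $\mu=\delta$, $\xi^{*}=\cf(\delta)$), which forces $\cf(\xi^{*})=\cf(\mu)=\omega_1$; since $h\uhr\xi^{*}$ is normal and $\im(c)$ is club in $\mu$, the set $E:=\{\xi<\xi^{*}\mid h(\xi)\in\im(c)\}$ is club in $\xi^{*}$, and on $E$ the marker value $f(\xi)=g_{\delta}(h(\xi))$ is constant, so $f$ is constant on a closed copy of $\omega_1$ contained in $E\subseteq\xi^{*}<\kappa$ --- again contradicting the choice of $f$. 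This closes the induction; the two numbered cases differ only by the harmless shift needed to keep $g_{\delta}$ regressive.

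The crux of the argument is this last case: at an ordinal of cofinality $\omega_1$ reached along the recursion (including $\delta$ itself when $\cf(\delta)=\omega_1$), the function already built below it cannot be revised, so bad cofinal copies must be defeated in advance rather than patched afterwards. The marker trick --- planting $f(\xi)$ at $h(\xi)$ --- is precisely what transports the ``not constant on a closed copy of $\omega_1$'' property of $f$ through these limits, and it is legitimate exactly because the relevant index $\xi^{*}$ always lies below $\cf(\delta)\leq\kappa$, where $f$ is defined. Everything else (regressivity, the successor step, the reduction inside a single block) should be routine.
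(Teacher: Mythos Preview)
Your argument is correct. It is, at heart, the same idea as the paper's, but the paper packages the recursion differently: rather than build $g_{\delta}$ by hand, the paper observes that $\neg F(\lambda,\kappa)$ makes $\dP_{F(\lambda)}(\kappa^{+})$ $(\kappa+1)$-strategically closed, then invokes the general fact (the preceding Lemma and Corollary) that $(\mu+1)$-strategic closure implies ${<}\,\mu^{+}$-strategic closure, and reads off a condition of domain $\delta$ from a play of $G(\dP_{F(\lambda)}(\kappa^{+}),\delta+1)$. Your transfinite recursion with markers is precisely what one gets by unwinding that strategic-closure argument for this particular poset: the ``combine a strategy for $\cf(\delta)+1$ with strategies on the blocks $[h(\xi),h(\xi+1))$'' step in the abstract Lemma becomes your ``marker $f(\xi)$ at $h(\xi)$, local witness $w_{\xi}$ on the block'' construction. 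The paper's route is shorter because the recursion has already been done once in the abstract setting; your route is more self-contained and makes the mechanism explicit. Both yield exactly the same functions.
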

	
	\begin{proof}
		Assume $F(\lambda,\kappa)$ fails. Then $\dP_{F(\lambda)}(\kappa^+)$ is $\kappa+1$-strategically closed. By corollary \ref{ColStratClos}, $\dP_{F(\lambda)}(\kappa^+)$ is $\delta+1$-strategically closed. Ergo $\dP_{F(\lambda)}(\kappa^+)$ contains a condition with domain $\delta$ which is as required.
	\end{proof}
	
	A similar proof shows that the negation of $F$ is compact at singular cardinals:
	
	\begin{mylem}
		Let $\kappa$ be singular and $\lambda\leq\kappa$ any cardinal. Assume $F(\lambda,\delta)$ fails for all $\delta<\kappa$.
		\begin{enumerate}
			\item If $\lambda<\kappa$, there is a regressive function $g\colon\kappa\to\lambda$ that is not constant on any closed copy of $\omega_1$.
			\item If $\lambda=\kappa$, there is a regressive function $g\colon\kappa\to\kappa$ that is not constant on any closed copy of $\omega_1$.
		\end{enumerate}
	\end{mylem}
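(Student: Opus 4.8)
The plan is to argue as in the proof of Lemma~\ref{FriedmanPumpUp}, working with the poset $\dP:=\dP_{F(\lambda)}(\kappa^+)$ (which is legitimate since $\lambda\le\kappa<\kappa^+$) and extracting the desired function as the restriction to $\kappa$ of a condition of $\dP$ whose domain is at least $\kappa$. Any such condition is by definition a regressive function into $\lambda$ that is not constant on any closed copy of $\omega_1$, and restricting it to $\kappa$ preserves both features, since a normal $h\colon\omega_1\to\kappa$ with $g\circ h$ constant would also witness that the whole condition is constant on a closed copy of $\omega_1$. In case $(2)$, where $\lambda=\kappa$, this yields a regressive $g\colon\kappa\to\kappa$ as required.

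The first step is to show that $\dP$ is ${<}\,\kappa$-strategically closed. For every regular cardinal $\delta$ with $\omega_2\le\delta<\kappa$, the principle $F(\lambda,\delta)$ fails by hypothesis (reading it as $F(\delta,\delta)$ in case $\lambda>\delta$, as permitted), so by the Lemma equating the failure of $F(\lambda,\delta)$ with the $\delta+1$-strategic closure of $\dP_{F(\lambda)}(\kappa')$ for regular $\kappa'>\delta$ --- applied with $\kappa'=\kappa^+$ --- the poset $\dP$ is $\delta+1$-strategically closed, hence ${<}\,\delta^+$-strategically closed by Corollary~\ref{ColStratClos}. As $\kappa$ is singular it is a limit cardinal, so the ordinals $\delta^+$ for $\delta<\kappa$ regular are cofinal in $\kappa$; it follows that $\dP$ is $\eta+1$-strategically closed for every $\eta<\kappa$, i.e.\ ${<}\,\kappa$-strategically closed. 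Since $\kappa$ is a singular limit ordinal, Lemma~\ref{LiftingStratClos} now upgrades this to $\dP$ being $\kappa+1$-strategically closed.

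Finally I would have COM play a winning strategy in $G(\dP,\kappa+1)$ against an opponent who (using that the domain of a condition of $\dP$ can always be enlarged, as noted in the proof of the Lemma cited above) arranges the domains of the played conditions to be cofinal in $\kappa$. Because COM wins, there is a legal move $p_\kappa$ at stage $\kappa$, and then $\dom(p_\kappa)\ge\sup_{\alpha<\kappa}\dom(p_\alpha)=\kappa$; setting $g:=p_\kappa\uhr\kappa$ finishes the proof. I expect no real obstacle here: the one point that genuinely uses singularity of $\kappa$ is the jump from ${<}\,\kappa$- to $\kappa+1$-strategic closure, which is exactly the content of Lemma~\ref{LiftingStratClos}, and everything else (verifying that the characterisation Lemma applies at $\kappa^+$, and steering the run of the game so that the domains reach $\kappa$) is routine bookkeeping.
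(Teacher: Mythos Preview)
Your proposal is correct and follows essentially the same route as the paper: both arguments use that the failure of $F(\lambda,\delta)$ below $\kappa$ makes $\dP_{F(\lambda)}(\kappa^+)$ ${<}\,\kappa$-strategically closed, invoke Lemma~\ref{LiftingStratClos} at the singular $\kappa$ to upgrade this to $\kappa+1$-strategic closure, and then read off a condition with domain at least $\kappa$. Your write-up simply unpacks more of the routine details (the appeal to Corollary~\ref{ColStratClos} and the bookkeeping to ensure the domains reach $\kappa$) that the paper leaves implicit.
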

	
	\begin{proof}
		Assuming $F(\lambda,\delta)$ fails for all $\delta<\kappa$, the poset $\dP_{F(\lambda)}(\kappa^+)$ is ${<}\,\kappa$-strategically closed. Ergo it is actually $\kappa+1$-strategically closed. It follows that functions as required exist.
	\end{proof}
	
	\section{Maximal Versions of $\MM$ with Failure of Friedman Properties}
	
	The forcing axiom Martin's Maximum ($\MM$) was introduced by Foreman, Magidor and Shelah in \cite{ForemanMagidorShelahMM}. It is shown in the same paper that $\MM$ implies $F^+((D_i)_{i\in\omega_1},\kappa)$ for any regular $\kappa\geq\omega_2$, where $(D_i)_{i\in\omega_1}$ is the partition given by $D_0=\omega_1$, $D_i=\emptyset$ for $i\neq 0$ (we will later show that $\SRP$, a consequence of $\MM$, implies $F^+((D_i)_{i\in\omega_1},\kappa)$ for any partition $(D_i)_{i\in\omega_1}$ and any regular $\kappa\geq\omega_2$). In this section we will introduce variants of $\MM$ which are compatible with the failure of various instances of $F$ and $F^+$ and provably maximal in this respect. The principal applications of these axioms will come in the next two sections where we use them to separate all instances of the defined principles. We will focus on the failure of the principles at $\omega_2$ to avoid making the arguments and notation too cumbersome.
	
	\begin{mydef}
		Let $\kappa\geq\omega_2$ be regular.
		\begin{enumerate}
			\item Let $\lambda\leq\kappa$ be any cardinal and $f\colon\kappa\to\lambda$ regressive. $F_{\neg}(f,\lambda,\kappa)$ states that $f$ is not constant on any closed copy of $\omega_1$.
			\item Let $(D_i)_{i\in\omega_1}$ be a partition of $\omega_1$ and $(A_i)_{i\in\omega_1}$ a sequence of stationary subsets of $E_{\omega}^{\kappa}$. $F_{\neg}^+((A_i)_{i\in\omega_1},(D_i)_{i\in\omega_1},\kappa)$ states that there is no normal function $g\colon\omega_1\to\kappa$ such that $g[D_i]\subseteq A_i$.
		\end{enumerate}
	\end{mydef}
	
	So, roughly speaking, the previous definition holds if $f$ (or $(A_i)_{i\in\omega_1}$) witnesses that the corresponding principle fails.
	
	We can now define our versions of $\MM$:
	
	\begin{mydef}
		\begin{enumerate}
			\item Let $\lambda\leq\omega_2$ be a cardinal and $f\colon\kappa\to\lambda$ regressive. $\MM_{F(\lambda)}(f)$ states the following:
			\begin{itemize}
				\item $F(f,\lambda,\omega_2)$ holds.
				\item Whenever $\dP$ is a poset such that $\dP$ preserves stationary subsets of $\omega_1$ as well as $F_{\neg}(f,\lambda,\omega_2)$ and $\mathcal{D}$ is an $\omega_1$-sized collection of dense subsets of $\dP$ there is a filter $G\subseteq\dP$ which has nonempty intersection with any $D\in\mathcal{D}$.
			\end{itemize}
			\item Let $(D_i)_{i\in\omega_1}$ be a partition of $\omega_1$ and $(A_i)_{i\in\omega_1}$ a sequence of stationary subsets of $E_{\omega}^{\omega_2}$. $\MM_{F^+((D_i)_{i\in\omega_1})}((A_i)_{i\in\omega_1})$ states the following:
			\begin{itemize}
				\item $F^+((A_i)_{i\in\omega_1},(D_i)_{i\in\omega_1},\kappa)$ holds.
				\item Whenever $\dP$ is a poset such that $\dP$ preserves stationary subsets of $\omega_1$ as well as $F_{\neg}^+((A_i)_{i\in\omega_1},(D_i)_{i\in\omega_1},\omega_2)$ and $\mathcal{D}$ is an $\omega_1$-sized collection of dense subsets of $\dP$ there is a filter $G\subseteq\dP$ which has nonempty intersection with any $D\in\mathcal{D}$.
			\end{itemize}
		\end{enumerate}
	\end{mydef}
	
	The main result of this section is that any such principle is consistent. Thanks to Observation \ref{MainObs} showing this is surprisingly straightfoward: We obtain a model of any such principle by forcing over the standard model of Martin's Maximum with the poset adding a witness to the failure of the corresponding property.
	
	We first give some background on Martin's Maximum. The notions of \emph{proper} and \emph{semiproper forcing} were introduced by Shelah. Recall that for any poset $\dP$, any set $M$ and any $\dP$-generic filter $G$, we let $M[G]:=\{\tau^G\;|\;\tau\in M\text{ is a }\dP\text{-name}\}$.
	
	\begin{mydef}
		Let $\dP$ be a poset, $M\prec H(\Theta)$ with $\dP\in M$ and $p\in\dP$.
		\begin{enumerate}
			\item $p$ is \emph{$(M,\dP)$-generic} if $p\Vdash M[\Gamma]\cap V=M$.
			\item $p$ is \emph{$(M,\dP)$-semigeneric} if $p\Vdash M[\Gamma]\cap\omega_1=M\cap\omega_1$.
		\end{enumerate}
		$\dP$ is \emph{proper} (\emph{semiproper}) if for any sufficiently large cardinal $\Theta$ there is a club $C\subseteq[H(\Theta)]^{<\omega_1}$ such that whenever $M\in C$ and $p\in M\cap\dP$ there is an $(M,\dP)$-generic ($(M,\dP)$-semigeneric) condition $q\leq p$.
	\end{mydef}
	
	It can be shown that any proper notion of forcing preserves stationary subsets of $[X]^{<\omega_1}$ for all sets $X$ and any semiproper notion of forcing preserves stationary subsets of $\omega_1$. Moreover, properness and semiproperness is preserved by iterations with suitable support: For proper forcing, countable support suffices while semiproperness is preserved by iterations with \emph{revised countable support} (as stated in section 2, we will not go into that definition here).
	
	The other ingredient we need is the concept of a \emph{Laver function}:
	
	\begin{mydef}
		Let $\kappa$ be a supercompact cardinal. $f\colon\kappa\to V_{\kappa}$ is a \emph{Laver function} if for any set $x$ and any $\lambda\geq|\tcl(x)|$ there is an elementary embedding $j\colon V\to M$, where $M$ is transitive, such that $j(\kappa)>\lambda$, ${}^{\lambda}M\subseteq M$ and $j(f)(\kappa)=x$.
	\end{mydef}
	
	Lastly, we need that certain notions of forcing preserve witnesses to the failure of $F$ (or $F^+$).
	
	We start with the following well-known result:
	
	\begin{mylem}\label{ProperStatPres}
		Let $\gamma$ be an ordinal with $\cf(\gamma)>\omega$ and $A\subseteq E_{\omega}^{\gamma}$ stationary. Let $\dP$ be a poset.
		\begin{enumerate}
			\item If $\dP$ is proper, $A$ is stationary in $V[\dP]$.
			\item If $\dP$ is semiproper and $|\gamma|=\omega_1$ in $V$, $A$ is stationary in $V[\dP]$.
		\end{enumerate}
	\end{mylem}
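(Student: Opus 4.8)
The plan is to verify that a stationary subset $A \subseteq E_\omega^\gamma$ remains stationary after (semi)proper forcing by showing that any club $\dot C$ in the extension must meet $A$ at a point of the form $\sup(M \cap \gamma)$ for a suitable elementary submodel $M$ of cofinality $\omega$. First I would fix a $\dP$-name $\dot C$ for a club subset of $\gamma$ and a condition $p$ forcing this, choose $\Theta$ large, and work inside the club $C \subseteq [H(\Theta)]^{<\omega_1}$ witnessing properness (respectively semiproperness) of $\dP$. Since the collection of countable $M \prec H(\Theta)$ with $p, \dP, \dot C, A, \gamma \in M$ and $M \in C$ is club in $[H(\Theta)]^{<\omega_1}$, and since $A$ is stationary in $\gamma$ with $\cf(\gamma) > \omega$, a standard reflection argument yields such an $M$ with $\sup(M \cap \gamma) \in A$: indeed the set of $M$ with $\sup(M\cap\gamma) \in A$ is stationary because $A$ is stationary in $\gamma$ and the map $M \mapsto \sup(M \cap \gamma)$ is continuous and cofinal into $\gamma$ along any increasing $\omega_1$-chain, so it pushes stationary sets forward. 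Note $\sup(M\cap\gamma)$ automatically has cofinality $\omega$ since $M$ is countable, consistent with $A \subseteq E_\omega^\gamma$.

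Next I would fix such an $M$ and pick an $(M,\dP)$-generic condition $q \leq p$ in the proper case (respectively $(M,\dP)$-semigeneric in the semiproper case). The key point is that genericity of $q$ forces $M[\Gamma] \cap \mathrm{Ord} = M \cap \mathrm{Ord}$, hence $M[\Gamma] \cap \gamma = M \cap \gamma$ and so $\sup(M[\Gamma] \cap \gamma) = \sup(M \cap \gamma)$. Since $\dot C \in M$ is forced to be a club in $\gamma$, by elementarity of $M[\Gamma]$ in $H(\Theta)^{V[G]}$ the set $\dot C^G$ is unbounded in $\sup(M[\Gamma] \cap \gamma) = \sup(M \cap \gamma)$, and by closure $\sup(M \cap \gamma) \in \dot C^G$. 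As $\sup(M \cap \gamma) \in A$, the condition $q$ forces $\dot C^G \cap A \neq \emptyset$. Since $p$ and $\dot C$ were arbitrary, $A$ is stationary in $V[\dP]$. For part (1) this is exactly the above; for part (2), the only subtlety is that a semigeneric condition $q$ only guarantees $M[\Gamma] \cap \omega_1 = M \cap \omega_1$, not full ordinal-correctness, so I must use the hypothesis $|\gamma| = \omega_1$: fixing in $V$ a bijection $e \colon \omega_1 \to \gamma$ lying in $M$, semigenericity gives $M[\Gamma] \cap \gamma = e[M[\Gamma] \cap \omega_1] = e[M \cap \omega_1] = M \cap \gamma$, so the sup is again preserved and the argument goes through verbatim.

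The main obstacle, such as it is, is bookkeeping in the reflection step — making sure that the set of countable $M \prec H(\Theta)$ (with the relevant parameters and lying in the properness club $C$) for which $\sup(M \cap \gamma) \in A$ is genuinely nonempty. This follows from the standard fact that for stationary $A \subseteq \gamma$ with $\cf(\gamma) > \omega$, the set $\{M \in [H(\Theta)]^{<\omega_1} : \sup(M \cap \gamma) \in A\}$ is stationary; one proves it by intersecting with the given club, building an increasing continuous chain $(M_\xi)_{\xi < \omega_1}$ whose union has $\sup(\bigcup_\xi M_\xi \cap \gamma)$ ranging over a club in $\gamma$, and hitting $A$. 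Beyond that, everything is routine, so I would state this reflection fact briefly and otherwise keep the proof short.
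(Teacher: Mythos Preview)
Your proposal is correct and follows essentially the same route as the paper: pick a countable $M\prec H(\Theta)$ in the (semi)properness club with $\sup(M\cap\gamma)\in A$, take a (semi)generic $q\leq p$, and argue that $\sup(M[\Gamma]\cap\gamma)=\sup(M\cap\gamma)$ lies in $\dot{C}^G\cap A$, using a bijection $e\colon\omega_1\to\gamma$ in $M$ for the semiproper case. One minor quibble: your sketch of the reflection step via an $\omega_1$-chain only works verbatim when $\cf(\gamma)=\omega_1$; for larger cofinality the suprema along such a chain are bounded in $\gamma$, so you should instead use the standard argument via a club of closure points of a Skolem function and an $\omega$-chain cofinal in some $\alpha\in A$ --- but the fact itself is well known and the paper also takes it for granted.
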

	
	\begin{proof}
		Assume toward a contradiction that $\dot{C}$ is a $\dP$-name for a club such that some $p\in\dP$ forces $\dot{C}\cap\check{A}=\emptyset$. In $V$, let $D\subseteq[H(\Theta)]^{<\omega_1}$ consist of those $M\prec (H(\Theta),\in,\dot{C})$ such that $\gamma,\dP,p\in M$ and $M$ witnesses the (semi-) properness of $\dP$. $D$ is club. Ergo there is $M\in D$ such that $\sup(M\cap\gamma)\in A$. Let $q\leq p$ be $(M,\dP)$-(semi-)generic. Let $G$ be $\dP$-generic containing $q$. In $V[G]$, $M[G]$ is a limit point of $\dot{C}^G$ and thus $\sup(M[G]\cap\gamma)\in\dot{C}^G$. If $\dP$ is proper we have $M[G]\cap\gamma=M\cap\gamma$ and obtain a contradiction.
		
		If $\dP$ is semiproper and $|\gamma|=\omega_1$, $M$ contains a bijection $f$ between $\omega_1$ and $\gamma$ (and $M[G]$ thus does as well). By elementarity, $f\uhr(M\cap\omega_1)$ is a bijection between $M\cap\omega_1$ and $M\cap\gamma$ and $f\uhr(M[G]\cap\omega_1)$ is a bijection between $M[G]\cap\omega_1$ and $M[G]\cap\gamma$. Because $M[G]\cap\omega_1=M\cap\omega_1$, $M[G]\cap\gamma=M\cap\gamma$ and we proceed as above.
	\end{proof}
	
	\begin{mylem}\label{ProperNotAddingSeq}
		Let $\gamma$ be an ordinal with $\cf(\gamma)>\omega$ and $A\subseteq\gamma$ such that $A$ does not contain a closed copy of $\omega_1$.
		\begin{enumerate}
			\item If $\dP$ is proper, $A$ does not contain a closed copy of $\omega_1$ in $V[\dP]$.
			\item If $\dP$ is semiproper and $|\gamma|=\omega_1$ in $V$, $A$ does not contain a closed copy of $\omega_1$ in $V[\dP]$.
		\end{enumerate}
	\end{mylem}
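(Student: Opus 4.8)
The plan is to recast the clause ``$A$ does not contain a closed copy of $\omega_1$'' as a statement about stationary sets of countable sets and then carry it through the preservation theorems for proper and semiproper forcing recalled above; the two clauses of the hypothesis on $\dP$ correspond to the two preservation theorems (stationary subsets of $[\gamma]^{<\omega_1}$ for proper, of $\omega_1$ for semiproper), which is why the extra assumption $|\gamma|=\omega_1$ is needed in (2).

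First I would establish, as a lemma in $\ZFC$ valid for any $A\subseteq\gamma$, the following reformulation: $A$ contains a closed copy of $\omega_1$ if and only if there is $\eta\le\gamma$ with $\cf(\eta)$ uncountable for which the set $R_\eta:=\{M\in[\eta]^{<\omega_1}\mid M\text{ infinite},\ \sup M\notin A\}$ is \emph{not} stationary in $[\eta]^{<\omega_1}$. For the forward direction, if $c\subseteq A$ is a closed copy of $\omega_1$ with $\eta:=\sup c$, then $\cf(\eta)=\omega_1$ because $c$ is cofinal in $\eta$ of ordertype $\omega_1$; the collection of infinite $M\in[\eta]^{<\omega_1}$ with $\sup M\in c$ is a club in $[\eta]^{<\omega_1}$ --- cofinality holds since the limit points of $c$ below $\eta$ are cofinal in $\eta$ (here $\cf(\eta)>\omega$), and closure holds since those limit points belong to $c$ --- and it is disjoint from $R_\eta$. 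For the converse, if $R_\eta$ misses a club $C\subseteq[\eta]^{<\omega_1}$ with $\cf(\eta)$ uncountable, one recursively builds a continuous, strictly $\subseteq$-increasing chain $(M_\xi)_{\xi<\omega_1}$ of infinite members of $C$ (cofinality of $C$ together with $\cf(\eta)>\omega$ ensures $\sup M_\xi<\eta$ at every stage, so the construction never leaves $[\eta]^{<\omega_1}$); then $\xi\mapsto\sup M_\xi$ is a normal function into $\eta\subseteq\gamma$ with range contained in $A$, that is, a closed copy of $\omega_1$ in $A$.

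Now suppose, toward a contradiction, that in $V[\dP]$ the set $A$ contains a closed copy $c$ of $\omega_1$, and put $\eta:=\sup c\le\gamma$, so that $\cf^{V[\dP]}(\eta)=\omega_1$. A cofinal $\omega$-sequence from $V$ remains cofinal, so $\cf^V(\eta)\neq\omega$ and is therefore uncountable; since $A$ has no closed copy of $\omega_1$ in $V$, the reformulation applied in $V$ gives that $R_\eta$ is stationary in $([\eta]^{<\omega_1})^V$. In case (1), $\dP$ is proper and hence preserves stationary subsets of $[\eta]^{<\omega_1}$, so $(R_\eta)^V$ --- and with it the larger set $(R_\eta)^{V[\dP]}$, since ``$\sup M\notin A$'' is absolute --- remains stationary in $([\eta]^{<\omega_1})^{V[\dP]}$; but the reformulation applied in $V[\dP]$ to the closed copy $c$ says precisely that $(R_\eta)^{V[\dP]}$ is nonstationary there, a contradiction. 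In case (2), $|\gamma|=\omega_1$ in $V$ forces $|\eta|^V\le\omega_1$ and hence $\cf^V(\eta)=\omega_1$, so one may fix in $V$ a normal cofinal map $f\colon\omega_1\to\eta$; since $A$ has no closed copy of $\omega_1$ in $V$, the set $T:=\{\delta<\omega_1\mid f(\delta)\notin A\}$ is stationary in $V$ (otherwise $f$ would carry a club of $\omega_1$ into $A$, producing a closed copy). As $\dP$ is semiproper it preserves stationary subsets of $\omega_1$, so $T$ stays stationary in $V[\dP]$; but $f$ is still normal and cofinal in $\eta$ there, so the range of $f$ and $c$ are two clubs of $\eta$ of ordertype $\omega_1$ and hence agree on a club $E\subseteq\omega_1$, whence $f(\delta)\in c\subseteq A$ for every $\delta\in E$ and $T$ is nonstationary in $V[\dP]$ --- a contradiction.

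The genuinely routine items are the two ``club/closure'' verifications inside the reformulation. The point requiring care --- and the reason the naive argument (take a single countable $M\prec H(\Theta)$, an $(M,\dP)$-generic $q\le p$, and pass to $V[G]$) does not settle the matter --- is that the forced closed copy need not ``reach'' $\sup(M\cap\gamma)$, so countable witnesses cannot simply be glued together into an $\omega_1$-witness in $V$; routing everything through preservation of stationary subsets of $[\eta]^{<\omega_1}$ (respectively $\omega_1$) avoids this, the price being the cofinality bookkeeping that guarantees the relevant $\eta$ has the right form: $\cf^{V[\dP]}(\eta)=\omega_1$ forces $\cf^V(\eta)$ to be uncountable in both cases, and in case (2) the hypothesis $|\gamma|=\omega_1$ sharpens this to $\cf^V(\eta)=\omega_1$, so that it is the $\omega_1$-preservation theorem which applies.
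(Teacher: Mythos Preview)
Your proof is correct. Both cases go through as written; the only cosmetic point is that in the converse direction of your reformulation you should say explicitly that at successor stages you pick $M_{\xi+1}\supseteq M_\xi$ containing an ordinal above $\sup M_\xi$, so that $\xi\mapsto\sup M_\xi$ is genuinely strictly increasing and not merely nondecreasing --- but this is routine.

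Your route differs from the paper's. The paper argues uniformly for both clauses by reducing to the preceding Lemma on preservation of stationary subsets of $E_\omega^\beta$: if some $p$ forces a normal $\dot f\colon\omega_1\to A$ with $\sup(\im\dot f)=\beta$, then $\cf^V(\beta)>\omega$, and since $A$ contains no closed copy of $\omega_1$ in $V$ the set $E_\omega^\beta\smallsetminus A$ is stationary in $\beta$; but the range of $\dot f$ produces in $V[\dP]$ a club in $\beta$ whose limit points lie in $E_\omega^\beta\cap A$, destroying that stationarity --- contradiction by the earlier lemma. Thus the paper phrases the obstruction as an \emph{ordinal} stationary set $E_\omega^\beta\smallsetminus A$, which lets a single invocation cover both the proper and the semiproper case (the $|\gamma|=\omega_1$ hypothesis is consumed inside that lemma). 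You instead split: in (1) you recast the obstruction as a stationary subset of $[\eta]^{<\omega_1}$ and appeal directly to properness, and in (2) you pull back along a ground-model normal cofinal map $f\colon\omega_1\to\eta$ to a stationary $T\subseteq\omega_1$ and appeal to semiproperness. Your case (2) is essentially the paper's argument unpacked (the bijection/cofinal map does the same work in both), while your case (1) is a genuinely different packaging via generalized stationarity. The paper's version is shorter and avoids the auxiliary reformulation; your version is more self-contained and makes explicit which preservation theorem is doing the work in each clause.
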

	
	\begin{proof}
		We prove both parts almost simultaneously.
		
		Assume toward a contradiction that there is a $\dP$-name $\dot{f}$ such that some $p\in\dP$ forces $\dot{f}$ to be a normal function from $\omega_1$ into $\check{A}$. Assume $p$ decides $\dot{f}=\check{\beta}$ for some $\beta$ (we note that $\beta=\gamma$ is possible). We have $\cf(\beta)=\omega_1$ in $V[\dP]$ and thus $\cf(\beta)>\omega$ in $V$. However, since $A$ does not contain a closed copy of $\omega_1$ in $V$, $E_{\omega}^{\beta}\smallsetminus A$ is stationary in $\beta$ in $V$, but this stationarity is destroyed in $V[\dP]$. In any case, we obtain a contradiction using Lemma \ref{ProperStatPres}.
	\end{proof}
	
	Interestingly, we need that $\gamma$ has size $\omega_1$ in $V$ and not in $V[G]$: For any stationary $A\subseteq E_{\omega}^{\omega_2}$, there is a (at least consistently) semiproper poset that adds a closed copy of $\omega_1$ contained in $A$ while collapsing $\omega_2$ to $\omega_1$.
	
	We can now show the consistency of our versions of $\MM$. By the \emph{standard iteration to force Martin's Maximum} $\dP_{MM}$ we mean the following: Let $\kappa$ be supercompact and $f\colon\kappa\to V_{\kappa}$ a Laver function (which always exists). Let $(\dP_{\alpha},\dot{\dQ}_{\alpha})_{\alpha<\kappa}$ be a revised countable support iteration such that $\dot{\dQ}_{\alpha}=f(\alpha)$ whenever $\alpha$ is inaccessible and $f(\alpha)$ is a $\dP_{\alpha}$-name for a semiproper poset and $\dot{\dQ}_{\alpha}$ is the Levy collapse of $2^{|\dP_{\alpha}|}$ to $\omega_1$ otherwise.
	
	We first do the proof for any instance of $\MM_{F(\lambda)}(f)$.
	
	\begin{mysen}\label{MMFLambda}
		Let $\kappa$ be a supercompact cardinal and $\lambda\in\omega_1+1\cup\{\kappa\}$. There is a forcing extension where $\kappa=\omega_2$ and there is a function $f\colon\kappa\to \lambda$ such that $\MM_{F(\lambda)}(f)$ holds.
	\end{mysen}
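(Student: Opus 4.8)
The plan is to follow the recipe suggested right after Observation~\ref{MainObs}: build a witness to $\neg F(\lambda,\kappa)$ over a (slightly prepared) model of Martin's Maximum, and use Observation~\ref{MainObs} to manufacture the master conditions keeping the relevant large cardinal embeddings alive. First I would pass to a forcing extension $V_1$ of $V$ in which $\kappa$ is still supercompact and carries a Laver function $\ell$, $F(\lambda,\delta)$ fails for every regular $\delta<\kappa$, and there is a regressive $f\colon\kappa\to\lambda$ with $F_{\neg}(f,\lambda,\kappa)$. This is obtained by a reverse Easton iteration of the posets $\dP_{F(\lambda)}(\delta)$ for regular $\delta\le\kappa$: by the lemma equating the strategic closure of $\dP_{F(\lambda)}(\delta)$ with the failure of $F(\lambda,\cdot)$ below $\delta$, each stage is sufficiently strategically closed and the last stage $\dP_{F(\lambda)}(\kappa)$ adds a \emph{total} function $f$. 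Preservation of the supercompactness of $\kappa$ is by the standard lifting argument; strategic closure alone would not supply the needed master conditions, but the union of a generic for $\dP_{F(\lambda)}(\delta)$ is a condition of the $\delta^+$-version by Observation~\ref{MainObs}, which is exactly what is required. Since $\dP_{F(\lambda)}(\kappa)$ is ${<}\,\kappa$-distributive, $\kappa$ stays inaccessible and $F(\lambda,\delta)$ still fails for all regular $\delta<\kappa$ in $V_1$.

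Over $V_1$ I would then run the standard revised countable support iteration $\dP_{MM}$ forcing Martin's Maximum, of length $\kappa$ and collapsing $\kappa$ to $\omega_2$, but with the Laver bookkeeping restricted so that the only iterands used are those forced to be semiproper \emph{and} to preserve the statement ``$f$ is not constant on a closed copy of $\omega_1$'' (the interleaved Levy collapses are $\sigma$-closed, hence proper, and so harmless by Lemma~\ref{ProperNotAddingSeq}). The key point — and the main technical burden — is to verify that $F_{\neg}(f,\lambda,\kappa)$ survives the entire iteration. One uses that ``$f$ is constant on a closed copy of $\omega_1$'' is equivalent to the existence of $\gamma<\kappa$ with $\cf(\gamma)=\omega_1$ and a value $v$ such that $\{\xi\in E_{\omega}^{\gamma}\mid f(\xi)\neq v\}$ is nonstationary in $\gamma$, so the preservation reduces to preservation of stationarity of subsets of $E_{\omega}^{\gamma}$; this is handled at each successor step by Lemma~\ref{ProperStatPres} (once $\gamma$ has been collapsed to size $\omega_1$ the iterand is merely semiproper, but part (2) of that Lemma still applies), and at limits by the appropriate preservation theorem for RCS iterations. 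Writing $W:=V_1[G_{MM}]$, we obtain $\kappa=\omega_2$ and $f$ witnesses $F_{\neg}(f,\lambda,\omega_2)$, which is the first clause of $\MM_{F(\lambda)}(f)$.

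For the second clause, let $\dP\in W$ preserve stationary subsets of $\omega_1$ together with $F_{\neg}(f,\lambda,\omega_2)$, with $\dot{\dP}$ a $\dP_{MM}$-name for it. Using that $\ell$ is still a Laver function in $V_1$, fix $j\colon V_1\to M$ with $\crit(j)=\kappa$, $M$ as closed as needed, and $j(\ell)(\kappa)=\dot{\dP}$. Then $j(\dP_{MM})$ agrees with $\dP_{MM}$ on its first $\kappa$ stages; since $j$ fixes the generic for $\dP_{F(\lambda)}(\kappa)$ pointwise we have $j(f)\uhr\kappa=f$, so $\dP$ is a legal iterand at stage $\kappa$ and $j(\dP_{MM})$ factors as $\dP_{MM}*\dot{\dP}*\dot{\dP}_{\mathrm{tail}}$. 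Forcing below $G_{MM}$ with $\dP*\dot{\dP}_{\mathrm{tail}}$ (let $H$ be the added generic for $\dP$) and lifting $j$ — the tail is closed enough in $M$ that the pointwise image of $G_{MM}$ has a lower bound and the remaining generic can be built by the usual counting of dense sets — gives $j\colon W\to M[\cdots]$. If $\langle D_i\mid i<\omega_1\rangle\in W$ are dense in $\dP$ and $p_i\in D_i\cap H$, then $j(p_i)\in j(D_i)$ lies in the lifted filter $j[H]$; since $j$ fixes $\omega_1$, $j(\langle D_i\rangle_{i<\omega_1})=\langle j(D_i)\rangle_{i<\omega_1}$, so $M[\cdots]$ sees a filter on $j(\dP)$ meeting each $j(D_i)$, and by elementarity $W$ has a filter on $\dP$ meeting each $D_i$ — exactly the second clause of $\MM_{F(\lambda)}(f)$.

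I expect the main obstacle to be the preservation of the witness $F_{\neg}(f,\lambda,\kappa)$ through the revised countable support iteration of the second paragraph: unlike semiproperness, this property is not automatically preserved at RCS limits, and making it go through cleanly (while still catching every relevant iterand with the Laver function, and while the cofinalities and cardinalities of ordinals below $\kappa$ change along the way) is the delicate part, resting on Lemma~\ref{ProperStatPres} and on bookkeeping ensuring that at every stage the ``bad'' $\gamma$'s are either still large or already of size $\omega_1$.
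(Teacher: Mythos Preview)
Your approach inverts the paper's order: you add the witness $f$ first (via a reverse Easton preparation) and then run a \emph{restricted} MM iteration, whereas the paper forces $\dP_{MM}*\dot{\dP}_{F(\lambda)}(\dot\omega_2)$ --- full MM first, then the witness. This inversion introduces a real gap in your argument for the second clause of $\MM_{F(\lambda)}(f)$. You set $j(\ell)(\kappa)=\dot{\dP}$ and assert that ``$\dP$ is a legal iterand at stage $\kappa$'' of $j(\dP_{MM})$, but your iterands are required to be \emph{semiproper}, while $\dP$ is only assumed to preserve stationary subsets of $\omega_1$. You never show that in $W$ every stationary-preserving, $F_{\neg}(f)$-preserving poset is semiproper; without this, the bookkeeping test at stage $\kappa$ fails, the iterand defaults to a collapse, and the lift tells you nothing about $\dP$. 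The paper handles exactly this point with a separate Foreman--Magidor--Shelah style lifting argument (taking $j(l)(\kappa)$ to name $\dP_{F(\lambda)}(\omega_2)*\Coll(\omega_1,\gamma)$) establishing that every stationary-preserving poset is semiproper in the final model, \emph{before} turning to the forcing axiom; you would need the analogue and did not supply it.

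The paper's order also dissolves the ``main obstacle'' you flag. Because $\dP_{F(\lambda)}(\omega_2)$ comes \emph{after} $\dP_{MM}$, there is no long RCS iteration through which $F_{\neg}(f)$ must be carried; the only place preservation matters is when building the master condition $q:=\bigcup G_{F(\lambda)}$ for the factor $j(\dP_{F(\lambda)}(\omega_2))$, i.e., checking that $q$ is still not constant on a closed copy of $\omega_1$ after the tail $\Coll(\omega_1,\gamma)*\dot{\dR}$ of $j(\dP_{MM})$. There the collapse is proper, so Lemma~\ref{ProperNotAddingSeq}(1) applies, and afterwards $\dom(q)=\kappa$ has size $\omega_1$, so Lemma~\ref{ProperNotAddingSeq}(2) handles the merely semiproper $\dot{\dR}$ in one shot --- no limit-stage preservation theorem for RCS iterations is needed. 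Your Easton preparation is likewise unnecessary: $\dP_{F(\lambda)}(\omega_2)$ is already ${<}\,\omega_2$-strategically closed in $V[\dP_{MM}]$ because $F(\lambda,\omega_1)$ trivially fails.
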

	
	\begin{proof}
		Let $\dP:=\dP_{MM}*\dot{\dP}_{F(\lambda)}(\dot{\omega}_2)$, where $\dP_{MM}$ is the standard iteration to force Martin's Maximum (as above). We will verify that $\dP$ gives us the required model.
		
		The first step is showing that after forcing with $\dP$, any stationary set preserving forcing is semiproper. This is done using a similar proof as for Lemma 3 in \cite{ForemanMagidorShelahMM}. The idea is that in $V[\dP]$ (just like in $V[\dP_{MM}]$) any ground-model supercompactness embedding $j\colon V\to M$ can be extended to $V[\dP]\to M[j(\dP)]$ in an extension of $V[\dP]$ using a semiproper poset.
		
		So let $\dot{\dQ}$ be a $\dP$-name for a forcing notion which preserves stationary subsets of $\omega_1$ but is not semiproper. Let $G$ be any $\dP$-generic filter. By the assumption there is a stationary set $A\subseteq[H^{V[G]}(\Theta)]^{<\omega_1}$ (for $\Theta$ sufficiently large) such that for any $N\in A$ there is some $p\in N\cap\dP$ such that there is no $(N,\dP)$-semigeneric condition below $p$. By the normality of the club filter we can assume that there is one such $p$ for all $N\in A$. Assume for simplicity that $p=1_{\dQ}$ and let $\gamma:=|H^{V[G]}(\Theta)|$.
		
		In $V$, let $j\colon V\to M$ be any $\gamma^+$-supercompact embedding (i.e. $j(\kappa)>\gamma^+$ and ${}^{\gamma^+}M\subseteq M$) such that $j(l)(\kappa)$ is a $\dP_{\kappa}$-name for $\dP_{F(\lambda)}(\dot{\omega}_2)*\dot{\Coll}(\omega_1,\gamma)$. Because $\dP_{F(\lambda)}(\dot{\omega}_2)*\dot{\Coll}(\omega_1,\gamma)$ is countably closed, it is even proper and therefore $j(\dP_{MM})$ is isomorphic to $\dP*\dP_{F(\lambda)}(\dot{\omega}_2)*\dot{\Coll}(\omega_1,\gamma)*\dot{\dR}$, where $\dot{\dR}$ is forced to be a revised countable support iteration of semiproper forcings (and thus semiproper itself).
		
		Now we show that $j$ lifts to an embedding of $V[G]$. First of all, let $G_{MM}$ be the $\dP_{MM}$-generic filter induced by $G$. Let $H_{MM}$ be any $j(\dP_{MM})$-generic filter containing $G$. Then $j$ lifts to $j\colon V[G_{MM}]\to M[H_{MM}]$. In $M[H_{MM}]$, let $q$ be the union of $G_{F(\lambda)}$, where $G_{F(\lambda)}$ is the $\dP_{F(\lambda)}(\dot{\omega}_2)^{G_{MM}}$-generic filter induced by $H_{MM}$. So $G=G_{MM}*G_{F(\lambda)}$. In $V[G_{MM}*G_{F(\lambda)}]$ (and in $M[G_{MM}*G_{F(\lambda)}]$), $q$ is not constant on any closed copy of $\omega_1$.
		
		We will now show that this is still the case in $M[H_{MM}]$. First of all, let $H_{\Coll}$ be the $\dot{\Coll}(\omega_1,\gamma)^{G_{MM}*G_{F(\lambda)}}$-generic filter induced by $H_{MM}$. Because $\dot{\Coll}(\omega_1,\gamma)^{G_{MM}*G_{F(\lambda)}}$ is countably closed and hence proper, $q$ is still not constant on any closed copy of $\omega_1$ in $M[G_{MM}*G_{F(\lambda)}*H_{\Coll}]$. Moreover, after forcing with $\Coll(\omega_1,\gamma)^{G_{MM}*G_{F(\lambda)}}$, $q$ is a function on an ordinal in $\omega_2$ with cofinality $\omega_1$. Thus, letting $H_{\dR}$ be the $\dot{\dR}^{G_{MM}*G_{F(\lambda)}*H_{\Coll}}$-generic filter induced by $H$, $q$ is still not constant on a closed copy of $\omega_1$ in $M[G_{MM}*G_{F(\lambda)}*H_{\Coll}*H_{\dR}]=M[H_{MM}]$ because $\dot{\dR}^{G_{MM}*G_{F(\lambda)}*H_{\Coll}}$ is semiproper.
		
		In summary, $q$ is a condition in $\dP_{F(\lambda)}(\dot{\omega}_2)^{H_{MM}}$. Ergo, letting $H_{F(\lambda)}$ be any $\dP_{F(\lambda)}(\dot{\omega}_2)^{H_{MM}}$-generic filter containing $q$, $j[G_{MM}*G_{F(\lambda)}]\subseteq H_{MM}*H_{F(\lambda)}$ and so $j$ lifts to $j\colon V[G_{MM}*G_{F(\lambda)}]\to M[H_{MM}*H_{F(\lambda)}]$. Let $H:=H_{MM}*H_{F(\lambda)}$ (recall $G=G_{MM}*G_{F(\lambda)}$), so $j\colon V[G]\to M[H]$.
		
		In $M[H]$, $A$ is still stationary in $[H^{V[G]}(\Theta)]^{<\omega_1}$: $A$ is stationary in $M[G]$. $M[H_{MM}]$ has been obtained by forcing with $(\dot{\Coll}(\omega_1,\gamma)*\dot{\dR})^G$ over $M[G]$. $\dot{\Coll}(\omega_1,\gamma)^G$ is proper, so it preserves the stationarity of $A$ and makes $A$ (morally) a stationary subset of $\omega_1$, so its stationarity is preserved by the semiproper forcing $\dot{\dR}^{G*H_{\Coll}}$. Lastly, $A$ is stationary in $M[H]$ by the countable closure of $\dP_{F(\lambda)}(\dot{\omega}_2)^{H_{MM}}$.
		
		By elementarity, $j(\dot{\dQ})^H$ preserves stationary subsets of $\omega_1$ in $M[H]$. Thus, if $I$ is $j(\dot{\dQ})^H$-generic, $A$ is stationary in $[H(\Theta)^{V[G]}]^{<\omega_1}$ in $M[H*I]$. In $M[H*I]$, there is a club set $C\subseteq H(j(\Theta))^{M[H*I]}$ such that whenever $N\in C$ and $\tau\in N\cap H(\Theta)^{V[G]}$ is a $\dot{\dQ}^G$-name for an element of $\omega_1$, then $j(\tau)^{H*I}\in N$. Ergo there is $N\in C$ such that $N\cap H(\Theta)^{V[G]}\in A$. In $M[H]$, let $q\in j(\dQ)^H$ force that such an $N$ with $N':=N\cap H(\Theta)^{V[G]}\in A$ exists (for some fixed $N'$). Then $q$ forces that whenever $\tau\in N'$ is a $\dot{\dQ}^G$-name for an element of $\omega_1$, then $j(\tau)^{H*I}\in N'$. Since $N'$ is countable, $j(N')=j[N']$, Ergo $q$ forces that whenever $\tau\in j(N')$ is a $j(\dot{\dQ})^{H}$-name for an element of $\omega_1$, then $\tau^{H*I}\in j(N')$. So in $M[H]$ there exists a semigeneric condition for $j(N')$ and thus in $V[G]$ there exists a semigeneric condition for $N'$. This contradicts our assumption.
		
		A very similar argument shows that $\MM_{F(\lambda)}(f)$ holds in $V[G]$: Let $\dot{\dQ}$ be a $\dP$-name for a forcing that preserves stationary subsets of $\omega_1$ and forces $F(f,\lambda,\omega_2)$ and $\dot{\mathcal{D}}$ a $\dP$-name for an $\omega_1$-sized collection of dense subsets of $\dot{\dQ}$. Let $j\colon V\to M$ be a $\gamma$-supercompact embedding (where $\gamma\geq|\dot{\dQ}|$) such that $j(l)(\kappa)$ is equal to $\dP_{F(\lambda)}(\omega_2)*\dot{\dQ}*\dot{\Coll}(\omega_1,\dot{\omega}_2)$. Just as in the last proof, there is a $j(\dP)$-generic filter $H$ such that $j$ lifts to $j\colon V[G]\to M[H]$: $\dot{\dQ}$ preserves that the generic function added by $\dP_{F(\lambda)}(\omega_2)$ is not constant on a closed copy of $\omega_1$ by assumption, this is further preserved by $\Coll(\omega_1,\omega_2)$ by the properness and lastly by the tail of the iteration as before. In $M[H]$ there is a filter $L$ which is generic for $\dot{\dQ}^G$ over $M[G]$. Clearly this filter intersects every element of $\dot{\mathcal{D}}^G$. Moreover, $j(\dot{\mathcal{D}}^G)=j[\dot{\mathcal{D}}^G]$ because it has size $\omega_1$. Ergo, $j[L]$ (which is in $M[H]$ by the closure) intersects every element of $j(\dot{\mathcal{D}}^G)$. By elementarity there is a filter in $V[G]$ for the poset $\dot{\dQ}^G$ which intersects every element of $\dot{\mathcal{D}}^G$. Thus $\MM_{F(\lambda)}(f)$ holds.
	\end{proof}
	
	We now turn to $F^+$. We first have the following analogue of Lemma \ref{ProperNotAddingSeq}:
	
	\begin{mylem}\label{ProperNotAddingSeq2}
		Let $\gamma$ be an ordinal with $\cf(\gamma)>\omega$. Let $(D_i)_{i\in\omega_1}$ be a partition of $\omega_1$, $(A_i)_{i\in\omega_1}$ a sequence of stationary subsets of $E_{\omega}^{\gamma}$ and $\dP$ a poset. Assume there is no normal function $g\colon\omega_1\to\gamma$ such that $g[D_i]\subseteq A_i$ for all $i\in\omega_1$.
		\begin{enumerate}
			\item If $\dP$ is proper, this is preserved by $\dP$.
			\item If $\dP$ is semiproper and $|\gamma|=\omega_1$, this is preserved by $\dP$.
		\end{enumerate}
	\end{mylem}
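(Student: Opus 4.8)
The plan is to mirror the proof of Lemma~\ref{ProperStatPres} (in the refined form used for Lemma~\ref{ProperNotAddingSeq}), using in place of the single stationary set $E_\omega^\beta\smallsetminus A$ the set of ``bad'' countable elementary submodels attached to the pair $((D_i)_{i\in\omega_1},(A_i)_{i\in\omega_1})$ — that is, the complement of the canonical set $S((D_i)_{i\in\omega_1},(A_i)_{i\in\omega_1},\Theta)$ of Fuchs — together with the correspondence recalled in Section~1 between continuous $\in$-chains through that set and normal functions respecting the partition.

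First I would assume toward a contradiction that $\dot{g}$ is a $\dP$-name and $p\in\dP$ forces $\dot{g}$ to be a normal function $\omega_1\to\check{\gamma}$ with $\dot{g}[\check{D}_i]\subseteq\check{A}_i$ for every $i$, and, exactly as in the proof of Lemma~\ref{ProperNotAddingSeq}, extend $p$ so that it decides $\sup(\dot{g}[\omega_1])=\check{\beta}$ for some ordinal $\beta\leq\gamma$; then $\cf^V(\beta)>\omega$, and in case~(2) also $|\beta|=\omega_1$. Fixing a large regular $\Theta$, let $\mathcal{C}$ be the club of countable $M\prec H(\Theta)$ that contain $\gamma$, $\beta$, $\dP$, $p$, $\dot{g}$, $(A_i)_{i\in\omega_1}$, $(D_i)_{i\in\omega_1}$ (and, in case~(2), a fixed bijection $b\colon\omega_1\to\beta$) as elements and witness the (semi)properness of $\dP$. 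For $M\in\mathcal{C}$ write $\mu_M:=M\cap\omega_1$, let $i_M$ be the index with $\mu_M\in D_{i_M}$, and put $\delta_M:=\sup(M\cap\beta)<\beta$. Let $S^*:=\{M\in\mathcal{C}\;|\;\delta_M\notin A_{i_M}\}$, i.e.\ the complement in $\mathcal{C}$ of the $\beta$-version of $S((D_i)_{i\in\omega_1},(A_i)_{i\in\omega_1},\Theta)$.

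The next step is to observe $S^*\neq\emptyset$: if $S^*=\emptyset$ then $\mathcal{C}$ — hence a continuous $\in$-chain contained in $\mathcal{C}$ — consists of members of $S((D_i)_{i\in\omega_1},(A_i)_{i\in\omega_1},\Theta)$ (taken relative to $\beta$), and Fuchs's argument produces a normal function $g\colon\omega_1\to\beta\subseteq\gamma$ with $g[D_i]\subseteq A_i$ for all $i$, contradicting the hypothesis. Having fixed some $M\in S^*$, I would let $q\leq p$ be $(M,\dP)$-generic in case~(1) and $(M,\dP)$-semigeneric in case~(2), take $G\ni q$ generic, and set $g:=\dot{g}^G$. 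In both cases $M[G]\cap\omega_1=\mu_M$, hence $\mu_M\in D_{i_M}$, and $g\in M[G]$; in case~(1) $M[G]\cap\beta=M\cap\beta$ by genericity, and in case~(2) $M[G]\cap\beta=b[M[G]\cap\omega_1]=b[\mu_M]=M\cap\beta$ using $b\in M[G]$ (this is where $|\beta|=\omega_1$ enters, just as in Lemma~\ref{ProperStatPres}(2)). Either way $\sup(M[G]\cap\beta)=\delta_M$. On the other hand, since $g\in M[G]$ one has $g[\mu_M]\subseteq M[G]\cap\beta$, and since $g$ is cofinal in $\beta$, elementarity of $M[G]$ gives $M[G]\cap\beta\subseteq g(\mu_M)$; hence $\delta_M=\sup(M[G]\cap\beta)=\sup g[\mu_M]=g(\mu_M)$, which lies in $A_{i_M}$ because $\mu_M\in D_{i_M}$ and $g[D_{i_M}]\subseteq A_{i_M}$. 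This contradicts $\delta_M\notin A_{i_M}$.

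The main obstacle I anticipate is the ``$S^*\neq\emptyset$'' step, specifically the verification that Fuchs's ``filling in the gaps'' survives the replacement of a regular cardinal by the possibly singular (possibly non-cardinal) ordinal $\beta$: the gap-filling between consecutive chain members $N_\xi$, $N_{\xi+1}$ requires the relevant $A_i$ to meet the interval $(\sup(N_\xi\cap\beta),\sup(N_{\xi+1}\cap\beta))$, which is automatic once $A_i$ is unbounded in $\beta$. For $i$ with $D_i$ unbounded in $\omega_1$ this unboundedness follows from the fact that $\dot{g}$ is forced to exist; the indices $i$ with $D_i$ bounded in $\omega_1$ only affect a bounded portion of $\omega_1$ and should be manageable by choosing the chain so that its early members have small intersection with $\beta$ (alternatively, when $\cf(\gamma)=\omega_1$ one may reduce to $\beta=\gamma$ at the outset, which already covers case~(2) and the typical uses of case~(1)). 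Apart from that, one must check that the restriction-to-$H(\Theta)^V$ bookkeeping and the bijection argument are carried out exactly as in Lemma~\ref{ProperStatPres}; these are routine.
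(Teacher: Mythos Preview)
Your proposal is correct and follows essentially the same route as the paper. The paper fixes a continuous $\in$-chain $(M_i)_{i\in\omega_1}$ of countable elementary submodels containing the parameters and witnessing (semi)properness, proves the Claim that some $M_i$ satisfies $\sup(M_i\cap\delta)\notin A_j$ for the $j$ with $M_i\cap\omega_1\in D_j$ (arguing, exactly as you do, that otherwise one could define the desired normal function on the club $\{M_i\cap\omega_1:i\in\omega_1\}$ and ``fill in the gaps'' as in Lemma~\ref{SRPImpliesFPlus}), and then derives the contradiction from a (semi)generic condition below $p$ for this $M_i$ in just the way you describe. Your formulation with the full club $\mathcal{C}$ and the set $S^*$ is equivalent: $S^*=\emptyset$ implies in particular that every member of a chain through $\mathcal{C}$ is ``good'', which is all the gap-filling needs. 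The obstacle you flag concerning the gap-filling when $\beta<\gamma$ is real but is also glossed over in the paper's one-line reference to Lemma~\ref{SRPImpliesFPlus}; it is handled by running the gap-filling forcing of that lemma inside $M_{i+1}$ (which contains $p$, $\dot g$, and $M_i$, hence knows that each relevant $A_j$ meets the interval $(\sup(M_i\cap\beta),\beta)$), so your concern, while legitimate, does not indicate a divergence from the paper's argument.
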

	
	\begin{proof}
		Assume toward a contradiction that there is $p\in\dP$ which forces some $\dP$-name $\dot{g}$ to be a normal function from $\omega_1$ to $\gamma$ such that $\dot{g}[D_i]\subseteq A_i$ for all $i\in\omega_1$. Assume also that $p$ forces $\sup(\im(\dot{g}))=\check{\delta}$ for some $\delta\leq\gamma$ (as before, $\delta=\gamma$ is possible).
		
		Let $(M_i)_{i\in\omega_1}$ be an $\in$-increasing and continuous sequence of elements of $[H(\Theta)]^{<\omega_1}$ such that $M_i\prec(H(\Theta),\in)$, $\dot{g},p,\delta,\gamma,\overline{A},\overline{D}\in M_i$ and $M_i$ witnesses (semi-)properness for any $i\in\omega_1$.
		
		\begin{myclaim}
			There is $i,j\in\omega_1$ such that $M_i\cap\omega_1\in D_j$ and $\sup(M_i\cap\delta)\notin A_j$.
		\end{myclaim}
		
		\begin{proof}
			Otherwise, we could define a normal function $f\colon\omega_1\to\gamma$ by letting $f(M_i\cap\omega_1):=\sup(M_i\cap\delta)$ for $i\in\omega_1$ and ``filling in the gaps'' as in the proof of Lemma \ref{SRPImpliesFPlus}.
		\end{proof}
		
		So let $i\in\omega_1$ be in the claim. Let $q\leq p$ be $(M,\dP)$-(semi-)generic and let $G$ be a $\dP$-generic filter containing $q$. We now work in $V[G]$. Since $\dot{g}^G\in M_i[G]$ and $M_i[G]\prec(H^{V[G]}(\Theta),\in)$, the image of $M_i[G]\cap\omega_1$ under $\dot{g}^G$ is unbounded in $M_i[G]\cap\delta$, so by normality $\dot{g}^G(M_i[G]\cap\omega_1)=\sup(M_i[G]\cap\delta)$. In any case, we obtain that $M_i[G]\cap\omega_1=M_i\cap\omega_1$ and $M_i[G]\cap\delta=M_i\cap\delta$ and thus a contradiction.
	\end{proof}
	
	Using Lemma \ref{ProperNotAddingSeq2} in place of Lemma \ref{ProperNotAddingSeq} in the proof of Theorem \ref{MMFLambda}, one can see that the following holds:
	
	\begin{mysen}
		Let $\kappa$ be a supercompact cardinal and $\dP_{MM}$ the standard iteration to forced Martin's Maximum defined from $\kappa$. In $V[\dP_{MM}]$, let $(D_i)_{i\in\omega_1}$ be any partition of $\omega_1$. Then there is a forcing extension of $V[\dP_{MM}]$ where there is a sequence $(A_i)_{i\in\omega_1}$ of stationary subsets of $E_{\omega}^{\omega_2}$ such that $\MM_{F^+((D_i)_{i\in\omega_1})}((A_i)_{i\in\omega_1})$ holds.
	\end{mysen}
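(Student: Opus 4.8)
The plan is to repeat the proof of Theorem~\ref{MMFLambda} essentially verbatim, replacing the poset $\dP_{F(\lambda)}$ throughout by $\dP_{F^+((D_i)_{i\in\omega_1})}$, the predicate $F_{\neg}(f,\lambda,\omega_2)$ by $F_{\neg}^+((A_i)_{i\in\omega_1},(D_i)_{i\in\omega_1},\omega_2)$, and Lemma~\ref{ProperNotAddingSeq} by its $F^+$-analogue Lemma~\ref{ProperNotAddingSeq2}; Observation~\ref{MainObs}, being stated for both posets, still supplies the master conditions. The only feature not already present in Theorem~\ref{MMFLambda} is that the partition $(D_i)_{i\in\omega_1}$ is chosen in $V[\dP_{MM}]$ rather than in $V$, but as it is coded by a subset of $\omega_1$ (which is absolute between $V$ and $V[\dP_{MM}]$) this causes no difficulty: one simply fixes in $V$ a $\dP_{MM}$-name $\dot{D}$ for it.

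Let $\kappa$ be the supercompact cardinal from which $\dP_{MM}$ is defined, so that $\omega_2^{V[\dP_{MM}]}=\kappa$. In $V[\dP_{MM}]$ the poset $\dP_{F^+((D_i)_{i\in\omega_1})}(\kappa)$ is $\kappa$-strategically closed by the corresponding lemma of Section~3, and hence adds no new $\omega_1$-sequences; in particular it preserves cardinals and stationary subsets of $\omega_1$. Let $g$ be $\dP_{F^+((D_i)_{i\in\omega_1})}(\kappa)$-generic over $V[\dP_{MM}]$, write $\bar G$ for the resulting $\dP_{MM}*\dP_{F^+((D_i)_{i\in\omega_1})}(\kappa)$-generic over $V$, and let $(A_i)_{i\in\omega_1}$ be the sequence of subsets of $\kappa$ added by $g$. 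By the relevant lemma of Section~3, $(A_i)_{i\in\omega_1}$ is a disjoint sequence of stationary subsets of $E_{\omega}^{\kappa}$ with no normal $h\colon\omega_1\to\kappa$ satisfying $h[D_i]\subseteq A_i$ for all $i$; that is, $F_{\neg}^+((A_i)_{i\in\omega_1},(D_i)_{i\in\omega_1},\omega_2)$ holds in $V[\bar G]$. This is the first clause of $\MM_{F^+((D_i)_{i\in\omega_1})}((A_i)_{i\in\omega_1})$.

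For the second clause, let $\dot{\dQ}$ be a name for a poset over $V[\bar G]$ which preserves stationary subsets of $\omega_1$ and $F_{\neg}^+((A_i)_{i\in\omega_1},(D_i)_{i\in\omega_1},\kappa)$, and let $\dot{\mathcal{D}}$ be a name for an $\omega_1$-sized family of dense subsets of $\dot{\dQ}$; fix $\gamma\geq\kappa$ with $\gamma\geq|\dot{\dQ}|$. Exactly as in Theorem~\ref{MMFLambda}, I would use the Laver function $l$ underlying $\dP_{MM}$ to obtain a $\gamma$-supercompact embedding $j\colon V\to M$ with $j(\kappa)>\gamma$, ${}^{\gamma}M\subseteq M$, and $j(l)(\kappa)$ a $\dP_{MM}$-name for $\dP_{F^+(\dot{D})}(\kappa)*\dot{\dQ}*\dot{\Coll}(\omega_1,\dot{\omega}_2)$, the last factor being a Levy collapse large enough to make the composition semiproper (as there); then $j(\dP_{MM})$ factors as $\dP_{MM}*\dP_{F^+(\dot{D})}(\kappa)*\dot{\dQ}*\dot{\Coll}(\omega_1,\dot{\omega}_2)*\dot{\dR}$ with $\dot{\dR}$ forced to be a revised countable support iteration of semiproper forcings. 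I would then build a $j(\dP_{MM})$-generic $H\supseteq\bar G$ whose stages past $\dP_{MM}*\dP_{F^+((D_i)_{i\in\omega_1})}(\kappa)$ are, in order, a $\dot{\dQ}^{\bar G}$-generic $H_{\dQ}$ over $M[\bar G]$, a $\dot{\Coll}(\omega_1,\dot{\omega}_2)$-generic, and a $\dot{\dR}$-generic. The crucial step (the one place where Lemma~\ref{ProperNotAddingSeq2} replaces Lemma~\ref{ProperNotAddingSeq}) is that $j$ lifts through the factor $\dP_{F^+((D_i)_{i\in\omega_1})}(\kappa)$ with master condition $F:=\bigcup g$: by Observation~\ref{MainObs}, $F$ is a condition of $\dP_{F^+((D_i)_{i\in\omega_1})}(\kappa^+)^{V[\bar G]}$, and to know that $F\in j(\dP_{F^+((D_i)_{i\in\omega_1})}(\kappa))^{M[H]}=\dP_{F^+((D_i)_{i\in\omega_1})}(j(\kappa))^{M[H]}$ it suffices that $F_{\neg}^+((A_i)_{i\in\omega_1},(D_i)_{i\in\omega_1},\kappa)$ holds in $M[H]$. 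It holds in $M[\bar G]$; it is preserved by $\dot{\dQ}^{\bar G}$ by the assumption on $\dot{\dQ}$; it is preserved by $\dot{\Coll}(\omega_1,\dot{\omega}_2)$ since that forcing is proper (Lemma~\ref{ProperNotAddingSeq2}(1)); and it is preserved by the semiproper tail $\dot{\dR}$, since by that stage $\kappa$ has size $\omega_1$ (Lemma~\ref{ProperNotAddingSeq2}(2)). Hence $j$ lifts to $j\colon V[\bar G]\to M[H*h]$ for a suitable $j(\dP_{F^+((D_i)_{i\in\omega_1})}(\kappa))^{M[H]}$-generic $h$ below $F$, with $j[\bar G]\subseteq H*h$.

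Finally, $H_{\dQ}$ is $\dot{\dQ}^{\bar G}$-generic over $M[\bar G]$, hence meets every member of $\dot{\mathcal{D}}^{\bar G}$; as $|\dot{\mathcal{D}}^{\bar G}|=\omega_1$ and $\crit(j)=\kappa>\omega_1$, we have $j(\dot{\mathcal{D}}^{\bar G})=j[\dot{\mathcal{D}}^{\bar G}]$, so $j[H_{\dQ}]$ (which lies in $M[H*h]$ by the closure of $M$) generates a filter on $j(\dot{\dQ}^{\bar G})$ meeting every member of $j(\dot{\mathcal{D}}^{\bar G})$. By elementarity of $j\colon V[\bar G]\to M[H*h]$ there is then a filter in $V[\bar G]$ on $\dot{\dQ}^{\bar G}$ meeting every member of $\dot{\mathcal{D}}^{\bar G}$, which establishes the second clause and hence $\MM_{F^+((D_i)_{i\in\omega_1})}((A_i)_{i\in\omega_1})$. (As in Theorem~\ref{MMFLambda}, the same argument shows in addition that in $V[\bar G]$ every poset preserving stationary subsets of $\omega_1$ is semiproper.) The one step that will require genuine care is the one highlighted above, namely verifying that $F_{\neg}^+((A_i)_{i\in\omega_1},(D_i)_{i\in\omega_1},\kappa)$ propagates along the entire tail of $j(\dP_{MM})$ so that $F=\bigcup g$ really is a legitimate master condition; everything else goes through word-for-word as in the proof of Theorem~\ref{MMFLambda}.
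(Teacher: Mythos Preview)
Your proposal is correct and follows exactly the approach the paper intends: the paper's own proof consists of the single line ``Simply force with $\dP_{MM}*\dP_{F^+((\dot{D}_i)_{i\in\omega_1})}(\dot{\omega}_2)$'', preceded by the remark that one uses Lemma~\ref{ProperNotAddingSeq2} in place of Lemma~\ref{ProperNotAddingSeq} in the proof of Theorem~\ref{MMFLambda}. You have spelled out precisely this adaptation, including the handling of the partition being named in $V[\dP_{MM}]$, so there is nothing to add.
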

	
	\begin{proof}
		Simply force with $\dP_{MM}*\dP_{F^+((\dot{D}_i)_{i\in\omega_1})}(\dot{\omega}_2)$.
	\end{proof}
	
	\section{Separating Instances of $F$}
	
	We now use $\MM_{F(\lambda)}(f)$ in order to separate all possible instances of $F(\cdot,\omega_2)$.
	
	\begin{mysen}\label{FDistinction}
		Assume $\lambda\leq\omega_2$ is a cardinal and $f\colon\kappa\to\lambda$ is a regressive function such that $\MM_{F(\lambda)}(f)$ holds and $f^{-1}[\{i\}]\cap E_{\omega}^{\omega_2}$ is stationary in $\omega_2$ for any $i\in\lambda$. Then $F(\lambda',\kappa)$ holds for all cardinals $\lambda'<\lambda$.
	\end{mysen}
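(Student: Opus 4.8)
The plan is to apply the forcing axiom $\MM_{F(\lambda)}(f)$ to a Friedman‑style club‑shooting poset. Fix a regressive $g\colon\kappa\to\lambda'$; we must produce a normal $h\colon\omega_1\to\kappa$ with $g\circ h$ constant. First I would choose the colour to shoot through. Since $\lambda'\leq\omega_1<\kappa$, the partition $(g^{-1}[\{j\}]\cap E_{\omega}^{\kappa})_{j<\lambda'}$ of the stationary set $E_{\omega}^{\kappa}$ has a stationary piece, and I claim one may choose $j<\lambda'$ so that $A:=g^{-1}[\{j\}]\cap E_{\omega}^{\kappa}$ is stationary \emph{and} $A\smallsetminus f^{-1}[\{i\}]$ is stationary for every $i<\lambda$. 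Indeed, if no such $j$ existed, then every stationary piece $A_j$ of the $g$‑partition would satisfy that $A_j\smallsetminus f^{-1}[\{i_j\}]$ is nonstationary for some $i_j<\lambda$; since a union of fewer than $\kappa$ nonstationary subsets of $\kappa$ is nonstationary, $E_{\omega}^{\kappa}$ would then be covered, modulo the nonstationary ideal, by $\bigcup_{j}f^{-1}[\{i_j\}]$, a union of at most $\lambda'<\lambda$ fibres of $f$; hence $f^{-1}[\{i^*\}]\cap E_{\omega}^{\kappa}$ would be nonstationary for any $i^*$ not among these fibres, contradicting the hypothesis.

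I would then take $\dP:=\dP(A)$, the poset of normal functions $p\colon\alpha+1\to A$ ($\alpha<\omega_1$) ordered by end‑extension. By Lemma~\ref{PAStatPres}, $\dP(A)$ preserves stationary subsets of $\omega_1$. The heart of the argument, and the step I expect to be the main obstacle, is that $\dP(A)$ also preserves $F_{\neg}(f,\lambda,\omega_2)$, i.e.\ adds no normal $h\colon\omega_1\to\omega_2$ with $f\circ h$ constant. Suppose toward a contradiction that some $p$ forces $\dot h$ to be such a function with $f\circ\dot h$ constantly $i$ and (deciding it) $\sup\im(\dot h)=\delta$. If $\delta=\omega_2$, then $S:=E_{\omega}^{\omega_2}\smallsetminus f^{-1}[\{i\}]$ is stationary (as $\MM_{F(\lambda)}(f)$ gives that $f$ is constant on no closed copy of $\omega_1$) and $S\cap A=A\smallsetminus f^{-1}[\{i\}]$ is stationary by our choice of $j$; an elementary submodel argument as in Lemma~\ref{PAStatPres} — take a countable $M\prec H(\Theta)$ with $\sup(M\cap\omega_2)\in S\cap A$ and build a condition below $p$ ending in the pair $(M\cap\omega_1,\sup(M\cap\omega_2))$ while deciding a given club name cofinally below $\sup(M\cap\omega_2)$ — shows $S$ remains stationary, contradicting that $\im(\dot h)$ is forced to be a club disjoint from $S$. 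If $\delta<\omega_2$, then $\cf^V(\delta)\geq\cf^{V[G]}(\delta)=\omega_1$ while $|\delta|\leq\omega_1$, so $\cf^V(\delta)=\omega_1$, and therefore $S:=E_{\omega}^{\delta}\smallsetminus f^{-1}[\{i\}]$ is stationary in $\delta$. Fixing some $\eta^*\in A$ which is a limit point of $A$ with $\eta^*>\delta$ (such $\eta^*$ occur stationarily often, by a Fodor argument) and a countable $M\prec H(\Theta)$ containing $p,\delta,A,\eta^*$ and an arbitrary club name $\dot C\subseteq\delta$ with $\sup(M\cap\delta)\in S$, I would build a descending $\omega$‑chain below $p$ inside $M$ deciding $\dot C$ cofinally below $\sup(M\cap\delta)$ and whose ranges increase to $\eta^*$; capping this chain with the pair $(\cdot,\eta^*)$ yields a legitimate condition of $\dP(A)$ (since $\eta^*\in A$) forcing $\sup(M\cap\delta)\in\dot C\cap S$, again a contradiction. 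Thus $\dP(A)$ is admissible for $\MM_{F(\lambda)}(f)$.

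Finally I would apply $\MM_{F(\lambda)}(f)$ to $\dP(A)$ with the $\omega_1$‑sized family $\mathcal{D}=\{E_{\delta}\mid\delta<\omega_1\}$, where $E_{\delta}:=\{p\in\dP(A)\mid\delta\in\dom(p)\}$. Each $E_{\delta}$ is dense: given $p\colon\alpha+1\to A$ with $\alpha<\delta$, the tail $A\cap(p(\alpha),\omega_2)$ is still stationary, so by the $\ZFC$ version of $F$ for countable ordertypes — which produces a normal function from $\gamma+1$ into any stationary subset of $E_{\omega}^{\omega_2}$, for every countable $\gamma$ — one extends $p$ to a normal function with domain $\delta+1$ and range in $A$, i.e.\ to some $q\leq p$ in $E_{\delta}$. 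The filter $G$ obtained from the axiom meets every $E_{\delta}$, so $h:=\bigcup G$ is a normal function $\omega_1\to A\subseteq g^{-1}[\{j\}]$; hence $g\circ h$ is constantly $j$, and $h$ witnesses $F(\lambda',\kappa)$. As $g$ was arbitrary, $F(\lambda',\kappa)$ holds.
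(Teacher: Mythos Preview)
Your overall strategy matches the paper's: choose a $g$-fibre $A$, verify that $\dP(A)$ preserves stationary subsets of $\omega_1$ and $F_{\neg}(f,\lambda,\omega_2)$, then apply the axiom to the dense sets $E_\delta$. The combinatorics differ in flavour: the paper finds one $g$-colour $i$ together with two $f$-colours $j\neq j'$ such that $g^{-1}[\{i\}]\cap f^{-1}[\{j\}]\cap E_\omega^{\omega_2}$ and $g^{-1}[\{i\}]\cap f^{-1}[\{j'\}]\cap E_\omega^{\omega_2}$ are both stationary, and then argues \emph{directly} that below any condition one can force $f\circ\dot h$ to hit $j$, and below that to hit $j'$. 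You instead choose $A$ so that $A\smallsetminus f^{-1}[\{i\}]$ is stationary for \emph{every} $i$, and prove preservation by contradiction with a case split on $\delta=\sup\im(\dot h)$. Your combinatorial claim is stronger than the paper's, but your counting argument for it is correct.

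Your case $\delta=\omega_2$ is fine. The case $\delta<\omega_2$, however, has a gap. Once $\eta^*$ is fixed as an element of $M$, the descending chain you build inside $M$ to decide $\dot C$ may well acquire ranges exceeding $\eta^*$: the dense sets ``decide an element of $\dot C$ above $\gamma$'' are met by conditions in $M$, but nothing keeps $\max\im(p_n)$ below $\eta^*$, so you cannot cap the union at $\eta^*$. The repair is much simpler than the $\eta^*$ construction: since $|\delta|\le\omega_1$ and $\cf^V(\delta)=\omega_1$, fix a normal cofinal $c\colon\omega_1\to\delta$; then $c^{-1}[E_\omega^\delta\smallsetminus f^{-1}[\{i\}]]$ is stationary in $\omega_1$, and $\dP(A)$ preserves it by Lemma~\ref{PAStatPres}. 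In the extension $\im(c)$ is still club in $\delta$, so the forced club $\im(\dot h)$ must meet $c[c^{-1}[S]]\subseteq S$, a contradiction. (It is worth noting that the paper's single-case argument tacitly assumes that $\{r:r\Vdash\sup\im(\dot h)\ge\beta\}$ is dense for every $\beta\in M\cap\omega_2$, which fails exactly when $\dot h$ is bounded; so your recognition that the bounded case needs separate treatment is an improvement over the paper even though your handling of it needs the repair above.)
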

	
	\begin{proof}
		Let $\lambda'<\lambda$ be a cardinal and $g\colon\kappa\to\lambda'$ a regressive function.
		\begin{myclaim}
			There are $i<\lambda'$ and $j,j'<\lambda$ such that both $g^{-1}[\{i\}]\cap f^{-1}[\{j\}]\cap E_{\omega}^{\omega_2}$ and $g^{-1}[\{i\}]\cap f^{-1}[\{j'\}]\cap E_{\omega}^{\omega_2}$ are stationary in $\omega_2$.
		\end{myclaim}
		
		\begin{proof}
			Let $j<\lambda$. Because $\omega_2=\bigcup_{i<\lambda'}g^{-1}[\{i\}]$ and $\lambda'<\lambda\leq\omega_2$, there is $h(j)<\lambda'$ such that $g^{-1}[\{h(j)\}]\cap f^{-1}[\{j\}]\cap E_{\omega}^{\omega_2}$ is stationary. By the pigeonhole principle there are $j,j'<\lambda$ with $h(j)=h(j')$. We can take $i:=h(j)=h(j')$.
		\end{proof}
		
		Let $\dP:=\dP(g^{-1}[\{i\}]\cap E_{\omega}^{\omega_2})$, i.e. the forcing shooting a closed copy of $\omega_1$ into $g^{-1}[\{i\}]\cap E_{\omega}^{\omega_2}$.
		
		\begin{myclaim}
			$\dP$ forces $F(f,\lambda,\omega_2)$.
		\end{myclaim}
		
		\begin{proof}
			Let $\dot{h}$ be a $\dP$-name for a normal function from $\omega_1$ into $\omega_2$ and $p\in\dP$. We will show that there is a condition $q\leq p$ that forces $\check{f}(\dot{h}(\check{\alpha}))=\check{j}$ for some $\alpha\in\omega_1$. A symmetrical argument shows that there is a further $r\leq q$ that forces $\check{f}(\dot{h}(\check{\beta}))=\check{j}'$, so that $\check{f}$ is forced by $r$ not to be constant on $\im(\dot{h})$.
			
			Since $g^{-1}[\{i\}]\cap E_{\omega}^{\omega_2}\cap f^{-1}[\{j\}]$ is stationary in $\omega_2$, we can find $M\prec H(\Theta)$ countable containing $\dP,p$ and $\dot{h}$ such that $\sup(M\cap\omega_2)\in g^{-1}[\{i\}]\cap E_{\omega}^{\omega_2}\cap f^{-1}[\{j\}]$ (as in the proof of Lemma \ref{PAStatPres}). Let $(D_n)_{n\in\omega}$ enumerate all open dense subsets of $\dP$ lying in $M$. Find a descending sequence $(p_n)_{n\in\omega}$ with $p_0\leq p$ such that $p_n\in D_n$ for all $n\in\omega$. Define $\delta_1:=M\cap\omega_1$ and $\delta_2:=\sup(M\cap\omega_2)$ and let $q:=\bigcup_np_n\cup\{(\delta_1,\delta_2)\}$.
			
			\begin{mysclaim}
				$q$ is a condition in $\dP$.
			\end{mysclaim}
			
			\begin{proof}
				Let $q':=\bigcup_np_n$. It suffices to show that $\dom(q')=M\cap\omega_1$ and $\sup(\im(q'))=\sup(M\cap\omega_2)$. For any $n\in\omega$, $p_n\in M$, so $p_n\subseteq M$ as it is countable, which implies $q'\subseteq M$ and thus $\dom(q')\subseteq M\cap\omega_1$ and $\im(q')\subseteq M\cap\omega_2$. On the other hand, for any $\alpha<M\cap\omega_1$, the set of all conditions $r$ with $\alpha\in\dom(r)$ is open dense in $\dP$ and in $M$, so $\alpha\in\dom(q')$. Additionally, for any $\alpha<\sup(M\cap\omega_2)$, there is $\beta>\alpha$ with $\beta\in M\cap\omega_2$. The set of all conditions $r$ with $\beta<\sup(\im(r))$ is open dense in $\dP$ and in $M$, so $\beta<\sup(\im(q'))$.
			\end{proof}
			
			A similar argument shows the following:
			
			\begin{mysclaim}
				$q$ forces $\dot{h}(\check{\delta}_1)=\check{\delta}_2$.
			\end{mysclaim}
			
			\begin{proof}
				Because $\dot{h}$ is forced to be continuous, it suffices to show that $q$ forces $\sup(\dot{h}[\check{\delta}_1])=\check{\delta}_2$. Whenever $\alpha\in\delta_1$, the set $D_{\alpha}$ of conditions $r$ with decide $\dot{h}(\check{\alpha})$ is open dense in $\dP$ and in $M$, so there is $n\in\omega$ with $p_n\in D_{\alpha}$. The corresponding value that is decided for $\dot{h}(\check{\alpha})$ is in $M$ as well by elementarity. This shows $\sup(\dot{h}[\check{\delta}_1])\leq\check{\delta}_2$.  On the other hand, given any $\beta\in M\cap\omega_2$ the set $D_{\beta}$ of conditions $r$ which force $\sup(\im(\dot{h}))\geq\check{\beta}$ is open dense in $\dP$ and in $M$, so again there is $n\in\omega$ with $p_n\in D_{\beta}$. This shows the equality.
				\end{proof}
				
				So in particular $q$ forces that $\check{\delta}_2\in\im(\dot{h})$. Because $\delta_2\in f^{-1}[\{j\}]$, $q$ forces that $\check{f}(\dot{h}(\check{\alpha}))=\check{j}$ for some $\alpha\in\omega_1$.
			\end{proof}
		
		Now let, for $\alpha<\omega_1$, $D_{\alpha}:=\{r\in\dP\;|\;\alpha\in\dom(r)\}$. It follows that any $D_{\alpha}$ is open dense in $\dP$ (using similar arguments to before). Thus, by $\MM_{F(\lambda)}(f)$ there is a filter $G\subseteq\dP$ which intersects any $D_{\alpha}$. It follows that $\bigcup G$ is a normal function from $\omega_1$ into $\omega_2$ such that $g$ is constant (with value $i$) on $\im(\bigcup G)$.
	\end{proof}
	
	As a corollary, we obtain that there is no implication from $F(\lambda',\omega_2)$ to $F(\lambda,\omega_2)$ when $\lambda'<\lambda$, i.e. it is consistent that there is a partition of $\omega_2$ into $17$ parts such that none of them contain a closed copy of $\omega_1$ while there is no such partition of $\omega_2$ into any fewer parts. Let us note the following two results of a similar type: In \cite{CumForeMagCanonicalStructureTwo}, Theorem 8.1, Cummings, Foreman and Magidor show that for any $\eta\leq\omega_1$ it is consistent that there are $\eta$ stationary subsets of $\omega_2\cap\cof(\omega)$ which do not reflect simultaneously while every collection of fewer than $\eta$ stationary subsets of $\omega_2\cap\cof(\omega)$ does reflect simultaneously. In \cite{CumForeMagSquareScalesStatRefl}, the same authors show that there is no implication from $\square_{\omega_1,\lambda}$ to $\square_{\omega_1,\lambda'}$ for $\lambda'<\lambda$.
	
	\begin{mycol}
		Assume $V$ is the standard model for Martin's Maximum and $\lambda\leq\omega_2$. There is a forcing extension where no cardinals are collapsed, $F(\lambda,\kappa)$ fails and $F(\lambda',\kappa)$ holds for all $\lambda'<\lambda$.
	\end{mycol}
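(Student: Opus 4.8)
The plan is to take the model to be $V[G]$ for $G$ generic over $V$ for $\dP:=\dP_{F(\lambda)}(\omega_2)$, and to read off the two halves of the statement from Theorem \ref{MMFLambda} and Theorem \ref{FDistinction}. Since the standard model for Martin's Maximum is itself a generic extension $V_0[\dP_{MM}]$ for a supercompact cardinal $\kappa$ of $V_0$ (with $\kappa=\omega_2$ in $V$), the extension $V[G]$ is precisely the extension $V_0[\dP_{MM}*\dot{\dP}_{F(\lambda)}(\dot{\omega}_2)]$ treated in Theorem \ref{MMFLambda}; hence $\MM_{F(\lambda)}(f)$ holds in $V[G]$, where $f:=\bigcup G\colon\omega_2\to\lambda$ is the generically added regressive function. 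In particular $F(f,\lambda,\omega_2)$ holds, so $F(\lambda,\omega_2)$ fails in $V[G]$. That no cardinals are collapsed follows from the analysis of $\dP_{F(\lambda)}(\kappa)$ in Section 3: assuming $\lambda\geq 2$ (which is the only interesting case, since $F(\lambda,\omega_2)$ holds outright otherwise), $F(\lambda,\delta)$ fails for every regular $\delta<\omega_2$ — there is nothing to check beyond $\delta=\omega$ (no normal $f\colon\omega_1\to\omega$ exists) and $\delta=\omega_1$ (a $2$-colouring of $\omega_1$ along a stationary, co-stationary set is constant on no club) — so $\dP$ is $\omega_1+1$-strategically closed, hence ${<}\,\omega_2$-strategically closed by Corollary \ref{ColStratClos}, hence adds no new ${<}\,\omega_2$-sequences; as $\dP$ also has size $\omega_2^{<\omega_2}=\omega_2$ in $V$, it preserves all cardinals.

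The one point not covered by the cited theorems is that $f$ satisfies the extra hypothesis of Theorem \ref{FDistinction}, namely that $f^{-1}[\{i\}]\cap E_{\omega}^{\omega_2}$ is stationary in $\omega_2$ for every $i<\lambda$. I would verify this by a density argument over $\dP$. Fix $i<\lambda$, a condition $p$, and a $\dP$-name $\dot{C}$ for a club subset of $\omega_2$. Build a descending sequence $(p_n)_{n\in\omega}$ with $p_0\leq p$ and ordinals $\gamma_n$ with $\gamma_0>i$ such that $\dom(p_n)<\gamma_{n+1}<\dom(p_{n+1})$ and $p_n\Vdash\check{\gamma}_n\in\dot{C}$; then put $\gamma:=\sup_n\gamma_n=\sup_n\dom(p_n)$ (so $\cf(\gamma)=\omega$) and $p_\omega:=\bigcup_n p_n$, which has domain $\gamma$ and forces $\check{\gamma}\in\dot{C}$ by closure of $\dot{C}$. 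The key point is that $p_\omega$ is again a condition: a normal $g\colon\omega_1\to\gamma$ with $p_\omega\circ g$ constant would have $\sup(\im(g))$ of cofinality $\omega_1$, hence either $\sup(\im(g))=\gamma$, contradicting $\cf(\gamma)=\omega$, or $\sup(\im(g))<\dom(p_n)$ for some $n$, in which case $g$ maps into $\dom(p_n)$ and $p_n\circ g=p_\omega\circ g$ is constant, contradicting $p_n\in\dP$. Finally $q:=p_\omega\cup\{(\gamma,i)\}$ is still a condition — it is regressive since $i<\gamma$, and any normal $g\colon\omega_1\to\gamma+1$ cannot take the value $\gamma$, so it maps into $\gamma$ and the previous sentence applies — while $q\leq p$, $q\Vdash\check{\gamma}\in\dot{C}$, $q(\gamma)=i$ and $\gamma\in E_{\omega}^{\omega_2}$. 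Thus, for each $i$ and each club name $\dot{C}$, the set of such $q$ is dense, and by genericity $f^{-1}[\{i\}]\cap E_{\omega}^{\omega_2}$ meets every club of $V[G]$.

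With the hypotheses of Theorem \ref{FDistinction} in hand, that theorem yields $F(\lambda',\omega_2)$ in $V[G]$ for every cardinal $\lambda'<\lambda$, and combined with the failure of $F(\lambda,\omega_2)$ and the preservation of all cardinals, $V[G]$ is exactly the required extension. The only step needing genuine care is the density argument — specifically, ruling out every normal function that could witness that $p_\omega$ or $q$ fails to be a condition — but this is the same bookkeeping already carried out in the strategic-closure lemmas for $\dP_{F(\lambda)}(\kappa)$, so no real obstacle arises.
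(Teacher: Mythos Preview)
Your proposal is correct and follows the same approach as the paper: force with $\dP_{F(\lambda)}(\omega_2)$ over the standard $\MM$ model, invoke Theorem~\ref{MMFLambda} for $\MM_{F(\lambda)}(f)$, and then apply Theorem~\ref{FDistinction}. The paper's own proof is a two-line sketch that leaves the stationarity of each $f^{-1}[\{i\}]\cap E_\omega^{\omega_2}$ to ``a simple genericity argument''; you have supplied exactly that argument, together with the cardinal-preservation check, so your write-up is simply a fuller version of the intended proof.
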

	
	\begin{proof}
		The forcing extension is obtained straightforwardly using the poset $\dP_{F(\lambda)}(\omega_2)$. A simple genericity argument shows that the generic function obtains every value stationarily often, so we can apply Theorem \ref{FDistinction}.
	\end{proof}
	
	\section{Separating Instances of $F^+$ and $\SRP$}
	
	As Fuchs showed in \cite{FuchsCanonicalFragments}, there is a natural implication between instances of $\SRP$ and instances of $F^+$: As we defined in the introduction, given a partition $(D_i)_{i\in\omega_1}$ of $\omega_1$ and a sequence $(A_i)_{i\in\omega_1}$ of stationary subsets of $E_{\omega}^{\kappa}$, there is for any $\Theta\geq\kappa$ a canonical subset of $[H(\Theta)]^{<\omega_1}$ associated with the pair $((D_i)_{i\in\omega_1},(A_i)_{i\in\omega_1})$:
	\begin{multline*}
		S((D_i)_{i\in\omega_1},(A_i)_{i\in\omega_1},\Theta):=\hspace*{\fill} \\
		\hspace*{\fill}\{M\in[H(\Theta)]^{<\omega_1}\;|\;\forall i\in\omega_1(M\cap\omega_1\in D_i\to\sup(M\cap\kappa)\in A_i)\}
	\end{multline*}
	
	We also define $\mathcal{S}((D_i)_{i\in\omega_1},\kappa,\Theta)$ as the collection of $S((D_i)_{i\in\omega_1},(A_i)_{i\in\omega_1},\Theta)\cap C$ for all sequences $(A_i)_{i\in\omega_1}$ of stationary subsets of $E_{\omega}^{\kappa}$ and clubs $C\subseteq[H(\Theta)]^{<\omega_1}$.
	
	We first answer a question from Fuchs' paper by showing that this set is projective stationary for any partition of $\omega_1$:
	
	\begin{mysen}\label{ProjStat}
		Let $(D_i)_{i\in\omega_1}$ be a partition of $\omega_1$, $\kappa\geq\omega_2$ regular and $(A_i)_{i\in\omega_1}$ a sequence of stationary subsets of $E_{\omega}^{\kappa}$. Then $S((D_i)_{i\in\omega_1},(A_i)_{i\in\omega_1},\Theta)$ is projective stationary.
	\end{mysen}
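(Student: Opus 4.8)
The plan is to prove projective stationarity directly from the definition: given any stationary $A\subseteq\omega_1$ and any club $C\subseteq[H(\Theta)]^{<\omega_1}$ (equivalently, a function $F\colon[H(\Theta)]^{<\omega}\to H(\Theta)$), I must produce $M\in S((D_i)_{i\in\omega_1},(A_i)_{i\in\omega_1},\Theta)$ with $M\cap\omega_1\in A$ and $M$ closed under $F$. Fix the unique $i^*\in\omega_1$ such that $A\cap D_{i^*}$ is stationary — wait, this need not exist, since $A$ may meet many $D_i$ stationarily; but it certainly meets \emph{some} $D_{i^*}$ on a stationary set, and that is all I need. Replacing $A$ by $A\cap D_{i^*}$, I may assume $A\subseteq D_{i^*}$. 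Now the requirement on $M$ becomes: if $M\cap\omega_1\in D_{i^*}$ then $\sup(M\cap\kappa)\in A_{i^*}$, and for $j\neq i^*$, since $M\cap\omega_1\in D_{i^*}$ forces $M\cap\omega_1\notin D_j$, those conditions are vacuous. So it suffices to find $M\prec H(\Theta)$ countable, closed under $F$, with $M\cap\omega_1\in A$ and $\sup(M\cap\kappa)\in A_{i^*}$.

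**Key steps.** This is now the standard ``catch your tail'' construction used in the proof of Lemma \ref{PAStatPres}. First I build a continuous $\in$-increasing chain $(M_\xi)_{\xi<\omega_1}$ of countable elementary submodels of $H(\Theta)$, each closed under $F$ and containing $A$, $(A_i)_{i\in\omega_1}$, $(D_i)_{i\in\omega_1}$, $\kappa$ and $\Theta$. The set $\{\sup(M_\xi\cap\kappa)\;|\;\xi<\omega_1\}$ is club in its supremum, which is an ordinal of cofinality $\omega_1$; but I want a \emph{single} model whose sup lands in $A_{i^*}$, so instead I interleave. Since $A_{i^*}\subseteq E_\omega^\kappa$ is stationary, pick $\delta\in A_{i^*}$ that is a limit of a chain; concretely, fix a countable cofinal set $\{\gamma_n\;|\;n\in\omega\}\subseteq\delta$. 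I first choose $M_0\prec H(\Theta)$ countable, closed under $F$, containing all relevant parameters together with some $\gamma_0$; then recursively pick $M_{n+1}\supseteq M_n$ countable, closed under $F$, with $\gamma_{n+1}\in M_{n+1}$ and $M_n\in M_{n+1}$, \emph{and} arranging $M_{n+1}\cap\omega_1$ to climb through $A$. The last part is the crux: I use that $A$ is stationary in $\omega_1$ together with the elementarity, exactly as in Lemma \ref{PAStatPres} — given any countable $N_0$, since $A$ is stationary one can find a countable $N\supseteq N_0$ closed under $F$ with $N\cap\omega_1\in A$ (run an internal $\omega$-length construction inside a larger model reflecting the relevant facts). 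Let $M:=\bigcup_n M_n$. Then $M$ is countable, closed under $F$, $M\cap\omega_1\in A\subseteq D_{i^*}$, and $\sup(M\cap\kappa)=\sup_n\gamma_n=\delta\in A_{i^*}$, so $M$ is the required member of $S((D_i)_{i\in\omega_1},(A_i)_{i\in\omega_1},\Theta)$ lying in $C$ with $M\cap\omega_1\in A$.

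**Main obstacle.** The genuinely delicate point is the simultaneous bookkeeping: I need $M\cap\omega_1$ to land in the prescribed stationary set $A$ \emph{and} $\sup(M\cap\kappa)$ to land in the prescribed stationary set $A_{i^*}$, and these two targets live at different cardinals. The trick that makes it go through is that $A_{i^*}$ consists of ordinals of cofinality $\omega$, so I only need to absorb a \emph{countable} cofinal sequence into $M$, leaving room to also perform the $\omega$-step ``hit $A$'' construction on the $\omega_1$ side without conflict — the two requirements decouple because one is about the $\omega_1$-trace and the other about a countable cofinal subset of the $\kappa$-trace. I should be careful to state the auxiliary fact (stationarity of $A$ lets us close off a countable $F$-closed model with $N\cap\omega_1\in A$) cleanly, since it is reused, and to note that the argument uses only that each $A_i\subseteq E_\omega^\kappa$ is stationary, nothing about the partition $(D_i)_{i\in\omega_1}$ beyond its being a partition.
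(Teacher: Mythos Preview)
Your reduction to a single $i^*$ is where the argument breaks. You assert that the stationary set $A$ must meet some $D_{i^*}$ on a stationary set, but this is false in general: take the partition $D_i=\{i\}$ into singletons, so that $A\cap D_i$ is at most a singleton, never stationary. More generally, any stationary subset of $\omega_1$ is trivially a union of $\omega_1$ nonstationary sets, so no Fodor-type reduction is available. This is not a technicality but the heart of the difficulty: the target $A_i$ for $\sup(M\cap\kappa)$ depends on which $D_i$ contains $M\cap\omega_1$, and you cannot decouple these by fixing $i$ in advance. Even granting your reduction, there is a second gap: you pick $\delta\in A_{i^*}$, absorb a countable cofinal sequence, and then claim $\sup(M\cap\kappa)=\delta$. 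But your $M_n$'s are elementary in $H(\Theta)$ with $\kappa\in M_n$, so they will typically contain ordinals of $\kappa$ above $\delta$; nothing bounds $\sup(M\cap\kappa)$ from above. The actual argument of Lemma~\ref{PAStatPres} that you cite first finds a model $M^*$ of size $\omega_1$ with $\omega_1\subseteq M^*$ and $\sup(M^*\cap\kappa)$ in the target set, and only then takes the countable $N$ \emph{inside} $M^*$ --- the containment is what provides the upper bound you are missing.

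The paper closes the real gap with a tree argument due to Shelah. One defines $f\colon[\kappa^{<\omega}]\to\omega_1$ by letting $f(b)$ be the least $\alpha\in S$ with $\Hull^{\mathcal{H}}(b\cup\alpha)\cap\omega_1=\alpha$, observes that each $f^{-1}[\alpha+1]$ is closed, and applies Shelah's Lemma~3.5 (Chapter~XI of \emph{Proper and Improper Forcing}) to obtain a subtree $T'$ in which every node has $\kappa$ successors and on which $f$ is bounded; a minimality argument then yields a single $\alpha\in S$ with $\Hull^{\mathcal{H}}(b\cup\alpha)\cap\omega_1=\alpha$ for \emph{every} branch $b$ of $T'$. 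With $\alpha$ (and hence the relevant $i$ with $\alpha\in D_i$) now fixed, one chooses $\gamma\in A_i$ in two auxiliary clubs and builds a branch $b\subseteq\gamma$ through $T'$ so that $\sup(\Hull^{\mathcal{H}}(b\cup\alpha)\cap\kappa)=\gamma$. The tree lemma is exactly what lets one stabilize the $\omega_1$-trace first while retaining full $\kappa$-splitting freedom to control the $\kappa$-trace afterward --- the step your sketch does not supply.
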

	
	\begin{proof}
		Let $C\subseteq[H(\Theta)]^{<\omega_1}$ be club and $S\subseteq\omega_1$ stationary. Our aim is to find $M\in C\cap S((D_i)_{i\in\omega_1},(A_i)_{i\in\omega_1},\Theta)$ with $M\cap\omega_1\in S$. Let $\mathcal{H}$ be an expansion of $H(\Theta)$ coding all necessary information such that any $M\prec \mathcal{H}$ is in $C$.
		
		Let $T$ be the tree $\kappa^{<\omega}$, i.e. $T$ consists of all finite sequences of elements of $\omega_2$. Define a function $f\colon[T]\to\omega_1$ by letting $f(b)$ be the minimum $\alpha\in S$ such that $\Hull^{\mathcal{H}}(b\cup\alpha)\cap\omega_1=\alpha$. Such an $\alpha$ exists as the set of all $\beta$ with $\Hull^{\mathcal{H}}(b\cup\beta)\cap\omega_1=\beta$ is club in $\omega_1$.
		
		It follows that for any $\alpha<\omega_1$, the set $f^{-1}[\alpha+1]$ is closed in the tree topology: If $f(b)>\alpha$ then there is a finite $\eta\sqsubset b$ such that $\Hull^{\mathcal{H}}(\eta\cup\alpha)\cap\omega_1>\alpha$. Consequently, $\{c\in\kappa^{<\omega}\;|\;\eta\sqsubset c\}$ is open, contains $b$ and is disjoint from $f^{-1}[\alpha+1]$.
		
		By Lemma 3.5 in \cite[chapter XI]{ShelahProperImproper}, there exists a tree $T''\leq T$ and $\beta\in\omega_1$ such that $|\Suc_{T''}(\eta)|=\kappa$ for any $\eta\in T''$ and $[T'']\subseteq f^{-1}[\beta+1]$. Let $\alpha\in\omega_1$ be minimal such that for some $\eta\in T''$ and any branch $b$ of $T''$ containing $\eta$, $f(b)\leq\alpha$. Let
		$$T':=T''\uhr\eta:=\{\eta'\in T''\;|\;\eta'\sqsubseteq\eta\vee \eta\sqsubseteq\eta'\}$$
		It follows from the minimality of $\alpha$ that for any $\eta\in T'$ there is a branch $b$ of $T'$ with $f(b)=\alpha$: We know $f(b)\leq\alpha$ and if $f(b)<\alpha$ for all such branches, $\eta$ would witness that $\alpha$ was not minimal.
		
		However, this actually implies that $\Hull^{\mathcal{H}}(b\cup\alpha)\cap\omega_1=\alpha$ for any branch $b$ of $T'$: Otherwise there would be a finite $\eta\sqsubset b$ such that $\Hull^{\mathcal{H}}(\eta\cup\alpha)\cap\omega_1>\alpha$ but then we could never extend $\eta$ to a branch $b'$ with $f(b')=\alpha$.
		
		Now consider the following clubs:
		
		\begin{enumerate}
			\item Let $C_0\subseteq\kappa$ consist of all those $\gamma\in\kappa$ such that $\sup(\Hull^{\mathcal{H}}(\alpha\cup s)\cap\kappa)<\gamma$ for all $s\in[\gamma]^{<\omega}$.
			\item Let $C_1\subseteq\kappa$ consist of all those $\gamma\in\omega_2$ such that whenever $\eta\in T'$ is above the stem with $\eta\in\gamma^{<\omega}$ and $\alpha\in\gamma$ there is $\beta\in(\alpha,\gamma)$ such that $\eta^{\frown}\beta\in T'$.
		\end{enumerate}
		
		$C_0$ is clearly club in $\kappa$. $C_1$ is club in $\kappa$ as well because whenever $M$ is an elementary submodel of $(H(\Theta),T',\in)$ has size $\omega_1$, $M\cap\kappa$ is in $C_1$ (this shows unboundedness, closure is clear).
		
		Let $i$ be such that $\alpha\in D_i$. Let $\gamma\in A_i\cap C_0\cap C_1$. Fix sequences $(\alpha_n)_{n\in\omega}$ converging to $\alpha$ and $(\gamma_n)_{n\in\omega}$ converging to $\gamma$ (this is possible as $A_{\alpha}\subseteq\cof(\omega)$).
		
		We construct a branch $b$ of $T'$ as follows: Let $\eta_0$ be the stem of $T'$. Given $\eta_n$, find $\eta_{n+1}$ in $T'$ such that $\max(\eta_{n+1})>\gamma_n$. Let $b:=\bigcup_n\eta_n$ and let $M:=\Hull^{\mathcal{H}}(\alpha\cup b)\in C$. By assumption we have $M\cap\omega_1=\alpha\in S\cap D_i$. Furthermore we have $\gamma\leq\sup(M\cap\kappa)$ by the construction of $b$ and $\gamma\geq\sup(M\cap\kappa)$ by $\gamma\in C_0$, so $\sup(M\cap\kappa)\in A_i$.
		
		Consequently, $M\in C\cap S((D_i)_{i\in\omega_1},(A_i)_{i\in\omega_1},\Theta)$ and $M\cap\omega_1\in S$.
	\end{proof}
	
	It is clear that any intersection of a projective stationary set with a club is projective stationary. Following an argument of Fuchs, this shows that $\SRP$ implies $F^+((D_i)_{i\in\omega_1},\kappa)$ for any partition $(D_i)_{i\in\omega_1}$ of $\omega_1$:
	
	\begin{mysen}\label{ThmSRP}
		Let $(D_i)_{i\in\omega_1}$ be a partition of $\omega_1$ and $\kappa\geq\omega_2$ regular. Let $\Theta\geq\kappa$ be sufficiently large. Then $\SRP(\mathcal{S}((D_i)_{i\in\omega_1},\kappa,\Theta))$ implies $F^+((D_i)_{i\in\omega_1},\kappa)$.
	\end{mysen}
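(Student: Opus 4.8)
The plan is to run the argument of Fuchs alluded to in the introduction, using Theorem~\ref{ProjStat} to supply the projective stationarity that makes $\SRP$ applicable. Fix a sequence $(A_i)_{i\in\omega_1}$ of stationary subsets of $E_\omega^\kappa$; I must produce a normal $f\colon\omega_1\to\kappa$ with $f[D_i]\subseteq A_i$ for all $i$. For $\alpha<\omega_1$ write $i(\alpha)$ for the unique index with $\alpha\in D_{i(\alpha)}$ and set $B_\alpha:=A_{i(\alpha)}$, so the requirement becomes $f(\alpha)\in B_\alpha$ for every $\alpha$. By Theorem~\ref{ProjStat} the set $S:=S((D_i)_{i\in\omega_1},(A_i)_{i\in\omega_1},\Theta)$ is projective stationary, hence so is $S\cap C$ for the club $C$ of all $M\prec H(\Theta)$ having $(A_i)_{i\in\omega_1}$, $(D_i)_{i\in\omega_1}$, $\kappa$ and the function $\alpha\mapsto i(\alpha)$ as elements. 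Since $S\cap C\in\mathcal S((D_i)_{i\in\omega_1},\kappa,\Theta)$, the hypothesis $\SRP(\mathcal S((D_i)_{i\in\omega_1},\kappa,\Theta))$ yields an increasing and continuous $\in$-chain $(N_k)_{k\in\omega_1}$ of members of $S\cap C$.

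Next I would read off the ``skeleton'' of $f$. Put $\delta_k:=N_k\cap\omega_1$ and $\gamma_k:=\sup(N_k\cap\kappa)$. Then $E:=\{\delta_k\mid k\in\omega_1\}$ is a club in $\omega_1$, each $\delta_k$ is a limit ordinal, each $\gamma_k$ lies in $E_\omega^\kappa$, and the assignment $f(\delta_k):=\gamma_k$ is strictly increasing and continuous on $E$ because the chain is; continuity at limit points of $E$ is automatic. Since $N_k\in S$ and $\delta_k\in D_{i(\delta_k)}$ we also have $\gamma_k\in B_{\delta_k}$. Two elementarity facts drive the rest: because $\alpha\mapsto i(\alpha)$ belongs to $N_k$, the ordinal $\delta_k$ is closed under it, so for every $\alpha<\delta_k$ we have $B_\alpha=A_{i(\alpha)}\in N_k$; and since $B_\alpha$ is stationary in $\kappa$ and $N_k\prec H(\Theta)$, the set $B_\alpha\cap N_k$ is cofinal in $\gamma_k$, i.e.\ $\gamma_k$ is a limit point of $B_\alpha$. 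Moreover $\gamma_{k'}\in N_k$ whenever $k'<k$ (as $N_{k'}\in N_k$).

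It remains to ``fill in the gaps'', extending $f$ to a normal function on all of $\omega_1$. The gaps are $[0,\delta_0)$ and $[\delta_k,\delta_{k+1})$; since every $N\cap\omega_1$ is a limit ordinal, each has countable order type, that order type is a limit ordinal, and $\delta_{k+1}$ is a limit, so on $[\delta_k,\delta_{k+1})$ I must build a normal $g$ with $g(\delta_k)=\gamma_k$, $\sup g=\gamma_{k+1}$ and $g(\alpha)\in B_\alpha$ throughout (the initial interval $[0,\delta_0)$ is handled identically with upper target $\gamma_0$, choosing the finitely many values below $\omega$ by hand). I would do this by recursion on the order type $\tau$ of the gap. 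Successor steps are easy: pick the next value in the relevant $B_\alpha$ above the current one, and, at the $m$-th of a fixed sequence of milestones, above the $m$-th term of a fixed cofinal $\omega$-sequence of $\gamma_{k+1}$, so as to force $\sup g=\gamma_{k+1}$ — this is possible since that $B_\alpha$ is cofinal in $\gamma_{k+1}$ by the elementarity facts above. The delicate point is the limit stages: at a limit ordinal $\alpha$ of the gap the value $g(\alpha)$ is forced to be the supremum of the values already chosen, which must lie in $B_\alpha$. To arrange this I would, before the recursion, use elementarity of $N_{k+1}$ (which contains every $B_\beta$ with $\beta<\delta_{k+1}$, every $\gamma_{k'}$ with $k'\le k$, and the whole countable gap) to choose for each limit ordinal $\alpha$ of the gap a target $t_\alpha\in B_\alpha$ that is a common limit point of all the $B_\beta$ with $\beta<\alpha$ in the gap: such $t_\alpha$ exists because being such a common limit point of countably many stationary sets defines a club in $\kappa$ which meets the stationary set $B_\alpha$, and a simultaneous recursion makes the $t_\alpha$ strictly increasing and cofinal in $\gamma_{k+1}$, all of them still below $\gamma_{k+1}$ by elementarity of $N_{k+1}$. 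Routing $g$ through these targets makes the limit-stage suprema come out in the right sets. Gluing the pieces along $E$ produces the desired normal $f$, and then, as noted in the introduction, $\SRP$ implying this for the fixed partition $(D_i)_{i\in\omega_1}$ is exactly $F^+((D_i)_{i\in\omega_1},\kappa)$.

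The main obstacle is precisely this gap-filling, namely arranging continuity of $f$ at the countably many limit ordinals inside each gap while keeping $f(\alpha)\in B_\alpha$. Everything hinges on the two features isolated above: that the gap order types are countable, so that at each limit stage only countably many constraints have accumulated and their limit-point sets intersect down to a club; and that the chain consists of elementary submodels of $H(\Theta)$, so that the relevant $B_\alpha$ are seen as stationary, hence cofinal below the suprema $\gamma_k$ that serve as targets. Once the gap lemma is in place, the remainder is routine bookkeeping.
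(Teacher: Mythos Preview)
Your proposal is correct and follows the same overall architecture as the paper: apply $\SRP$ to $S((D_i)_i,(A_i)_i,\Theta)\cap C$, read off the skeleton $f(\delta_k)=\gamma_k$ on the club $E=\{\delta_k\mid k<\omega_1\}$, then fill the gaps $[\delta_k,\delta_{k+1})$ using elementarity of $N_{k+1}$. The only real difference is in the presentation of the gap-filling. The paper packages it as a genericity argument: it introduces a poset $\dP_i$ (intended to consist of normal functions $p\colon[\delta_i,\alpha]\to\kappa$ with $p(\beta)\in A_j$ for $\beta\in D_j$), observes $\dP_i\in X_{i+1}$, and builds an $(X_{i+1},\dP_i)$-generic descending $\omega$-sequence; genericity then yields $\dom(\bigcup_n p_n^i)=[\delta_i,\delta_{i+1})$ and $\sup(\im(\bigcup_n p_n^i))=\gamma_{i+1}$ for free. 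You instead unwind this by hand via targets $t_\alpha$ chosen in $B_\alpha\cap\{\text{limit points of all }B_\beta,\beta<\alpha\}$. Both routes rest on the same combinatorial fact (the intersection of countably many clubs of limit points is a club meeting each $B_\alpha$), and the paper's density claims for $\dP_i$ implicitly require exactly your argument. One point to tighten in your write-up: at a limit-of-limits $\alpha$ in the gap, continuity forces $g(\alpha)=\sup_{\beta<\alpha,\,\beta\text{ limit}}t_\beta$, so your target sequence must itself be continuous, not merely strictly increasing; this is arranged by the same recursion one level down, but you should say so explicitly rather than leaving it inside ``simultaneous recursion''.
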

	
	\begin{proof}
		Let $(A_i)_{i\in\omega_1}$ be a sequence of stationary subsets of $E_{\omega}^{\kappa}$. Let $\Theta\geq\kappa$ be large enough. Let $C\subseteq[H(\Theta)]^{<\omega_1}$ be the club of all $M\prec H(\Theta)$ such that $(D_i)_{i\in\omega_1},(A_i)_{i\in\omega_1}\in M$. Since $\SRP(\mathcal{S}((D_i)_{i\in\omega_1},\kappa,\Theta))$ holds, there is a sequence $(X_i)_{i\in\omega_1}$ of elements of $S((D_i)_{i\in\omega_1},(A_i)_{i\in\omega_1},\Theta)\cap C$ such that $X_i\in X_{i+1}$ for any $i\in\omega_1$ and $X_i=\bigcup_{j<i}X_j$ for any limit $j\in\omega_1$. Let $D\subseteq\omega_1$ be the club consisting of $X_i\cap\omega_1$ for $i\in\omega_1$ and define $f$ on $D$ by letting $f(X_i\cap\omega_1):=\sup(X_i\cap\kappa)$. Then $f$ is a normal function from $D$ into $\omega_2$ and satisfies $f[D_i\cap D]\subseteq A_i$. So all that is left is to ``fill in the gaps''.
		
		For $i\in\omega_1$, let $\dP_i$ be the forcing notion of all functions $p\colon[X_i\cap\omega_1,\alpha]$ where $\alpha<\omega_1$ such that $p(\beta)\in A_j$ whenever $\beta\in D_j\cap\dom(p)$ for all $j\in\omega_1$. Because $X_i\in X_{i+1}$, $\dP_i\in X_{i+1}$. Let $(D_n^i)_{n\in\omega}$ enumerate all open dense subsets of $\dP_i$ lying in $X_{i+1}$. Let $(p_n^i)_{n\in\omega}$ be a descending sequence of elements of $\dP_i\cap X_{i+1}$ such that $p_n^i\in D_n^i$ for any $n\in\omega$. Let $p_i:=\bigcup_np_n^i$. It follows that $\dom(p_i)=[X_i\cap\omega_1,X_{i+1}\cap\omega_1)$ and $\sup(\im(p))=\sup(X_{i+1}\cap\kappa)$ by genericity. Ergo the function $g$ defined by
		$$g:=f\cup\bigcup_{i<\omega_1}p_i$$
		is a normal function from $\omega_1$ into $\kappa$ such that $g[D_i]\subseteq A_i$ for any $i\in\omega_1$.
	\end{proof}
	
	We now investigate implications between instances of $\SRP$ and $F^+$. We first introduce two partial orders on the set of partitions of $\omega_1$:
	
	\begin{mydef}
		Let $\overline{D}=(D_i)_{i\in\omega_1}$ and $\overline{E}=(E_i)_{i\in\omega_1}$ be partitions of $\omega_1$.
		
		\begin{enumerate}
			\item We write $\overline{D}\leq\overline{E}$ if there is a function $h\colon\omega_1\to\omega_1$ such that for any $i\in\omega_1$, $D_i\subseteq E_{h(i)}$.
			\item We write $\overline{D}\leq^*\overline{E}$ if there is a function $h\colon\omega_1\to\omega_1$ and a club $C\subseteq\omega_1$ such that for any $i\in\omega_1$, $D_i\cap C\subseteq E_{h(i)}$.
		\end{enumerate}
	\end{mydef}
	
	So $\overline{D}\leq\overline{E}$ states that $\overline{D}$ refines $\overline{E}$ while $\overline{D}\leq^*\overline{E}$ states that $\overline{D}$ refines $\overline{E}$ on a club.
	
	\begin{mybem}\label{RemEqLeqStar}
		The following reformulation of $\leq^*$ is useful: $\overline{D}\leq^*\overline{E}$ if and only if there is a function $h\colon\omega_1\to\omega_1$ such that the set
		$$S(h):=\{\alpha\in\omega_1\;|\;\exists i(\alpha\in D_i\smallsetminus E_{h(i)})\}$$
		is nonstationary: Assume $\overline{D}\leq^*\overline{E}$, witnessed by $h$ and $C$. Then for any $\alpha\in C$ and any $i\in\omega_1$, if $\alpha\in C\cap D_i$, $\alpha\in E_{h(i)}$. Thus $C$ is disjoint from $S(h)$. On the other hand, assume there is $h$ such that $S(h)$ is nonstationary, with $C\subseteq\omega_1$ club and disjoint from $S(h)$. Then for any $\alpha\in C$ and all $i\in\omega_1$, $\alpha\in D_i$ implies $\alpha\in E_{h(i)}$, so $D_i\cap C\subseteq E_{h(i)}\cap C$.
	\end{mybem}
	
	This enables us to show the following:
	
	\begin{mylem}
		Assume $(D_i)_{i\in\omega_1}$ and $(E_i)_{i\in\omega_1}$ are partitions of $\omega_1$ such that $\overline{D}\leq\overline{E}$ and $\kappa\geq\omega_2$ is regular. Then $F^+(\overline{D},\kappa)$ implies $F^+(\overline{E},\kappa)$.
	\end{mylem}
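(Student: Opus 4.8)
The plan is to reduce an arbitrary instance of $F^+(\overline{E},\kappa)$ directly to an instance of $F^+(\overline{D},\kappa)$ by pushing a given sequence of stationary sets through the refinement function. Fix a function $h\colon\omega_1\to\omega_1$ witnessing $\overline{D}\leq\overline{E}$, so that $D_i\subseteq E_{h(i)}$ for all $i\in\omega_1$. Given an arbitrary sequence $(B_i)_{i\in\omega_1}$ of stationary subsets of $E_{\omega}^{\kappa}$, I would set $A_i:=B_{h(i)}$ for each $i\in\omega_1$. Every $A_i$ is again a stationary subset of $E_{\omega}^{\kappa}$, so $F^+(\overline{D},\kappa)$ applies and yields a normal function $f\colon\omega_1\to\kappa$ with $f[D_i]\subseteq A_i=B_{h(i)}$ for all $i\in\omega_1$.

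It then remains to verify that this same $f$ witnesses $F^+(\overline{E},\kappa)$ for the sequence $(B_i)_{i\in\omega_1}$, i.e.\ that $f[E_j]\subseteq B_j$ for every $j\in\omega_1$. The key step is the following matching through a single ordinal: if $\alpha\in E_j$, then since $(D_i)_{i\in\omega_1}$ is a partition of $\omega_1$ there is a (unique) $i$ with $\alpha\in D_i$, hence $\alpha\in D_i\subseteq E_{h(i)}$; as $(E_i)_{i\in\omega_1}$ is also a partition and $\alpha$ lies in both $E_j$ and $E_{h(i)}$, we must have $j=h(i)$. Therefore $f(\alpha)\in f[D_i]\subseteq B_{h(i)}=B_j$, and since $\alpha\in E_j$ was arbitrary we get $f[E_j]\subseteq B_j$. (The case $E_j=\emptyset$ is vacuous, and the possible emptiness of some $D_i$ causes no trouble, since the argument only ever invokes the unique block of $\overline{D}$ actually containing a given ordinal.)

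There is no genuine obstacle here; the only point requiring mild care is the bookkeeping, namely that the matching must be run ``through $\alpha$'' (which block of $\overline{D}$ contains $\alpha$) rather than by trying to identify each $E_j$ with the union of the $D_i$ it contains — the latter formulation would implicitly demand that $h$ be surjective and would stumble on empty blocks, whereas the elementwise argument avoids both issues. I expect the final write-up to be only a few lines long.
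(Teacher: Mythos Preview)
Your proposal is correct and follows essentially the same route as the paper: both define $A_i:=B_{h(i)}$, apply $F^+(\overline{D},\kappa)$ to obtain $f$, and then verify $f[E_j]\subseteq B_j$ by the elementwise argument that $\alpha\in E_j\cap D_i$ forces $h(i)=j$ via the partition property. Your additional remarks on the bookkeeping (empty blocks, non-surjectivity of $h$) are apt but not needed for the write-up.
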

	
	\begin{proof}
		Let $h$ witness $\overline{D}\leq\overline{E}$.
		
		Let $(B_i)_{i\in\omega_1}$ be a sequence of stationary subsets of $E_{\omega}^{\kappa}$. Define $(A_i)_{i\in\omega_1}$ by letting $A_i:=B_{h(i)}$. Applying $F^+(\overline{D},\kappa)$, let $f\colon\omega_1\to\kappa$ be a normal function such that $f[D_i]\subseteq A_i$. Let $j\in\omega_1$ and $\alpha\in E_j$. Let $i\in\omega_1$ be such that $\alpha\in D_i$. Then $h(i)=j$, since $D_i\subseteq E_{h(i)}$ and $(D_i)_{i\in\omega_1}$, $(E_i)_{i\in\omega_1}$ are partitions. Ergo
		$$f(\alpha)\in A_i=B_{h(i)}=B_j$$
		In summary, $f[E_j]\subseteq B_j$ for any $j\in\omega_1$.
	\end{proof}
	
	In case we have obtained $F^+$ through $\SRP$, we can get away with the weaker ordering $\leq^*$. If we had tried to do the proof above using $\leq^*$, we could only have defined our function on the club $C$ witnessing $\overline{D}\leq^*\overline{E}$. It is not clear if we can always find a function $f$ defined on $C$ that can then be extended to the whole of $\omega_1$ (it is easy to see that there exist functions which cannot be extended, e.g. if $f(\alpha+2)=f(\alpha)+2$, $f(\alpha+1)=f(\alpha)+1$ but it is possible that this is not in the required stationary set).
	
	\begin{mylem}\label{SRPImpliesFPlus}
		Assume $(D_i)_{i\in\omega_1}$ and $(E_i)_{i\in\omega_1}$ are partitions of $\omega_1$ such that $\overline{D}\leq^*\overline{E}$ and $\kappa\geq\omega_2$ is regular. Let $\Theta\geq\kappa$ be sufficiently large. Then $\SRP(\mathcal{S}((D_i)_{i\in\omega_1},\kappa,\Theta))$ implies $F^+(\overline{E},\kappa)$.
	\end{mylem}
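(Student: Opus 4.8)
The plan is to run the proof of Theorem \ref{ThmSRP} directly rather than reducing to the previous Lemma, using the club witnessing $\overline{D}\leq^*\overline{E}$ to control the $\SRP$-chain of models.

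Fix a function $h\colon\omega_1\to\omega_1$ and a club $C\subseteq\omega_1$ witnessing $\overline{D}\leq^*\overline{E}$, so that $D_i\cap C\subseteq E_{h(i)}$ for all $i$. Let $(B_j)_{j\in\omega_1}$ be an arbitrary sequence of stationary subsets of $E_{\omega}^{\kappa}$; we must produce a normal $g\colon\omega_1\to\kappa$ with $g[E_j]\subseteq B_j$ for all $j$. Set $A_k:=B_{h(k)}$ for $k\in\omega_1$, and let $C'\subseteq[H(\Theta)]^{<\omega_1}$ be the club of those $M\prec H(\Theta)$ with $\overline{D},\overline{E},(B_j)_{j\in\omega_1},(A_k)_{k\in\omega_1},h,C\in M$ and $M\cap\omega_1\in C$ (this is a club because $C\in M$ is unbounded in $\omega_1$, so $C\cap(M\cap\omega_1)$ is unbounded in the limit ordinal $M\cap\omega_1$, whence $M\cap\omega_1\in C$). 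Put $S:=S(\overline{D},(A_k)_{k\in\omega_1},\Theta)\cap C'$. By Theorem \ref{ProjStat} together with the fact that the intersection of a projective stationary set with a club is projective stationary, $S$ is projective stationary; moreover $S\in\mathcal{S}(\overline{D},\kappa,\Theta)$, so $\SRP(\mathcal{S}(\overline{D},\kappa,\Theta))$ yields an increasing and continuous $\in$-chain $(X_i)_{i\in\omega_1}$ of members of $S$.

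The key point is that this chain lives on $C$: each $X_i\cap\omega_1\in C$. Hence, if $X_i\cap\omega_1\in E_j$ and $k$ is the unique index with $X_i\cap\omega_1\in D_k$, then $X_i\cap\omega_1\in D_k\cap C\subseteq E_{h(k)}$, and since $\overline{E}$ is a partition this forces $h(k)=j$; as $X_i\in S(\overline{D},(A_k)_{k\in\omega_1},\Theta)$, we get $\sup(X_i\cap\kappa)\in A_k=B_{h(k)}=B_j$. Thus, writing $D^*:=\{X_i\cap\omega_1\;|\;i\in\omega_1\}$ (a club by continuity of the chain) and defining $f$ on $D^*$ by $f(X_i\cap\omega_1):=\sup(X_i\cap\kappa)$, we obtain a normal function $f\colon D^*\to\kappa$ with $f[E_j\cap D^*]\subseteq B_j$ for every $j\in\omega_1$.

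It remains to fill in the gaps exactly as in the proof of Theorem \ref{ThmSRP}. For each $i\in\omega_1$ let $\dP_i$ be the poset of normal functions $p\colon[X_i\cap\omega_1,\alpha]\to\kappa$ with $\alpha<\omega_1$ such that $p(\beta)\in B_j$ whenever $\beta\in E_j\cap\dom(p)$. Since $X_i\in X_{i+1}$ we have $\dP_i\in X_{i+1}$; enumerating the open dense subsets of $\dP_i$ lying in $X_{i+1}$ as $(D_n^i)_{n\in\omega}$ and building a descending sequence $(p_n^i)_{n\in\omega}$ of elements of $\dP_i\cap X_{i+1}$ meeting every $D_n^i$, the union $p_i:=\bigcup_n p_n^i$ has $\dom(p_i)=[X_i\cap\omega_1,X_{i+1}\cap\omega_1)$ and $\sup(\im(p_i))=\sup(X_{i+1}\cap\kappa)$ by genericity. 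Then $g:=f\cup\bigcup_{i\in\omega_1}p_i$ is a normal function from $\omega_1$ into $\kappa$ with $g[E_j]\subseteq B_j$ for all $j$, using continuity of the chain at limit stages. The only content beyond Theorem \ref{ThmSRP} is the choice $A_k:=B_{h(k)}$ and the insertion of the club $\{M\;|\;M\cap\omega_1\in C\}$ into $S$, which together transport the refinement relation from $\overline{D}$ to $\overline{E}$ along the chain; this is also the only step where any care is needed, everything else being as before.
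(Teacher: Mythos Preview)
Your proof is correct and follows essentially the same approach as the paper's: define $A_k:=B_{h(k)}$, intersect $S(\overline{D},(A_k)_{k\in\omega_1},\Theta)$ with the club of models containing the relevant parameters (in particular $C$), apply $\SRP$, use $X_i\cap\omega_1\in C$ to transfer $f[D_k\cap D^*]\subseteq A_k$ to $f[E_j\cap D^*]\subseteq B_j$, and then fill in the gaps as in Theorem~\ref{ThmSRP}. The only cosmetic difference is that you explicitly build the condition $M\cap\omega_1\in C$ into your club $C'$, whereas the paper derives it afterwards from $C\in X_j\prec H(\Theta)$; both are the same elementarity observation.
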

	
	\begin{proof}
		Let $h$ and $C$ witness $\overline{D}\leq\overline{E}$.
		
		Let $(B_i)_{i\in\omega_1}$ be a sequence of stationary subsets of $E_{\omega}^{\kappa}$. Define $(A_i)_{i\in\omega_1}$ by letting $A_i:=B_{h(i)}$. Let $D\subseteq[H(\Theta)]^{<\omega_1}$ be the club of all $M\prec(H(\Theta),\in,\overline{B},\overline{A},\overline{D},\overline{E},C)$. Applying $\SRP(\mathcal{S}((D_i)_{i\in\omega_1},\kappa,\Theta))$ to $S:=S((D_i)_{i\in\omega_1},(A_i)_{i\in\omega_1})\cap D$, obtain an $\in$-increasing continuous sequence $(X_i)_{i\in\omega_1}$ of elements of $S$. Define $f$ on the club $F$ consisting of $X_i\cap\omega_1$ for any $i\in\omega_1$ by letting $f(X_i\cap\omega_1)=\sup(X_i\cap\kappa)$. As before we have $f[D_i\cap F]\subseteq A_i$ for any $i\in\omega_1$.
		
		\begin{myclaim}
			For any $i\in\omega_1$, $f[E_i\cap F]\subseteq B_i$.
		\end{myclaim}
		
		\begin{proof}
			Let $\alpha\in E_i\cap F$, i.e. $\alpha=X_j\cap\omega_1$ for some $j\in\omega_1$. As $X_j\prec(H(\Theta),\in,C)$, $X_j\cap\omega_1\in C$. Let $k\in\omega_1$ be such that $\alpha\in D_k$. Because $X_j\cap\omega_1\in C\cap D_k\subseteq E_{h(k)}$, we have $h(k)=i$. Ergo
			$$f(\alpha)\in A_k=B_{h(k)}=B_i$$
		\end{proof}
		
		Now we can proceed as in the proof of Theorem \ref{ThmSRP} to show that we can extend $f$ to all of $\omega_1$.
	\end{proof}
	
	It turns out that this is all that is provable (and thus that $\leq^*$ is the exact relation that characterizes the implication between $\SRP$ and $F^+$).
	
	\begin{mysen}\label{DistFPlusSRP}
		Assume $(D_i)_{i\in\omega_1}$ is a partition of $\omega_1$ and $(A_i)_{i\omega_1}$ is a disjoint sequence of stationary subsets of $E_{\omega}^{\omega_2}$ such that $\MM_{F^+((D_i)_{i\in\omega_1})}((A_i)_{i\in\omega_1})$ holds. Let $(E_i)_{i\in\omega_1}$ be a partition of $\omega_1$. Then the following are equivalent:
		
		\begin{enumerate}
			\item $\overline{E}\not\leq^*\overline{D}$
			\item $\SRP(\mathcal{S}((E_i)_{i\in\omega_1},\omega_2,\Theta))$ holds for all $\Theta\geq\kappa$.
		\end{enumerate}
	\end{mysen}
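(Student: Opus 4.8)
The plan is to prove the two implications separately; $(2)\Rightarrow(1)$ is a short deduction, while $(1)\Rightarrow(2)$ is the substantial direction. For $(2)\Rightarrow(1)$ I argue by contraposition: assume $\overline{E}\leq^*\overline{D}$. By Lemma~\ref{SRPImpliesFPlus}, $\SRP(\mathcal{S}((E_i)_{i\in\omega_1},\omega_2,\Theta))$ then implies $F^+(\overline{D},\omega_2)$; applying this to the sequence $(A_i)_{i\in\omega_1}$ yields a normal function $g\colon\omega_1\to\omega_2$ with $g[D_i]\subseteq A_i$ for all $i$. But $\MM_{F^+((D_i)_{i\in\omega_1})}((A_i)_{i\in\omega_1})$ asserts in particular that no normal $g\colon\omega_1\to\omega_2$ satisfies $g[D_i]\subseteq A_i$ for all $i$ (this is the assertion $F_{\neg}^+((A_i)_{i\in\omega_1},(D_i)_{i\in\omega_1},\omega_2)$ built into the axiom), a contradiction. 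Hence $\SRP(\mathcal{S}((E_i)_{i\in\omega_1},\omega_2,\Theta))$ fails for $\Theta=\omega_2$, so $(2)$ fails.

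For $(1)\Rightarrow(2)$, assume $\overline{E}\not\leq^*\overline{D}$, fix a regular $\Theta\geq\omega_2$ and an arbitrary $S=S((E_i)_{i\in\omega_1},(B_i)_{i\in\omega_1},\Theta)\cap C$ in $\mathcal{S}((E_i)_{i\in\omega_1},\omega_2,\Theta)$, where the $B_i$ are stationary subsets of $E_{\omega}^{\omega_2}$ and $C\subseteq[H(\Theta)]^{<\omega_1}$ is club. By Theorem~\ref{ProjStat} and the fact that the intersection of a projective stationary set with a club is projective stationary, $S$ is projective stationary. Let $\dP_S$ be the forcing whose conditions are the $\in$-increasing continuous chains $(N_\eta)_{\eta\leq\alpha}$ of members of $S$ with $\alpha<\omega_1$, ordered by end-extension. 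Since any two conditions in a filter on $\dP_S$ have a common end-extension, such a filter is linearly ordered by end-extension, so its union is again an $\in$-increasing continuous chain of members of $S$. Moreover each $D_\beta:=\{q\in\dP_S\mid\operatorname{length}(q)\geq\beta\}$ ($\beta<\omega_1$) is dense: one shows this by induction on $\beta$, the limit step using that $S$ is stationary (and lifts, as a stationary set, from $[H(\Theta)]^{<\omega_1}$ to $[H(\Theta')]^{<\omega_1}$ for $\Theta'>\Theta$) to pick a countable $M\prec H(\Theta')$ with $S,C,\beta$ and the given condition in $M$ and $M\cap H(\Theta)\in S$, then building inside $M$ a descending $\omega$-chain of conditions whose lengths exceed $\beta$ and whose tops exhaust $M\cap H(\Theta)$, and finally adjoining $M\cap H(\Theta)$ as a last model. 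Thus a filter $G$ on $\dP_S$ meeting the $\omega_1$-many dense sets $D_\beta$ has union a continuous $\in$-chain $(N_\eta)_{\eta<\omega_1}$ of members of $S$, which is exactly the object $\SRP(\mathcal{S}((E_i)_{i\in\omega_1},\omega_2,\Theta))$ asks for. By $\MM_{F^+((D_i)_{i\in\omega_1})}((A_i)_{i\in\omega_1})$ such a $G$ exists, provided one verifies (i) that $\dP_S$ preserves stationary subsets of $\omega_1$, and (ii) that $\dP_S$ preserves $F_{\neg}^+((A_i)_{i\in\omega_1},(D_i)_{i\in\omega_1},\omega_2)$.

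Point (i) is the standard argument for shooting chains through a projective stationary set, modelled on Lemma~\ref{PAStatPres}: given stationary $A\subseteq\omega_1$, a $\dP_S$-name $\dot{C}$ for a club of $\omega_1$, and $p\in\dP_S$, projective stationarity of $S$ gives a countable $M\prec H(\Theta')$ with $p,\dP_S,\dot C,S\in M$, $M\cap\omega_1\in A$ and $M\cap H(\Theta)\in S$; a descending $\omega$-chain below $p$ inside $M$ that meets the dense sets of $\dP_S$ in $M$, decides $\dot C$ cofinally in $M\cap\omega_1$, and exhausts $M\cap H(\Theta)$ in its tops has, as lower bound with last model $M\cap H(\Theta)\in S$, a condition forcing $M\cap\omega_1\in\dot C\cap A$.

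Point (ii) is the heart of the matter and the only place where $\overline{E}\not\leq^*\overline{D}$ is used; I expect it to be the main obstacle. Note that $\dP_S$ need not be proper (the $M$ with $M\cap H(\Theta)\in S$ form only a stationary, not a club, set), so Lemma~\ref{ProperNotAddingSeq2} does not apply at $\gamma=\omega_2$. The plan is to assume toward a contradiction that some $p\in\dP_S$ forces a name $\dot g$ to be a normal function $\omega_1\to\omega_2$ with $\dot g[D_i]\subseteq A_i$ for all $i$, and to work along an $\in$-increasing continuous chain $(M_\xi)_{\xi<\omega_1}$ of countable elementary submodels of a large $H(\Theta')$ containing all relevant parameters, arranged so that $M_\xi\cap H(\Theta)\in S$ for stationarily many $\xi$ (using projective stationarity). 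For such $\xi$ one builds, by the semigenericity construction from (i) now additionally deciding $\dot g\restriction(M_\xi\cap\omega_1)$, a condition below $p$ whose last model is $M_\xi\cap H(\Theta)$; this ties together the constraint $\sup(M_\xi\cap\omega_2)\in B_i$ (from $M_\xi\cap H(\Theta)\in S$, where $M_\xi\cap\omega_1\in E_i$) with the constraint $\dot g(M_\xi\cap\omega_1)\in A_j$ (from $\dot g[D_j]\subseteq A_j$, where $M_\xi\cap\omega_1\in D_j$). Reading off the forced assignment $i\mapsto j$ occurring on a club as a function $h\colon\omega_1\to\omega_1$ and invoking Remark~\ref{RemEqLeqStar}, $\overline{E}\not\leq^*\overline{D}$ guarantees that $\{\alpha\in\omega_1\mid\exists i\,(\alpha\in E_i\smallsetminus D_{h(i)})\}$ is stationary, which produces a model $M_\xi$ whose intersection with $\omega_1$ lies in a part $E_i$ that is not, even modulo nonstationary error, contained in the part $D_{h(i)}$ forced by $\dot g$, giving the contradiction. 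The delicate point — where the proof will need genuine care — is controlling the value forced for $\dot g(M_\xi\cap\omega_1)$ relative to $\sup(M_\xi\cap\omega_2)$, so that the $E_i$/$D_j$ mismatch really turns into an impossible membership requirement on some $A_j\cap B_i$.
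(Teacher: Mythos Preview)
Your overall architecture is right and matches the paper: $(2)\Rightarrow(1)$ via Lemma~\ref{SRPImpliesFPlus}, and $(1)\Rightarrow(2)$ by applying the forcing axiom to the chain-shooting poset $\dP_S$, checking it preserves stationary subsets of $\omega_1$ and preserves $F_{\neg}^+((A_i),(D_i),\omega_2)$. Part (i) is fine.

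The gap is exactly where you flag it, and it is a real one: your definition of $h$ does not make sense, and the chain approach cannot be made to yield the needed simultaneous control. You write ``reading off the forced assignment $i\mapsto j$ occurring on a club as a function $h$'', but for $\alpha=M_\xi\cap\omega_1$ the pair $(i,j)$ with $\alpha\in E_i\cap D_j$ is determined by the partitions alone; there is no well-defined map $i\mapsto j$, and nothing in your construction relates $\sup(M_\xi\cap\omega_2)$ to the $A_j$'s at all. Once you fix a continuous chain $(M_\xi)$, the suprema $\sup(M_\xi\cap\omega_2)$ are determined and you have lost the freedom to place them outside a prescribed $A_j$.

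The paper resolves this differently. First it defines $h$ \emph{a priori} from the sets, not from any chain: since the $A_i$ are pairwise disjoint, for each $i$ there is at most one $j$ with $B_i\smallsetminus A_j$ nonstationary; call it $h(i)$ (and set $h(i)=0$ if no such $j$ exists). Then $j\neq h(i)$ implies $B_i\smallsetminus A_j$ is stationary. Now $\overline{E}\not\leq^*\overline{D}$ gives that $S'=\{\alpha:\exists i\,(\alpha\in E_i\smallsetminus D_{h(i)})\}$ is stationary. The second key ingredient is the tree argument from Theorem~\ref{ProjStat}: one finds a Laver-style subtree $T'$ of $(\omega_2)^{<\omega}$ and a fixed $\alpha\in S'$ such that every branch $b$ of $T'$ gives $\Hull(b\cup\alpha)\cap\omega_1=\alpha$. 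With $\alpha\in E_i\cap D_j$ and $j\neq h(i)$, the set $B_i\smallsetminus A_j$ is stationary, so one can pick $\gamma\in B_i\smallsetminus A_j$ that is a limit point of $T'$ and build $M$ with $M\cap\omega_1=\alpha$ and $\sup(M\cap\omega_2)=\gamma$. The resulting master condition with top model $M$ forces $\dot g(\alpha)=\gamma\notin A_j$, contradicting $\dot g[D_j]\subseteq A_j$. The disjointness of the $A_i$ (which you never invoke) and the tree technique (which lets you fix $M\cap\omega_1$ first and then independently steer $\sup(M\cap\omega_2)$) are the two ideas your sketch is missing.
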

	
	\begin{proof}
		That (2) implies (1) follows from Lemma \ref{SRPImpliesFPlus} together with the fact that $\MM_{F^+((D_i)_{i\in\omega_1})}((A_i)_{i\in\omega_1})$ implies $\neg F^+((D_i)_{i\in\omega_1})$.
		
		Let $(B_i)_{i\in\omega_1}$ be a sequence of stationary subsets of $E_{\omega}^{\omega_2}$ and $C\subseteq[H(\Theta)]^{<\omega_1}$ any club. Let $\dP$ be the forcing shooting an $\in$-increasing and continuous sequence through $S:=S((E_i)_{i\in\omega_1},(B_i)_{i\in\omega_1},\Theta)\cap C$, i.e. $\dP$ consists of continuous $\in$-increasing functions $p\colon\alpha+1\to S$, where $\alpha<\omega_1$. It is not hard to see (and standard) that, since $S$ is projective stationary by Theorem \ref{ProjStat}, $\dP$ preserves stationary subsets of $\omega_1$. We now aim to show that $\dP$ does not add a normal function $g\colon\omega_1\to\omega_2$ with $g[D_i]\subseteq A_i$ for all $i\in\omega_1$. Assume otherwise and let $\dot{g}$ be a name for such a function.
		
		\begin{myclaim}
			For any $i\in\omega_1$ there exists at most one $h(i)\in\omega_1$ such that $B_i\smallsetminus A_{h(i)}$ is nonstationary.
		\end{myclaim}
		
		\begin{proof}
			Assume otherwise that $B_i\smallsetminus A_{j}$ and $B_i\smallsetminus A_{j'}$ are nonstationary for $j\neq j'$. Then $B_i=B_i\smallsetminus (A_j\cap A_{j'})=(B_i\smallsetminus A_j)\cup (B_i\smallsetminus A_{j'})$ is nonstationary as well, a contradiction.
		\end{proof}
		
		Consider the function $h$ defined by using the claim (and letting $h(i):=0$ if such a value does not exist). Then for all $i,j\in\omega_1$, if $j\neq h(i)$, $B_i\smallsetminus A_j$ is stationary. Because $\overline{E}\not\leq^*\overline{D}$, the set
		$$S':=\{\alpha\in\omega_1\;|\;\exists i(\alpha\in E_i\smallsetminus D_{h(i)})\}$$
		is stationary (see Remark \ref{RemEqLeqStar}).
		
		Now we proceed as in the proof of Theorem \ref{ProjStat}: Let $\mathcal{H}$ be an algebra on $H(\Theta)$ such that any $M\in[H(\Theta)]^{<\omega_1}$ with $M\prec\mathcal{H}$ is in $C$. Find a Laver-style tree $T\leq(\omega_2)^{<\omega}$ (possibly with a stem) and $\alpha\in S'$ such that for any branch $b$ of $T$, $\Hull^{\mathcal{H}}(b\cup\alpha)\cap\omega_1=\alpha$. Let $i\in\omega_1$ be such that $\alpha\in E_i\smallsetminus D_{h(i)}$ and let $j$ be such that $\alpha\in D_j$. Then $j\neq h(i)$, so $B_i\smallsetminus A_j$ is stationary. Ergo we can find $\gamma\in B_i\smallsetminus A_j$ which is a ``limit point of the tree $T$'' (as in the proof of Theorem \ref{ProjStat}). Thus there is $M\prec\mathcal{H}$ such that $M\cap\omega_1\in E_i\cap D_j$ and $\sup(M\cap\omega_2)\in B_i\smallsetminus A_j$. Let $(D_n)_{n\in\omega}$ enumerate all open dense subsets of $\dP$ lying in $M$, let $(p_n)_{n\in\omega}$ be a descending sequence of elements of $\dP$ with $p_n\in D_n$ and let $p:=\bigcup_np_n\cup\{(M\cap\omega_1,M)\}$. It follows that $p\in\dP$ (since $M\in S$ and $M=\bigcup_np_n(\dom(p_n)-1)$) and $p$ forces $\dot{g}(M\cap\omega_1)=\sup(M\cap\omega_2)$, so $p$ forces $\dot{g}[\check{D}_j]\not\subseteq \check{A}_j$, a contradiction.
		
		Now let $(D_i)_{i\in\omega_1}$ be defined by $D_i:=\{p\in\dP\;|\;i\in\dom(p)\}$. It follows that there exists a filter $G$ on $\dP$ intersecting every $D_i$ and that the union of this filter is as required.
	\end{proof}
	
	Fuchs asked in \cite{FuchsCanonicalFragments} whether there is an implication from $\SRP(\mathcal{S}((D_i)_{i\in\omega_1},\kappa,\Theta))$ to $F^+((E_i)_{i\in\omega_1},\omega_2)$, where $(D_i)_{i\in\omega_1}$ is the trivial partition given by $D_0:=\omega_1$ and $D_i:=\emptyset$ for $i\neq 0$ and $(E_i)_{i\in\omega_1}$ is a maximal partition of $\omega_1$ into stationary sets, i.e. every $E_i$ is stationary and for every stationary $A\subseteq\omega_1$ there is $i\in\omega_1$ such that $A\cap E_i$ is stationary. We can answer this question in the negative in a strong way by showing the following:
	
	\begin{mylem}\label{PartNotEqu}
		Let $(D_i)_{i\in\omega_1}$ be the trivial partition of $\omega_1$ and let $(E_i)_{i\in\omega_1}$ be a partition of $\omega_1$ into at least two stationary sets. Then $\overline{D}\not\leq^*\overline{E}$.
	\end{mylem}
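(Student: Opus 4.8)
The plan is to unwind the definition of $\leq^*$ directly and derive a contradiction from the stationarity of two disjoint pieces of $\overline{E}$. Recall that $\overline{D}\leq^*\overline{E}$ would give a function $h\colon\omega_1\to\omega_1$ and a club $C\subseteq\omega_1$ with $D_i\cap C\subseteq E_{h(i)}$ for all $i\in\omega_1$. Since $(D_i)_{i\in\omega_1}$ is the trivial partition, the conditions for $i\neq 0$ are vacuous (as $D_i=\emptyset$), so the only real constraint is $C=D_0\cap C\subseteq E_{h(0)}$.

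First I would record that this forces $E_{h(0)}$ to contain the club $C$, hence $\omega_1\smallsetminus E_{h(0)}$ is nonstationary. Next, since $(E_i)_{i\in\omega_1}$ is a partition, every $E_j$ with $j\neq h(0)$ is disjoint from $E_{h(0)}$ and therefore disjoint from $C$, so $E_j$ is nonstationary for all $j\neq h(0)$. But by hypothesis at least two of the $E_i$ are stationary, and at most one of them can have index $h(0)$; choosing a stationary $E_j$ with $j\neq h(0)$ yields the contradiction. Hence no such $h$ and $C$ exist, i.e. $\overline{D}\not\leq^*\overline{E}$.

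There is essentially no obstacle here: the argument is a one-line observation once the definition is unfolded, the point being simply that a club cannot be contained in a single piece of a partition that has two stationary parts. The only thing worth being careful about is the bookkeeping with the trivial partition — making explicit that the constraints indexed by $i\neq 0$ are automatically satisfied, so that the whole content of $\overline{D}\leq^*\overline{E}$ reduces to "$C\subseteq E_{h(0)}$ for some club $C$" — but this is routine. I would also remark (optionally) that the same argument shows more generally that $\overline{D}\leq^*\overline{E}$ fails whenever some $D_i$ is costationary-complemented by a single club while $\overline{E}$ splits the corresponding set, but for the statement as given the two-stationary-pieces hypothesis is exactly what is used.
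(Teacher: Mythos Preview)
Your argument is correct and is essentially identical to the paper's own proof: assuming $\overline{D}\leq^*\overline{E}$ via $h$ and $C$, one gets $C=D_0\cap C\subseteq E_{h(0)}$, which is impossible since $E_{h(0)}$ has stationary complement (as some other $E_j$ is stationary). The paper states this in one line; you have simply spelled out the same reasoning in more detail.
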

	
	\begin{proof}
		Let $h\colon\omega_1\to\omega_1$ be any function. Let $i:=h(0)$. So there is a club $C$ such that $D_0\cap C\subseteq E_{h(0)}$. But this is a contradiction as $E_{h(0)}$ has a stationary complement.
	\end{proof}
	
	Thus we have the following:
	
	\begin{mycol}
		Assume we are in the standard model of Martin's Maximum. Assume $(D_i)_{i\in\omega_1}$ is the trivial partition of $\omega_1$ and $(E_i)_{i\in\omega_1}$ is a partition of $\omega_1$ such that at least two $E_i, E_{i'}$ are stationary. Then there is a forcing extension where $\SRP(\mathcal{S}((D_i)_{i\in\omega_1},\omega_2,\Theta))$ holds and $F^+((E_i)_{i\in\omega_1},\omega_2)$ fails.
	\end{mycol}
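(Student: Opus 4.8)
The plan is to derive this corollary by combining Theorem \ref{DistFPlusSRP} with Lemma \ref{PartNotEqu}, using the two partitions in the opposite roles from how they appear in the statement. Write $\overline{D}$ for the trivial partition and $\overline{E}$ for the given partition with at least two stationary parts. Working over the standard model of $\MM$, I would force with $\dP_{F^+(\overline{E})}(\omega_2)$; by the $F^+$-analogue of Theorem \ref{MMFLambda} proved at the end of Section~4 (obtained by forcing with $\dP_{MM}*\dP_{F^+((\dot{D}_i)_{i\in\omega_1})}(\dot{\omega}_2)$), the resulting model $W$ carries a sequence $(A_i)_{i\in\omega_1}$ of stationary subsets of $E_\omega^{\omega_2}$ with $\MM_{F^+(\overline{E})}((A_i)_{i\in\omega_1})$. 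Here $(A_i)_{i\in\omega_1}$ is the sequence read off from the generic filter, and by the earlier analysis of $\dP_{F^+}$ it is in fact a \emph{disjoint} sequence of stationary subsets of $E_\omega^{\omega_2}$, so that the hypotheses of Theorem \ref{DistFPlusSRP} are met with $\overline{E}$ in the slot of ``$(D_i)_{i\in\omega_1}$''.

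In $W$ the property $F^+(\overline{E},\omega_2)$ fails: $\MM_{F^+(\overline{E})}((A_i)_{i\in\omega_1})$ explicitly asserts $F_{\neg}^+((A_i)_{i\in\omega_1},\overline{E},\omega_2)$, i.e.\ that there is no normal $g\colon\omega_1\to\omega_2$ with $g[E_i]\subseteq A_i$ for all $i$, and since each $A_i$ is a stationary subset of $E_\omega^{\omega_2}$ this is precisely a witness to $\neg F^+(\overline{E},\omega_2)$. For the other half, I apply Theorem \ref{DistFPlusSRP} with the trivial partition $\overline{D}$ now in the slot of ``$(E_i)_{i\in\omega_1}$''. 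The equivalence provided there says that $\overline{D}\not\leq^*\overline{E}$ holds if and only if $\SRP(\mathcal{S}(\overline{D},\omega_2,\Theta))$ holds for all regular $\Theta\geq\omega_2$; and by Lemma \ref{PartNotEqu} the trivial partition does not refine $\overline{E}$ on any club once $\overline{E}$ has two stationary parts, i.e.\ $\overline{D}\not\leq^*\overline{E}$. Hence $\SRP(\mathcal{S}(\overline{D},\omega_2,\Theta))$ holds in $W$ for all such $\Theta$, which completes the proof.

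There is no substantive obstacle here; the argument is purely a matter of slotting the two partitions into Theorem \ref{DistFPlusSRP} in the correct order. The only point that needs (minor) checking is that the stationary sequence $(A_i)_{i\in\omega_1}$ produced by $\dP_{F^+(\overline{E})}(\omega_2)$ is genuinely disjoint, since disjointness of that sequence is a standing hypothesis of Theorem \ref{DistFPlusSRP}; this is exactly what the ``disjointness condition'' (1) in the definition of $\dP_{F^+}$ guarantees, as explained in the remark following that definition.
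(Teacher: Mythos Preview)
Your proposal is correct and follows essentially the same route as the paper: force with $\dP_{F^+(\overline{E})}(\omega_2)$ over the standard $\MM$ model, read off the disjoint stationary sequence $(A_i)_{i\in\omega_1}$ from the generic, and then invoke Theorem~\ref{DistFPlusSRP} (with the roles of the two partitions swapped as you note) together with Lemma~\ref{PartNotEqu}. Your remark about the disjointness condition being needed to meet the hypothesis of Theorem~\ref{DistFPlusSRP} is exactly the point the paper leaves implicit.
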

	
	\begin{proof}
		Simply force with $\dP_{F^+((E_i)_{i\in\omega_1})}(\omega_2)$ to obtain $V[G]$. In the resulting model, let $F:=\bigcup G$ and $A_i:=\{\alpha\in\omega_2\;|\;F(\alpha,i)=1\}$. Then $\MM_{F^+((E_i)_{i\in\omega_1})}((A_i)_{i\in\omega_1})$ holds (so $F^+((E_i)_{i\in\omega_1},\omega_2)$ fails), and $(A_i)_{i\in\omega_1}$ is a disjoint sequence of stationary subsets of $E_{\omega}^{\omega_2}$. By Theorem \ref{DistFPlusSRP} and Lemma \ref{PartNotEqu}, $\SRP(\mathcal{S}((D_i)_{i\in\omega_1},\omega_2,\Theta))$ holds. 
	\end{proof}
	
	More subtle separations of instances of $F^+$ and $\SRP$ are also possible: E.g. it is consistent that there are two maximal partitions $(E_i)_{i\in\omega_1}$ and $(D_i)_{i\in\omega_1}$ of $\omega_1$ into stationary sets such that $\SRP(\mathcal{S}((D_i)_{i\in\omega_1},\omega_2,\Theta))$ holds for all $\Theta\geq\omega_2$ and $F^+((E_i)_{i\in\omega_1},\omega_2)$ fails: Assume we are in the standard model of $\MM$. Let $(F_i)_{i\in\omega_1}$ be a maximal partition of $\omega_1$ into stationary sets which can be constructed as follows (see \cite{FuchsCanonicalFragments}, Remark 3.17): Let $(G_i)_{i\in\omega_1}$ be any partition of $\omega_1$ such that each $G_i$ is stationary. Let $f\colon\omega_1\to\omega_1$ be defined as follows: If $j\in G_i$ for $i<j$, let $f(j):=i$, otherwise let $f(j):=0$. Ergo $f$ is regressive. For any $i\in\omega_1$, let $F_i:=f^{-1}[\{i\}]$. As $F_i\supseteq G_i\smallsetminus(i+1)$, each $F_i$ is stationary. Moreover, whenever $S$ is stationary, $f\uhr T$ is constant for some stationary $T\subseteq S$, say with value $i_0$, so $T\cap F_{i_0}$ is stationary. Now let $\sigma\colon\omega_1\times\omega_1\to\omega_1$ be a bijection and let
	$$D_i:=\bigcup_{j\in\omega_1}F_{\sigma(i,j)}\text{ and }E_i:=\bigcup_{j\in\omega_1}F_{\sigma(j,i)}$$
	Then $(D_i)_{i\in\omega_1}$ and $(E_i)_{i\in\omega_1}$ are maximal partitions of $\omega_1$ into stationary sets and for every $i,j\in\omega_1$, $D_i\cap E_j$ is stationary. Consequently:
	
	\begin{myclaim}
		$\overline{D}$ and $\overline{E}$ are incomparable in $\leq^*$.
	\end{myclaim}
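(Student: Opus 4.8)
The plan is to verify both $\overline{D}\not\leq^*\overline{E}$ and $\overline{E}\not\leq^*\overline{D}$ by one and the same computation; since the construction of $\overline{D}$ and $\overline{E}$ is symmetric (the roles of the two coordinates of the bijection $\sigma$ are simply interchanged), it suffices to carry out the argument for $\overline{D}\not\leq^*\overline{E}$. The starting point is the observation recorded just above: for all $i,j\in\omega_1$ the intersection $D_i\cap E_j$ equals the single piece $F_{\sigma(i,j)}$ of the maximal partition $(F_k)_{k\in\omega_1}$ — this is because the $F_k$ form a partition, so an ordinal lies in $D_i\cap E_j$ exactly when it lies in $F_k$ with $\sigma^{-1}(k)=(i,j)$ — and hence $D_i\cap E_j$ is stationary.

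First I would assume toward a contradiction that $\overline{D}\leq^*\overline{E}$, witnessed by a function $h\colon\omega_1\to\omega_1$ and a club $C\subseteq\omega_1$ with $D_i\cap C\subseteq E_{h(i)}$ for every $i\in\omega_1$. Fix an arbitrary $i$ and choose some $j\in\omega_1$ with $j\neq h(i)$. Since the blocks $(E_k)_{k\in\omega_1}$ are pairwise disjoint, $E_{h(i)}\cap E_j=\emptyset$, and therefore
\[
D_i\cap E_j\cap C\;\subseteq\; E_{h(i)}\cap E_j\;=\;\emptyset .
\]
On the other hand $D_i\cap E_j=F_{\sigma(i,j)}$ is stationary, so $D_i\cap E_j\cap C$ is stationary — in particular nonempty — a contradiction. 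Hence $\overline{D}\not\leq^*\overline{E}$, and by the symmetric argument (exchanging the coordinates of $\sigma$) also $\overline{E}\not\leq^*\overline{D}$, so $\overline{D}$ and $\overline{E}$ are incomparable in $\leq^*$.

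There is essentially no hard step: the statement is an immediate consequence of the fact that each $D_i\cap E_j$ is a stationary block of the partition $(F_k)_{k\in\omega_1}$ together with the pairwise disjointness of the $E_k$ (resp.\ $D_k$). The only points requiring a word of care are the choice of $j\neq h(i)$ (available since $\omega_1$ has more than one element) and the implicit use of the standard fact that the intersection of a stationary set with a club is stationary; both are routine.
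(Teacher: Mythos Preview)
Your proof is correct and follows essentially the same approach as the paper: assume $\overline{D}\leq^*\overline{E}$ via some $h$ and $C$, pick $j\neq h(i)$ (the paper simply takes $i=0$), and obtain a contradiction from the stationarity of $D_i\cap E_j$ together with $D_i\cap C\subseteq E_{h(i)}$. Your additional remarks identifying $D_i\cap E_j=F_{\sigma(i,j)}$ and explicitly invoking symmetry for the other direction are fine elaborations of the same argument.
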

	
	\begin{proof}
		Let $h,C$ witness $\overline{D}\leq^*\overline{E}$. Let $j\neq h(0)$. Then there is $\alpha\in D_0\cap E_j\cap C$. But then $\alpha$ witnesses that $D_0\cap C\not\subseteq E_{h(0)}$.
	\end{proof}
	
	Ergo we can find separate forcing extensions in which $\SRP(\mathcal{S}((D_i)_{i\in\omega_1},\omega_2,\Theta))\wedge\neg F^+((E_i)_{i\in\omega_1},\omega_2)$ and $\SRP(\mathcal{S}((E_i)_{i\in\omega_1},\omega_2,\Theta))\wedge\neg F^+((D_i)_{i\in\omega_1},\omega_2)$ hold.
	
	\section{$F$ and $F^+$ at large cardinals}
	
	In this section we investigate the connections between $F$, $F^+$ and large cardinal properties, focusing in particular on the points at which $F$ and $F^+$ can hold for the first time. In the case of $F$, this of course determines the possible patterns perfectly since $\neg F(\lambda,\kappa)$ implies $\neg F(\lambda,\kappa')$ whenever $\kappa'<\kappa$.
	
	For simplicity, we will focus on the notationally easiest cases, i.e. $F(2,\delta)$ and $F^+((D_i)_{i\in\omega_1},\delta)$, where $D_0=\omega_1$, $D_i=\emptyset$ for $i\neq0$ (it will be straightforward to see that the proofs also work for more complicated instances of the properties). For simplicity we denote those properties by $F(\delta)$ and $F^+(\delta)$ respectively. Due to this, we also let $\dP_F(\delta):=\dP_{F(2)}(\delta)$ and let $\dP_{F^+}(\delta)$ consist of functions $p\colon\gamma\to2$ for $\gamma<\delta$ such that $p(\alpha)=1$ implies that $\alpha$ has countable cofinality and $p$ is not constant on any closed copy of $\omega_1$.
	
	We also fix the following definition:
	
	\begin{mydef}
		Let $\delta$ be an ordinal. $f\colon\delta\to 2$ is an $F$-function if $f$ is not constant on any closed copy of $\omega_1$. $A\subseteq E_{\omega}^{\delta}$ is an $F^+$-set if it is stationary and does not contain a closed copy of $\omega_1$.
	\end{mydef}
	
	We start by proving some connections between the Friedman properties and large cardinal notions in $\ZFC$. Later on we will prove corresponding independence statements. Interestingly, the differences between $F$ and $F^+$ will show up twice: On one hand, the complexity of the statement ``$A$ is stationary'' leads us to weaker $\ZFC$ results while the stronger properties of the forcing $\dP_{F^+}$ as opposed to the forcing $\dP_F$ allows us to obtain stronger independence results, so that we always obtain a complete characterization.
	
	\begin{mysen}
		Assume $\kappa$ is weakly compact and $F(\delta)$ fails for all regular cardinals $\delta<\kappa$. Then $F(\kappa)$ fails.
	\end{mysen}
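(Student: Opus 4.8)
The plan is to produce an $F$-function on $\kappa$, that is, a function $g\colon\kappa\to 2$ which is not constant on any closed copy of $\omega_1$; such a $g$ is precisely a witness to $\neg F(\kappa)$. Since $\kappa$ is weakly compact it is inaccessible, so in particular $\cf(\kappa)=\kappa>\omega_1$. Let $T$ be the set of all $F$-functions $p\colon\delta\to 2$ with $\delta<\kappa$, ordered by end-extension. Because any restriction of an $F$-function is again an $F$-function (a normal function into a smaller ordinal is also one into a larger ordinal), $T$ is a tree in which the $\delta$-th level is exactly the set of $F$-functions with domain $\delta$.

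First I would check that $T$ has height $\kappa$. Given $\delta<\kappa$, by inaccessibility there is a regular cardinal $\mu$ with $\delta\le\mu<\kappa$; by hypothesis $F(\mu)$ fails, and an arbitrary witness to $\neg F(\mu)$ is exactly an $F$-function $\mu\to 2$, whose restriction to $\delta$ is an $F$-function with domain $\delta$. Hence every level of $T$ is nonempty. Next, by inaccessibility the $\delta$-th level has cardinality at most $2^{|\delta|}<\kappa$, so $T$ is a $\kappa$-tree. Since $\kappa$ is weakly compact it has the tree property, so $T$ has a cofinal branch $b$; set $g:=\bigcup b$. As $b$ meets every level, $g$ is a function $\kappa\to 2$, and for every $\delta<\kappa$ the node of $b$ at level $\delta$ is exactly $g\uhr\delta$, an $F$-function.

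It remains to check that $g$ is an $F$-function. Let $c\subseteq\kappa$ be a closed copy of $\omega_1$, say the image of a normal $f\colon\omega_1\to\kappa$. Since $c$ has cardinality $\omega_1$ and $\cf(\kappa)=\kappa>\omega_1$, $c$ is bounded; putting $\delta:=\sup c<\kappa$ we have $c\subseteq\delta$, so $c$ is a closed copy of $\omega_1$ inside $\delta$. But $g\uhr\delta$ is a node of $b$, hence an $F$-function, hence not constant on $c$; therefore $g$ is not constant on $c$. As $c$ was arbitrary, $g$ witnesses $\neg F(\kappa)$.

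The step I expect to be the real content is the last one, and it is worth isolating why it works: the tree $T$ is \emph{not} closed under unions of chains of length $<\kappa$ — this is exactly why $\dP_F(\kappa)$ is merely ${<}\,\kappa$-strategically closed rather than ${<}\,\kappa$-closed, and it is the point at which the analogous argument for, say, a nonreflecting stationary set breaks down. What rescues the present situation is that the union of a \emph{cofinal} branch of $T$ is nonetheless an $F$-function: since $\cf(\kappa)>\omega_1$, every closed copy of $\omega_1$ in $\kappa$ is bounded and hence is already controlled by one of the initial-segment $F$-functions lying on the branch.
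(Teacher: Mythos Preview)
Your proof is correct and follows essentially the same approach as the paper: build a $\kappa$-tree of $F$-functions, apply the tree property, and observe that the union of a cofinal branch is an $F$-function because every closed copy of $\omega_1$ in $\kappa$ is bounded (since $\cf(\kappa)>\omega_1$) and hence already handled by some node on the branch. The only cosmetic difference is that the paper takes the tree generated by one chosen witness $f_\delta$ for each regular $\delta<\kappa$, whereas you take the tree of \emph{all} $F$-functions with domain ${<}\,\kappa$; both are $\kappa$-trees by inaccessibility and the argument is otherwise identical.
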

	
	\begin{proof}
		For any regular $\delta<\kappa$, let $f_{\delta}\colon\delta\to 2$ be a function witnessing the failure of $F(\delta)$. Let $T$ be the tree generated by $f_{\delta}$ for $\delta<\kappa$. By weak compactness there is a branch $b\colon\kappa\to 2$ such that $b\uhr\alpha\in T$ for any $\alpha<\kappa$.
		
		\begin{myclaim}
			$b$ witnesses that $F(\kappa)$ fails.
		\end{myclaim}
		
		\begin{proof}
			Otherwise let $c\colon\omega_1\to\kappa$ be such that $b\uhr c[\omega_1]$ is constant. Let $\sup(c[\omega_1])=:\alpha<\kappa$. There is a regular $\delta<\kappa$ such that $b\uhr\alpha=f_{\delta}\uhr\alpha$. However, then $c$ witnesses that $f_{\delta}$ does not witness $\neg F(\delta)$, a contradiction.
		\end{proof}
		
		Ergo $F(\kappa)$ fails.
	\end{proof}

	For embedding-based large cardinal notions, we can observe that the failure persists until one cardinal above the target of the critical point using a well-known observation:
	
	\begin{mysen}
		Assume $\kappa\leq\lambda$ are cardinals such that $\kappa$ is $\lambda$-supercompact. Assume $F(\delta)$ fails for all regular cardinals $\delta<\kappa$. Then $F(\lambda^+)$ fails.
	\end{mysen}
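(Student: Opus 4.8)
The plan is to transport an $F$-function from $\kappa$ up to $\lambda^{+}$ along a $\lambda$-supercompactness embedding; this is the ``well-known observation'' alluded to. First I would note that, since $\kappa$ is $\lambda$-supercompact with $\lambda\geq\kappa$, the cardinal $\kappa$ is in particular weakly compact, so the preceding Theorem (applied to the hypothesis that $F(\delta)$ fails for all regular $\delta<\kappa$) gives that $F(\kappa)$ fails. Fix an $F$-function $f\colon\kappa\to2$. Then take an elementary embedding $j\colon V\to M$ with $M$ transitive, $\crit(j)=\kappa$, $j(\kappa)>\lambda$ and ${}^{\lambda}M\subseteq M$.

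Next I would observe that ``$h$ is an $F$-function on $\gamma$'' is a first-order property of $h$ and $\gamma$ (with $\omega_1$ and $2$ as the only further parameters), so by elementarity $M$ believes that $j(f)\colon j(\kappa)\to2$ is an $F$-function on $j(\kappa)$, using $j(\omega_1)=\omega_1$ and $j(2)=2$. Since $j(\kappa)$ is inaccessible, hence a limit cardinal, in $M$, and since $(\lambda^{+})^{M}=\lambda^{+}$ by the $\lambda$-closure of $M$, we get $j(\kappa)>\lambda^{+}$; thus $g:=j(f)\uhr\lambda^{+}$ is a well-defined function from $\lambda^{+}$ into $2$.

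The last step, which is the only place where any care is needed, is to check that $g$ is an $F$-function in $V$ and not merely in $M$. Suppose toward a contradiction that $c\colon\omega_1\to\lambda^{+}$ is a normal function with $g\circ c$ constant. Since $\kappa>\omega_1$ we have $\omega_1\leq\lambda$, so ${}^{\omega_1}M\subseteq M$ and therefore $c\in M$; then $M$ sees $c$ as a closed copy of $\omega_1$ inside $j(\kappa)$ on which $j(f)$ is constant, contradicting the previous paragraph. Hence no such $c$ exists and $g$ witnesses $\neg F(\lambda^{+})$. The main (and essentially only) obstacle is this absoluteness point: one must make sure that a potential bad closed copy of $\omega_1$ living in $V$ is automatically reflected into $M$, which is exactly what the $\lambda$-closure of $M$ delivers since such a copy has size $\omega_1\leq\lambda$; everything else is the standard ``reflect a combinatorial object through a supercompactness embedding'' argument.
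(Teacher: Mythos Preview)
Your proof is correct and follows the same core strategy as the paper: pass to a $\lambda$-supercompactness embedding $j\colon V\to M$, observe that $j(\kappa)>\lambda^+$ since $(\lambda^+)^M=\lambda^+$ and $j(\kappa)$ is inaccessible in $M$, obtain a witness to $\neg F(\lambda^+)$ in $M$, and then use ${}^{\lambda}M\subseteq M$ to see that any closed copy of $\omega_1$ in $V$ lies in $M$, so the witness works in $V$.

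The one difference is how you produce the witness inside $M$. You first invoke the preceding weak-compactness theorem to get an $F$-function $f$ on $\kappa$ in $V$, then push $j(f)$ forward and restrict to $\lambda^+$. The paper avoids this detour entirely: by elementarity, $M$ satisfies ``$F(\delta)$ fails for every regular $\delta<j(\kappa)$'', and since $\lambda^+<j(\kappa)$ is regular in $M$, $M$ simply contains some $F$-function $f\colon\lambda^+\to 2$, which one then checks works in $V$ by closure. Your route is perfectly fine, but the appeal to the previous theorem is not needed --- the hypothesis ``$F(\delta)$ fails for all regular $\delta<\kappa$'' transfers directly to $M$ with $\kappa$ replaced by $j(\kappa)$, which already covers $\lambda^+$.
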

	
	\begin{proof}
		Let $j\colon V\to M$ be a $\lambda$-supercompact embedding, i.e. ${}^{\lambda}M\subseteq M$ and $j(\kappa)>\lambda$. As ${}^{\lambda}M\subseteq M$, $(\lambda^+)^M=\lambda^+$ and since $j(\kappa)$ is inaccessible in $M$, $j(\kappa)>\lambda^+$. So $(F(\lambda^+))^M$ fails, witnessed by some $f\colon\lambda^+\to 2$, $f\in M\subseteq V$. Assume $c\subseteq\lambda^+$ is closed and has ordertype $\omega_1$ such that $f\uhr c$ is constant. Then $c\in M$ by the closure of $M$, a contradiction.
	\end{proof}
	
	For $F^+$, less is provable even regarding the first point of failure due to the added complexity of ``$A$ is stationary''. We first prove the following $\ZFC$ result:
	
	\begin{mysen}
		Assume $\kappa\leq\lambda$ are cardinals such that $\kappa$ is $\lambda$-supercompact. Assume $F^+(\delta)$ fails for all regular cardinals $\delta<\kappa$. Then $F^+(\lambda)$ fails.
	\end{mysen}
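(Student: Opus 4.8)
The plan is to run the ultrapower argument from the $F$-case but additionally to check that the witnessing set remains stationary when it is pulled down to $V$. First I would fix a $\lambda$-supercompact embedding $j\colon V\to M$, so that $\crit(j)=\kappa$, $j(\kappa)>\lambda$ and ${}^{\lambda}M\subseteq M$. Applying elementarity to the hypothesis, $M$ satisfies that $F^+(\delta)$ fails for every regular cardinal $\delta<j(\kappa)$; since $\lambda<j(\kappa)$ and $\lambda$ is still regular in $M$ (as $M\subseteq V$), $M$ contains an $F^+$-set $A$ at $\lambda$, i.e. $M$ thinks $A\subseteq E_{\omega}^{\lambda}$ is stationary in $\lambda$ and contains no closed copy of $\omega_1$. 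Then $A\in M\subseteq V$, and the remaining task is to verify that $A$ is an $F^+$-set at $\lambda$ from the point of view of $V$.

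Two consequences of ${}^{\lambda}M\subseteq M$ do most of the work. First, both $M$ and $V$ are closed under $\omega$-sequences, so cofinality $\omega$ is absolute and $(E_{\omega}^{\lambda})^{M}=(E_{\omega}^{\lambda})^{V}$; hence $A\subseteq E_{\omega}^{\lambda}$ holds in $V$. Second, and this is the crucial point, ${}^{\lambda}M\subseteq M$ gives $\mathcal{P}(\lambda)^{V}\subseteq M$, by identifying $X\subseteq\lambda$ with its characteristic function, which lies in ${}^{\lambda}2\subseteq{}^{\lambda}M\subseteq M$. Consequently every club $C\subseteq\lambda$ of $V$ already lies in $M$, and ``closed and unbounded in $\lambda$'' is absolute, so $C$ is a club of $M$; by stationarity of $A$ in $M$ we get $A\cap C\neq\emptyset$. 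Thus $A$ is stationary in $\lambda$ in $V$.

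It remains to rule out a closed copy of $\omega_1$ inside $A$ over $V$. If $c\colon\omega_1\to\lambda$ were a normal function with $\im(c)\subseteq A$, then $c$ is a function of length $\omega_1\leq\lambda$ taking values in $\lambda\subseteq M$, so $c\in{}^{\omega_1}M\subseteq{}^{\lambda}M\subseteq M$. Being a normal function with range included in $A$ is absolute, so $M$ would see a closed copy of $\omega_1$ inside $A$, contradicting the choice of $A$. Hence $A$ witnesses $\neg F^+(\lambda)$ in $V$.

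The main obstacle is the stationarity-preservation step, which is also precisely why this argument reaches only $\lambda$ and not $\lambda^+$ (unlike the corresponding theorem for $F$, where the witness is just a function and stationarity plays no role): to know that every $V$-club of $\lambda$ is captured by $M$ we need exactly closure of $M$ under $\lambda$-sequences, whereas the same step at $\lambda^+$ would require ${}^{\lambda^+}M\subseteq M$, which a $\lambda$-supercompact embedding does not supply. Everything else --- absoluteness of cofinality $\omega$, of clubness, and of normality of $c$ --- is routine.
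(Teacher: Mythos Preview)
Your proof is correct and follows essentially the same route as the paper: take a $\lambda$-supercompact embedding $j\colon V\to M$, use elementarity to obtain an $F^+$-witness $A$ at $\lambda$ in $M$, and then invoke ${}^{\lambda}M\subseteq M$ twice --- once to push $V$-clubs into $M$ for stationarity, and once to push a hypothetical closed copy of $\omega_1$ into $M$. Your write-up simply spells out the absoluteness checks in more detail than the paper does, and your closing remark on why the argument stops at $\lambda$ rather than $\lambda^+$ matches the paper's surrounding discussion.
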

	
	\begin{proof}
		Let $j\colon V\to M$ be a $\lambda$-supercompact embedding, i.e. ${}^{\lambda}M\subseteq M$ and $j(\kappa)>\lambda$. Then $(F^+(\lambda))^M$ fails, witnessed by some $A\subseteq\lambda$. In $M$, $A$ is a stationary subset of $E_{\omega}^{\lambda}$, which implies that $A$ is a stationary subset of $E_{\omega}^{\lambda}$ in $V$ by the closure of $M$ (any club witnessing the nonstationarity of $A$ in $V$ would be in $M$). Ergo $A$ witnesses that $F^+(\lambda)$ fails in $V$, since any closed $c\subseteq A$ with ordertype $\omega_1$ would be in $M$.
	\end{proof}
	
	Now we present related consistency results which show that our previous theorems cannot be improved (in various directions).
	
	We will later require complicated lifting arguments which involve the ``filling'' of partial Friedman sets. The following theorem serves as a gentle introduction into the machinery:
	
	\begin{mysen}\label{ThmSuperCompF}
		Assume $F^+(\delta)$ holds for all regular $\delta\geq\omega_2$ and $\GCH$ holds above $\omega_2$. Let $\kappa\leq\lambda$ be cardinals such that $\kappa$ is $\lambda^+$-supercompact. There is a forcing extension where $\kappa$ is $\lambda$-supercompact, $F(\delta)$ fails for all regular $\delta\in[\omega_2,\lambda^+]$ and $F^+(\lambda^{++})$ holds.
	\end{mysen}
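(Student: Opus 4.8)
The plan is to force with a reverse Easton iteration $\dP$ that destroys $F(\delta)$ at every regular $\delta\in[\omega_2,\lambda^+]$ while remaining small enough to preserve $F^+(\lambda^{++})$, and then to recover the $\lambda$-supercompactness of $\kappa$ by lifting a $\lambda^+$-supercompactness embedding; the extra degree of supercompactness in the hypothesis will be consumed in building the master condition and a generic for the (very short) tail of the image iteration. Concretely, I would let $\dP=(\dP_\alpha,\dot\dQ_\alpha)_{\alpha\le\lambda^+}$ be the reverse Easton iteration (direct limits at inaccessibles, inverse limits elsewhere) with $\dot\dQ_\delta=\dot\dP_F(\delta)$ computed in $V^{\dP_\delta}$ whenever $\delta\in[\omega_2,\lambda^+]$ is a regular cardinal, and $\dot\dQ_\delta$ trivial otherwise. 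By induction on $\delta$, using the Lemma characterizing the strategic closure of $\dP_F(\delta)$ together with the fact that the earlier part of the iteration is too closed to add a witness to $F(\delta')$ for any regular $\delta'<\delta$, each $\dot\dQ_\delta$ is forced to be ${<}\,\delta$-strategically closed; the usual coordinatewise-strategy argument then makes $\dP$ itself ${<}\,\omega_2$-strategically closed, so it adds no new $\omega_1$-sequences and preserves all cardinals as well as cofinality $\omega$. Standard Easton bookkeeping under $\GCH$ gives $|\dP|=\lambda^+$, hence $\dP$ is $\lambda^{++}$-cc. In $V[G]$, for each regular $\delta\in[\omega_2,\lambda^+]$ the generic object at stage $\delta$ is a witness to $\neg F(\delta)$ by the Lemma on $\dP_F(\delta)$ adding a total $F$-function, and it stays a witness since the tail of the iteration past $\delta$ is ${<}\,\delta$-strategically closed.

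For $F^+(\lambda^{++})$ I would argue directly. Given a $\dP$-name $\dot A$ with $p\Vdash$``$\dot A$ is stationary in $E_\omega^{\lambda^{++}}$'', put $A_q:=\{\alpha\mid q\Vdash\check\alpha\in\dot A\}\in V$ for $q\le p$. If every $A_q$ were nonstationary, say disjoint from a club $C_q\in V$, then $\bigcap_{q\le p}C_q$ is a club of $V$ (an intersection of $\le|\dP|=\lambda^+$ clubs), which $p$ forces disjoint from $\dot A$, contradicting stationarity since $\dP$ is $\lambda^{++}$-cc and hence preserves clubs and stationary subsets of $\lambda^{++}$. So some $A_q$ is stationary in $V$; by $F^+(\lambda^{++})$ in $V$ it contains a closed copy $c$ of $\omega_1$, whence $q\Vdash\check c\subseteq\dot A$. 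As $\dP$ adds no new $\omega_1$-sequences, $c$ is still a closed copy of $\omega_1$ in $V[G]$, and a routine density argument upgrades this to $\mathbb 1\Vdash$``$\dot A$ stationary $\to\dot A$ contains a closed copy of $\omega_1$''.

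The core of the argument is preserving ``$\kappa$ is $\lambda$-supercompact''. Fix a normal fine measure $U$ on $P_\kappa(\lambda^+)$ and $j\colon V\to M\cong\mathrm{Ult}(V,U)$, so $\crit(j)=\kappa$, $j(\kappa)>\lambda^+$, ${}^{\lambda^+}M\subseteq M$, and — the key point — every subset of $\lambda^{++}$ in $M$ is represented by a function $P_\kappa(\lambda^+)\to\mathcal P(\kappa)^V$, so $|\mathcal P(\lambda^{++})^M|^V=\lambda^{++}$. Since $M$ and $V$ agree on $H(\lambda^{++})$, $j(\dP)$ factors as $\dP*\dot\dP^{\mathrm{tail}}$, where $\dP^{\mathrm{tail}}$ is (in $M[G]$) the reverse Easton iteration of the $\dP_F(\delta')$ for $\delta'$ regular in $(\lambda^+,j(\lambda^+)]$; it is ${<}\,\lambda^{++}$-strategically closed (its first nontrivial stage is $\dP_F(\lambda^{++})^{M[G]}$, and $F(\delta)$ still fails in $M[G]$ for every regular $\delta\le\lambda^+$ because the $V[G]$-witnesses lie in $M[G]$ by closure), has $M[G]$-size $j(\lambda^+)$ (of $V[G]$-cardinality $\lambda^{++}$), and by the representation fact has at most $\lambda^{++}$ many dense subsets lying in $M[G]$. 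For $p\in G$ one checks $j(p)\restriction\dP=p\restriction\kappa\in G$ (every stage of $p$ at or above $\kappa$ is sent past $\lambda^+$), while $j(p)\restriction\dP^{\mathrm{tail}}$ is supported on $j[\{\delta\in\supp(p):\delta\ge\kappa\}]$ with value $j(p(\delta))$ at coordinate $j(\delta)$; since $\cf(\sup j[\delta])=\cf(\delta)\ge\omega_2>\omega_1$ for each such regular $\delta$, the ``filled'' union $\bigcup\{j(p(\delta)):p\in G\}$ is a genuine condition of $\dP_F(j(\delta))$ (a monochromatic closed copy of $\omega_1$ in its domain would either be bounded, hence inside some single $j(q)$, or cofinal, hence of cofinality $\omega_1\ne\cf(\delta)$), and these assemble into a master condition $m\in\dP^{\mathrm{tail}}$ below every $j(p)\restriction\dP^{\mathrm{tail}}$.

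It then remains — and this is the main obstacle — to build, inside $V[G]$, an $M[G]$-generic filter $H\ni m$ for $\dP^{\mathrm{tail}}$. I would enumerate in order type $\lambda^{++}$ both the (at most $\lambda^{++}$) dense subsets of $\dP^{\mathrm{tail}}$ lying in $M[G]$ and the (at most $\lambda^{++}$) closed copies of $\omega_1$ lying in $M[G]$, and construct a descending $\lambda^{++}$-sequence of conditions below $m$: at successor steps one meets the next dense set, and at a limit step one takes the union built so far. This union is automatically a condition when the stage has cofinality $\ne\omega_1$ (a monochromatic closed copy of $\omega_1$ cofinal in its domain would have cofinality $\omega_1$, and a bounded one lies inside an earlier condition); the delicate case is cofinality $\omega_1$, where one must already have arranged — via the bookkeeping — to seed values at cofinally many earlier limit stages so as to break the monochromaticity of every relevant closed copy of $\omega_1$. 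This ``filling of partial Friedman functions'' succeeds because $\dP^{\mathrm{tail}}$ is ${<}\,\lambda^{++}$-strategically closed and, crucially, only $\lambda^{++}$ many obstructions come from $M[G]$ (this is precisely where the representation fact for $U$ is used). With $H$ in hand, $j$ lifts to $j\colon V[G]\to M[G][H]$, and $M[G][H]$ is $\lambda^+$-closed in $V[G]$ (any $\lambda^+$-sequence over it has a name which is a $\lambda^+$-sequence over $M[G]$, hence lies in $M[G]$). Therefore $U^*:=\{X\in\mathcal P(P_\kappa(\lambda))^{V[G]}\mid j[\lambda]\in j(X)\}$ is a normal fine $\kappa$-complete ultrafilter on $P_\kappa(\lambda)$ in $V[G]$, so $\kappa$ is $\lambda$-supercompact there, and $V[G]$ is the required model.
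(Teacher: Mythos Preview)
Your master-condition construction is fine (and in fact slightly cleaner than the paper's: you take $h_\delta=\bigcup_{p\in G}j(p(\delta))$, which is already a total function on $\sup j[\delta]$, whereas the paper works with the partial function $j[f_\delta]$ and fills the gaps $\sup j[\delta]\smallsetminus j[\delta]$ by an auxiliary $F$-function obtained from Lemma~\ref{FriedmanPumpUp}). The preservation of $F^+(\lambda^{++})$ is also the same as in the paper.

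The genuine gap is in your step of building an $M[G]$-generic $H$ for $\dP^{\mathrm{tail}}$ \emph{inside} $V[G]$. This is impossible, and your bookkeeping sketch cannot be made to work. The first nontrivial stage of $\dP^{\mathrm{tail}}$ is $\dP_F(\lambda^{++})^{M[G]}$ (note $\lambda^{++}\le j(\lambda^+)$ and $\lambda^{++}$ is regular in $M$), and since $M[G]$ is closed under $\lambda^+$-sequences in $V[G]$, we have $\dP_F(\lambda^{++})^{M[G]}=\dP_F(\lambda^{++})^{V[G]}$ and, more importantly, $M[G]$ and $V[G]$ share exactly the same closed copies of $\omega_1$ inside $\lambda^{++}$. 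Any $M[G]$-generic for this stage produces a total function $f\colon\lambda^{++}\to 2$ (the sets $\{p:\dom(p)\ge\alpha\}$ are dense in $M[G]$ for every $\alpha<\lambda^{++}$), and $f$ is not constant on any closed copy of $\omega_1$ lying in $M[G]$, hence not on any such copy in $V[G]$. But you have just shown that $F^+(\lambda^{++})$, hence $F(\lambda^{++})$, holds in $V[G]$; so no such $f$ can exist in $V[G]$. This is exactly the phenomenon the paper highlights later: ${<}\,\lambda^{++}$-strategic closure is \emph{not} enough to build generics meeting $\lambda^{++}$ dense sets.

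The fix is the paper's: do not try to find $H\in V[G]$. Instead, force with $\dP^{\mathrm{tail}}$ over $V[G]$ to obtain $H$ containing the master condition, lift $j$ in $V[G*H]$, and read off a normal fine $\kappa$-complete ultrafilter on $(\mathcal P([\lambda]^{<\kappa}))^{V[G]}$ there. That ultrafilter has size $2^{\lambda^{<\kappa}}=\lambda^+$ (by $\GCH$), and since $\dP^{\mathrm{tail}}$ is ${<}\,\lambda^{++}$-strategically closed (hence ${<}\,\lambda^{++}$-distributive), it adds no new $\lambda^+$-sequences; therefore the ultrafilter already lies in $V[G]$, and $\kappa$ is $\lambda$-supercompact there.
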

	
	We need the following preservation result:
	
	\begin{mylem}\label{SmallPresFPlus}
		Assume $\delta$ is a regular cardinal such that $F^+(\delta)$ holds and $\dP$ is a poset with $|\dP|<\delta$ that does not collapse $\omega_1$. Then $F^+(\delta)$ holds after forcing with $\dP$.
	\end{mylem}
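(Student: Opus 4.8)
The plan is to run the standard small-forcing preservation argument, exploiting that $\dP$, having size $\mu:=|\dP|<\delta$, is $\mu^{+}$-cc with $\mu^{+}\le\delta$. First I would record the routine chain-condition facts: $\delta$ stays regular, every club of $\delta$ lying in $V$ remains a club of $\delta$ in any generic extension $V[G]$ (no new bounded subsets of $\delta$ appear and $\delta$ stays regular), hence every subset of $\delta$ that is stationary in $V$ stays stationary in $V[G]$, and conversely every element of $V$ that is stationary in $V[G]$ was already stationary in $V$.

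Second comes the ``capture'' of a potential counterexample. Working below a condition forcing $\dot A$ to be a stationary subset of $E_{\omega}^{\delta}$, for each $q\in\dP$ set $A_q:=\{\alpha<\delta\mid q\Vdash\check\alpha\in\dot A\}\in V$. For a generic $G$ one has $A:=\dot A^{G}=\bigcup\{A_q\mid q\in G\}$, a union of at most $\mu<\cf(\delta)$ sets from $V$; since a union of fewer than $\cf(\delta)$ nonstationary subsets of $\delta$ is nonstationary, some $A_{q_{0}}$ with $q_{0}\in G$ is stationary in $V[G]$, and then by the first paragraph $B:=A_{q_{0}}$ is already stationary in $V$. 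Moreover $q_{0}\in G$ forces $\check B\subseteq\dot A$, so $B\subseteq A$.

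Third, I would apply $F^{+}(\delta)$ in $V$. Since $q_{0}\Vdash\check B\subseteq\dot A\subseteq E_{\omega}^{\check\delta}$, every $\alpha\in B$ has $\cf^{V[G]}(\alpha)=\omega$; as $\dP$ preserves $\omega_{1}$ it preserves ``cofinality $\omega_{1}$'' for every ordinal (a countable cofinal sequence through an ordinal of $V$-cofinality $\omega_{1}$ would collapse $\omega_{1}$), and as $\dP$ is $\mu^{+}$-cc it preserves all cofinalities $\ge\mu^{+}$; granting that this is enough to keep $E_{\omega}^{\delta}$ unchanged, $B$ is a stationary subset of $E_{\omega}^{\delta}$ in the sense of $V$. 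By $F^{+}(\delta)$ in $V$ there is a closed $c\subseteq B$ with $\otp(c)=\omega_{1}$, and since $c\in V\subseteq V[G]$ and $c\subseteq B\subseteq A$ this $c$ witnesses the relevant instance of $F^{+}(\delta)$ in $V[G]$.

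The delicate point---and the one I expect to require real care---is precisely the parenthetical claim that $\dP$ leaves $E_{\omega}^{\delta}$ unchanged: a priori $\dP$ could singularize some regular $\nu$ with $\omega_{1}<\nu\le\mu$ to cofinality $\omega$ (a Namba-type effect), in which case $E_{\omega}^{\delta}$ can strictly grow and the set $B$ produced above might concentrate on $E_{\nu}^{\delta,V}$, about which $F^{+}(\delta)$ gives no information. In the applications of this lemma $\dP$ is always at least $\sigma$-closed (in fact strategically closed), so it preserves every cofinality and the difficulty disappears; for the statement in full generality one must split $B$ along the fewer than $\delta$ many $V$-cofinalities that $q_{0}$ forces down to $\omega$ and show that the offending configuration---a $V$-stationary $B\subseteq E_{\nu}^{\delta,V}$ receiving no closed copy of $\omega_{1}$ after $\nu$ is singularized---cannot arise, which is the crux of the argument.
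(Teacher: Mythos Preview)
Your argument is essentially identical to the paper's: define the sets $A_q=\{\alpha<\delta\mid q\Vdash\check\alpha\in\dot A\}$ (the paper writes $S_p$), observe that the generic stationary set is covered by fewer than $\delta$ of them, pick one that is stationary in $V$, apply $F^+(\delta)$ in $V$ to obtain a ground-model closed copy of $\omega_1$, and note that this copy survives because $\omega_1$ is preserved.

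You are in fact more scrupulous than the paper on the one point you flag as delicate. The paper simply writes ``Since $F^+(\delta)$ holds, there is a closed set $c\subseteq S_p$ with ordertype $\omega_1$'' without ever checking that $S_p\subseteq E_\omega^\delta$ in $V$; there is no splitting by $V$-cofinality and no case analysis. So the subtlety you isolate---that a small $\omega_1$-preserving forcing could in principle singularize some regular $\nu\in(\omega_1,\mu]$ to cofinality $\omega$ and thereby enlarge $E_\omega^\delta$---is not the ``crux'' of the paper's argument but rather something it passes over in silence. Your observation that every actual application of the lemma is to a $\sigma$-strategically closed poset (so no cofinalities drop to $\omega$ and $E_\omega^\delta$ is absolute) is exactly right and is what makes the lemma unproblematic in context. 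For the lemma in the generality stated, the gap you identify is genuine in both your write-up and the paper's.
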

	
	\begin{proof}
		Assume $\dot{S}$ is a $\dP$-name for a stationary subset of $E_{\omega}^{\delta}$. For $p\in\dP$, let $S_p:=\{\alpha\in\delta\;|\;p\Vdash\check{\delta}\in\dot{S}\}$. Since $\dot{S}$ is forced to be a subset of $\bigcup_{p\in\dP}S_p$, one of the $S_p$ is stationary. Since $F^+(\delta)$ holds, there is a closed set $c\subseteq S_p$ with ordertype $\omega_1$. Ergo $p$ forces that $\check{c}$ is a closed subset of $\dot{S}$ with ordertype $\dot{\omega}_1$ (since $\dP$ does not collapse $\omega_1$).
	\end{proof}
	
	\begin{proof}[Proof of Theorem \ref{ThmSuperCompF}]
		Let $(\dP_{\alpha},\dot{\dQ}_{\alpha})_{\alpha\leq\lambda^+}$ be an Easton-support iteration such that $\dot{\dQ}_{\alpha}$ is a $\dP_{\alpha}$-name for $\dP_F(\alpha)$ whenever $\alpha$ is regular and a $\dP_{\alpha}$-name for the trivial poset otherwise. Let $\dP:=\dP_{\lambda^++1}:=\dP_{\lambda^+}*\dot{\dQ}_{\lambda^+}$.
		
		\begin{myclaim}
			For any $\alpha\leq\lambda^++1$, the following holds:
			\begin{enumerate}
				\item $\dP_{\alpha}$ forces that $F(\delta)$ fails for all regular $\delta<\alpha$.
				\item $\dP_{\alpha}$ does not collapse any cardinals.
				\item $\dP_{\alpha}$ forces $\GCH$ above $\omega_2$.
			\end{enumerate}
		\end{myclaim}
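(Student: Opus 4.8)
The plan is to prove the claim by induction on $\alpha\leq\lambda^++1$, establishing (1)--(3) simultaneously while also carrying along two bookkeeping items: that $\dP_\alpha$ has size at most the least cardinal $\geq\alpha$ (so in particular $|\dP_\alpha|<\kappa$ for $\alpha<\kappa$), which follows from $\GCH$, the Easton support discipline, and the bound $|\dP_F(\gamma)|\leq 2^{<\gamma}$ on the iterands; and that for every $\beta<\alpha$ the tail $\dP_{[\beta,\alpha)}$ is forced by $\dP_\beta$ to be countably closed and, more strongly, ${<}\gamma_\beta$-strategically closed, where $\gamma_\beta$ denotes the least regular cardinal $\geq\beta$. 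Countable closure of the iterands is immediate: the union of an $\omega$-chain in $\dP_F(\gamma)=\dP_{F(2)}(\gamma)$ has domain of cofinality $\omega$, and any normal $g\colon\omega_1\to\xi$ with $\cf(\xi)=\omega$ is bounded and so factors through an earlier condition, so the union is again a condition; the strategic closure of the tail is then the standard preservation theorem for Easton support iterations of ${<}\gamma_\beta$-strategically closed forcings. Granting these items, cardinal preservation (2) and the preservation of $\GCH$ above $\omega_2$ (3) are the usual reverse Easton bookkeeping — the direct limit at an inaccessible stage supplies the relevant chain condition, the distributivity of the tails handles smaller cardinals, and the size bound together with distributivity keeps the continuum function fixed above $\omega_2$ — so I will not dwell on these.

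The content is in (1). At a successor $\alpha=\beta+1$ with $\beta$ singular or $\beta<\omega_2$ there is nothing to prove, so suppose $\beta\geq\omega_2$ is regular. By the inductive hypothesis $\dP_\beta$ forces $F(\delta)$ to fail for all regular $\delta<\beta$; hence $\dP_F(\beta)$ is ${<}\beta$-strategically closed in $V^{\dP_\beta}$ (by the Section~3 lemma relating this to the failure of $F(2,\delta)$, together with Corollary~\ref{ColStratClos} and Lemma~\ref{LiftingStratClos}), so it adds no new ${<}\beta$-sequences, and by the size bound it preserves cardinals and $\GCH$ above $\omega_2$. By the lemma that $\dP_F(\beta)$ adds a witness to $\neg F(\beta)$ — whose hypothesis is precisely the failure of $F(2,\delta)$ for regular $\delta<\beta$, just verified — $F(\beta)$ fails in $V^{\dP_{\beta+1}}$. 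Finally, $F(\delta)$ stays false for regular $\delta<\beta$: the $F$-function $f_\delta\colon\delta\to 2$ added at stage $\delta+1$ witnesses that, viewing $f_\delta^{-1}(0)$ and $f_\delta^{-1}(1)$ as subsets of the regular cardinal $\delta$ (which has uncountable cofinality), neither contains a closed copy of $\omega_1$; the tail $\dP_{[\delta+1,\beta+1)}$ is countably closed, hence proper, so Lemma~\ref{ProperNotAddingSeq}(1) yields that this remains true in $V^{\dP_{\beta+1}}$, i.e.\ $f_\delta$ is still not constant on any closed copy of $\omega_1$.

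At a limit stage $\alpha$ the argument for (1) is identical: for each regular $\delta<\alpha$ the witness $f_\delta$ produced at stage $\delta+1$ survives into $V^{\dP_\alpha}$ because the tail $\dP_{[\delta+1,\alpha)}$ is countably closed, hence proper, and Lemma~\ref{ProperNotAddingSeq}(1) applies. The bookkeeping items and clauses (2), (3) pass to $\alpha$ by the standard Easton support limit computations.

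The obstacles are of the bookkeeping kind rather than conceptual. The point I expect to spend the most care on is checking that the tails of the iteration retain enough closure — countable closure, so that Lemma~\ref{ProperNotAddingSeq} applies, and ${<}\gamma_\beta$-strategic closure, so that no cardinal or cofinality below $\gamma_\beta$ is disturbed — which is exactly where one invokes the general preservation machinery for Easton support iterations of strategically closed forcings. A second, minor but genuine, point is the stage $\beta=\omega_2$: there $\dP_F(\omega_2)$ has size $2^{\omega_1}$, so one needs $2^{\omega_1}=\omega_2$ (which holds in the models relevant to Theorem~\ref{ThmSuperCompF}) to prevent this forcing from collapsing cardinals above $\omega_2$. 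Everything else is routine.
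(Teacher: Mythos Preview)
Your proof is correct and follows essentially the same inductive scheme as the paper: at successors use the induction hypothesis to get ${<}\,\beta$-strategic closure of $\dP_F(\beta)$, and at limits factor $\dP_\alpha$ as $\dP_{\delta+1}*\dP_{[\delta+1,\alpha)}$ with a sufficiently closed tail.

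There is one small divergence worth noting. To preserve $\neg F(\delta)$ under the tail forcing you argue that the tail is countably closed, hence proper, and then invoke Lemma~\ref{ProperNotAddingSeq}. The paper instead uses the stronger fact that the tail $\dP_{[\delta+1,\alpha)}$ is $\delta{+}1$-strategically closed and therefore adds no new subsets of $\delta$ at all; since a closed copy of $\omega_1$ inside $\delta$ is such a subset, the witness $f_\delta$ survives automatically. Both arguments are valid, but the paper's is more direct and avoids the detour through properness --- you already need the full strategic closure of the tail for cardinal and $\GCH$ preservation, so there is no extra cost in using it here too. Your observation about needing $2^{\omega_1}=\omega_2$ at the bottom stage is well taken and is implicit in the paper's claim that $\dP_F(\beta)$ ``has size $\beta$''.
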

		
		\begin{proof}
			We do the proof by induction on $\alpha$.
			
			Let $\alpha:=\beta+1$. In case $\beta$ is not regular, $\dP_{\beta}$ is essentially the same as $\dP_{\alpha}$, so the statement is clear (because we also have no new requirements in (1)). Assume $\beta$ is regular. Then $\dP_{\alpha}=\dP_{\beta}*\dP_F(\check{\beta})$. By the induction hypothesis (1), in $V[\dP_{\beta}]$, $\dP_F(\beta)$ is ${<}\,\beta$-strategically closed and has size $\beta$ (by the induction hypothesis (2)). Ergo it forces $\neg F(\beta)$, does not collapse any cardinals and does not increase powerset sizes (by a nice name argument). This shows that all three statements hold for $\alpha$.
			
			Now assume $\alpha$ is a limit ordinal. Since $|\dP_{\alpha}|\leq|\alpha|$, it suffices to show that for any cardinal $\delta<\alpha$, $\dP_{\alpha}$ forces $2^{\delta}=\delta^+$, does not collapse $\delta$ and forces $\neg F(\delta)$ if $\delta$ is regular. So let $\delta<\alpha$. Then $\dP_{\alpha}$ decomposes as $\dP_{\delta+1}*\dP_{\delta+1,\alpha}$, where a similar analysis as before and the usage of Easton support shows that $\dP_{\delta+1,\alpha}$ is $\delta+1$-strategically closed in $V[\dP_{\delta+1}]$. Ergo any subset of $\delta$ added by $\dP_{\alpha}$ has been added by $\dP_{\delta+1}$. Then the induction hypothesis for $\delta+1$ shows that the required statements hold.
		\end{proof}
		
		Lastly, we show the following:
		
		\begin{myclaim}
			For each $\alpha\leq\lambda^++1$ that is not a singular limit cardinal, $\dP_{\alpha}$ has a dense subset of cardinality $\leq|\alpha|$.
		\end{myclaim}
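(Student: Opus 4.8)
I would prove the claim by induction on $\alpha$, via a standard nice-name count fed by the structural information from the preceding claim. The inputs I would use are: for regular $\beta\leq\lambda^+$, $\dP_{\beta}$ preserves cardinals, forces $\GCH$ above $\omega_2$ and forces $\neg F(\delta)$ for all regular $\delta<\beta$, so by the characterization of the strategic closure of $\dP_F$ together with Corollary \ref{ColStratClos}, $\dP_{\beta}$ forces $\dP_F(\beta)$ to be ${<}\,\beta$-strategically closed; and since every condition of $\dP_F(\beta)$ is a function $\gamma\to 2$ with $\gamma<\beta$, we get $|\dP_F(\beta)|\leq\sum_{\gamma<\beta}2^{|\gamma|}\leq\beta$ in $V[\dP_{\beta}]$, using the cardinal arithmetic hypotheses to absorb the bottom of the hierarchy. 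The only feature of Easton support that is actually needed is that, since $\dot{\dQ}_{\xi}$ is trivial unless $\xi$ is a regular cardinal, every condition of $\dP_{\alpha}$ automatically has $\supp(p)\subseteq\mathrm{Reg}\cap\alpha$. To make the limit steps close up I would carry, as a second clause of the induction, the coarser bound that $\dP_{\alpha}$ has a dense subset of size $\leq|\alpha|^+$ whenever $\alpha$ lies in an interval $[\mu,\mu^+)$ above a singular limit cardinal $\mu$ (the threads through the cofinally-many nontrivial coordinates below $\mu$ costing one extra exponentiation); for all other $\alpha$ the bound is $\leq|\alpha|$.

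For the successor step write $\alpha=\beta+1$, so $\dP_{\alpha}=\dP_{\beta}*\dot{\dQ}_{\beta}$. If $\beta$ is not a regular cardinal then $\dot{\dQ}_{\beta}$ is trivial, $\dP_{\alpha}$ is densely isomorphic to $\dP_{\beta}$, and the bound at $\alpha$ is the bound at $\beta$ (note $|\alpha|=|\beta|$). If $\beta$ is regular, let $\dR_{\beta}$ be a dense subset of $\dP_{\beta}$ of size $\leq\beta$ from the inductive hypothesis, and take as dense subset of $\dP_{\alpha}$ the set of pairs $(r,\check{q})$ with $r\in\dR_{\beta}$, $q$ a function $\gamma\to 2$ ($\gamma<\beta$), and $r\Vdash\check{q}\in\dot{\dQ}_{\beta}$: density is clear (given $(p,\dot{q}')$, first extend $p$ to decide $\dot{q}'$ to be a concrete such $q$, then refine the first coordinate into $\dR_{\beta}$), and the size is at most $|\dR_{\beta}|\cdot|{}^{<\beta}2|\leq\beta$ by the bound on $\dP_F(\beta)$.

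The content is at the limit stages. If $\alpha$ is a regular limit cardinal --- in particular $\alpha=\kappa$ --- then a support of size $<\alpha$ sitting inside the cofinal set $\mathrm{Reg}\cap\alpha$ is bounded (as $\alpha$ is regular), so $\dP_{\alpha}=\bigcup_{\xi<\alpha}\dP_{\xi}$ is a direct limit and $\bigcup_{\xi<\alpha}\dR_{\xi}$ is dense of size $\leq\alpha$. If $\alpha=\rho^+$ is a successor cardinal --- in particular $\alpha=\lambda^+$ --- Easton support permits supports of size $\leq\rho$ contained in $\mathrm{Reg}\cap\rho^+$, so conditions need not have bounded support; here I would show that the set of \emph{hereditarily nice} conditions --- those $p$ with $\supp(p)\subseteq\mathrm{Reg}$, $p\uhr\xi\in\dR_{\xi}$ for every $\xi\in\supp(p)$, and every $p(\xi)$ a nice $\dP_{\xi}$-name for a condition of $\dP_F(\xi)$ --- is dense, and count it: there are $(\rho^+)^{\rho}=\rho^+$ possible supports, and for each of the $\leq\rho$ coordinates $\xi$ there are at most $\rho^+$ nice $\dP_{\xi}$-names for a condition of $\dP_F(\xi)$ (as $|\dP_{\xi}|\leq\rho$ by induction and $|\dP_F(\xi)|\leq\rho$ by the bound above), so the total is $\leq\rho^+$ by $\GCH$ at $\rho$. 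The case $\alpha$ a singular limit cardinal is excluded by the statement; the coarser bound $\leq|\alpha|^+$ (for it and for $[\alpha,\alpha^+)$) is the same count, now with support inside the cofinal set $\mathrm{Reg}\cap\alpha$ and with the product over coordinates contributing one further exponentiation by $\leq|\alpha|$, which $\GCH$ turns into $|\alpha|^+$.

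The main obstacle is establishing the density of the hereditarily nice conditions at successor-cardinal stages: one must simultaneously replace all (up to $\rho$-many, possibly cofinal) coordinate values of a given condition by nice $\dR_{\xi}$-names while keeping the result a legitimate condition of the iteration. This is the usual fusion/amalgamation argument for Easton iterations --- a recursion of length $\leq\rho$ along the support, using density of each $\dR_{\xi}$ in $\dP_{\xi}$ --- routine but requiring care. A secondary point is the very bottom of the iteration: $\GCH$ is assumed only above $\omega_2$, so one must check directly that $|{}^{<\omega_2}2|=\aleph_2$ (that is, $2^{\aleph_1}=\aleph_2$), which is what keeps $|\dP_F(\omega_2)|\leq\aleph_2$ and hence the count honest at the first nontrivial stage.
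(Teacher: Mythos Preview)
Your overall approach matches the paper's: inductively build dense subsets $\dR_\alpha$ of ``hereditarily nice'' conditions and count them using $\GCH$. The paper defines $\dR_\alpha$ uniformly (at limits, $\dR_\alpha=\{f:\forall\beta<\alpha\,(f\uhr\beta\in\dR_\beta)\}$) and then argues density by a single recursion along the condition, whereas you split limits by the cardinal type of $\alpha$. Your split omits non-cardinal limit ordinals, but those are trivial since all coordinates above the largest cardinal $\mu<\alpha$ are the trivial poset, so $\dP_\alpha$ is essentially $\dP_{\mu+1}$. Your auxiliary bound $\leq|\alpha|^+$ for $\alpha$ just above a singular $\mu$ is actually more honest than the paper, which asserts $|\dR_\alpha|=|\dR_{\beta+1}|=\beta$ without verifying the second equality when $\beta$ is singular; fortunately only the case $\alpha=\lambda^++1$ matters for the application.

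There is, however, a genuine gap in your successor step. You take the dense set at $\alpha=\beta+1$ (with $\beta$ regular) to consist of pairs $(r,\check{q})$ with $q$ a ground-model function, arguing density by ``first extend $p$ to decide $\dot{q}'$''. But $\dP_\beta$ is not ${<}\,\beta$-distributive, and $\dot{q}'$ need not be decidable: for instance, the generic $F$-function $g\colon\omega_2\to 2$ added by $\dP_F(\omega_2)$ is itself a condition in $\dP_F(\omega_3)^{V[\dP_{\omega_3}]}$ not lying in $V$, and no ground-model $q$ can end-extend it, so $(1,\dot{g})\in\dP_{\omega_3+1}$ has no extension of your form. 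The fix is precisely what the paper does: use nice $\dR_\beta$-names $\tau$ in the second coordinate rather than checked names. With that correction the counts go through as you describe.
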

		
		\begin{proof}
			We define the set inductively. Assume $\dR_{\beta}\subseteq\dP_{\beta}$ has been defined for each $\beta<\alpha$.
			
			If $\alpha=\beta+1$ for $\beta$ nonregular, we simply let $\dR_{\beta+1}$ consist of $(p,\emptyset)$ for $p\in\dR_{\beta}$. If $\beta$ is regular, we let $\dR_{\beta+1}$ consist of $(p,\tau)$, where $p\in\dR_{\beta}$ and $\tau$ is a nice $\dR_{\beta}$-name for an element of $\dot{\dQ}_{\beta}$. It follows from $\GCH$ above $\omega_2$, the size of $\dot{\dQ}_{\beta}$ being forced to equal $\beta$ and the fact that $|\dR_{\beta}|=\beta$ that $|\dR_{\beta+1}|=\beta$ and it is clearly dense.
			
			If $\alpha$ is a limit, we let $\dR_{\alpha}$ consist of those $f\in\dP_{\alpha}$ such that $f\uhr\beta\in\dR_{\beta}$ for each $\beta<\alpha$. We see as follows that $\dR_{\alpha}$ is dense: Given any $p\in\dP_{\alpha}$, let $q$ be defined as follows: If $q\uhr\beta$ has been constructed, let $q(\beta)$ be such that $(q\uhr\beta,q(\beta))\in\dR_{\beta+1}$ and $q\uhr\beta\Vdash q(\beta)=p(\beta)$ (this is possible by the definition of $\dR_{\beta+1}$). This also shows that in limit steps, $q\uhr\beta$ will be in $\dR_{\beta}$, so $q\in\dR_{\alpha}$.
			
			Clearly, if $\alpha$ is not a limit cardinal, $|\dR_{\alpha}|=|\dR_{\beta+1}|$ where $\beta$ is the largest cardinal below $\alpha$, so $|\dR_{\alpha}|=\beta=|\alpha|$. If $\alpha$ is a regular limit cardinal, we can identify $\dR_{\alpha}$ with the set of all ${<}\,\alpha$-sized partial functions from $\alpha$ to $\alpha$, so $|\dR_{\alpha}|=\alpha$ by $\GCH$ above $\omega_2$.
		\end{proof}
		
		So $F^+(\lambda^{++})$ holds after forcing with $\dP$ by Lemma \ref{SmallPresFPlus}. The only thing left is to show that $\dP$ forces that $\kappa$ is $\lambda$-supercompact. To this end, let $G$ be $\dP$-generic and let $j\colon V\to M$ be a $\lambda^+$-supercompact embedding, i.e. ${}^{\lambda^+}M\subseteq M$ and $j(\kappa)>\lambda^+$. For any $\alpha<\beta\leq\lambda^++1$, let $G\uhr\alpha$ be the $\dP_{\alpha}$-generic filter induced by $G$, let $G\uhr[\alpha,\beta)$ be the $\dP_{\alpha,\beta}^{G\uhr\alpha}$-generic filter induced by $G$ (where we write $\dP_{\beta}\cong\dP_{\alpha}*\dP_{\alpha,\beta}$) and let $G(\alpha)$ be the $\dot{\dQ}_{\alpha}^{G\uhr\alpha}$-generic filter induced by $G$. We aim to find a master condition $q\in j(\dP)$ which allows us to lift the embedding to $j\colon V[G]\to M[H]$ inside $V[H]$ for some $j(\dP)$-generic filter $H$.
		
		\begin{myclaim}
			For any $\alpha\in[\kappa,\lambda^++1]$ there is a condition $q_{\alpha}\in j(\dP)\uhr[\lambda^++1,j(\alpha))$ such that whenever $H$ is $j(\dP)\uhr[\lambda^++1,j(\alpha))$-generic containing $q_{\alpha}$, $j[G\uhr\alpha]\subseteq G*H$.
		\end{myclaim}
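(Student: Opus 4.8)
The plan is to prove the Claim by induction on $\alpha\in[\kappa,\lambda^++1]$, building the $q_\alpha$ coherently (so that $q_\alpha\uhr[\lambda^++1,j(\beta))=q_\beta$ whenever $\kappa\le\beta\le\alpha$) and, simultaneously, lifting $j$ to an embedding $j\colon V[G\uhr\alpha]\to M[G*H_\alpha]$ where $H_\alpha$ is a $j(\dP)\uhr[\lambda^++1,j(\alpha))$-generic filter over $M[G]$ containing $q_\alpha$ (chosen coherently as the recursion proceeds, in a generic extension of $V[G]$; this is what breaks the apparent circularity, since computing the images $j[G(\xi)]$ needs the lift, which in turn needs the $q$'s). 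This makes sense because $j(\dP)\uhr(\lambda^++1)=\dP$: the definitions of the iteration and of each $\dP_F(\delta)$ are absolute between $V$ and $M$ for ordinals $\le\lambda^+$ (using the $\lambda^+$-closure of $M$), so $\dP_\delta^M=\dP_\delta^V$ for $\delta\le\lambda^++1$ and $G*H_\alpha$ is genuinely $j(\dP)\uhr j(\alpha)$-generic over $M$; where a condition $q_\alpha$ fails to lie in $M$ outright it is replaced by a canonical $j(\dP)\uhr j(\alpha)$-name for it, exactly as in the proof of Theorem \ref{ReflectionDiamond}.

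For $\alpha=\kappa$: each $p\in G\uhr\kappa$ has Easton support bounded below the inaccessible $\kappa$, so $p\in H(\kappa)^V$ and $j(p)=p$; hence $j[G\uhr\kappa]=G\uhr\kappa\subseteq G$ and we take $q_\kappa=1_{\dP}$. For $\alpha$ a limit: let $q_\alpha$ be the coordinate-wise union of the coherent conditions $q_\beta$ ($\kappa\le\beta<\alpha$); its support is the image under $j$ of the Easton set of regular cardinals in $[\kappa,\alpha)$, hence Easton in $M$ below $j(\alpha)$, and it is bounded in $j(\alpha)$ when $\cf(\alpha)\ge\kappa$, whereas when $\cf(\alpha)<\kappa$ one has $\cf^M(j(\alpha))=\cf(\alpha)<\kappa$, so $j(\alpha)$ is not inaccessible in $M$ and the (unbounded) support of $q_\alpha$ is permitted there. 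By coherence $j[G\uhr\alpha]=\bigcup_{\xi<\alpha}j[G\uhr\xi]\subseteq G*H_\alpha$ for any $H_\alpha$ through $q_\alpha$, and the lift is obtained by taking unions.

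The crux is the successor step $\alpha=\beta+1$ with $\beta$ regular (when $\beta$ is singular or not a cardinal the iterand is trivial and we set $q_{\beta+1}:=q_\beta$). Since $\beta\ge\kappa$ we get $j(\beta)\ge j(\kappa)>\lambda^+$, and as $j(\kappa)$ is inaccessible in $M$ in fact $j(\beta)>\lambda^++1$, so the new coordinate $j(\beta)$ lies in the tail $[\lambda^++1,j(\beta+1))$, while any support coordinate $<\kappa$ is fixed by $j$ and handled by $G$. Let $f_\beta:=\bigcup G(\beta)\colon\beta\to 2$ be the generic $F$-function added by $\dP_F(\beta)$ over $V[G\uhr\beta]$. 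The master condition at coordinate $j(\beta)$ is
$$q^\beta:=\textstyle\bigcup\{\,j(r)\mid r\in G(\beta)\,\}=j(f_\beta)\uhr\sup j[\beta],$$
where $j$ on the left is the already-available lift to $V[G\uhr\beta]$ and the two expressions agree once $j$ is further lifted to $V[G\uhr(\beta+1)]$; we put $q_{\beta+1}:=q_\beta$ followed by (a name for) $q^\beta$. Granting that $q^\beta$ is a legitimate condition (see below): if $H$ is generic through $q_{\beta+1}$, then $H\uhr[\lambda^++1,j(\beta))$ is generic through $q_\beta$, so by the inductive lift $j(p)\in G*H$ for all $p\in G\uhr\beta$, and the $\beta$-th coordinate of $j$ applied to any member of $G\uhr(\beta+1)$ is some $j(r)$ with $r\in G(\beta)$, which $q^\beta$ end-extends; hence $j[G\uhr(\beta+1)]\subseteq G*H$ and $j$ lifts to $V[G\uhr(\beta+1)]$.

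The main obstacle is checking that $q^\beta$ really is a condition of $\dP_F(j(\beta))$ in the target, and this is precisely where Observation \ref{MainObs} comes in. One has $\sup j[\beta]<j(\beta)$ because $\cf(\beta)=\beta\ge\kappa$, so $q^\beta$ is a function from an ordinal below $j(\beta)$ into $2$; that it equals $j(f_\beta)\uhr\sup j[\beta]$ and end-extends each $j(r)=j(f_\beta)\uhr j(\dom r)$ follows since $\{\dom r\mid r\in G(\beta)\}$ is cofinal in $\beta$ (genericity) and $j$ is order-preserving. The substantive point is that $q^\beta$ is \emph{not} constant on any closed copy of $\omega_1$: by elementarity $j(f_\beta)$ is not constant on any closed copy of $\omega_1$ in the target model, and a closed copy of $\omega_1$ inside $\sup j[\beta]$ is in particular one inside $j(\beta)$; and the target model and $V[G\uhr(\beta+1)]$ have exactly the same closed copies of $\omega_1$ inside $\beta$, since $M$ is $\lambda^+$-closed — a property inherited by the relevant forcing extension of $M$, because every $\dP_\delta$ with $\delta\le\lambda^++1$ has a dense subset of size $|\delta|\le\lambda^+$ by the Claim just proved — while every closed copy of $\omega_1$ inside an ordinal $\le\lambda^+$ has size $\omega_1\le\lambda^+$. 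Thus $q^\beta\in\dP_F(j(\beta))$ as computed in the target, closing the induction; instantiating the Claim at $\alpha=\lambda^++1$ yields the master condition used to lift $j$ over the whole of $\dP$.
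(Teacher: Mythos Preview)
Your approach diverges from the paper's at the crucial successor step. You take $q^\beta=\bigcup_{r\in G(\beta)}j(r)$, the union of the images of all conditions in $G(\beta)$ under the already-lifted $j\colon V[G\uhr\beta]\to M[G*H_\beta]$. The paper instead works with the \emph{pointwise} image $j[f_\beta]$, which is only a partial function with domain $j[\beta]\subsetneq\sup j[\beta]$, and explicitly fills the gaps $\sup j[\beta]\smallsetminus j[\beta]$ using an auxiliary $F$-function $g\colon\sup j[\beta]\to 2$ found in $M[G*H_\beta]$ via Lemma~\ref{FriedmanPumpUp}; it then checks that $h:=j[f_\beta]\cup(g\uhr(\sup j[\beta]\smallsetminus j[\beta]))$ is an $F$-function by a case split on whether a putative closed copy of $\omega_1$ is cofinal in $\sup j[\beta]$. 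The paper even remarks that one cannot simply fill the gaps with a constant value, since $j(\kappa)\smallsetminus\kappa\subseteq\sup j[\beta]\smallsetminus j[\beta]$ contains closed copies of $\omega_1$.

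Your written justification that $q^\beta$ is an $F$-function is circular: you invoke $j(f_\beta)$ and its being an $F$-function ``by elementarity,'' but $j(f_\beta)$ only exists after $j$ is lifted to $V[G\uhr(\beta+1)]$, which is exactly what requires $q^\beta$ to be a condition in the first place. Observation~\ref{MainObs} does not close this gap (it concerns $\bigcup G(\beta)$, not $\bigcup j[G(\beta)]$). The non-circular argument you need is: each $j(r)$ is a condition in $\dP_F(j(\beta))^{M[G*H_\beta]}$ by elementarity of the lift to $V[G\uhr\beta]$, and any closed copy of $\omega_1$ inside $\sup j[\beta]$ in $M[G*H_\beta]$ is \emph{bounded} there (because $j\uhr\beta\in M$ witnesses $\cf^{M}(\sup j[\beta])=\beta$, and $j(\dP)\uhr j(\beta)$ preserves cofinalities), hence lies inside some $j(\dom r)$. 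A second point you gloss over: $q_{\beta+1}(j(\beta))$ must be a $j(\dP)\uhr j(\beta)$-name \emph{in $M$}, so one needs $q^\beta\in M[G*H_\beta]$. The paper's $h$ is visibly there, being built from $j\uhr\beta\in M$, $f_\beta\in M[G]$, and $g\in M[G*H_\beta]$; your $q^\beta$ is defined using the lifted embedding, a class of $V[G*H_\beta]$ not available in $M[G*H_\beta]$, so membership in $M[G*H_\beta]$ requires a separate argument you have not supplied.
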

		
		\begin{proof}
			We do the proof by induction. First we let $q_{\kappa}:=\emptyset$ which works as $j[G\uhr\kappa]=G\uhr\kappa$.
			
			Now assume $q_{\beta}$ has been defined and $\alpha=\beta+1$. If $\beta$ is not a regular cardinal, we are done, so assume $\beta$ is regular. Let $f:=\bigcup G(\beta)$. Then $f$ is not constant on any closed copy of $\omega_1$ and the same holds for $j[f]$. However, $j[f]$ is only defined on $j[\beta]$ and so $f$ is not an $F$-function. Furthermore, simply patching $f$ by letting it be $0$ or $1$ on $\sup(j[\beta])\smallsetminus j[\beta]$ does not work because $j[\beta]$ is disjoint from $j(\kappa)\smallsetminus\kappa$ which clearly contains a closed set with ordertype $\omega_1$.
			
			Let $H$ be any $j(\dP)\uhr[\lambda^++1,j(\beta))$-generic filter containing $q_{\beta}$ and work in $M[G*H]$. We note that $j[f]\in M[G*H]$ because $G(\beta)\in M[G*H]$, $\bigcup G(\beta)$ is a function on $\beta\leq\lambda^+$ and $j\uhr\lambda^+\in M[G*H]$. Let $\eta:=\sup(j[\beta])$. By regularity of $j(\beta)$ (and $\beta\leq\lambda^+$ which implies $\beta<j(\beta)$), $\eta<j(\beta)$. Furthermore, $F(\delta)$ fails for each regular $\delta<j(\beta)$ (we are in an extension by $j(\dP)\uhr j(\beta)$) so there exists an $F$-function $g\colon\eta\to 2$ in $M[G*H]$ by Lemma \ref{FriedmanPumpUp} (either $j(\beta)$ is a limit cardinal in which case it is clear that $g$ exists or $j(\beta)=j(\beta^{-})$ in which case $|\eta|=\beta^-$ in $M[G*H]$ and $F(\beta^-)$ fails). Let $h:=j[f]\cup(g\uhr(\eta\smallsetminus j[\beta]))$. We claim that $h$ is an $F$-function. To this end, let $c\subseteq\eta$ be closed with ordertype $\omega_1$. Assume without loss of generality that any $\delta\in c$ has cofinality $\omega$ and let $\xi:=\sup(c)$.
			
			\begin{itemize}
				\item[Case 1:] $j[\beta]$ is unbounded in $\xi$. In this case, $j[\beta]$ contains a club in $\xi$ (because $j$ is continuous at ordinals of cofinality ${<}\,\kappa$ and $\cf(\xi)=\omega_1$). Thus $c\cap j[\beta]$ contains a club $d$ in $\xi$ (necessarily with ordertype $\omega_1$). Let $e:=j^{-1}[d]$. Then $e$ is closed (by continuity of $j$), $\otp(e)=\omega_1$ and $e\subseteq\beta$. Ergo $f(\zeta_0)=0$ and $f(\zeta_1)=1$ for some $\zeta_0,\zeta_1\in e$. But this implies $j(\zeta_0),j(\zeta_1)\in d$ and
				$$h(j(\zeta_0))=j(f)(j(\zeta_0))=j(f(\zeta_0))=j(0)=0$$
				and similarly $j(j(\zeta_1))=1$.
				\item[Case 2:] $j[\beta]$ is bounded in $\xi$. In this case we can assume without loss of generality that $c$ is disjoint from $j[\beta]$ (simply take a tail segment). However, this implies that $g$ is constant on $c$, a contradiction.
			\end{itemize}
			
			Ergo in $M[G*H]$ the function $h$ is a condition in $\dP(j(\beta))$. It follows that $q_{\beta+1}:=q_{\beta}*\dot{h}$ is as required (letting $\dot{h}$ be a name for $h$): Whenever $H$ is $j(\dP)\uhr[\lambda^++1,j(\beta+1))$-generic containing $q_{\beta+1}$, $H\uhr j(\beta)$ contains $q_{\beta}$. Ergo $j[G\uhr\beta]\subseteq G*(H\uhr j(\beta))$ and $j[G\uhr\beta+1]\subseteq G*H$, since whenever $p\in G\uhr\beta+1$, $j(p(\beta))$ is a subset of $f$ and thus of $H(j(\beta))$.
			
			In case $\alpha$ is a limit we can simply let $q_{\alpha}$ be the limit of the conditions constructed thus far. For any $\alpha\in[\kappa+1,\lambda^++1]$, $j[\alpha]$ is in $M$, so its supremum can never be a regular cardinal, since $\sup(j[\alpha])\geq j(\kappa)>\lambda^+$ and $\cf(\sup(j[\alpha]))=\cf(\alpha)\leq\lambda^+$. Since $\dom(q_{\alpha})\subseteq[\lambda^++1,j(\alpha))\cap\im(j)$, it is therefore an Easton set. Clearly, $q_{\alpha}$ is as required.
		\end{proof}
		
		Now let $H$ be a $j(\dP)\uhr[\lambda^++1,j(\lambda^++1))$-generic filter containing $q_{\lambda^++1}$. It follows that in $V[G*H]$ we can lift $j$ to $j^+\colon V[G]\to M[G*H]$, so in $V[G*H]$ there exists a $\kappa$-complete normal and fine ultrafilter over $(\mathcal{P}([\lambda]^{<\kappa}))^{V[G]}$. This ultrafilter has size $2^{(\lambda^{<\kappa})}=\lambda^+$ by assumption so it has not been added by the ${<}\,\lambda^{++}$-strategically closed tail forcing. Ergo $\kappa$ is $\lambda$-supercompact in $V[G]$.
	\end{proof}
	
	In the case we want the ``split'' to occur at limit cardinals we have to use a slightly more involved argumentation, since the iteration to force the failure of $F$ at every cardinal $\delta<\kappa$ (for some limit cardinal $\kappa$) will have size at least $\kappa$. In those cases, we have to strengthen our hypothesis from $F^+$ globally to being in a standard model of $\MM$.
	
	We will also need the following preservation Lemma:
	
	\begin{mylem}\label{PresLemma}
		Let $\omega_2\leq\delta<\kappa$ be regular cardinals. Let $A\subseteq E_{\omega}^{\delta}$ and $B\subseteq E_{\omega}^{\kappa}$ be stationary sets such that neither of them contains a closed copy of $\omega_1$. Then $A$ does not contain a closed copy of $\omega_1$ after forcing with $\dP(B)$.
	\end{mylem}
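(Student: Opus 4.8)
The plan is to imitate the master-condition argument from the proof of Lemma~\ref{PAStatPres}, applied to the stationary set $E_\omega^\gamma\smallsetminus A$ for a suitable $\gamma\le\delta$. Suppose toward a contradiction that some $p^*\in\dP(B)$ forces a $\dP(B)$-name $\dot e$ to be a normal function from $\omega_1$ into $\check A$; extending $p^*$ I may assume it also decides $\sup(\im(\dot e))=\check\gamma$ for a fixed ordinal $\gamma\le\delta$. A normal function cofinal in $\gamma$ has closed cofinal range, so $p^*$ forces $\im(\dot e)$ to be a club in $\gamma$ of ordertype $\omega_1$; in particular $p^*\Vdash\cf(\gamma)=\omega_1$, and since $\dP(B)$ preserves $\omega_1$ (Lemma~\ref{PAStatPres}) and forcing cannot increase cofinalities, $\cf^V(\gamma)\ge\omega_1$. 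Now the hypothesis that $A$ contains no closed copy of $\omega_1$ gives that $S:=E_\omega^\gamma\smallsetminus A$ is stationary in $\gamma$: if $D\subseteq\gamma$ were a club disjoint from $S$, then, writing $D$ as its increasing enumeration, $\{D(\lambda)\;|\;\lambda<\omega_1\text{ a limit ordinal}\}$ would be a closed set of ordertype $\omega_1$ consisting of ordinals of cofinality $\omega$ lying in $D$, hence in $A$, contrary to assumption. The goal is therefore to show that $p^*$ forces $\im(\dot e)$ to meet $\check S$, which contradicts $p^*\Vdash\im(\dot e)\subseteq\check A$.

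The core step is to find a countable $M\prec H(\Theta)$, for $\Theta$ sufficiently large, containing $\dP(B),A,B,\gamma,\dot e,p^*$ and all other relevant parameters, such that \emph{simultaneously} $\sup(M\cap\kappa)\in B$ and $\sup(M\cap\gamma)\in S$. Granting such an $M$, I would build inside $M$ a descending sequence $(p_n)_{n<\omega}$ with $p_0\le p^*$ that meets every dense subset of $\dP(B)$ lying in $M$ and has strictly increasing domains (padding if necessary). Then for each $\eta\in M\cap\gamma$ some $p_n$ decides the least element of $\im(\dot e)$ above $\eta$, and as this value lies in $M\cap\gamma$ it is $<\sup(M\cap\gamma)$. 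Putting $\beta:=\sup_n\dom(p_n)$ and $q:=\bigcup_n p_n\cup\{(\beta,\sup(M\cap\kappa))\}$, the condition $q$ is a genuine element of $\dP(B)$ \emph{precisely because} $\sup(M\cap\kappa)\in B$, which is what makes $\bigcup_n p_n\cup\{(\beta,\sup(M\cap\kappa))\}$ a normal function into $B$ — this is the single point at which the failure of closure of $\dP(B)$ is circumvented, exactly as in Lemma~\ref{PAStatPres}. Since $q$ extends every $p_n$, it forces $\im(\dot e)\cap\sup(M\cap\gamma)$ to be cofinal in $\sup(M\cap\gamma)$, and since $\im(\dot e)$ is forced closed in $\gamma$ while $\sup(M\cap\gamma)<\gamma$, we obtain $q\Vdash\sup(M\cap\gamma)\in\im(\dot e)\subseteq\check A$. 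But $\sup(M\cap\gamma)\in S=E_\omega^\gamma\smallsetminus A$, so $q$ forces something false, the desired contradiction.

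I expect the main obstacle to be this simultaneous reflection: $B$ is stationary in $\kappa$ and $S$ in $\gamma$, but $\gamma<\kappa$, so the two associated stationary families of models sit at different cardinals and one cannot simply intersect them. I would handle it by a two-stage Skolem-hull construction. First choose a closure point $\beta^*\in B$ of the Skolem functions of $(H(\Theta),\in,\dots)$ with $\beta^*>\gamma$ (possible since $B$ is stationary), and fix a cofinal $\omega$-sequence $\vec\xi$ in $\beta^*$. Then choose a closure point $\alpha^*\in S$ of the Skolem functions \emph{relativized to the parameters $\vec\xi$} (possible since $S$ is stationary in $\gamma$, each such relativized hull of a finite set being countable and hence bounded below $\gamma$), and fix a cofinal $\omega$-sequence $\vec\alpha$ in $\alpha^*$. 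Let $M$ be the Skolem hull of $\vec\alpha\cup\vec\xi$ together with the parameters. Then $\sup(M\cap\kappa)=\beta^*\in B$ and $\sup(M\cap\gamma)=\alpha^*\in S$. The subtle point, and the reason for the relativization, is that closing under the Skolem functions using the comparatively large generators $\vec\xi$ needed to reach up into $B$ must not drag $M\cap\gamma$ past $\alpha^*$; taking $\alpha^*$ to be a $\vec\xi$-closure point is exactly what rules this out.
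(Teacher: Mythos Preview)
Your proof is correct and follows the same overall architecture as the paper's: assume a name for a club in $A\cap\gamma$, find a countable $M\prec H(\Theta)$ with $\sup(M\cap\gamma)\notin A$ and $\sup(M\cap\kappa)\in B$, then build an $(M,\dP(B))$-generic condition and derive the contradiction. The only difference lies in how the simultaneous hitting of $E_\omega^\gamma\smallsetminus A$ and $B$ is secured. The paper simply invokes Theorem~7 of Foreman--Magidor (\emph{Mutually stationary sequences of sets\ldots}), which says that any sequence of stationary subsets of the $E_\omega^{\kappa_i}$ is mutually stationary; your two-stage hull construction (first pick a closure point $\beta^*\in B$, then pick $\alpha^*\in S$ closed under Skolem functions with the fixed parameters $\vec\xi$ cofinal in $\beta^*$) is precisely a self-contained proof of the two-cardinal case of that theorem. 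What you gain is that your argument is elementary and needs no external citation; what the paper gains is brevity and a pointer to the general phenomenon. Either way, the remainder of the argument---building $q$ from the $(p_n)$ and appending the pair $(\beta,\sup(M\cap\kappa))$---is identical.
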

	
	\begin{proof}
		Assume toward a contradiction that a condition $p\in\dP(B)$ forces that there is a closed copy of $\omega_1$ contained in $E_{\omega}^{\delta}$. Ergo (after strengthening $p$ if necessary) there is $\gamma\leq\delta$ such that $p$ forces $A\cap\gamma$ to contain a club $\dot{C}$.
		
		By \cite{ForemanMagidorMutStat}, Theorem 7, there is $M\prec(H(\Theta),\dot{C},A,B)$ with $p\in M$ such that $\sup(M\cap\gamma)\notin A$ (as $A\cap\gamma$ is costationary in $\gamma$ in $V$) and $\sup(M\cap\kappa)\in B$. However, because $\sup(M\cap\kappa)\in B$ we can find a condition $q\leq p$ that lies in any open dense subset of $\dP(B)$ that is in $M$. Ergo $q$ forces $\sup(M\cap\gamma)\in\dot{C}$, a contradiction.
	\end{proof}
	
	Now we can show our incompactness results for the failure of $F$ and $F^+$:
	
	\begin{mysen}
		Let $\lambda$ be supercompact and $\kappa>\lambda$ a Mahlo cardinal. There is a forcing extension where $\lambda=\omega_2$, $\kappa$ remains Mahlo, $F(\delta)$ fails for all regular $\delta<\kappa$ and $F^+(\kappa)$ holds.
	\end{mysen}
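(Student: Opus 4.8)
The plan is to pass to a model of $\MM$ in which $\lambda=\omega_2$, then destroy $F$ everywhere below $\kappa$ by the Easton iteration of the posets $\dP_F(\delta)$ used in Theorem \ref{ThmSuperCompF}, and finally verify that $F^+(\kappa)$ survives — the last point being where the hypothesis of living in an $\MM$-model (rather than merely globally assuming $F^+$) is genuinely needed. First I would, after a preliminary reverse-Easton forcing of GCH (which preserves both the supercompactness of $\lambda$ and the Mahloness of $\kappa$), force $\MM$ with the standard iteration $\dP_{MM}$ of length $\lambda$. In the resulting model $V_1$ one has $\lambda=\omega_2$, GCH above $\omega_2$, and $\SRP$, hence $F^+(\mu)$ for every regular $\mu\geq\omega_2$ by Theorem \ref{ThmSRP}; and $\kappa$ is still Mahlo since $|\dP_{MM}|=\lambda<\kappa$ is small relative to $\kappa$.

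Over $V_1$ I would run the Easton-support iteration $\dP=(\dP_\alpha,\dot\dQ_\alpha)_{\alpha<\kappa}$ with $\dot\dQ_\alpha$ a name for $\dP_F(\alpha)$ when $\alpha\in[\omega_2,\kappa)$ is regular and for the trivial poset otherwise, exactly as in the proof of Theorem \ref{ThmSuperCompF}. The bookkeeping claims from that proof transfer: by induction on $\alpha$, $\dP_\alpha$ forces that $F(\delta)$ fails for all regular $\delta<\alpha$, preserves all cardinals and GCH above $\omega_2$, each tail $\dP_{[\delta+1,\kappa)}$ is $\delta+1$-strategically closed over $V_1[\dP_{\delta+1}]$, $\dP$ is $\sigma$-closed, and — using that $\kappa$ is Mahlo together with Easton support — $\dP$ is $\kappa$-c.c.; moreover $\dP_\alpha$ has a dense subset of size $\leq|\alpha|$ for $\alpha$ not a singular limit cardinal. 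From the factorization $\dP=\dP_{\delta+1}*\dP_{[\delta+1,\kappa)}$ one sees that $\dP_{\delta+1}$ has size $\leq\delta$ and preserves the inaccessibility of $\delta$, while the tail adds no bounded subsets of $\delta$; hence every $V_1$-inaccessible $\delta<\kappa$ remains inaccessible, and since these are stationary in $V_1$ and $\dP$ is $\kappa$-c.c., $\kappa$ stays Mahlo. Finally, $F(\delta)$ fails for every regular $\delta<\kappa$ of $V_1[\dP]$: the generic function at stage $\delta$ witnesses $\neg F(\delta)$ in $V_1[\dP_{\delta+1}]$, and the $\delta+1$-strategically closed tail adds no new closed copy of $\omega_1$ inside $\delta$.

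The crux is that $F^+(\kappa)$ still holds in $V_1[\dP]$. Fix a $\dP$-name $\dot S$ for a stationary subset of $E_\omega^\kappa$ and a condition forcing its stationarity; by $\kappa$-c.c.\ the set $T=\{\alpha:\exists p\,(p\Vdash\check\alpha\in\dot S)\}$ lies in $V_1$ and is stationary there, so $F^+(\kappa)$ in $V_1$ produces a closed copy of $\omega_1$ inside $T$ — but since $\dot S^G$ may be a proper subset of $T$ this does not suffice, and it is exactly here that the strong reflection of the $\MM$-model is used rather than only $F^+(\kappa)$. The route I would take is to work in $V_1$ and, using that $\{M\prec(H(\Theta),\in,\dP,\dot S,T):\sup(M\cap\kappa)\in T\}$ is projective stationary (Theorem \ref{ProjStat}, a theorem of $\ZFC$), apply $\SRP$ to obtain a continuous $\in$-chain $\langle N_i:i<\omega_1\rangle$ of countable models through that set; then, using the $\sigma$-closure of $\dP$ to build $N_i$-generic conditions and to pass through the (all countable-cofinality) limit stages $i<\omega_1$, construct a descending sequence $\langle p_i:i<\omega_1\rangle$ with $p_i$ an $(N_i,\dP)$-master condition forcing $\sup(N_i\cap\kappa)\in\dot S$, so that $\langle\sup(N_i\cap\kappa):i<\omega_1\rangle$ is forced to be a closed copy of $\omega_1$ contained in $\dot S$. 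Arranging that the master conditions can simultaneously be made to force $\sup(N_i\cap\kappa)\in\dot S$ — which rests on $\sup(N_i\cap\kappa)\in T$ and on absorbing the relevant part of a witnessing condition into $p_i\uhr(N_i\cap\kappa)$ via $N_i$-genericity — is the main technical obstacle of the whole argument; an alternative is to force $\RwD(\kappa)$ over $V_1$ first (Theorem \ref{ReflectionDiamond}) and to run a non-collapsing variant of the reflection argument behind Theorem \ref{ShelahTheorem}, after checking that $\dP$ is nicely $\kappa$-c.c.\ so that Lemma \ref{ReflectionNicelyCC} applies.
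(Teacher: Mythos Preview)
Your setup through the second paragraph is correct and matches the paper: force $\GCH$, then $\MM$ from $\lambda$, then the Easton iteration of $\dP_F(\alpha)$ up to $\kappa$; the cardinal arithmetic, the failure of $F(\delta)$ for regular $\delta<\kappa$, and the Mahloness of $\kappa$ all go through as you describe.

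The divergence, and the genuine gap, is in your argument for $F^+(\kappa)$. Your proposed route --- apply $\SRP$ in $V_1$ to get a continuous chain $\langle N_i\rangle$ through $\{M:\sup(M\cap\kappa)\in T\}$, then build a descending $\omega_1$-sequence of $(N_i,\dP)$-master conditions each forcing $\sup(N_i\cap\kappa)\in\dot S$ --- runs into exactly the obstacle you yourself name, and you do not resolve it. The witness $q$ to $\gamma_i:=\sup(N_i\cap\kappa)\in T$ is not in $N_i$ (since $\gamma_i\notin N_i$), its support may lie above $\gamma_i$, and there is no mechanism for absorbing it into the master condition while remaining compatible with the conditions required at later stages. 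Your appeal to ``$N_i$-genericity'' does not help, since genericity only meets dense sets lying in $N_i$. The $\RwD$ alternative is likewise only a gesture: you would need to interleave the $\RwD$ forcing with both $\dP_{MM}$ and the Easton tail and verify all the relevant preservation, and the conclusion of Lemma~\ref{ReflectionNicelyCC} concerns only $\dR\cap M$, whereas here the full $\dP$ collapses the reflection point $M\cap\kappa$ to $\omega_2$.

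The paper takes a completely different route that sidesteps this difficulty. Rather than arguing combinatorially inside $V_1$, it exploits the supercompactness of $\lambda$ directly: given the name $\dot A$, the Laver function provides a $\kappa$-supercompact embedding $j\colon V\to M$ with
\[
j(\dP_{MM}(\lambda))\;\cong\;\dP_{MM}(\lambda)*\dP*\dP(\dot A)*\dot{\Coll}(\omega_1,\check\kappa)*\dot{\dR}.
\]
The closed copy of $\omega_1$ inside $\dot A$ is thus literally added by the $\dP(\dot A)$-generic sitting in the tail of $j(\dP_{MM}(\lambda))$, and elementarity of the lifted embedding reflects its existence back to $V[G*H]$. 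The technical work shifts entirely to building a master condition for $j$ on $j(\dP)$, and the key step --- that each generic $F$-function $f_\beta=\bigcup H(\beta)$ remains an $F$-function after forcing with $\dP(\dot A)$ --- is precisely Lemma~\ref{PresLemma}, stated and proved for this purpose. After $\Coll(\omega_1,\kappa)$ each $f_\beta$ has domain of size $\omega_1$, so the semiproper tail preserves them as well via Lemma~\ref{ProperNotAddingSeq}. This is why the theorem assumes $\lambda$ is supercompact rather than merely that one lives in an $\MM$-model: it is the embedding itself, not the axiom $\SRP$, that carries the argument.
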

	
	\begin{proof}
		We assume without loss of generality that $\GCH$ holds, otherwise we can achieve this by taking a forcing extension before the following construction.
		
		Let $l\colon\lambda\to V_{\lambda}$ be a Laver function for $\lambda$. Let $\dP_{MM}(\lambda)$ be the standard iteration forcing $\MM$ defined from $l$ described in the proof of Theorem \ref{MMFLambda}.
		
		In $V[\dP_{MM}(\lambda)]$, let $\dP:=(\dP_{\alpha},\dot{\dQ}_{\alpha})_{\alpha<\kappa}$ be an Easton support iteration where $\dot{\dQ}_{\alpha}$ is a $\dP_{\alpha}$-name for $\dP_F(\alpha)$ if $\alpha\geq\omega_2$ is regular and for the trivial poset otherwise. Clearly $\dP_{MM}(\lambda)*\dP$ is $\kappa$-cc.\ and it follows as before that $\dP_{MM}(\lambda)*\dP$ does not change cofinalities and powerset sizes above $\lambda$ (it forces $2^{\omega}=2^{\omega_1}=\omega_2=\lambda$), so $\kappa$ remains Mahlo in the extension. Additionally, $\dP_{MM}(\lambda)*\dP$ forces $\neg F(\delta)$ for all regular $\delta<\kappa$.
		
		All that is left is to show that $F^+(\kappa)$ holds in $V[\dP_{MM}(\lambda)*\dP]$. To this end, fix a $\dP_{MM}(\lambda)*\dP$-name $\dot{A}$ for a stationary subset of $E_{\omega}^{\kappa}$. Thanks to the properties of $\dP_{MM}(\lambda)$, there is a $\kappa$-supercompact embedding $j\colon V\to M$ for $\lambda$ such that $j(\dP_{MM}(\lambda))\cong \dP_{MM}(\lambda)*\dP*\dP(\dot{A})*\dot{\Coll}(\omega_1,\check{\kappa})*\dot{\dR}$ where $\dot{\dR}$ is forced to be semiproper.
		
		We now argue similarly to the proof of Theorem \ref{MMFLambda}. Let $G*H$ be any $\dP_{MM}(\lambda)*\dP$-generic filter. In $V[G*H]$, for $\alpha<\beta\leq\kappa$, let $H\uhr\alpha$ be the $\dP_{\alpha}$-generic filter induced by $H$, let $H\uhr[\alpha,\beta)$ be the $\dP_{\alpha,\beta}^{G*(H\uhr\alpha)}$-generic filter induced by $H$ (where $\dP_{\beta}\cong\dP_{\alpha}*\dP_{\alpha,\beta}$) and let $G(\alpha)$ be the $\dot{\dQ}_{\alpha}^{G*(H\uhr\alpha)}$-generic filter induced by $G$. We aim to build a master condition which allows us to lift $j$ and note that
		$$j(\dP_{MM}(\lambda)*\dP)\cong j(\dP_{MM}(\lambda))*j(\dP)\cong\dP_{MM}(\lambda)*\dP*\dP(\dot{A})*\dot{\Coll}(\omega_1,\check{\kappa})*\dot{\dR}*j(\dP)$$
		First of all, let $G^{\text{lift}}$ be any $j(\dP_{MM}(\lambda))$-generic filter containing $G*H$. Work in $V[G^{\text{lift}}]$.
		
		\begin{myclaim}
			For any $\alpha\in[\lambda,\kappa]$ there is a condition $q_{\alpha}\in j(\dP)^{G^{\text{lift}}}\uhr\alpha$ such that whenever $I$ is $j(\dP)^{G^{\text{lift}}}\uhr\alpha$-generic containing $q_{\alpha}$, $j[G*H\uhr\alpha]\subseteq G^{\text{lift}}*I$.
		\end{myclaim}
		
		\begin{proof}
			We do the proof by induction. First we left $q_{\lambda}:=\emptyset$ which works because $j(\dP)^{G^{\text{lift}}}\uhr j(\lambda)$ is the trivial forcing (since $j(\dP_{MM})$ forces $j(\lambda)=\omega_2$). Now assume $q_{\beta}$ has been defined and $\alpha=\beta+1$. If $\beta$ is not a regular cardinal, we are done, so assume $\beta$ is regular. Let $f:=\bigcup H(\beta)$.
			
			\begin{mysclaim}
				In $V[G^{\text{lift}}]$, $f$ is not constant on any closed copy of $\omega_1$.
			\end{mysclaim}
			
			\begin{proof}
				The statement is clearly true in $V[G*(H\uhr\beta+1)]$, so we only are left to show that it remains true in $V[G^{\text{lift}}]$. First of all, $V[G*H]$ contains no $\omega_1$-length sequences not contained in $V[G*(H\uhr\beta+1)]$, so the statement is still true in $V[G*H]$. By Lemma \ref{PresLemma}, this is preserved by $\dP(\dot{A}^{G*H})$. It follows as in the proof of Theorem \ref{MMFLambda} that $f$ is not constant on any closed copy of $\omega_1$ in $V[G^{\text{lift}}]$.
			\end{proof}
			
			Let $I_{\beta}$ be $j(\dP)\uhr\beta$-generic containing $q_{\beta}$. In $V[G^{\text{lift}}*I_{\beta}]$, we can define a condition $q(\beta)$ extending $j[f]$ as in the proof of Theorem \ref{ThmSuperCompF}. Then $q_{\beta+1}:=q_{\beta}*q(\beta)$ is easily seen to be as required, since $j[G*H\uhr\beta]\subseteq G^{\text{lift}}*I_{\beta}$ by the inductive hypothesis and $j[G*H\uhr\beta+1]$ is equal to $j[G*H\uhr\beta]$ with $j[H(\beta)]$ at point $j(\beta)$ which is contained in $I(\beta)$ since $q(\beta)\in I(\beta)$.
			
			The limit step follows as before by taking a limit of the previously defined conditions.
		\end{proof}
		
		So let $H^{\text{lift}}$ be a $j(\dP)^{G^{\text{lift}}}$-generic filter containing $q_{\kappa}$. It follows that in $V[G^{\text{lift}}*H^{\text{lift}}]$ we can lift the embedding $j$ to $j\colon V[G*H]\to M[G^{\text{lift}}*H^{\text{lift}}]$. Furthermore, in $M[G^{\text{lift}}*H^{\text{lift}}]$, $\dot{A}^{G*H}$ contains a closed copy $c$ of $\omega_1$. By the closure of $M[G^{\text{lift}}*H^{\text{lift}}]$, $j[c]$ is in $M[G^{\text{lift}}*H^{\text{lift}}]$ as well and is a closed copy of $\omega_1$ contained in $j[\dot{A}^{G*H}]\subseteq j(\dot{A}^{G*H})$. By elementarity, in $V[G*H]$, $\dot{A}^{G*H}$ contains a closed copy of $\omega_1$.
		
	\end{proof}
	
	We now turn to the principle $F^+$. As we have seen before, the proofs of the $\ZFC$ results do not adapt to $F^+$ because of the added complexity. However, this complexity also allowed us to define a more well-behaved forcing notion adding witnesses to $\neg F^+$ ($\dP_{F^+}(\delta)$ is $\delta$-strategically closed and not just ${<}\,\delta$-strategically closed) which will now grant us the ability to obtain corresponding stronger non-implications.
	
	\begin{mysen}
		Assume $\lambda$ is supercompact and $\kappa>\lambda$ is weakly compact. There is an extension where $\lambda=\omega_2$, $\kappa$ remains weakly compact, $F^+(\delta)$ fails for all regular $\delta<\kappa$ and $F^+(\kappa)$ holds.
	\end{mysen}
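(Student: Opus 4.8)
The plan is to run the argument of the previous theorem with $\dP_{F^+}$ in place of $\dP_F$. After first forcing $\GCH$ if necessary, fix a Laver function $l\colon\lambda\to V_\lambda$, let $\dP_{MM}(\lambda)$ be the standard iteration to force $\MM$ defined from $l$, and, in $V[\dP_{MM}(\lambda)]$, let $\dP=(\dP_\alpha,\dot\dQ_\alpha)_{\alpha<\kappa}$ be the Easton support iteration with $\dot\dQ_\alpha$ a name for $\dP_{F^+}(\alpha)$ when $\alpha\in[\omega_2,\kappa)$ is regular and trivial otherwise; the model will be $V[\dP_{MM}(\lambda)*\dP]$. As in Theorem~\ref{ThmSuperCompF} one checks by induction that every tail $\dP_{\delta+1,\kappa}$ is $\delta+1$-strategically closed over $V[\dP_{\delta+1}]$ (here this needs no hypothesis, since $\dP_{F^+}(\alpha)$ is always $\alpha$-strategically closed), that $\dP_{MM}(\lambda)*\dP$ is $\kappa$-cc, collapses no cardinals, forces $\lambda=\omega_2$ and $\GCH$ above $\omega_2$, and forces $\neg F^+(\delta)$ for every regular $\delta<\kappa$: the stationary $F^+$-set $A_\delta\subseteq E_\omega^\delta$ added at stage $\delta$ survives the ${<}\,\delta^+$-strategically closed remainder, which adds no new $\delta$-sequences, hence no new clubs on $\delta$ and no new closed copies of $\omega_1$.

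Next I would verify that $\kappa$ stays weakly compact. Since $|\dP_{MM}(\lambda)|=\lambda<\kappa$ this is clear for the first factor, and $\dP$ is a $\kappa$-cc iteration which is the direct limit of the posets $\dP_\alpha$, each of size $<\kappa$ by $\GCH$. Given a $\dP$-name $\dot A$ for a subset of $\kappa$, reflect it into a transitive $M\models\ZFC^-$ of size $\kappa$ with $M^{<\kappa}\subseteq M$ and $\dP,\dot A\in M$ and fix elementary $j\colon M\to N$ with $\crit(j)=\kappa$, $N$ transitive, $N^{<\kappa}\subseteq N$; then $j(\dP)\uhr\kappa=\dP$, and as $N$ thinks $\kappa$ is regular the first nontrivial stage of the tail $j(\dP)\uhr[\kappa,j(\kappa))$ is $(\dP_{F^+}(\kappa))^N$, which is $\kappa$-strategically closed. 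As $N[H]$ has size $\kappa$ and stays ${<}\,\kappa$-closed, one can meet all its dense sets for the tail inside $V[\dP_{MM}(\lambda)*\dP]$, lift $j$, and conclude.

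The main point is $F^+(\kappa)$. Fix a $\dP_{MM}(\lambda)*\dP$-name $\dot A$ for a stationary subset of $E_\omega^\kappa$ and assume its realization $\dot A^{G*H}$ contains no closed copy of $\omega_1$ (else done). As in the previous theorem, using that $\lambda$ is supercompact and the properties of $\dP_{MM}(\lambda)$, pick a supercompact-for-$\lambda$ embedding $j\colon V\to M$ with $j(\dP_{MM}(\lambda))\cong\dP_{MM}(\lambda)*\dP*\dP(\dot A)*\dot\Coll(\omega_1,\check\kappa)*\dot\dR$, $\dR$ forced semiproper, and lift it to $j\colon V[G*H]\to M[G^{\mathrm{lift}}*H^{\mathrm{lift}}]$ in a further extension. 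The recursive construction of the master condition for $j(\dP)$ runs exactly as in Theorem~\ref{ThmSuperCompF}, but it is now easier: at a regular $\beta<\kappa$, with $f:=\bigcup H(\beta)$ the generic function added at stage $\beta$, the coordinate $j(\beta)$ of the master condition may simply be taken to be $j[f]$ together with the constant value $0$ on $\sup(j[\beta])\smallsetminus j[\beta]$ --- since $\dP_{F^+}(j(\beta))$ permits the value $0$ on closed copies of $\omega_1$, there is no need to ``fill in'' with an $F$-function. The only thing to verify is that $j[f^{-1}[\{1\}]]$ has no closed copy $c$ of $\omega_1$; if it did, then $j^{-1}[c]$ (available since $j\uhr\kappa\in M$) would be a closed copy of $\omega_1$ inside $f^{-1}[\{1\}]$, which is impossible because $f^{-1}[\{1\}]$ keeps the property of containing no closed copy of $\omega_1$ all the way along the lifting: through the ${<}\,\kappa$-strategically closed remainder of $\dP$, through $\dP(\dot A^{G*H})$ by Lemma~\ref{PresLemma}, through $\dot\Coll(\omega_1,\check\kappa)$ by properness, and through $\dR$ by Lemma~\ref{ProperNotAddingSeq}(2), once $\dot\Coll(\omega_1,\check\kappa)$ has made $\beta$ have size $\omega_1$. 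Limits of the master condition are unions, and $\dom(q_\kappa)\subseteq j[\kappa]$ is Easton since $\sup(j[\alpha])$ is singular for every $\alpha\leq\kappa$. Having lifted $j$, note that in $M[G^{\mathrm{lift}}]$ the factor $\dP(\dot A^{G*H})$ has shot a closed copy $c\subseteq\dot A^{G*H}$ of ordertype $\omega_1$; then $j[c]$, which lies in $M[G^{\mathrm{lift}}*H^{\mathrm{lift}}]$, is closed by continuity of $j$ at cofinality $\omega_1$, has ordertype $\omega_1$, and is contained in $j(\dot A^{G*H})$, so by elementarity $\dot A^{G*H}$ contains a closed copy of $\omega_1$ in $V[G*H]$, a contradiction; hence $F^+(\kappa)$ holds.

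The step I expect to be most delicate is the same as for $F$: organizing the recursion for the master condition for $j(\dP)$ and checking that the generic set $f^{-1}[\{1\}]$ retains ``no closed copy of $\omega_1$'' through every factor of $j(\dP_{MM}(\lambda))$, the only subtle passage being through the semiproper tail $\dR$. Everything else is lighter than in the $F$-case: the genuine $\kappa$-strategic closure of $\dP_{F^+}$ trivializes the coordinate conditions of the master condition, removes the induction hypothesis needed for the closure of the tails of $\dP$, and makes the preservation of the weak compactness of $\kappa$ routine.
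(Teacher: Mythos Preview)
Your proposal is correct and follows essentially the same route as the paper. The paper also forces with $\dP_{MM}(\lambda)*\dP$ where $\dP$ is the Easton iteration of $\dP_{F^+}(\alpha)$, refers back to the Mahlo theorem for $F^+(\kappa)$ (which you spell out in full, with the correct simplification that the master condition at $j(\beta)$ can just be $j[f]$ padded by zeros), and then devotes the new work to weak compactness of $\kappa$. One small organizational difference: for weak compactness the paper passes to a genuine generic extension $V[G^{\text{lift}}]$ for the tail, lifts $j$, obtains a branch there, and then argues separately via $\kappa$-strategic closure that the branch already lies in $V[G]$; you instead build an $N[G]$-generic for the tail inside $V[G]$ directly, using $|N[G]|=\kappa$ together with $\kappa$-strategic closure. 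Both are standard and valid; your version is slightly more direct. The only point you leave implicit is that in the master-condition step the passage through $I_\beta$ (the generic for $j(\dP)\uhr j(\beta)$) is harmless because that tail is ${<}\,j(\lambda)$-strategically closed and $j(\lambda)>\kappa>\beta$, so any offending closed copy of $\omega_1$ already lies in $V[G^{\text{lift}}]$ where your preservation chain applies.
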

	
	\begin{proof}
		We again assume $\GCH$.
		
		As before, let $\dP_{MM}(\lambda)$ be the standard iteration forcing Martin's Maximum from $\lambda$. In $V[\dP_{MM}(\lambda)]$, let $\dP:=(\dP_{\alpha},\dot{\dQ}_{\alpha})_{\alpha<\kappa}$ be an Easton support iteration where $\dot{\dQ}_{\alpha}$ is a $\dP_{\alpha}$-name for $\dP_{F^+}(\alpha)$ if $\alpha\geq\omega_2$ is regular and for the trivial poset otherwise. As before, $\dP_{MM}(\lambda)*\dP$ forces that $F^+(\delta)$ fails for all regular $\delta<\kappa$ and does not change cofinalities nor powerset sizes. It also follows the same way as before that after forcing with $\dP_{MM}(\lambda)*\dP$, $F^+(\kappa)$ holds and $F^+(\delta)$ fails for all $\delta<\kappa$, so the only thing left to show is that $\kappa$ remains weakly compact in the extension.
		
		Clearly $\kappa$ remains inaccessible so we are aiming to show that it still has the tree property in $V[G]$, where $G$ is any $\dP_{MM}*\dP$-generic filter. To this end, let $\dot{T}$ be a $\dP_{MM}(\lambda)*\dP$-name for a tree on $\kappa$. By inaccessibility of $\kappa$ in $V$, let $M\prec(H(\Theta),\in)$ contain all relevant information with $|M|=\kappa$ and $M^{<\kappa}\subseteq M$. By the weak compactness of $\kappa$ in $V$, let $j\colon M\to N$ be an elementary embedding with critical point $\kappa$ such that $N^{<\kappa}\subseteq N$. It follows that
		$$j(\dP_{MM}*\dP)\cong\dP_{MM}*\dP*(j(\dP)^{N[G]}\uhr[\kappa,j(\kappa)))$$
		\begin{myclaim}
			In $V[G]$, $j(\dP)^{N[G]}\uhr[\kappa,j(\kappa))$ is $\kappa$-strategically closed.
		\end{myclaim}
		
		\begin{proof}
			In $N[G]$, $j(\dP)^{N[G]}\uhr[\kappa,j(\kappa))$ is $\kappa$-strategically closed (simply by its definition, since $N$ models enough $\ZFC$). In $V$, $N^{<\kappa}\subseteq N$. Ergo in $V[G]$, $N[G]^{<\kappa}\subseteq N[G]$ because $\dP_{MM}*\dP$ is $\kappa$-cc.. Since every ${<}\,\kappa$-sequence of elements of $j(\dP)^{N[G]}\uhr[\kappa,j(\kappa))$ in $V[G]$ is in $N[G]$, the winning strategy from the perspective of $N[G]$ is actually a winning strategy.
		\end{proof}
		
		Let $G^{\text{lift}}$ be any $j(\dP_{MM}*\dP)$-generic filter extending $G$. In $V[G^{\text{lift}}]$ there is a lift of $j$ to $j\colon M[G]\to H[G^{\text{lift}}]$. Now a standard computation shows that $\dot{T}^G$ has a cofinal branch in $V[G^{\text{lift}}]$, so we are done after showing:
		
		\begin{myclaim}
			$\dot{T}^G$ has a cofinal branch in $V[G]$.
		\end{myclaim}
		
		\begin{proof}
			In $V[G]$, there is a $j(\dP)^{N[G]}\uhr[\kappa,j(\kappa))$-name $\dot{b}$ forced to be a cofinal branch of $\dot{T}^G$. Using that $j(\dP)^{N[G]}\uhr[\kappa,j(\kappa))$ is $\kappa$-strategically closed and thus in particular strongly ${<}\,\kappa$-distributive we can find a descending sequence $(p_{\alpha})_{\alpha<\kappa}$ of elements of $(j(\dP))^{N[G]}\uhr[\kappa,j(\kappa))$ and a sequence $(x_{\alpha})_{\alpha<\kappa}$ such that for any $\alpha<\kappa$, $p_{\alpha}$ forces that $\dot{b}\uhr\check{\alpha}=\check{x}_{\alpha}$. This implies that $\bigcup_{\alpha<\kappa}x_{\alpha}$ (which is in $V[G]$) is a cofinal branch of $\dot{T}^G$.
		\end{proof}
		
		This shows that $\kappa$ has the tree property in $V[G]$.
	\end{proof}
	
	\begin{mybem}
		The preceding proof (together with the observation that it does not work for $F(\kappa)$) shows that while $\kappa$-strategically closed forcings (or even strongly ${<}\,\kappa$-distributive forcings) cannot add branches to $\kappa$-trees, the same cannot be said for merely ${<}\,\kappa$-strategically closed forcings.
	\end{mybem}
	
	Lastly, we prove a similar result for embedding-based large cardinals:
	
	\begin{mysen}
		Assume $\kappa\leq\lambda$ are regular cardinals such that $\kappa$ is $\lambda$-supercompact. Assume $\GCH$ holds above $\omega_2$ as well as $F^+(\delta)$ for all regular $\delta\geq\omega_2$. There is a forcing extension where $\kappa$ remains $\lambda$-supercompact, $F^+(\delta)$ fails for all regular $\delta\leq\lambda$ and $F^+(\lambda^+)$ holds.
	\end{mysen}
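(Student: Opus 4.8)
The plan is to mimic the proof of Theorem \ref{ThmSuperCompF}, but replacing $\dP_F$ by $\dP_{F^+}$ and stopping the iteration at $\lambda$ rather than at $\lambda^+$. Concretely, I would let $(\dP_\alpha,\dot\dQ_\alpha)_{\alpha\le\lambda}$ be the Easton-support iteration with $\dot\dQ_\alpha$ a name for $\dP_{F^+}(\alpha)$ when $\alpha\ge\omega_2$ is regular and for the trivial poset otherwise, and set $\dP:=\dP_{\lambda+1}=\dP_\lambda*\dot\dQ_\lambda$. As in the first two claims of the proof of Theorem \ref{ThmSuperCompF} --- now using that each $\dP_{F^+}(\delta)$ is $\delta$-strategically closed rather than merely ${<}\,\delta$-strategically closed --- one checks by induction that $\dP_\alpha$ collapses no cardinals, forces $\GCH$ above $\omega_2$, forces that $F^+(\delta)$ fails for every regular $\delta<\alpha$, and (for $\alpha$ not a singular limit cardinal) has a dense subset of size $\le|\alpha|$; in particular $\dP$ has a dense subset of size $\lambda$. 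The witness added at a regular stage $\delta$ survives the rest of the iteration because the tail $\dP_{\delta+1,\lambda+1}$ is $(\delta+1)$-strategically closed, hence adds no new subsets of $\delta$, so stationarity and the non-existence of a closed copy of $\omega_1$ are both preserved. Since $|\dP|=\lambda<\lambda^+$ and $\dP$ preserves $\omega_1$, Lemma \ref{SmallPresFPlus} gives that $F^+(\lambda^+)$ holds in the extension.

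The remaining, and main, point is that $\kappa$ stays $\lambda$-supercompact. Fix a $\lambda$-supercompact embedding $j\colon V\to M$ with $\crit(j)=\kappa$, ${}^\lambda M\subseteq M$ and $j(\kappa)>\lambda$; then $j(\dP)\cong\dP*\dR$ where, in $M[G]$, $\dR=j(\dP)\uhr[\lambda+1,j(\lambda)+1)$ has first nontrivial iterand $\dP_{F^+}(\lambda^+)$ and is therefore $\lambda^+$-strategically closed. I would build, by induction on $\alpha\in[\kappa,\lambda+1]$, a master condition $q_\alpha$ in the appropriate initial segment of $\dR$ exactly as in the Claim inside the proof of Theorem \ref{ThmSuperCompF}: at a regular stage $\beta\in[\kappa,\lambda]$, with $f:=\bigcup G(\beta)$ the generically added indicator of an $F^+$-set, one must extend $j\uhr f$ from the set $j[\beta]$ to a genuine $\dP_{F^+}(j(\beta))$-condition $h$ with domain $\eta:=\sup(j[\beta])$ (note $\eta<j(\beta)$, since $\cf(\beta)\ge\kappa$, and $\eta$ is singular). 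Using that $\dP_{F^+}(j(\beta))$ is $j(\beta)$-strategically closed in the relevant intermediate model one gets a ``filler'' condition $g$ of domain $\eta$, and sets $h:=(j\uhr f)\cup(g\uhr(\eta\smallsetminus j[\beta]))$. The case analysis showing $h$ is non-constant on every closed copy $c\subseteq\eta$ of $\omega_1$ splits, as there, according to whether $j[\beta]$ is unbounded in $\sup c$ (pull $c$ back through $j$ to a closed copy of $\omega_1$ in $\beta$ and use that $f$ is not constant on it) or bounded in $\sup c$ ($h=g$ on a tail of $c$). The extra ingredient relative to the $F$-case is that $h$ must also not be \emph{constantly $0$} on such a $c$, i.e.\ $\{h=1\}$ must remain stationary below every point of cofinality $\omega_1$; this is available because every proper initial segment of $f$ is already a condition (so $\{f=1\}$ is ``everywhere stationary'' in $\beta$) and because $g$ is a valid condition, but checking that the gluing preserves this at points where $j[\beta]$ is cofinal is the one genuinely new verification, and I expect it to be the main obstacle. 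The limit steps go through as in Theorem \ref{ThmSuperCompF}, the supports involved being Easton sets because $\sup(j[\alpha])$ is always singular.

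Finally I would extract the supercompactness measure already in $V[G]$ --- this is where stopping the iteration at $\lambda$ (so that the tail $\dR$ is only $\lambda^+$-, not ${<}\,\lambda^{++}$-, strategically closed) must be compensated for. Put $q:=q_{\lambda+1}$ and fix in $V[G]$ an enumeration $(X_\alpha)_{\alpha<\lambda^+}$ of $\mathcal P(\mathcal P_\kappa\lambda)^{V[G]}$, which has size $2^{\lambda}=\lambda^+$ by $\GCH$ (recall $\lambda^{<\kappa}=\lambda$ here). Since $\dR^G$ is $\lambda^+$-strategically closed, it is strongly ${<}\,\lambda^+$-distributive, and remains so over $V[G]$ because $M[G]$ is closed under $\lambda$-sequences in $V[G]$; hence in $V[G]$ there is a single descending sequence $(r_\alpha)_{\alpha<\lambda^+}$ with $r_0\le q$ and with each $r_\alpha$ deciding ``$\check{j[\lambda]}\in\dot Y_\alpha$'', where $\dot Y_\alpha$ is the $\dR^G$-name $(j(\dot X_\alpha))_G$ and $\dot X_\alpha\in V$ is a $\dP$-name for $X_\alpha$. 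Because $q$ forces the lift of $j$ to be elementary, a routine bookkeeping argument --- using that any two of the $r_\alpha$ have a common lower bound appearing later in the sequence, and that $\crit(j)=\kappa$ --- shows that $U:=\{X_\alpha\mid r_\alpha\Vdash\check{j[\lambda]}\in\dot Y_\alpha\}$ is a normal, fine, $\kappa$-complete ultrafilter on $\mathcal P_\kappa(\lambda)$ lying in $V[G]$. Thus $\kappa$ is $\lambda$-supercompact in $V[G]$, completing the proof.
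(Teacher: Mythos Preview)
Your overall strategy matches the paper's: the same Easton-support iteration of $\dP_{F^+}(\alpha)$'s up to $\lambda+1$, the same preservation arguments, the same appeal to Lemma~\ref{SmallPresFPlus} for $F^+(\lambda^+)$, and the same two-step endgame of first building a master condition and then exploiting $\lambda^+$-strategic closure of the tail to produce enough genericity inside $V[G]$.

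The one substantive difference is your master-condition construction, and here you have made life harder than necessary. You worry about $h$ being constantly $0$ on a closed copy of $\omega_1$ and therefore splice in a nontrivial filler $g$. The paper does not do this: it simply extends $j[f]$ by $0$ on all of $\eta\smallsetminus j[\beta]$ and declares the result a condition. This works because, if you go back to the general definition of $\dP_{F^+((D_i)_{i\in\omega_1})}(\kappa)$ and specialize to the trivial partition $D_0=\omega_1$, clause~(c) only forbids a normal $g\colon\omega_1\to\dom(p)$ with $p\circ g\equiv 1$; there is no prohibition against $p$ being constantly $0$ on a closed copy of $\omega_1$ (indeed this is exactly why the $\kappa$-strategic-closure proof can fill with $0$'s at limits). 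The phrase ``not constant'' in the Section~7 shorthand should be read as ``not constantly $1$''. With this reading your ``main obstacle'' evaporates: one only has to check that $j[A_\beta]$ contains no closed copy of $\omega_1$, which pulls back through $j$ exactly as in the $F$-case. The paper also verifies that $j[A_\beta]$ remains stationary in $\sup(j[\beta])$ after passing to $M[G*(H\uhr j(\beta))]$, via a pullback-of-club argument; this is not needed for the master condition itself but is a pleasant sanity check.

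Your final step---building a $\lambda^+$-length descending sequence below $q$ that decides $j[\lambda]\in j(X)$ for every $X\in\mathcal P(\mathcal P_\kappa\lambda)^{V[G]}$ and reading off the ultrafilter directly---is a standard variant of what the paper does. The paper instead counts that there are only $|j(\lambda^+)|=\lambda^+$ many maximal antichains of the tail lying in $M[G]$, meets them all along a descending $\lambda^+$-sequence, and obtains an honest $M[G]$-generic filter $H\in V[G]$, hence a genuine lift $j\colon V[G]\to M[G*H]$. Both routes are correct; the paper's yields slightly more (an actual lifted embedding) for no extra cost.
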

	
	\begin{proof}
		Let $\dP:=(\dP_{\alpha},\dot{\dQ}_{\alpha})_{\alpha<\lambda+1}$ be an Easton support iteration where $\dot{\dQ}_{\alpha}$ is a $\dP_{\alpha}$-name for $\dP_{F^+}(\alpha)$ if $\alpha\geq\omega_2$ is regular and for the trivial poset otherwise. Then, just like before, it follows that $\dP$ preserves all cofinalities and powerset sizes and forces that $F^+(\delta)$ fails for all regular $\delta\leq\lambda$ (since the stationarity of any added set is preserved thanks to the closure of the tail forcing). Furthermore, as in the proof of Theorem \ref{ThmSuperCompF} it follows that $\dP$ forces $F^+(\lambda^+)$ to hold. The only thing left to show is that $\kappa$ remains $\lambda$-supercompact in $V[G]$, where $G$ is $\dP$-generic. To this end, let $j\colon V\to M$ be a $\lambda$-supercompact embedding. We first construct a master condition as before and then use the stronger properties of $\dP_{F^+}$ to build a sufficiently generic filter in $V[G]$ which allows us to lift the embedding.
		
		We have a similar claim to before:
		
		\begin{myclaim}
			There is a condition $q\in j(\dP)\uhr[\lambda+1,j(\lambda+1))$ such that whenever $H$ is $j(\dP)\uhr[\lambda+1,j(\lambda+1))$-generic containing $q$, $j[G]\subseteq G*H$.
		\end{myclaim}
		
		\begin{proof}
			We give a sketch of the proof. Everything works basically the same as before, we just have to be a bit more careful due to the added requirement of stationarity. Again let $G\uhr\alpha$ be the $\dP_{\alpha}$-generic filter induced by $G$, let $G(\alpha)$ let the $\dot{\dQ}_{\alpha}^{G\uhr\alpha}$-generic filter induced by $G$ and let $G\uhr[\alpha,\beta)$ be the $\dP_{\alpha,\beta}$-generic filter induced by $G$. Let $F_{\alpha}:=\bigcup G(\alpha)$ and let $A_{\alpha}:=\{\gamma\in\alpha\;|\;F(\gamma)=1\}$. Then $A_{\alpha}$ is a stationary subset of $\alpha$ that does not contain a closed copy of $\omega_1$.
			
			Furthermore, $j[A_{\alpha}]\in M[G]$ (it is definable from $G(\alpha)$ and $j\uhr\lambda$) and it is a stationary subset of $\sup(j[\alpha])$ there: Otherwise there would be $C\subseteq\sup(j[\alpha])\in M[G]$ club in $\sup(j[\alpha])$ such that $C\cap j[A_{\alpha}]=\emptyset$. Let $D:=j^{-1}[C]\in V[G]$ (note we do not need a lift of $j$ here since we take the pointwise preimage). Then $D$ is a ${<}\,\kappa$-club in $\alpha$ in $V[G]$ (i.e. it is unbounded and closed under limits of cofinality ${<}\,\kappa$), so its closure under limits of cofinality $\geq\kappa$ is club in $\alpha$ and intersects $A_{\alpha}$ nonemptily. However, any such ordinal is already in $D$ (since $A_{\alpha}$ consists of points of countable cofinality) and thus $C\cap j[A_{\alpha}]$ is nonempty.
			
			Again by the closure of $j(\dP)\uhr[\lambda+1,j(\lambda)+1)$, $j[A_{\alpha}]$ is a stationary subset of $\sup(j[\alpha])$ consisting of ordinals of countable cofinality that does not contain a closed copy of $\omega_1$. So its characteristic function is a condition in $(\dot{\dQ}_{j(\alpha)})^{M[G*(H\uhr\alpha)]}$. We note that unlike the previous cases, working with $\dP_F$, here we simply take the pointwise image of the generic function added by $G(\alpha)$ and ``fill it up'' with $0$ wherever it is not defined.
			
			Now it should be clear how to proceed, given the techniques we have used before.
		\end{proof}
		
		$\dP$ contains a dense set $\dR$ of cardinality $\lambda$ and thus has the $\lambda^+$-cc.. Ergo, there are at most $\lambda^+$ many maximal antichains of $\dP$ contained in $\dR$. By elementarity, there are at most $j(\lambda^+)$ many maximal antichains of $j(\dP)$ contained in $j(\dR)$ lying in $M$. However, $j(\lambda^+)$ is equal to the set of all equivalence classes of functions from $\lambda^{<\kappa}$ to $\lambda$, so $|j(\lambda^+)|=2^{\lambda}=\lambda^+$ in $V$ (which is of course not changed by forcing extensions). Ergo in $V[G]$ we can find an enumeration $(A_{\alpha})_{\alpha<\lambda^+}$ of all maximal antichains of $j(\dP)\uhr[\lambda+1,j(\lambda+1))$ lying in $M[G]$. Using the fact that $j(\dP)\uhr[\lambda+1,j(\lambda+1))$ is $\lambda^+$-strategically closed we can now find a descending sequence $(q_{\alpha})_{\alpha<\lambda^+}$ in $V[G]$ of elements of $j(\dP)\uhr[\lambda+1,j(\lambda+1))$ such that $q_0\leq q$ and for any $\alpha<\lambda^+$, $q_{\alpha}$ is below an element of $A_{\alpha}$. Ergo the upward closure of $\{q_{\alpha}\;|\;\alpha<\lambda^+\}$, denote it by $H$, is a filter for $j(\dP)\uhr[\lambda+1,j(\lambda+1))$ that is generic over $M[G]$ (by density of $j(\dR)$). Furthermore, $H\in V[G]$. Ergo, in $V[G]$ we can lift the embedding $j$ to $j\colon V[G]\to M[G*H]$ which shows that $\kappa$ is $\lambda$-supercompact in $V[G]$.
	\end{proof}
	
	\begin{mybem}
		As before we can see that, while $\lambda^+$-strategic closure (and even strong ${<}\,\lambda^+$-distributivity) is sufficient to build suitable generic filters, the same is not true for ${<}\,\lambda^+$-strategic closure.
	\end{mybem}
	
	\printbibliography
	
\end{document}